\newtheorem{lemma}{Lemma}[section]
\newtheorem{proposition}[lemma]{Proposition}
\newtheorem{theorem}[lemma]{Theorem}
\newtheorem{corollary}[lemma]{Corollary}
\newtheorem{maintheorem}{Theorem}
\newtheorem{mainproposition}[maintheorem]{Proposition}
\newtheorem{maincorollary}[maintheorem]{Corollary}
\theoremstyle{definition}
\newtheorem{definition}[lemma]{Definition}
\newtheorem{conjecture}[lemma]{Conjecture}
\theoremstyle{remark}
\newtheorem{remark}[lemma]{Remark}
\newtheorem{example}[lemma]{Example}
\newtheorem*{remark*}{Remark}
\newtheorem*{problem*}{Problem}
\newtheorem*{lremark*}{Literature remark}
\newcommand\Ima{{\rm Im}}
\newcommand{\Ma}{\operatorname{M}}
\newcommand\End{\rm End}
\newcommand\Hom{{\rm Hom}}
\DeclareMathOperator{\id}{id}
\newcommand{\Supp}{\mathrm{supp}}
\newcommand{\Span}{\mathrm{span}}
\newcommand{\fin}{\operatorname{fin}}
\DeclareMathOperator{\SPAN}{span}
\newcommand\D{\Delta}
\newcommand\eps{\varepsilon}
\newcommand\triangler{\triangleright}
\newcommand\trianglel{\triangleleft}
\newcommand\Char{{\rm char}}
\newcommand\ot{\otimes}
\newcommand\mc{\mathcal}
\newcommand{\N}{{\mathbb N}}
\newcommand{\Z}{{\mathbb Z}}
\newcommand{\C}{{\mathbb C}}
 \newcommand\restr[2]{{% we make the whole thing an ordinary symbol
   \left.\kern-\nulldelimiterspace % automatically resize the bar with \right
   #1 % the function
   %\vphantom{\big|} % pretend it's a little taller at normal size
   \right|_{#2} % this is the delimiter
   }}
\providecommand\@dotsep{5}
\renewcommand{\listoftodos}[1][\@todonotes@todolistname]{%
  \@starttoc{tdo}{#1}}
\let\oldtocsubsection=\tocsubsection
\renewcommand{\tocsubsection}[2]{\hspace{2em}\oldtocsubsection{#1}{#2}}
\begin{document}

\title[On combinatorial solutions of PE and positive basis Hopf algebras]{On set-theoretic solutions of pentagon equation and positive basis Hopf algebras}
\author{Ilaria Colazzo}
\author{Geoffrey Janssens}

\address{(Ilaria Colazzo)\newline University of Leeds
School of Mathematics
Department of Pure Mathematics
Leeds, UK
\newline E-mail address: {\tt I.Colazzo@leeds.ac.uk}\newline
(Geoffrey Janssens) \newline Departement Wiskunde, Vrije Universiteit Brussel,
Pleinlaan $2$, 1050 Elsene, Belgium \newline E-mail address: {\tt geofjans@vub.ac.be}}

\begin{abstract}
We investigate the connection between bijective, not necessarily finite, set-theoretic solutions of the pentagon equation and Hopf algebras. Firstly, we prove that finite solutions correspond to Hopf algebras with the positive basis property. As a corollary we generalise Lu-Yan-Zhu classification to arbitrary characteristic $0$ fields $k$. 
Secondly, we study the general problem of when a Hopf algebra has a basis yielding a set-theoretic solution. Finally, we classify all (co)commutative bijective solutions. This result requires to obtain a description of all bases of a group algebra $k[G]$ yielding a set-theoretic solution. We namely show   that such bases correspond, through a Fourier transform, to splittings $A \rtimes N$ of $G$ with $A$ a finite abelian group.
\end{abstract}
\maketitle

\newcommand\blfootnote[1]{%
  \begingroup
  \renewcommand\thefootnote{}\footnote{#1}%
  \addtocounter{footnote}{-1}%
  \endgroup
}

\blfootnote{\textit{2020 Mathematics Subject Classification}. 16T25, 16T30, 81R12 }
\blfootnote{\textit{Key words and phrases}. Pentagon Equation, set-theoretic solutions, Hopf Algebras, positive basis property} 

\blfootnote{The first author acknowledges the partial support of Fonds voor Wetenschappelijk
Onderzoek (Flanders) – Krediet voor wetenschappelijk verblijf in Vlaanderen (grant
V512223N) for supporting her research visit at VUB. The second author is grateful to Fonds Wetenschappelijk Onderzoek vlaanderen - FWO (grant 88258), and le Fonds de la Recherche Scientifique - FNRS (grant 1.B.239.22) for financial support.}

\tableofcontents

\section{Introduction}
\subsection{Background}\addtocontents{toc}{\protect\setcounter{tocdepth}{1}}
A linear map $f \in \End_k(V\ot V)$ on a $k$-vector space $V$ is a solution of the pentagon equation if it satisfies the equation $f_{23} f_{13} f_{12} = f_{12} f_{23}$ in $V^{\ot 3}$. 
The pentagon equation appears naturally across several areas ranging from theory of quantum groups to mathematical physics.
In integrable systems, the pentagon equation can act as a building block for higher-dimensional analogues of the Yang–Baxter equation: Maillet constructs spectral-parameter solutions of the tetrahedron equation from solutions of the pentagon equation \cite{zbMATH00721651}.

In operator algebra, unitary solutions to the pentagon equation are known as multiplicative unitary operators and play a central role in the theory of (locally compact) quantum groups. Baaj and Skandalis showed that, in finite dimension, a multiplicative unitary operator yields a Hopf $C^*$-algebra which is in fact a finite-dimensional Kac algebra \cite[theorem 4.10]{BaSk}. Since multiplicative unitary operators have been an important tool in the theory of quantum groups, e.g. see \cite{Wo96, zbMATH01594092}.

Although this formulation is analytic, the content of the equation is fundamentally a coherence condition for associativity.
%, and it admits an abstract reformulation in monoidal category theory.
Namely, Mac Lane's pentagon axiom \cite{maclane} is precisely the coherence constraint for the associator in a monoidal category. Moreover, in a monoidal category one finds morphisms satisfying the same formal pentagon equation: Street 
\cite{St98} proves that fusion operators satisfies the pentagon equation. 
Moreover, Kashaev \cite{Ka96} proves that the pentagon equation plays for the Heisenberg double the same role that the Yang-Baxter equation plays for the Drinfeld double. Most importantly for this article Militaru \cite{Mi04} and Davydov \cite{Dav} have shown that any finite-dimensional Hopf algebra corresponds to an invertible solution of the pentagon equation.

Lately there has been considerable interest in solutions where the underlying vector space $V$ has a $k$-basis $\mc{B}$ which is $f$-invariant in the sense that $f$ restricts to a map between pure tensors in $\mc{B} \ot \mc{B}$. In that case $f$ induces a map $\restr{f}{\mc{B}} \in \End(\mc{B}\times \mc{B})$ which satisfies the set-theoretic pentagon equation.
A set-theoretic solution to the pentagon equation is often denoted by the pair $(S,s)$, where $s:S\times S\to S\times S$. 
In \cite{zbMATH05238963}, set-theoretic solutions to the pentagon equation were used in symmetrically factorizable Lie groups, while in \cite{zbMATH05984366} they were considered in the context of the discrete Liouville equation.
Involutive set-theoretic solutions $(S,s)$ of the pentagon equation, i.e. those with $s^2=\id$,
were completely classified by Colazzo--Jespers--Kubat \cite{CJK}, while Castelli \cite{castelli} recently studied further special families of bijective solutions. 
Moreover, Colazzo--Okni\'nski--Van Antwerpen  \cite{COvA} show that finite set-theoretic solution induces on $S$ a semigroup structure of the form $S\cong E\times G$,
with $E$ a left-zero semigroup and $G$ a group, and then refine this to a decomposition
$S\cong X\times A\times G$ together with a hidden group structure on $A$ and a matched product
structure on $(A,G)$ that determines the solution.\smallskip

In this article we investigate the connections between, potentially infinite, set-theoretic solutions and Hopf algebras and conversely to extract information on the Hopf algebras related to combinatorial solutions. There are two types of difficulties one is confronted with when describing (infinite) set-theoretic solutions of the PE.
\begin{enumerate}
\item[(i)] Most constructions and dualities at a operator of Hopf algebra level do not preserve bases;
\item[(ii)] There is no canonical (multiplier) Hopf algebra associated to an infinite solution.
\end{enumerate}

One aim of this article is to contribute on circumventing the first difficulty. Another is to understand the finite dimensional Hopf algebras related to finite bijective set-theoretic solutions of the pentagon equation. One of our main results states that such Hopf algebras have very nice combinatorial properties: they are exactly those admitting a positive basis. Hopf algebras with a positive basis were defined by Lu–Yan–Zhu in \cite{zbMATH01616308} and classified over the complex numbers. Later the same authors constructed set-theoretic solutions of the Yang–Baxter equation in \cite{Lu2000} from positive basis Hopf algebras. A third aim of this work is to investigate bases of potentially infinite dimensional Hopf algebras that canonically provide set-theoretic pentagon solutions. In particular, we classify such bases for the group algebra and also (infinite) cocommutative solutions.

In upcoming work, the framework of this paper, combined with representation theory, will be used for the study of combinatorial solutions of the Yang-Baxter equation. Among others, it will be used to construct an action of finite injective set-theoretic solutions of the YBE on the finite bijective set-theoretic solutions of the RPE.\

In the following subsections we will now explain in more detail the main results of this paper.

\subsection{Connection set-theoretic solutions and positive basis property}\label{subsec:connection}
Given a bialgebra $(H, \eps, 1, m, \D)$ there is a canonical associated RPE solution, see \cite{Dav} and \Cref{from bialg to PE}, as following:
\begin{equation}\label{phi constr intro}
\Phi_H := (1 \ot m)(\D \ot 1) \in \End(H \ot H).
\end{equation}
We call a basis $\mc{B}$ of the Hopf algebra \emph{$\Phi$-set theoretic} if $\Phi_H$ sends pure tensors $b \ot c \in \mc{B} \ot \mc{B}$ to a pure tensor in $\mc{B} \ot \mc{B}$. In other words if the restriction of $\Phi_H$ to $\mc{B} \ot \mc{B}$ yields a set-theoretic solution on $\mc{B} \times \mc{B}.$ If $H$ is a Hopf algebra, then the solution $\Phi_H$ is bijective.\smallskip

In \Cref{section generalitis exotic basis} we investigate some general properties of $\Phi$-set theoretic bases. For instance in \Cref{nearly positive} we show that if $\eps(b) \geq 0$ for all $b \in \mc{B}$ and also the unit element $1_H$ decomposes into $\mc{B}$ with positive coefficients, then $H$ has the \emph{positive basis property}. The latter means that $H$ has a basis for which all structural constants of all structural maps are positive.\smallskip 

If $H$ is finite dimensional, then there is somehow an inverse construction to $\Phi_H$. More precisely, \cite{Dav, Mi04} associated to any finite dimensional bijective RPE solution two finite dimensional Hopf algebras, called right (resp. left) coefficient Hopf algebra. This construction is an analogue of Baaj-Skandalis construction \cite{BaSk93} of slices in the $C^*$-algebra setting. This construction yields a two-way construction between finite bijective RPE solutions and finite dimensional Hopf algebras \cite[Theorem 3.1]{Mi04}. Given a finite bijective set-theoretic solution $(S,s)$, these Hopf algebras will be denoted $H_{r}(s)$ and $H_{\ell}(s)$. Our first main result is that in the finite setting, set-theoretic solutions correspond to Hopf algebras having a positive basis.

\begin{maintheorem}[\Cref{pos basis prop theorem}]\label{main thm positive basis}
Let $(S,s)$ be a finite bijective solution to RPE. Then $H_{\ell}(s)$ and $H_{r}(s)$ have a basis which is both positive and $\Phi$-set theoretic.
\end{maintheorem}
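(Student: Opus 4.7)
The natural candidate is the \emph{slice basis} coming directly from the set-theoretic datum $(S,s)$ itself. The plan is to exhibit it explicitly, verify that it spans each of $H_\ell(s)$ and $H_r(s)$, and then check the two combinatorial properties separately.

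First I would linearise $(S,s)$ to $f\in\End(V\ot V)$ on $V=kS$ with basis $\{e_x:x\in S\}$, so that $f(e_x\ot e_y)=e_{s_1(x,y)}\ot e_{s_2(x,y)}$. The Militaru--Davydov construction realises $H_r(s),H_\ell(s)\subseteq\End(V)$ as the linear spans of slices of $f$ obtained by pairing with the dual basis $\{e_y^*\}$. Writing $T^r_y:=(e_y^*\ot\id_V)(f)$ and $T^\ell_y:=(\id_V\ot e_y^*)(f)$, each slice is, by direct inspection, a $0/1$ matrix in the basis $\{e_x\}$ with exactly one non-zero entry per column (respectively per row), entirely governed by $s$ and by $s^{-1}$. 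Bijectivity of $s$ then forces $\{T^r_y\}_{y\in S}$ to be linearly independent; a dimension count against $|S|=\dim H_r(s)$ shows that it is a basis of $H_r(s)$, and similarly for $H_\ell(s)$.

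With the basis in hand, positivity is verified by computing all structure constants directly from the slice description: the product $T^r_yT^r_z$, the coproduct $\D(T^r_y)$, the counit $\eps(T^r_y)$, the unit element of $H_r(s)$, and the antipode $\mc{S}(T^r_y)$. Using the pentagon equation for $s$ (and its inverse $s^{-1}$ for the antipode) I expect each of these to evaluate to a single slice $T^r_{(\cdot)}$ or to zero, yielding structure constants in $\{0,1\}$, which is more than enough to conclude the positive basis property. For the $\Phi$-set-theoretic property, I would compute
\[
\Phi_{H_r(s)}(T^r_y\ot T^r_z)=\sum (T^r_y)_{(1)}\ot (T^r_y)_{(2)}\,T^r_z
\]
from the explicit coproduct and product formulas produced in the previous step, and check that the result is a pure tensor of the form $T^r_a\ot T^r_b$ with $(a,b)$ determined by applying $s$ to $(y,z)$. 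Morally, this is a self-reproduction statement: the canonical $\Phi$ on $H_r(s)$ recovers the original solution $s$ through the slice basis.

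The main obstacle I anticipate is pinning down the Militaru--Davydov coefficient conventions (right versus left slices, and the RPE vs.\ PE normalisation) consistently with the description of $(S,s)$, so that the structure maps $m,\D,\eps,\mc{S}$ on $H_r(s)$ all factor cleanly through $s$ and $s^{-1}$ in the slice basis. Once those conventions are aligned, both the $\{0,1\}$-valuedness of the structure constants and the set-theoretic behaviour of $\Phi$ become direct consequences of the pentagon equation and of the bijectivity of $s$; the $H_\ell(s)$ statement is entirely analogous via the symmetric slice construction.
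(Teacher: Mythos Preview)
Your proposal has a genuine gap at the very first step: the claim that $\{T^r_y\}_{y\in S}$ is linearly independent and that $\dim H_r(s)=|S|$ is false in general. The slices that span $H_r(s)$ are naturally indexed by the dual basis of $A^*=\End_k(V)^*$, hence by pairs $(x',y')\in S^2$, not by single elements of $S$; and a great many of them coincide. The paper shows (via the coinvariants and the fundamental theorem of Hopf modules) that
\[
\dim_k H_r(s)=|G|\cdot|\bar E|,
\]
where $S\cong E\times G$ as a left group and $\bar E$ is the quotient of $E$ by the retract relation $e\sim f\iff \psi_e=\psi_f$. This is strictly smaller than $|S|=|E|\cdot|G|$ whenever two idempotents share the same $\psi$-map, which happens already in the simplest non-irretractable examples. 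Obtaining the correct indexing set requires the structural input from \cite{COvA} (left-group decomposition, the fixed-point-free dichotomy for $\psi_x$, and the fact that $\{\psi_x\}$ is a group) together with a count showing all retract classes have the same size; none of this is visible from bijectivity of $s$ alone.

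There is a second gap in the positivity verification: your expectation that $\D(T^r_y)$ is a single slice or zero is incorrect. In the paper's basis $g_{(x',y')}=\sum_{y\in\nu_\psi(y')}S_{y\circ x',y}$ the coproduct is an honest sum
\[
\D_r(g_{(x',y')})=\sum_{[d]\in\bar E} g_{(\psi_d(x'),\,z(d,y'))}\otimes g_{(x',d)},
\]
with one term per class in $\bar E$; positivity holds because the coefficients are all $1$, not because the sum collapses. What \emph{does} collapse is $\Phi_{H_r(s)}(g_{(x_1',y_1')}\otimes g_{(x_2',y_2')})$, where multiplying the second leg by $g_{(x_2',y_2')}$ kills all but one summand via the orthogonality relation for products (itself a consequence of the $\nu_\psi$-index structure). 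So the $\Phi$-set-theoretic property is not a triviality inherited from monomial structure constants, but comes from a cancellation that depends on having chosen the retract-class indexing correctly.
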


If the ground field $k$ is the complex numbers, then finite dimensional Hopf algebras with the positive basis property have been classified by Lu-Yan-Zhu \cite{zbMATH01616308}. They showed that they are all isomorphic to a bicrossed product Hopf algebra $k[B]^* \bowtie k[N]$ for some mashed pair of groups $(B,N)$. In \Cref{subsecti bicrossed} we recall the necessary background on mashed pairs. More precisely, they show that given a positive basis $\mc{B}$ of $H$, one can rescale the basis elements so that $\mc{B}$ corresponds to a group $G$ together with a factorization $G = B.N$, uniquely determined by $H$. The existence of such rescaling, obtained in \cite[Section 4]{zbMATH01616308}, required some analysis on the spaces of bialgebra structures on a given vector space. As a consequence of \Cref{main thm positive basis} and \cite{COvA}, we obtain a new proof of Lu-Yan-Zhu's classification which has the advantage to work for any characteristic $0$ ground field. More precisely, we obtain the following corollary.

\begin{maincorollary}\label{coro intro set basis iff positive}
    Let $H$ be a finite dimensional Hopf $k$-algebra with $\Char(k)=0$. Then the following are equivalent:
    \begin{itemize}
    \item $H$ has a $\Phi$-set theoretic basis,
    \item $H$ has the positive basis property,
    \item $H \cong k[B]^* \bowtie k[N]$ for a mashed pair of finite groups $(B,N, \trianglel,\triangler)$.
    \end{itemize}
    Moreover, if above holds, then the mashed pair is uniquely determined.
\end{maincorollary}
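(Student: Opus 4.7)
The plan is to establish a cycle of implications using \Cref{pos basis prop theorem}, the Militaru--Davydov correspondence between finite-dimensional bijective RPE solutions and finite-dimensional Hopf algebras (\cite{Dav,Mi04}), and the structural classification of finite bijective set-theoretic solutions from \cite{COvA}.

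For the implication ``$\Phi$-set theoretic basis $\Rightarrow$ positive basis property'', I would take a $\Phi$-set theoretic basis $\mc{B}$ of $H$ and consider the restriction $s = \restr{\Phi_H}{\mc{B}\times\mc{B}}$, which is a finite bijective set-theoretic solution. By the Militaru--Davydov correspondence $H$ is isomorphic to the coefficient Hopf algebra $H_\ell(s)$ (or $H_r(s)$), and \Cref{pos basis prop theorem} supplies a basis of $H$ that is simultaneously positive and $\Phi$-set theoretic. For ``$\Phi$-set theoretic basis $\Rightarrow$ bicrossed product'' I would apply the COvA structure theorem to the same $(S,s)$: it produces a mashed pair $(B,N,\trianglel,\triangler)$ of finite groups from which $s$ arises, and a direct computation in the setting of \Cref{subsecti bicrossed} identifies the corresponding coefficient Hopf algebra with $k[B]^*\bowtie k[N]$. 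The converse directions, starting from the bicrossed product, are settled by explicit verification on its canonical basis $\{\delta_b \ot n : b\in B,\, n\in N\}$, which is readily seen to be both positive and $\Phi$-set theoretic.

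The remaining and main obstacle is ``positive basis property $\Rightarrow$ $\Phi$-set theoretic basis''. Here the strategy is to observe that on a positive basis $\mc{B}$ the matrix of $\Phi_H = (1 \ot m)(\D \ot 1)$ in $\mc{B}\ot\mc{B}$ is entrywise non-negative, and the matrix of its inverse is non-negative as well because the antipode has non-negative structural constants in $\mc{B}$. An invertible non-negative matrix whose inverse is non-negative must be a positive diagonal times a permutation matrix, so a suitable positive rescaling of $\mc{B}$ (which preserves the positivity of every structural map) turns $\Phi_H$ into a genuine permutation of $\mc{B}\ot\mc{B}$, exhibiting the rescaled basis as $\Phi$-set theoretic. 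This bypasses the moduli-of-bialgebras analysis of \cite[Section 4]{zbMATH01616308} and works in arbitrary characteristic zero. Uniqueness of the mashed pair finally follows from the uniqueness of the matched structure recovered from $s$ in \cite{COvA}, combined with the fact that $s$ is canonically determined, up to isomorphism, by $H$ via the coefficient construction.
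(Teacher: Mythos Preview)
Your argument for the implications ``$\Phi$-set theoretic $\Rightarrow$ positive'' and ``bicrossed product $\Rightarrow$ $\Phi$-set theoretic, positive'' is correct and matches the paper's route via \Cref{pos basis prop theorem}, the Militaru--Davydov reconstruction, and the explicit computation of \Cref{solutions of hopf mashed pair}. Likewise, the passage ``$\Phi$-set theoretic $\Rightarrow$ bicrossed product'' through the structure theorem of \cite{COvA} is what the paper has in mind (the paper points to \Cref{Hopf op grp and dual solution} to identify the coefficient algebra of the resulting matched-pair solution with $k[B]^*\bowtie k[N]$).

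The genuine gap is in your treatment of ``positive $\Rightarrow$ $\Phi$-set theoretic''. Your observation that both $\Phi_H$ and $\Phi_H^{-1}$ have non-negative matrices in $\mc{B}\otimes\mc{B}$, and hence that $\Phi_H$ is monomial, is correct and useful. But the sentence ``so a suitable positive rescaling of $\mc{B}$ turns $\Phi_H$ into a genuine permutation of $\mc{B}\otimes\mc{B}$'' does not follow. Being monomial means $\Phi_H=P\Lambda$ with $P$ a permutation of $\mc{B}\otimes\mc{B}$ and $\Lambda$ a positive diagonal on the $n^2$-dimensional space $\mc{B}\otimes\mc{B}$. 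A rescaling of $\mc{B}$ conjugates $\Phi_H$ by $D\otimes D$ for a diagonal $D$ on the $n$-dimensional space $H$; this kills $\Lambda$ only if the scalars $\lambda(b,c)$ satisfy
\[
\lambda(b,c)\;=\;\frac{\mu(\psi_c(b))\,\mu(c\circ b)}{\mu(b)\,\mu(c)}
\]
for some function $\mu:\mc{B}\to k_{>0}$, i.e.\ only if $\lambda$ is a coboundary for the set-theoretic solution $s=(\psi,\circ)$. The pentagon equation forces $\lambda$ to be a $2$-cocycle for $s$, but not every such cocycle is a coboundary, and nothing in your argument rules out nontrivial classes. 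Establishing that this particular cocycle---the one coming from a Hopf algebra with positive basis---is always a coboundary is exactly the rescaling step carried out analytically in \cite[Section~4]{zbMATH01616308}; you have not bypassed it, only restated it. Without this step, you do not obtain a $\Phi$-set theoretic basis, nor can you identify $H$ with $H_r(s^v)$ (since $\Phi_H$ and $s^v$ differ by $\Lambda$ and the coefficient-algebra construction is not invariant under such diagonal twists).
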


We expect that for any Hopf algebra, the existence of a $\Phi$-set theoretic basis implies the existence of a (potentially different) basis which is positive, see \Cref{conjecture on positive basis}.

\begin{remark*}
Conversely, one may attempt from the positive basis Hopf algebras classification in \cite{zbMATH01616308} and the methods above to obtain a Hopf theoretical proof of the classification in \cite{COvA} of all finite bijective set-theoretic solutions $(S,s)$. However, this is more subtle as it first looks like. Indeed, if we denote by $(k[S],s^v)$ the associated vector space solution, then \cite[Theorem 5.7]{Dav} combined with \cite{zbMATH01616308} yields that $s^v$ is equivalent to a solution $\Phi_{k[B]^* \bowtie k[N]} \ot 1_X$ for some finite dimensional vector space $X$. However, it is not clear to what the starting basis $S$ of $k[S]$ would correspond to in the space $k[B]^* \ot k[N] \ot X$. This for two reasons: (1) the tensor product $\Phi_{k[B]^* \bowtie k[N]} \ot 1_X$ arises after a use of the fundamental theorem of Hopf modules, which do not preserve the canonical bases and (2) the groups $B$ and $N$ are obtained after a non-explicit rescaling process. In order to solve this obstacles, one should describe all $\Phi$-set theoretical bases of $k[B]^* \bowtie k[N]$. As explained below, we will be do so in the cocommutative case.
\end{remark*}

\subsection{Infinite dimensional Hopf algebras yielding set-theoretic solutions}\label{subsec:infinite}
In case of infinite solutions there is no well-behaved analogue of the left coefficient Hopf algebras. In particular, there is no associated Hopf algebra in general. For instance it will follow from the main results below, \Cref{main th cocomm} and \Cref{reconstructino theorem intro}, that the solution from \Cref{sol from grp alg} on an infinite abelian group $A$ is not induced from a $\Phi$-set theoretic basis of a Hopf algebra. However, the construction \eqref{phi constr intro} also works for multiplier Hopf algebras and the aforementioned solution originates so via the multiplier Hopf algebra $k_{\fin}^A$ of finitely supported functions on $A$. We expect that this phenomenon holds for any bijective set-theoretic solution. A conceptual reason is that set-theoretic solutions, in contrary of vector space solutions, enjoy duality (cf. \Cref{dual in set } and \Cref{connection linear and pullback sol}).

In the remainder of the article we investigate which infinite solutions can be obtained from Hopf algebras. We call such solutions \emph{reachable}. In practice this amount to the problem of determining whether a given Hopf algebra $H$ has a $\Phi$-set theoretic basis.

\subsubsection*{Cocommutative Hopf algebras} 

One of our aims is to classify non-finite bijective cocommutative solutions of the RPE. To do so we relate in \Cref{cocomm from sol to hopf}  such solutions  as following to Hopf algebras.

\begin{maintheorem}[\Cref{classif cocomm sol}]\label{main th cocomm}
Let $(S,s)$ be a reachable cocommutative bijective solution of RPE on a set $S$. Then there exists a group $G$ and $\Phi$-set theoretic basis $\mc{B}$ of $k[G]$ such that 
$$ s = \phi_{\mc{B}} \times 1_{X}$$ 
for some set $X$, where $\phi_{\mc{B}}$ is the set-theoretic solution on $\mc{B} \times \mc{B}$ associated to $\restr{\Phi_{k[G]}}{\mc{B} \ot \mc{B}}$.
\end{maintheorem}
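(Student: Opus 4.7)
The plan is to combine the reachability assumption with the Hopf-algebraic reduction carried out in \Cref{cocomm from sol to hopf}, and then to invoke the Cartier--Kostant--Milnor--Moore structure theorem in characteristic zero to force the relevant Hopf algebra to be a group algebra.

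First, by reachability together with the material of \Cref{cocomm from sol to hopf}, one produces a cocommutative Hopf algebra $H$ equipped with a $\Phi$-set theoretic basis $\mc{C}$ whose induced set-theoretic solution $\phi_{\mc{C}}$, possibly after tensoring with an inert identity factor, recovers $s$. The cocommutativity of $H$ is the Hopf-level manifestation of the cocommutativity assumption on $(S,s)$: it translates, on the span of $\mc{C}$, to the flip-symmetry $\tau \D = \D$ of the coproduct.

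Second, since $\Char(k) = 0$, the Cartier--Kostant--Milnor--Moore theorem decomposes $H \cong U(\mf{g}) \# k[G]$, with $G = G(H)$ the grouplike group and $\mf{g} = P(H)$ the primitive Lie algebra. The key claim is $\mf{g} = 0$. A nonzero primitive $x$ has $\D(x) = x \ot 1 + 1 \ot x$, so
\[
\Phi_H(x \ot c) = x \ot c + 1 \ot x c,
\]
which has rank two since $x, 1$ are linearly independent and $xc \neq 0$ generically by injectivity of right multiplication. This obstruction should propagate up the coradical filtration: any basis element of $\mc{C}$ not contained in the zeroth layer $H_0 = k[G]$ produces, under $\Phi_H(\,\cdot\, \ot c)$, at least two linearly independent pure tensors for generic $c \in \mc{C}$. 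An induction on the coradical filtration then forces $\mc{C} \subseteq k[G]$, so $H = k[G]$ and $\mc{C}$ is the required basis $\mc{B}$.

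Third, the inert factor $X$ accommodates elements of $S$ corresponding to an inert multiplicity not seen by $\mc{B}$; identifying $S \cong \mc{B} \times X$ and checking the factorization $s = \phi_{\mc{B}} \times 1_X$ on the level of sets concludes. The principal obstacle is the second step, specifically controlling the $\Phi$-set theoretic condition across the whole coradical filtration and not only at the grouplike layer. The cleanest route is likely via the dual perspective of \Cref{dual in set }: passing to the Fourier-dual multiplier Hopf algebra $k_{\fin}^G$ converts the pure-tensor condition on $\Phi_H$ into a concrete combinatorial criterion on the evaluations of basis elements against finitely supported functionals, which can then be examined layer by layer.
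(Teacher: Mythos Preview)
Your overall architecture matches the paper's: reachability gives a Hopf algebra $H$ with a $\Phi$-set theoretic basis, cocommutativity of the solution forces $H$ to be cocommutative (this is \Cref{phi versus H cocomm}), and then Cartier--Kostant--Milnor--Moore writes $H \cong U(\mf g)\rtimes k[G]$, so the problem reduces to killing $\mf g$. Where your proposal diverges is precisely at this reduction, and you correctly flag it as the obstacle: the coradical-filtration induction is only asserted, not executed, and the dual route via $k_{\fin}^G$ is merely a suggestion. Note also that your opening observation about primitives is not quite usable as stated: you need $xc\neq 0$ and $c, xc$ linearly independent, but in $U(\mf g)\rtimes k[G]$ right multiplication by a basis element $c\in\mc C$ need not be injective, so the rank-two claim for $\Phi_H(x\ot c)$ is not automatic in the full smash product.

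The paper bypasses the coradical filtration entirely with a short rank argument valid in any Hopf algebra that is a \emph{domain} (\Cref{domain no set solution}): write $\D(b)=\sum_{i=1}^n a_i\ot b_i$ in left-reduced form, so the $a_i$ are linearly independent and, by minimality, so are the $b_i$; in a domain right multiplication by any $c\neq 0$ is injective, hence the $b_ic$ remain linearly independent and $\Phi_H(b\ot c)=\sum_i a_i\ot b_ic$ has tensor rank exactly $n$. The $\Phi$-set theoretic condition forces $n=1$, so $\D(b)$ is a pure tensor, and applying $\eps\ot\id$ and $\id\ot\eps$ gives $\D(b)=\eps(b)^{-1}b\ot b$. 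Since $U(\mf g)$ is a domain with no nontrivial group-likes, it admits no $\Phi$-set theoretic basis unless $\mf g=0$ (\Cref{no set for Lie}). The paper then invokes the smash-product decomposition $\Phi_H=\Phi_{U(\mf g)}\#\Phi_{k[G]}$ from \Cref{solutions of hopf mashed pair} to conclude $H\cong k[G]$. This domain argument is both simpler and sharper than a filtration induction; you should replace your second step with it.
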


It is tempting to believe that \Cref{main th cocomm} follows from the work of Baaj-Skandalis \cite{BaSk93, BaSk03} by lifting the linearised solution $(k[S],s^v)$ to a multiplicative unitary on the Hilbert space $\ell^2(S)$. Their result would provide a unitary to $L^2(G \times X)$ for some locally compact group $G$ and discrete space $X$. However, it is not clear where the basis $S$ is send to in $L^2(G \times X)$. The aforementioned \Cref{sol from grp alg}, illustrates this. This problem will be considered in upcoming work.

\subsubsection*{Non-canonical bases of group algebras}

\Cref{main th cocomm} reduces the problem of classifying all (reachable) cocommutative set-theoretic solutions to describing $\Phi$-set theoretic bases of group algebras $k[G]$. At first surprising, such bases do not need to be multiplicativly closed. For instance any mashed pair decomposition $A \bowtie N$ of $G$ with $A$ a finite abelian group acting trivially on $N$, yield a $\Phi$-set theoretic basis of $k[G]$ using Fourier analysis. 

Concretely, when $A$ is finite, for each $\chi$ in the character group $A^{\vee}$ one has the idempotent $e_{\chi} := \frac{1}{|A|}\sum_{a\in A}\chi(a^{-1})\,a .$ The set  $\mathcal{B}_{A^\vee} := \{\, e_\chi\,u \mid \chi\in A^\vee,\ u\in N\,\}$  is a $k$-basis of $k[G]$. Furthermore the right action of $N$ on $A$ induces a left action of $N$ on $A^\vee$ which permutes the idempotents $e_{\chi}$.

\begin{mainproposition}[\Cref{thm:dual}]\label{main th sol from mashed}
Let $(A,N)$ a mashed pair with $A$ a finite abelian group acting trivially on $N$ and let $G = A \rtimes N$. With notations as above, the basis $\mathcal{B}_{A^\vee}$ is $\Phi$-set theoretic. Moreover, $\Phi_{k[G]}$ restricts to following set-theoretic solution of RPE:
\[
\phi_{A^\vee\bowtie N}:\ (A^\vee\times N)\times(A^\vee\times N)\to (A^\vee\times N)\times(A^\vee\times N),
\]
given by
\begin{equation}\label{set sol on mashed intro}
\phi_{A^\vee\bowtie N}\bigl((\alpha,u),(\beta,v)\bigr)
=
\bigl((\alpha(u\cdot\beta)^{-1},u),\ (u\cdot\beta,uv)\bigr).
\end{equation}
\end{mainproposition}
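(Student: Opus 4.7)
The plan is to establish \Cref{main th sol from mashed} by a direct Fourier computation: after checking that $\mathcal{B}_{A^\vee}$ is a $k$-basis, evaluate $\Phi_{k[G]}$ on a pure basis tensor and recognise the output inside $\mathcal{B}_{A^\vee}\otimes\mathcal{B}_{A^\vee}$. The pentagon property then comes for free from \eqref{phi constr intro}, since $\Phi_H$ is always an RPE solution when $H$ is a bialgebra.

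\emph{Basis step.} Since $|A|$ is invertible in $k$, Fourier inversion on the finite abelian group $A$ shows that $\{e_\chi:\chi\in A^\vee\}$ is a complete orthogonal system of idempotents and hence a $k$-basis of $k[A]$. The identification $G=A\rtimes N$ yields a vector-space decomposition $k[G]\cong k[A]\otimes k[N]$ via $(a,u)\mapsto au$, and right-multiplying the $e_\chi$-basis by the canonical basis $N$ of $k[N]$ produces exactly $\mathcal{B}_{A^\vee}$.

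\emph{Commutation step.} Write the $N$-action on $A$ as $(u,a)\mapsto u\cdot a$, so that $ua=(u\cdot a)u$ inside $G$, and extend it to $A^\vee$ by $(u\cdot\chi)(a):=\chi(u^{-1}\cdot a)$. Substituting $b=u\cdot a$ in the defining sum of $e_\chi$ gives the key commutation relation
\[
u\,e_\chi=e_{u\cdot\chi}\,u,\qquad u\in N,\ \chi\in A^\vee,
\]
while the identity $a\,e_\gamma=\gamma(a)\,e_\gamma$ for $a\in A$ follows from $\{e_\gamma\}$ being the character-idempotents of the abelian group algebra $k[A]$.

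\emph{Computation step.} Since the elements of $G$ are group-like,
\[
\Phi_{k[G]}(e_\chi u\otimes e_\beta v)=\tfrac{1}{|A|}\sum_{a\in A}\chi(a^{-1})\,au\otimes (au)\,e_\beta v.
\]
Applying the two commutation identities above gives $(au)\,e_\beta v=a\,e_{u\cdot\beta}\,uv=(u\cdot\beta)(a)\,e_{u\cdot\beta}\,uv$, and recognising $\chi(a^{-1})(u\cdot\beta)(a)=\bigl(\chi(u\cdot\beta)^{-1}\bigr)(a^{-1})$ in the first tensor slot yields
\[
\Phi_{k[G]}(e_\chi u\otimes e_\beta v)=e_{\chi(u\cdot\beta)^{-1}}u\otimes e_{u\cdot\beta}\,uv,
\]
which is both a pure basis tensor and precisely the formula \eqref{set sol on mashed intro}. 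The only mild obstacle is bookkeeping: one must fix consistent conventions for the induced $N$-action on $A^\vee$ and for the interplay between $\chi$ and $\chi^{-1}$ in the Fourier sums; once these are pinned down, the statement is essentially a one-line calculation and the RPE is inherited from the general theory.
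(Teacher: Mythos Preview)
Your proof is correct and follows essentially the same route as the paper: both establish the idempotent identities $a\,e_\chi=\chi(a)e_\chi$ and $u\,e_\chi u^{-1}=e_{u\cdot\chi}$ (the paper's \eqref{eq:trans} and \eqref{eq:conj}), then expand $\Phi_{k[G]}(e_\alpha u\otimes e_\beta v)$ over $a\in A$, simplify the second tensor slot to $(u\cdot\beta)(a)\,e_{u\cdot\beta}\,uv$, and recollect the first slot as $e_{\alpha(u\cdot\beta)^{-1}}u$. Your remark about bookkeeping is apt: your convention $(u\cdot\chi)(a)=\chi(u^{-1}\cdot a)$ differs in appearance from the paper's $(u\cdot\chi)(a)=\chi(a\triangleleft u^{-1})$, but both are arranged so that $u e_\chi u^{-1}=e_{u\cdot\chi}$ holds, which is all the computation needs.
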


Constructing the non-trivial $\Phi$-set theoretic basis $\mc{B}$ in \Cref{main th sol from mashed} crucially relies on $A$ being finite and for $G$ to have torsion elements. In \Cref{thm:two-point-obstruction} we show how non-torsion elements restrict the possible support of an element in $\mc{B}$. Furthermore, it is shown in \Cref{no non-triv sol of torsion-fre} that in the torsion-free case there is only the trivial $\Phi$-set theoretic basis. More generally, our final main theorem shows that all $\Phi$-set theoretic bases of $k[G]$ must be of the form as in \Cref{main th sol from mashed}. As a consequence all cocommutative solutions would be equivalent to one of the form \eqref{set sol on mashed intro}.

\begin{maintheorem}[\Cref{Classification theorem basis grp alg}]\label{reconstructino theorem intro}
Let $G$ be a group and $\mc{B}$ a $\Phi$-set theoretic basis of $k[G]$. Then there exists $A,N \leq G$ such that
\begin{enumerate}
    \item $G \cong A \rtimes N$ with $A$ a finite abelian group,
    \item $\mc{B}$ is a scalar multiple of the basis $\mc{B}_{A^{\vee}}= \{ e_{\chi} u \mid  \chi \in A^{\vee}, u \in N\}$,
    \item $A= \{ b\in \mc{B} \mid 1 \in \Supp(b) \}$ are the idempotents in $\mc{B}$,
    \item $\phi_{\mc{B}} := \restr{\Phi}{\mc{B} \ot \mc{B}}$ is equivalent to the solution \eqref{eq:Phi-dual}.
\end{enumerate}
\end{maintheorem}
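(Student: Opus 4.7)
The plan is to start from the coefficient identity imposed by $\Phi$-set theoreticity and use it to extract a canonical finite abelian subgroup of $G$ along which each basis element looks like a translate of an idempotent. Writing $b=\sum_{g\in G} b_g\, g$ for each $b\in\mc{B}$ and $\phi(b,c)=(b',c')$, the identity $\Phi(b\ot c)=b'\ot c'$ becomes the rank-one factorisation
\[
b'_g\, c'_h = b_g\, c_{g^{-1}h}, \qquad g,h\in G.
\]
Fixing $g\in\Supp(b)$ forces $c'$ to be a nonzero scalar multiple of the left translate $g\cdot c$, and varying $g$ over $\Supp(b)$ shows that for any $g_1,g_2\in\Supp(b)$ the element $g_2^{-1}g_1$ acts on $c$ by left multiplication in $k[G]$ as a nonzero scalar. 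Running the same argument for $\Phi^{-1}$, which is set-theoretic on $\mc{B}\ot\mc{B}$ by bijectivity of $\phi$, yields the right-sided analogue. Setting
\[
A := \{\, h\in G \mid h\cdot c \in k^\times c \text{ for every } c\in\mc{B}\,\},
\]
one obtains a subgroup of $G$ with $\Supp(b)\subseteq g_0 A \cap A g_0$ for any $g_0\in\Supp(b)$; in particular every $b$ is supported on a coset of $A$ and every element of $\Supp(b)$ normalises $A$.

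Next I would show $A$ is finite abelian. For $h_1,h_2\in A$ and $c\in\mc{B}$ both $h_1h_2$ and $h_2h_1$ act on $c$ by the same scalar $\mu_{h_1}(c)\mu_{h_2}(c)$, so $(h_1h_2-h_2h_1)\cdot c=0$ for every $c\in\mc{B}$; extending linearly and evaluating on the basis $G$ of $k[G]$ forces $h_1h_2=h_2h_1$. For finiteness, pick any $b\in\mc{B}$ with $1\in\Supp(b)$, which exists because $1$ is a $k$-linear combination of $\mc{B}$. Then $\Supp(b)\subseteq A$ and the eigenvector relations $hb=\mu_h(b)b$ produce $\chi\in A^\vee=\Hom(A,k^\times)$ and $c_0\in k^\times$ with $b=c_0\sum_{h\in A}\chi(h^{-1})h$; since $\chi$ is nowhere vanishing and $b$ has finite support, $A$ must be finite and $b$ is a scalar multiple of the idempotent $e_\chi$, giving item (3). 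Applying the same eigenvector argument to $\tilde b\in k[A]$ in the factorisation $b=g_0\tilde b$ (available because $g_0\in\Supp(b)$ normalises $A$) shows every $b\in\mc{B}$ has the form $\lambda_b\, g_0\, e_{\chi_b}$ for some $\chi_b\in A^\vee$.

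To finish I would produce the complement $N$ by exploiting the explicit form of $\phi$ on such elements. A direct computation using $u e_\psi = e_{u\cdot\psi}\, u$, valid whenever $u$ normalises $A$, yields
\[
\Phi(e_\alpha u \ot e_\beta v) = e_{\alpha(u\cdot\beta)^{-1}}\, u \ \ot\ e_{u\cdot\beta}\, uv .
\]
Declaring $u\in N$ exactly when $e_{1}u$ (with $1$ the trivial character) lies, after the rescaling of the previous step, in $\mc{B}$, the bijectivity of $\phi_{\mc{B}}$ together with the displayed formula forces $N$ to be closed under multiplication and inversion and to satisfy $N\cap A=\{1\}$ and $G=AN$, so $G\cong A\rtimes N$; this gives (1) and (2). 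The displayed formula then matches \eqref{set sol on mashed intro}, and hence \eqref{eq:Phi-dual}, giving (4).

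The main obstacle is exactly this final step: converting the coset-plus-character parametrisation $\mc{B}\leftrightarrow G/A\times A^\vee$ of the middle step into a genuine multiplicatively closed complement $N\leq G$. A naive choice of coset representatives need not close under the group law, and the scalars $\lambda_b$ in $b=\lambda_b g_0 e_{\chi_b}$ must be pinned down simultaneously with the group element in order for the rescaled basis to sit inside $\mc{B}$. The mechanism is that the second component of $\phi_{\mc{B}}$ enforces a consistent choice of both the representative and the scalar, and it is precisely this compatibility---essentially the matched-pair cocycle condition already visible in \Cref{main th sol from mashed}---that produces $N$ and finishes the proof.
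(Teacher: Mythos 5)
Your first half is essentially sound and runs parallel to the paper: your rank-one coefficient identity is the paper's support lemma (\Cref{lem:support-preserve}), and your intrinsic definition of $A$ as the common scalar-stabiliser $\{h\in G\mid hc\in k^* c\ \forall c\in\mc{B}\}$, its abelianness, its finiteness via an element $b$ with $1\in\Supp(b)$, and the identification of such $b$ with scalar multiples of Fourier idempotents $e_\chi$ all match the paper's construction of $A$ and $B_1$. But two steps are not justified. First, you invoke that $\Phi^{-1}$ is set-theoretic on $\mc{B}\ot\mc{B}$ ``by bijectivity of $\phi$'': injectivity of $\phi_{\mc{B}}$ does follow from injectivity of $\Phi$, but for an infinite basis this does not give surjectivity, and purity of $\Phi^{-1}(b\ot c)$ is exactly what you would need; moreover, even granting $\Supp(b)\subseteq g_0A\cap Ag_0$, the jump to ``every element of $\Supp(b)$ normalises $A$'' is not spelled out. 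This part is repairable without $\Phi^{-1}$: writing $b=g_0\tilde b$ with $\tilde b\in k[A]$, the relation $ab=\mu_a b$ gives $(g_0^{-1}ag_0)\,\tilde b=\mu_a\tilde b$, and a group element scaling a nonzero element of $k[A]$ must lie in $\Supp(\tilde b)\Supp(\tilde b)^{-1}\subseteq A$; finiteness then yields $g_0^{-1}Ag_0=A$, and since the supports of basis elements cover $G$, $A$ is normal (the paper proves this separately in \Cref{thm: A normal}).

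The decisive gap is the final step, which you yourself flag as the main obstacle: producing the complement $N$, pinning one common scalar, and hence items (1), (2), (4). Your membership test for $N$ cannot work as stated: since $e_\chi a=\chi(a)e_\chi$ for $a\in A$, the element $e_{\chi_0}u$ for the trivial character $\chi_0$ depends only on the coset $Au$, so the criterion ``$e_{\chi_0}u\in\mc{B}$ after rescaling'' accepts either all representatives of a coset or none (under the theorem's conclusion it accepts all of $G$), and thus cannot select a transversal, let alone a multiplicatively closed one. The paper's missing ingredient is the cocycle computation of \Cref{lem:set-theoretic-kills-cocycles}: choose an arbitrary normalized section $\tau$ of $G\to Q:=G/A$ with factor set $\sigma$, note each basis element is $\lambda_b\,e_\chi\tau(p)$, and compute
\[
\Phi\bigl(e_\chi\tau(p)\ot e_\psi\tau(q)\bigr)=(p\cdot\psi)(\sigma(p,q))\,\Bigl(e_{\chi(p\cdot\psi)^{-1}}\tau(p)\ot e_{p\cdot\psi}\tau(pq)\Bigr).
\]
The $\Phi$-set theoretic condition first forces all the scalars $\lambda_b$ to coincide (using $c\in B_1$ and varying $\psi$) and then forces $(p\cdot\psi)(\sigma(p,q))=1$ for all $\psi$; since characters of the finite abelian group $A$ separate points (here the roots-of-unity hypothesis on $k$ enters), this gives $\sigma\equiv 1$, so $\tau(Q)=N$ is a genuine complement, and the same formula identifies $\phi_{\mc{B}}$ with \eqref{eq:Phi-dual}. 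Without this argument (or an equivalent one), the splitting $G\cong A\rtimes N$ and the single-scalar statement in (2) are not established, so the proposal as written does not prove the theorem.
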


\subsubsection*{Non-existence for domains}

A first, non-difficult but important, fact for \Cref{main th cocomm} is that cocommutative solutions can only arise as the restriction to a $\Phi$-set theoretic basis of a solution $\Phi_H$ if $H$ is a cocommutative Hopf algebra. This allows to invoke Cartier-Konstant-Milnor-Moore classification saying that $H \cong U(\mathfrak{g}) \rtimes k[G]$ with $\mathfrak{g} := P(H)$ the Lie algebra consisting of primitive elements in $H$ and $G := G(H)$ the group of group-like elements. For universal envelopings, and more generally domains, we show that no $\Phi$-set-theoretic basis exists.

\begin{maintheorem}[\Cref{domain no set solution} \& \Cref{no set for Lie}]
Let $H$ be a Hopf algebra which is a domain. If $H$ has a $\Phi$-set-theoretic basis $\mc{B}$, then for each $b \in \mc{B}$ we have that $\D_H(b) = \eps_H(b)^{-1}\,  b \otimes b$. Consequently, if $\mathfrak{g}$ is a finite dimenisonal Lie algebra, then $U(\mathfrak{g})$ has no $\Phi$-set-theoretic basis.
\end{maintheorem}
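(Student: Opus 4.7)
The plan is to show first that the $\Phi$-set theoretic condition together with the domain hypothesis forces $\D(b)$ to be a pure tensor for each $b \in \mc{B}$, from which the group-like formula follows immediately. Fix $b \in \mc{B}$ and write $\D_H(b) = \sum_{i=1}^n \lambda_i\, x_i \ot y_i$ with $\{x_i\}$ and $\{y_i\}$ each linearly independent in $H$ and every $\lambda_i \in k^{\times}$. Since $\eps$ is surjective and $\mc{B}$ spans $H$, I may pick $c \in \mc{B}$ with $\eps(c) \neq 0$ (expand $1_H$ in $\mc{B}$); the $\Phi$-set theoretic assumption then gives $\Phi_H(b \ot c) = b' \ot c'$ for some $b', c' \in \mc{B}$, that is,
\[
\sum_i \lambda_i\, x_i \ot y_i c \;=\; b' \ot c'.
\]

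Applying $\id \ot \eps$ to this identity and using the counit axiom yields $\eps(c)\, b = \eps(c')\, b'$; since $\mc{B}$ is a basis and $\eps(c) \neq 0$, this forces $b' = b$ and $\eps(c') = \eps(c)$. The hard part will be showing $n = 1$. I would invoke the elementary fact that if a pure tensor $u \ot v$ equals $\sum_i \lambda_i x_i \ot w_i$ with $\{x_i\}$ linearly independent, then every $w_i$ is a scalar multiple of $v$ (apply a dual functional to $x_j$ in the first tensor slot). Applied here, $y_i c = \alpha_i c'$ for scalars $\alpha_i$, all nonzero because $H$ is a domain and $y_i, c \neq 0$. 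Comparing indices $i, j$ gives $(\alpha_j y_i - \alpha_i y_j)\, c = 0$, so $\alpha_j y_i = \alpha_i y_j$ by the domain hypothesis, contradicting linear independence of $\{y_i\}$ unless $n = 1$. Thus $\D(b) = \lambda\, x \ot y$ is a pure tensor; the two counit axioms force $x, y \in k \cdot b$, whence $\D(b) = \kappa\, b \ot b$, and a final counit evaluation pins down $\kappa = \eps(b)^{-1}$ (in particular $\eps(b) \neq 0$).

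For the consequence, I assume $\mf{g} \neq 0$ (otherwise $U(\mf{g}) = k$ is the trivial case). By PBW, $U(\mf{g})$ is a Hopf algebra that is a domain, so by the first part every $b \in \mc{B}$ would satisfy $\eps(b)^{-1} b \in G(U(\mf{g}))$. It then suffices to recall that $G(U(\mf{g})) = \{1\}$: any group-like $g$ is a unit with inverse $S(g)$, and the standard PBW filtration argument---the leading symbol of a unit must be a unit of $\mathrm{Sym}(\mf{g})$, forcing it to have degree zero---shows that the units of $U(\mf{g})$ are exactly $k^{\times}$. Combined with $\eps(g) = 1$, this yields $g = 1$. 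Therefore every $b \in \mc{B}$ lies in $k \cdot 1$, contradicting the fact that $\mc{B}$ is a basis of an algebra of dimension $> 1$.
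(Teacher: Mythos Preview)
Your proof is correct and follows essentially the same approach as the paper: both arguments write $\D(b)$ in a rank decomposition, use the domain hypothesis to see that $\Phi_H(b\otimes c)$ being a pure tensor forces $n=1$, and then apply the counit axioms to identify the pure tensor as $\eps(b)^{-1}\,b\otimes b$. The only cosmetic differences are that your choice of $c$ with $\eps(c)\neq 0$ (and the resulting $b'=b$ step) is not actually needed for the rank argument, and for the Lie-algebra consequence you compute $G(U(\mf g))=\{1\}$ via the PBW filtration on units whereas the paper simply invokes connectedness of $U(\mf g)$.
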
 

In fact, in general by \Cref{thm:groupalgebra-under-coalgebasis}, having a basis $\mc{B}$ such that $\D(\mc{B}) \subseteq k^* \ \mc{B} \ot \mc{B}$  implies that the Hopf algebra must be isomorphic to a group algebra. By one of the notorious Kaplansky conjectures, a group algebra $k[G]$ would be a domain if and only if $G$ is torsion-free.

\subsection{Outline}\label{subsec:outline}

In \Cref{section overview sols} we review different notions of solutions of the PE that can be found in the literature. The main differences being in which category they live, i.e. are they built from a set, vector space or algebra. We recall how to explicitly pass from one setting to another. Thereafter in \Cref{section background coeff alg} we recall Davydov \cite{Dav} and Militaru's \cite{Mi04} construction of a finite dimensional Hopf algebra associated to a RPE solution. These Hopf algebras are called the coefficient algebras and are inspired by Baaj-Skandalis construction \cite{BaSk93} in the operator algebra setting. Thereafter we specialize the construction to the case of set-theoretic solutions and describe a spanning set.

This is followed by \cref{section From hopf to sol} where we recall the converse construction. In other words, how to canonically associate a solution to any Hopf algebra $H$. This section contains several instrumental examples and introduces the concept of a $\Phi$-set theoretic basis $\mc{B}$, a crucial concept in the rest of the paper. It is also explained that in the finite dimensional setting the Davydov-Militaru Hopf algebra determines a vector space solution up to a trivial factor, but do not behave well with remembering specific bases.

The aim of \Cref{section positive basis} is to relate the class of finite dimensional Hopf algebras with the positive basis property and finite bijective solutions of the pentagon equation. This is done by constructing an explicit basis of the Hopf algebra attached to a set-theoretic solution. This basis is shown to have structural constants $\{ 0,1 \}$. 
This section fully clarifies the finite dimensional setting. Hence the remainder of paper focuses on Hopf algebras of arbitrary dimension. Firstly, in \Cref{cocomm classif section}, we show that a Hopf algebra which is a domain or is cocommutative can only yield a set-theoretic solution if it is a group algebra. Subsequently,
in \Cref{reconstruction theorem section} we classify all set-theoretic bases of a group algebra. This classification depends on a construction introduced in \Cref{solutions from grp alg}. \smallskip

\vspace{0,2cm}
\noindent \textbf{Acknowledgment.} We would like to thank Leandro Vendramin for interesting discussions and especially for suggesting to consider the positive basis property. The authors are grateful to CIRM for supporting this work by providing ideal working conditions during their research in residence. the  The second author thanks Kenny De Commer for useful conversations on multiplier Hopf algebras and work of Baaj-Skandalis. He would also like to express his gratitude to \v{S}pela \v{S}penko for all her support during the writing of this paper.

\section{Overview and connection on the different types of Pentagon Solutions}\label{section overview sols}

Independent of the type of algebraic object one is dealing with we will study equations of the form

\begin{align}
Z_{12}Z_{13}Z_{23} = Z_{23} Z_{12} \tag{RPE}\\
Z_{23} Z_{13} Z_{12} = Z_{12} Z_{23} \tag{PE}
\end{align}
where PE stands for \emph{pentagon equation} and RPE for the \emph{reversed pentagon equation}. In some parts of the literature the PE is called the \emph{Hopf equation}, in which case the RPE is referred to simply as the pentagon equation. The meaning of the operator $Z_{ij}$, and hence of a solution to such equations, depends on the category in which we are working.

In this section we recall the necessary background on the various notions of solutions of the pentagon equation (for sets, vector spaces and algebras) that appear in the literature, and we explain how these notions are related. The content of this section is well-known to the experts, since we are not aware of a  single reference that collects all these facts, we include it here.

\subsection{Background on pentagon equation}

Let $S$ be a set. In the set-theoretic setting, the (R)PE equation in this case is an identity in $\End(S\times S\times S)$.

\begin{definition}\label{def set PE}
A set-theoretic solution to the RPE (resp. PE) is a pair $(S,s)$ where 
\begin{enumerate}
    \item $S$ is a set
    \item $s \in \End (S\times S)$ satisfying RPE (resp. PE)
\end{enumerate}
with the standard notation $s_{12} = s \times \id_S, s_{23} = \id_S \times s$ and $s_{13} = (\id_s \times \tau)(s \times \id_s)(\id_s \times \tau)$ where $\tau(u,v) = (v,u)$ denotes the flip map. 
\end{definition}

If $(S,s)$ is a set-theoretic solution to the RPE (resp. PE), it is convenient to denote it by
$$s(x,y) = (\psi_y(x), y\circ x) \quad (\text{resp. } s(x,y) =(xy, \theta_x(y))).$$
Expanding the RPE (resp. PE) for $(x,y,z)\in S^3$ yields 
\begin{align}
    z\circ(y\circ x) = (z\circ y) \circ x\quad &(\text{resp. } x\cdot(y\cdot z))\label{eq1}\\
    \psi_z(y\circ x) = \psi_z(y)\circ \psi_{z\circ y}(x)\quad&(\text{resp. } \theta_x(y\cdot z) = \theta_x(y)\cdot \theta_{x\cdot y}(z))\label{eq2}\\
    \psi_{\psi_z(y)}\psi_{z\circ y}(x) =\psi_y(x)\quad &(\text{resp. }\theta_{\theta_x(y)}\theta_{x\cdot y}(z) =\theta_y(z)).\label{eq3}
\end{align}
In particular, \eqref{eq1}, shows that $\circ$ is an associative binary operation on $S$.

From the above identities one checks immediately that, given a set-theoretic solution $(S,s)$ to the RPE, the map
$$t(x,y) = (x\circ y, \psi_x(y))$$
defines a set-theoretic solution $(S,t)$ to the PE. 
Moreover, since $t = \tau s \tau$ (with $\tau(x,y)=(yx)$), it follows that $t$ is bijective if and only if $s$ is bijective. 

Finally, let $(S,s)$ be a solution of the RPE and write $s(x,y)=(\psi_y(x),y\circ x)$ and $s^{-1}(x,y) = (xy, \theta_x(y))$. A direct computation shows that $ss^{-1}=\id_{S\times S}= s^{-1}s$  is equivalent to the following relations:
\begin{align}
    \psi_y(x)(y\circ x) = x,\\
    \theta_{\psi_y(x)}(y\circ x) = y,\\
    \psi_{\theta_x(y)}(xy) = x, \label{eq:psitheta}\\ 
    \theta_{x}(y) \circ (xy)= y.
\end{align}
In \Cref{section generalitis exotic basis} we refer to theses as the \emph{inverse relations}.\medskip

Given a set-theoretic solution, one can associate to it solutions in linear settings (vector spaces or algebras). For instance, linearising yields an operator on $V=k[S]$ and hence an equation in $\End(V^{\otimes 3})$. 

\begin{definition}\label{def VS PE}
A \emph{vector space solution} to the PE (resp. RPE) is a pair $(V,f)$ where 
\begin{enumerate}
    \item $V$ is a $k$-vector space
    \item $f \in \End_k(V\otimes V)$ satisfying PE (resp. RPE)
\end{enumerate} 
with $f_{12} = f \otimes \id_V, f_{23}= \id_V \otimes f$ and $f_{13} = (\id_V \otimes \tau)(f \otimes \id_V)(\id_V \otimes \tau)$
where $\tau(a\otimes b) = b\otimes a $ is the bilinear extension of the flip map.
\end{definition}

Some authors prefer to view the PE inside $\End_k(V)^{\ot 2}$, or, in other words, to work in a $k$-algebra. 

\begin{definition}\label{def algebra PE}
An \emph{algebra solution} to the PE (resp. RPE) is a tuple $(A,R)$ with 
\begin{enumerate}
    \item $A$ a $k$-algebra
    \item $R := \sum_i R_i^{(1)} \otimes R_i^{(2)} \in A \otimes A$ satisfying the PE (resp. RPE)
\end{enumerate}
where $R_{12}= R \times 1_A, R_{23} = 1_A \otimes R$ and $R_{13} = \sum_i R_i^{(1)}\otimes 1_A \otimes R_i^{(2)}.$
\end{definition}

Note that in \Cref{def algebra PE} the PE is an equation in $A \ot A \ot A$.

\subsection{Connections between set-theoretic and vector space solutions}

Starting from a set $S$ there are two canonical ways to associate to it a $k$-vector space:
\begin{enumerate}
    \item $k[S]$, the free $k$-vector space on $S$;
    \item $k^S$, is the vector space of functions $S \to k$.
\end{enumerate}

\subsubsection{Via linearisation.}

We consider the canonical $k$-linear map
$$\psi_{(n)} : k[S]^{\ot n} \rightarrow k[S^n]: s_1 \ot \cdots \ot s_n \mapsto (s_1,\cdots, s_n).$$
Note that $\psi_{(n)}$ is an isomorphism for any $n\in \N$ with inverse induced by the universal property (this holds even when $S$ infinite). Moreover, if $S$ is a semigroup, then $\psi_{(n)}$ is a $k$-algebra morphism.
Next given $s\in \End(S^{n})$, let $\overline{s} \in \End_k(k[S^{n}])$ denote its $k$-linear extension. We then define from a set-theoretic the $k$-linear map $s^{v} := \psi^{-1}_{(n)}\circ \overline{s} \circ \psi_{(n)}$.

\begin{proposition}
    The pair $(S,s)$ is a set-theoretic solution of PE (resp. RPE) if and only $(kS, s^v)$ is a vector space solution of the PE (resp. RPE).
\end{proposition}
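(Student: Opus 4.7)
The plan is to show that under the canonical isomorphism $\psi_{(3)}: k[S]^{\otimes 3} \to k[S^3]$, the linearised operators $s^v_{ij}$ correspond exactly to the linearisations $\overline{s_{ij}}$ of the set-theoretic leg operators, so that both sides of the (R)PE transport to one another. Concretely, I would establish the compatibility
\[
s^v_{ij} \;=\; \psi_{(3)}^{-1}\circ\overline{s_{ij}}\circ\psi_{(3)} \qquad \text{for } ij\in\{12,23,13\},
\]
then apply functoriality of linearisation and the faithfulness of the assignment $s\mapsto\overline{s}$.

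First, I would verify the compatibility on pure tensors. For $ij=12$ and $ij=23$ this is immediate from the definition $s^v=\psi_{(2)}^{-1}\circ\overline{s}\circ\psi_{(2)}$: writing $s(a,b)=(x,y)$, one has $s^v_{12}(a\otimes b\otimes c)=(x\otimes y)\otimes c$, which matches $\psi_{(3)}^{-1}\overline{s_{12}}\psi_{(3)}(a\otimes b\otimes c)=\psi_{(3)}^{-1}(x,y,c)$. The $ij=13$ case needs one extra remark, namely that the linear flip $\tau$ on $k[S]\otimes k[S]$ satisfies $\psi_{(2)}\circ\tau=\overline{\tau}\circ\psi_{(2)}$ where $\overline{\tau}$ is the linearisation of the set-theoretic flip; since $s_{13}$ is built from $s$ and $\tau$ both set-theoretically and linearly by the same recipe, the identity follows.

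Next, I would use the functoriality of the linearisation map $\End(S^n)\to\End_k(k[S^n])$, $f\mapsto\overline{f}$, which is a monoid homomorphism: $\overline{f\circ g}=\overline{f}\circ\overline{g}$. Combining with the compatibility above gives
\[
s^v_{23}\,s^v_{13}\,s^v_{12} \;=\; \psi_{(3)}^{-1}\circ\overline{s_{23}\,s_{13}\,s_{12}}\circ\psi_{(3)}, \qquad s^v_{12}\,s^v_{23} \;=\; \psi_{(3)}^{-1}\circ\overline{s_{12}\,s_{23}}\circ\psi_{(3)},
\]
and analogously for the PE side. Since $\psi_{(3)}$ is an isomorphism, equality of the left-hand sides is equivalent to equality of the linearised operators $\overline{s_{23}s_{13}s_{12}}$ and $\overline{s_{12}s_{23}}$.

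Finally, I would close the argument by noting that the linearisation map $\End(S^3)\hookrightarrow \End_k(k[S^3])$ is injective, since $S^3$ is a $k$-basis of $k[S^3]$: two set maps agreeing after linearisation must already agree on $S^3$. This gives the equivalence of the set-theoretic (R)PE identity and its linearised counterpart. The only mild obstacle is the bookkeeping for the $s_{13}$ case, which is resolved by the flip-compatibility remark above; everything else is formal functoriality.
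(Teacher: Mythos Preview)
Your proof is correct and follows exactly the paper's approach: establish the conjugation identity $(s^v)_{ij}=\psi_{(3)}^{-1}\circ\overline{s_{ij}}\circ\psi_{(3)}$ for each pair of legs, then transport the (R)PE across $\psi_{(3)}$. The paper compresses this into two lines, whereas you spell out the $s_{13}$ flip-compatibility and the injectivity of linearisation needed for the converse direction, but the content is the same.
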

\begin{proof}
    For $(i,j)\in\{(1,2),(1,3),(2,3)\}$ one has $(s^{v})_{ij}=\psi_{(3)}^{-1}\circ \overline{s_{ij}}\circ \psi_{(3)}$.
    The claim follows considering the conjugation by $\psi_{(3)}$.
\end{proof}

\subsubsection{Via the space of functions}

Alternatively, one can also work with $k^S$. For $f \in \Hom(X,Y)$ we write $f^*\in \Hom_k(k^Y,k^X)$ for the pullback, defined by $f^*\circ\varphi = \varphi\circ f$ for $\varphi \in k^Y$. In particular, if $s \in \End(S^n)$, then $s^* \in \End(k^{S^n})$. Since $(\cdot)^*$ is a contravariant functor from \underline{Set} to \underline{Vec}$_k$, it exchange the PE and the RPE. To express the pullback in a suitable way we introduce the follwing maps.

\begin{definition}\label{definition fs}
Let $S$ be a set, $k$ be a field, and $s\in \End(S^n)$. Define $f_s = \theta_n^{-1}\circ s^*\circ \theta_n$ where $\theta_n: k^S \ot \cdots \ot k^S \rightarrow k^{S^n}$ is given by 
$$
     \theta_n(f_1\ot\cdots\ot f_n): S^n \longrightarrow k: (x_1,\cdots,x_n) \longmapsto \prod_{i=1}^n f_i(x_i).
$$
\end{definition}

If $S$ is finite, then $\theta_n$ is an isomorphism; however its inverse is not canonical. 

\begin{proposition}\label{dual in set }
The pair $(S,s)$ is a solution of PE (resp. RPE) if and only if $(k^S, f_s)$ is a solution of the RPE (resp. PE).
\end{proposition}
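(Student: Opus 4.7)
The plan is to reduce the equivalence to two ingredients: the contravariance of the pullback functor, and a naturality-type intertwining between $\theta_{3}$ and the position-labelled copies $(\cdot)_{ij}$ on both sides.

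First, since $(\cdot)^{*}\colon\underline{\mathrm{Set}}\to\underline{\mathrm{Vec}}_{k}$ is contravariant, applying it to the RPE identity $s_{12}\circ s_{13}\circ s_{23}=s_{23}\circ s_{12}$ in $\End(S^{3})$ produces the mirrored identity
\[
s_{23}^{*}\circ s_{13}^{*}\circ s_{12}^{*}=s_{12}^{*}\circ s_{23}^{*}\qquad\text{in }\End(k^{S^{3}}),
\]
which already matches the PE pattern. The equivalence will then follow by conjugating by $\theta_{3}$, provided we can convert $s_{ij}^{*}$ into $(f_{s})_{ij}$ positionally.

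The heart of the argument is therefore the intertwining lemma: for every $(i,j)\in\{(1,2),(2,3),(1,3)\}$,
\[
\theta_{3}\circ (f_{s})_{ij}=s_{ij}^{*}\circ\theta_{3}.
\]
I would verify this on a pure tensor $\varphi_{1}\otimes\varphi_{2}\otimes\varphi_{3}$. For $(1,2)$, writing $s(x_{1},x_{2})=(a(x_{1},x_{2}),b(x_{1},x_{2}))$ one finds both sides evaluate at $(x_{1},x_{2},x_{3})\in S^{3}$ to $\varphi_{1}(a)\varphi_{2}(b)\varphi_{3}(x_{3})$, by directly unfolding \Cref{definition fs}. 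The case $(2,3)$ is symmetric. The case $(1,3)$ requires an extra step: one must check that $\theta_{n}$ is natural with respect to permutations of factors, so that the coordinate flip used to define $s_{13}$ corresponds under $\theta_{3}$ to the tensor flip used to define $(f_{s})_{13}$.

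Conjugating the identity of the first step by $\theta_{3}$ via this intertwining yields precisely the PE for $f_{s}$. The converse direction uses injectivity of $\theta_{3}$ (which is a bijection when $S$ is finite), and the PE$\Rightarrow$RPE direction is entirely analogous. The main and really only subtle point is the compatibility in the $(1,3)$ case, since both $s_{13}$ and $(f_{s})_{13}$ are defined via flips living in different categories; once that naturality check is done the rest reduces to bookkeeping of indices.
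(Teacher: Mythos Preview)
Your proposal is correct and follows essentially the same approach as the paper: establish the intertwining $(f_s)_{ij}=\theta_3^{-1}\circ s_{ij}^*\circ \theta_3$ for all three index pairs, then use contravariance of $(\cdot)^*$ to swap PE and RPE, and conjugate by $\theta_3$. The paper compresses your intertwining lemma into ``a direct verification shows'' and handles the converse by evaluating on delta-functions rather than phrasing it as injectivity of $\theta_3$, but the substance is the same; your flagging of the $(1,3)$ case as the only place requiring the naturality of $\theta_n$ under coordinate permutations is a useful clarification that the paper omits.
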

\begin{proof}
    A direct verification shows, for $(i,j)\in\{(1,2),(1,3),(2,3)\}$, that $(f_s)_{ij} = \theta_3^{-1}\circ(s_{ij})^*\circ\theta_3$. Thus
    $$(f_s)_{12}(f_s)_{13}(f_s)_{23}=\theta_3^{-1}s_{12}^*s_{13}^*s_{23}^*\theta_3=\theta_3^{-1}(s_{23}s_{13}s_{12})^*\theta_3$$
    and
    $$(f_s)_{23}(f_s)_{12} = \theta_3^{-1}s_{23}^*s_{12}^*\theta_3= \theta_3^{-1}(s_{12}s_{23})^*\theta_3.$$
    Therefore the RPE for $f_s$ in $\End_k(k^S\ot k^S \ot k^S)$ is equivalent to the equation 
    $$(s_{23}s_{13}s_{12})^* = (s_{12}s_{23})^*$$ in $\End_k(k^{S\times S \times S})$. Now evaluating on delta-functions shows that this holds if and only if $s_{23}s_{13}s_{12} = s_{12}s_{23}$ in $\End(S^3)$, i.e. $(S,s)$ satisfies the PE. The reverse implication and the swap of PE and RPE are analogous.
\end{proof}

\begin{example}\label{dual PE group solution}
    Let $G$ be a group and consider the map $s \in \End(G^2)$ defined by $s(g,h)= (gh,h)$. This is a set-theoretic solution of the PE. We compute the pullback $s^* \in \End(k^{S\times S})$. For a basis element $\delta_{(g,h)}$ we have, by definition, that 
    $$s^*(\delta_{(g,h)}) (a,b) = \delta_{(g,h)}(s(a,b)) = \delta_{(g,h)}(ab,b) = \left\lbrace \begin{array}{ll}
        1 & \text{if } h = b \text{ and } gb^{-1}=a\\
        0 & \text{else} 
    \end{array} \right.$$
Therefore $s^*(\delta_{(g,h)})= \delta_{(gh^{-1},h)}$. Note that more generally $s^*(\delta_{(g,h)})=\delta_{s^{-1}(g,h)}$. 
Consequently, on basis elements $\delta_g\ot \delta_h$ of $k^S \ot k^S$ one obtains $f_s(\delta_g\ot \delta_h) = \delta_{gh^{-1}}\ot\delta_h$, which gives that $(k^G, f_s)$ is a vector space solution of the RPE.
\end{example}

\subsubsection{Connection between the two constructions}

For a \emph{bijctive} solution $(S,s)$ of the RPE the constructions above give two vector space solutions:  (i) $(k[S],s^v)$, which is a solution of RPE and (ii) $(k^S,f_s)$, which is a solution of PE. They are related as follows.

\begin{proposition}\label{connection linear and pullback sol}
Let $(S,s)$ be a set-theoretic solution of the PE. Then 
$$f_s = (\sigma \ot \sigma)(s^{-1})^{v} (\sigma^{-1} \ot \sigma^{-1})$$ with $\sigma : k[S] \rightarrow k^S : g \mapsto \delta_g$.
\end{proposition}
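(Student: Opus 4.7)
The plan is to check the identity on the basis of pure tensors $\delta_a \otimes \delta_b$ with $a,b\in S$ and conclude by linearity. Since everything in sight is $k$-linear, it suffices to verify the claim on these generators. A small caveat: $\sigma \colon k[S] \to k^S$ is only an isomorphism when $S$ is finite; in general, $\sigma$ is an injection and the statement should be read as an equality of maps after restricting the right-hand side to the image $\sigma(k[S]) \otimes \sigma(k[S])$, which will automatically be preserved by $f_s$ (as the calculation below makes explicit).

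First I would unwind the left-hand side. By \Cref{definition fs}, $f_s = \theta_2^{-1}\circ s^{*}\circ \theta_2$. The image $\theta_2(\delta_a\otimes\delta_b)$ is the function $(x,y)\mapsto \delta_a(x)\delta_b(y)$, which is exactly the Kronecker delta $\delta_{(a,b)}\in k^{S\times S}$. Applying the pullback,
\[
s^{*}(\delta_{(a,b)})(x,y) \;=\; \delta_{(a,b)}(s(x,y))
\]
equals $1$ iff $s(x,y)=(a,b)$, that is, iff $(x,y)=s^{-1}(a,b)$. Hence $s^{*}(\delta_{(a,b)}) = \delta_{s^{-1}(a,b)}$ (this is the same computation as in \Cref{dual PE group solution}). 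Writing $s^{-1}(a,b) = (c,d)$, applying $\theta_2^{-1}$ yields $f_s(\delta_a\otimes\delta_b)=\delta_c\otimes\delta_d$.

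Next I would compute the right-hand side on the same input. One has $(\sigma^{-1}\otimes\sigma^{-1})(\delta_a\otimes\delta_b) = a\otimes b$, viewed in $k[S]\otimes k[S]$. By the definition of $(-)^v$ recalled after \Cref{def VS PE},
\[
(s^{-1})^{v}(a\otimes b) \;=\; \psi_2^{-1}\bigl(\overline{s^{-1}}(\psi_2(a\otimes b))\bigr) \;=\; \psi_2^{-1}(s^{-1}(a,b)) \;=\; c\otimes d,
\]
and then $(\sigma\otimes\sigma)(c\otimes d)=\delta_c\otimes\delta_d$. So both sides send $\delta_a\otimes\delta_b$ to $\delta_c\otimes\delta_d$, proving the identity on the generating set and hence on all of $\sigma(k[S])\otimes\sigma(k[S])$.

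There is essentially no technical obstacle: the whole statement is a bookkeeping identity tracking how the two natural embeddings of $S$ into the category of vector spaces interact with the contravariant functoriality of the pullback. The only subtle point worth flagging is the non-invertibility of $\sigma$ in the infinite case; I would therefore phrase the proposition (and its proof) as asserting that $f_s$ preserves $\sigma(k[S])\otimes\sigma(k[S])$ and that its restriction there is conjugate to $(s^{-1})^{v}$ via $\sigma\otimes\sigma$, which is exactly what the pure-tensor calculation above demonstrates.
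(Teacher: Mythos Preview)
Your proof is correct and follows essentially the same approach as the paper: both verify the identity on the basis $\{\delta_a\otimes\delta_b\}$ by unwinding the definitions of $f_s$ and $(s^{-1})^v$, observing that each side sends $\delta_a\otimes\delta_b$ to $\delta_c\otimes\delta_d$ where $(c,d)=s^{-1}(a,b)$. Your additional remark about the infinite case (where $\sigma$ is only an injection) is a useful clarification that the paper's proof leaves implicit.
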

\begin{proof}
It is enough to check the identity on the basis $\{\delta_g \mid g \in S\}$ of $K^S$.
On the one hand,
$$f_s(\delta_g \ot \delta_h) = \theta^{-1}_{(2)}s^* (\delta_{(g,h)}) = \theta^{-1}_{(2)} \delta_{s^{-1}(g,h)}.$$
And on the other hand,
$$\left( (\sigma \ot \sigma)(s^{-1})^{v} (\sigma^{-1} \ot \sigma^{-1}) \right) (\delta_g \ot \delta_h) = (\sigma \ot \sigma) (s^{-1})^v (g\ot h) = (\sigma \ot \sigma) \psi^{-1}_{(2)}s^{-1}(g,h)$$
which equals the expression above. This proves the claim.
\end{proof}

\subsection{Relation between vector space and algebra solutions}

To compare \Cref{def VS PE} and \Cref{def algebra PE} we use the following algebra map. For $n \in \mathbb{N}$ define
$$\phi_{(n)}: \End_k(V)^{\ot n} \longrightarrow \End_k(V^{\ot n}): g_1 \otimes \cdots \ot g_n \longmapsto \phi_{(n)}(g_1 \otimes \cdots \ot g_n)$$
defined as $\phi_{(n)}(g_1 \otimes \cdots \ot g_n)(v_1 \ot \cdots \ot v_n) := g_1(v_1) \ot \cdots \ot g_n(v_n)$.

It is a standard exercise that the following holds.

\begin{lemma}
  For all $n\in \N$ the map $\phi_{(n)}$ is a $k$-algebra monomorphism and hence an isomorphism if $\dim_k V$ is finite. Therefore if $(\End_k(V),R)$ is an algebra solution of PE (resp. RPE), then $(V,\phi_{(2)}(R))$ is a vector space solution of PE (resp. RPE). Converse holds if $\dim_k(V) < \infty.$
\end{lemma}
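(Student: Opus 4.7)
The plan is to establish three facts about $\phi_{(n)}$ in sequence: it is an algebra homomorphism, it is always injective, and it is surjective precisely when $\dim_k V$ is finite. For multiplicativity I would compute directly on elementary tensors, observing that $\phi_{(n)}\bigl((g_1 \otimes \cdots \otimes g_n)(h_1 \otimes \cdots \otimes h_n)\bigr) = \phi_{(n)}(g_1 h_1 \otimes \cdots \otimes g_n h_n)$ agrees with $\phi_{(n)}(g_1 \otimes \cdots \otimes g_n) \circ \phi_{(n)}(h_1 \otimes \cdots \otimes h_n)$ after applying both to $v_1 \otimes \cdots \otimes v_n$; unitality is clear. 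For injectivity I would use the canonical factorisation $\End_k(V)^{\otimes n} \cong (V^\ast \otimes V)^{\otimes n} \cong (V^\ast)^{\otimes n} \otimes V^{\otimes n} \hookrightarrow (V^{\otimes n})^\ast \otimes V^{\otimes n} \hookrightarrow \End_k(V^{\otimes n})$, where the only non-iso arrow is the canonical map $(V^\ast)^{\otimes n} \to (V^{\otimes n})^\ast$, which is always injective and an isomorphism exactly when $\dim_k V < \infty$. (Alternatively one argues by contradiction: a minimal relation $\sum_j g_1^{(j)} \otimes \cdots \otimes g_n^{(j)}$ in the kernel is broken by evaluating at well-chosen pure tensors of basis vectors.) Surjectivity in the finite-dimensional case then follows from a dimension count, both sides having dimension $(\dim_k V)^{2n}$.

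For the pentagon statement, the crucial additional observation is that $\phi_{(3)}$ intertwines leg placements, i.e.\ $\phi_{(3)}(R_{ij}) = \phi_{(2)}(R)_{ij}$ for $(i,j) \in \{(1,2),(1,3),(2,3)\}$. The cases $(1,2)$ and $(2,3)$ are immediate from the definitions. For $(1,3)$, one evaluates both sides on $v_1 \otimes v_2 \otimes v_3$: the algebra side gives $\sum_i R_i^{(1)}(v_1) \otimes v_2 \otimes R_i^{(2)}(v_3)$, and conjugation by the flip on the vector-space side $(\id \otimes \tau)(\phi_{(2)}(R) \otimes \id)(\id \otimes \tau)$ yields the same expression after unpacking. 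Granted these three identities, applying the algebra homomorphism $\phi_{(3)}$ to the algebra PE (resp.\ RPE) for $R$ produces exactly the vector space PE (resp.\ RPE) for $\phi_{(2)}(R)$, and injectivity of $\phi_{(3)}$ upgrades this to an equivalence. The converse implication uses surjectivity of $\phi_{(2)}$, so that an arbitrary $f \in \End_k(V^{\otimes 2})$ is realised as $\phi_{(2)}(R)$ for some $R \in \End_k(V)^{\otimes 2}$; this step is the only one where the finite-dimensionality hypothesis is needed.

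The main obstacle, such as it is, lies in carefully reconciling the two different formal descriptions of the $(1,3)$-placement: the algebraic $R_{13} = \sum_i R_i^{(1)} \otimes 1_A \otimes R_i^{(2)}$ versus the operatorial $f_{13} = (\id \otimes \tau)(f \otimes \id)(\id \otimes \tau)$. This is pure bookkeeping but must be done on pure tensors to avoid sign or ordering mistakes; once it is in place, the entire equivalence follows formally from $\phi_{(3)}$ being an injective algebra morphism, respectively an isomorphism under the finiteness assumption.
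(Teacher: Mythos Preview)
Your proposal is correct and more detailed than the paper, which simply declares the lemma ``a standard exercise'' and gives no proof at all. One caution: your primary injectivity argument via the factorisation $\End_k(V)^{\otimes n} \cong (V^\ast \otimes V)^{\otimes n}$ only works when $\dim_k V < \infty$, since for infinite-dimensional $V$ the canonical map $V^\ast \otimes V \to \End_k(V)$ hits only the finite-rank operators; your parenthetical alternative (minimal relation broken by evaluation on pure tensors) is the argument that actually covers the general case asserted in the lemma.
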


In practice we will often pass from a vector space solution to an algebra solution, so we need an explicit description of $\phi_{(2)}^{-1}$. Assume that $\dim_kV< \infty$ and to fix a basis $\mathcal{B} = \{ v_i \mid 1 \leq  i\leq \dim V  \}$ of $V$. For $1\leq i,j\leq \dim_kV$ define
$$S_{ij} : V \longrightarrow V, \qquad S_{ij}(v_j)= v_i,\quad S_{i,j}(v_k)=0 \ (k\neq j).$$
Under the identification $\End_k(V) \cong \Ma_{\dim V}(k)$ the endomorphism $S_{ij}$ corresponds to the elementary matrix $e_{ij}$. Thus the $\{S_{ij}\}$ is a basis  of $\End_k(V)$ and consequently $\{ \phi_{(2)}(S_{ij}\ot S_{kl}) \}$ is basis of $\End_k(V\ot V)$.

Now let $T \in \End_k(V \ot V)$ and denote $T : V \ot V \rightarrow V \ot V : v_i \ot v_j \mapsto \sum_{k,l} \alpha_{kl}^{ij} v_k \ot v_l$. Then a direct computation yields
\begin{equation}\label{from VS to alg solution}
\phi_{(2)}\left( \sum_{k,l,t,h} \alpha_{kl}^{th} S_{kt} \ot S_{lh} \right) = T.
\end{equation}
Note that \eqref{from VS to alg solution} provides an explicit formula for $\phi_{(2)}^{-1}(T)$ and it uses that the index sets are finite, i.e. $\dim_k V$ finite.

\section{Hopf algebras from set-theoretic RPE solutions}\label{section background coeff alg}

We follow the constructions of Militaru and Davydov. 
Let $A$ be a $k$-algebra with $R=\sum R^{1}\otimes R^{2} \in A^{\otimes 2}$ a RPE algebra solution. Then the subspaces
\begin{align*}
    R_{(\ell)} & :=\left\{\left.\sum a^{\ast}(R^{2})R^{1}\ \right|\  a^{\ast}\in A^{\ast}\right\} \\
 R_{(r)} &:=\left\{\left.\sum a^{\ast}(R^{1})R^{2}\ \right|\  a^{\ast}\in A^{\ast}\right\}
\end{align*}
are called \emph{left}, respectively \emph{right coefficients of $R$}. If $A$ is finite dimensional, Militaru \cite{Mi04} and Davydov \cite{Dav} have shown that the following structure maps equip $R_{(\ell)}$ and $R_{(r)}$ with the structure of a Hopf algebra.

\begin{proposition}[Davydov, Militaru]\label{structure maps coefficients hopf algebras}
The subspace $R_{(r)}$ (resp. $R_{(\ell)}$) of $A$ is Hopf algebra for the following structural maps:
\begin{itemize}
    \item Unit and multiplication: $R_{(r)}$ (resp. $R_{(\ell)}$) is a unital subalgebra of $A$,
    \item Co-unit : 
    $$\epsilon_{r}: R_{(r)} \rightarrow k : \sum a^*(R^1) R^2 \mapsto a^*(1),$$
    resp. $\epsilon_{\ell}(\sum a^*(R^2) R^1)= a^*(1)$.
    \item Co-multiplication: 
    $$\D_{r}: R_{(r)} \rightarrow R_{(r)} \ot R_{(r)}: x \mapsto R(x\otimes 1)R^{-1}, $$ 
    resp. $\D_{\ell}(x) = R^{-1}(1 \otimes x)R $.
    \item Antipode: 
    $$S_{r}: R_{(r)} \rightarrow R_{(r)}: (a^* \ot 1)(R) \mapsto (a^* \ot 1)(R^{-1}),$$
    resp. $S_{\ell}((1\ot a^*)(R)) = (1 \ot a^*)(R^{-1}).$
\end{itemize}
\end{proposition}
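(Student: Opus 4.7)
The plan is to deduce every Hopf-algebraic axiom directly from the RPE identity together with invertibility of $R$, noting that invertibility is implicit in the statement since the antipode formula requires $R^{-1}$ (and, for the set-theoretic solutions of interest in this paper, invertibility of the associated algebra solution corresponds to bijectivity of $s$). I would focus on $R_{(r)}$; the argument for $R_{(\ell)}$ is entirely symmetric, obtained by swapping tensor factors. The central reformulation of the RPE that I would isolate once and then use repeatedly is its conjugation form
\[
R_{12}^{-1}\, R_{23}\, R_{12} \;=\; R_{13}\, R_{23},
\]
which morally says that conjugation by $R_{12}$ of $R_{23}$ produces the coproduct-like element $R_{13} R_{23}$; this identity is the engine behind coassociativity.

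The first main step is showing that $R_{(r)}$ is a unital subalgebra of $A$. Closure under multiplication follows by pairing the RPE with functionals $a^*\ot b^*\ot \id$ and rearranging the product $(a^*\ot \id)(R)\cdot (b^*\ot \id)(R)$ into the form $(c^*\ot \id)(R)$, where $c^*$ is determined by evaluating $a^*$ against a slice of $R$. The unit $1_A$ lies in $R_{(r)}$ by pairing $R\,R^{-1}=1_A\ot 1_A$ with an arbitrary functional in the first slot, and well-definedness of $\eps_r$ reduces to the implication $(a^*\ot \id)(R)=0\Rightarrow a^*(1)=0$, which follows from invertibility of $R$ in $A\ot A$. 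For the coproduct $\D_r(x)=R(x\ot 1)R^{-1}$, I would verify it lands in $R_{(r)}\ot R_{(r)}$ by computing
\[
(\D_r\ot \id)(R)\;=\;R_{12}\,R_{13}\,R_{12}^{-1},
\qquad
(\id\ot \D_r)(R)\;=\;R_{23}\,R_{12}\,R_{23}^{-1}
\]
in $A^{\ot 3}$, which together with the conjugation identity exhibit the $\D_r$-images of generators of $R_{(r)}$ in the correct subspace.

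Coassociativity of $\D_r$ is then a direct computation in $A^{\ot 3}$: expanding both $(\D_r\ot\id)\D_r(x)$ and $(\id\ot \D_r)\D_r(x)$ by using the two formulas above and the fact that $x\ot 1\ot 1$ commutes with $R_{23}$, each side reduces to the expression $R_{23}R_{12}(x\ot 1\ot 1)R_{12}^{-1}R_{23}^{-1}$ after commuting the outer $R_{13}$ factors past $R_{12}$ and $R_{23}$ via the RPE. The counit axioms $(\eps_r\ot \id)\D_r=\id=(\id\ot \eps_r)\D_r$ follow immediately from the defining formula for $\eps_r$, and the antipode identities $m(S_r\ot\id)\D_r=\eta\eps_r=m(\id\ot S_r)\D_r$ collapse, after pairing with an arbitrary functional in the relevant slot, to the tautologies $R\,R^{-1}=1_A\ot 1_A=R^{-1}R$.

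The main obstacle throughout is not any deep algebraic difficulty but rather notational bookkeeping: at every step one must carefully track in which tensor slot each functional $a^*$ is evaluated and recognize the appropriate rewriting of the RPE to invoke (the conjugation form above, its inverse, or the version $R_{12}R_{13}=R_{23}R_{12}R_{23}^{-1}$). Since this is \emph{exactly} Davydov's and Militaru's theorem, in the paper I would present the above as a sketch and refer the reader to \cite{Dav, Mi04} for the fully detailed verifications.
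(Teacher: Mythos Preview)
The paper does not actually prove this proposition: it is stated as a result of Davydov and Militaru, followed only by a remark (\Cref{militaru vs davydov}) reconciling the two authors' formulations, and the reader is referred to \cite{Dav, Mi04}. Your sketch is a reasonable outline of how the argument goes, and you correctly anticipate in your final paragraph that the paper would simply cite the original sources rather than reproduce the verification; that is precisely what happens.
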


\begin{remark}\label{militaru vs davydov}
    In \cite[Theorem 2.1]{Mi04} the co-unit and antipode were defined using a basis of $R_{(\ell)}$ (and $R_{(r)}$). The definitions given in \Cref{structure maps coefficients hopf algebras} are taken from \cite[Section 5]{Dav}. Indeed, $R_{(r)}$ in \cite{Mi04} is isomorphic to $\Ima (\lambda)$ in \cite{Dav}, where
    $$\lambda: A^* \rightarrow A: a^* \mapsto (a \otimes 1)(R) .$$
    Furthermore, $R_{(l)}$ is isomorphic to $\Ima(\rho)$ with $\rho: A^* \rightarrow A: \omega \mapsto (I \otimes \omega)(R)$. Through these identifications, it is shown in
    \cite[Proposition 5.4 \& 5.5]{Dav} that the co-unit and antipode from \cite{Mi04} can be expressed as in \Cref{structure maps coefficients hopf algebras}.
\end{remark}

 In case that $R = s^{A}$ originates from a set-theoretic solution $(S,s)$ to RPE we will denote the associated Hopf algebras by 
$$H_{\ell}(s) := R_{(\ell)} \text{ and } H_r(s) := R_{(r)}.$$ 
Concretely, if $s(x,y) = (\psi_y(x), y \circ x)$ is a set-theoretic solution of PE, then from \eqref{from VS to alg solution} we see that
$$s^{A} = \sum\limits_{x,y\in S} S_{\psi_y(x), x}\otimes S_{y \circ x,y}.$$
Furthermore, denoting $s^{-1}(x,y) =(xy,\theta_x(y))$ one can verify that 
\begin{equation}
    (s^{A})^{-1} = (s^{-1})^{A} = \sum\limits_{x,y\in S} S_{xy, x}\otimes S_{\theta_x(y),y}.
\end{equation}

We have following generating sets for the associated Hopf algebras.

\begin{proposition}\label{leftrightinv}
Let $s(x,y) =(\psi_y(x), y \circ x)$ be a solution of RPE. Then
\begin{align}
    H_{\ell}(s) &= \Span_k\left\{\left.\sum\limits_{x \in \mu(x',y')} S_{\psi_{y'}(x),x}\ \right|\ (x',y')\in S^{2}\right\} \label{leftinv}\\
    H_r(s) &= \Span_k\left\{\left.\sum\limits_{ y \in \nu(x',y')} S_{y \circ x',y}\ \right|\ (x',y')\in S^{2}\right\} \label{rightinv}
\end{align}
where $\nu(x',y')= \{ y\in S\mid  \psi_y(x') = \psi_{y'}(x')\}$ and $\mu(x',y') = \{ x \in S \mid y'\circ x = y'\circ x' \}.$
\end{proposition}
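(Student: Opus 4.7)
The plan is a direct computation that unfolds the definition of the left/right coefficient subspaces for the specific tensor
$$
s^A \;=\; \sum_{x,y \in S} S_{\psi_y(x),\,x} \otimes S_{y\circ x,\,y},
$$
recorded just above the proposition from \eqref{from VS to alg solution}. I would plug this explicit formula into the definitions of $R_{(\ell)}$ and $R_{(r)}$ in \Cref{structure maps coefficients hopf algebras} and simplify.

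Since $A = \End_k(k[S])$ with $S$ finite, the matrix units $\{S_{u,v}\}_{u,v\in S}$ are a basis of $A$ and the dual functionals $\{S_{u,v}^{\ast}\}$ are a basis of $A^{\ast}$. By linearity of $a^{\ast} \mapsto \sum a^{\ast}(R^{2})R^{1}$ in the functional, it is enough to let $a^{\ast}$ run over this dual basis to produce a spanning set for $H_\ell(s)$, and symmetrically for $H_r(s)$. Evaluating $a^{\ast} = S_{u,v}^{\ast}$ in the definition of $R_{(\ell)}$ collapses the double sum to
$$
\sum_{x \in S \,:\, v\circ x = u} S_{\psi_v(x),\,x},
$$
while the corresponding computation for $R_{(r)}$ yields
$$
\sum_{y \in S \,:\, \psi_y(v) = u} S_{y\circ v,\,y}.
$$

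The remaining step is a bookkeeping reparametrisation translating these fibre-sums into the indexing by pairs $(x',y') \in S^{2}$ used in the statement. For \eqref{leftinv}: if $\{x : v\circ x = u\}$ is empty, the sum is $0$ and can be dropped from the span; otherwise pick any witness $x' \in S$ with $v\circ x' = u$ and put $y' := v$, so that the fibre becomes $\mu(x',y') = \{x : y'\circ x = y'\circ x'\}$. Conversely, any $(x',y') \in S^{2}$ is realised by the dual-basis choice $(u,v) = (y'\circ x', y')$. The argument for \eqref{rightinv} is entirely symmetric, with $\mu$ replaced by $\nu(x',y') = \{y : \psi_y(x') = \psi_{y'}(x')\}$ and the roles of the two tensor factors swapped.

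I do not anticipate a genuine obstacle; the claim is essentially a tautological rewriting of $R_{(\ell)}$ and $R_{(r)}$ in terms of the level sets of the two components of $s$. The only points that merit attention are the use of $\dim_k A < \infty$ to guarantee that $\{S_{u,v}^{\ast}\}$ is an actual basis of $A^{\ast}$ (so that testing on dual basis elements suffices), and treating unrealised pairs $(u,v)$—equivalently, empty fibres—as trivial contributions to the span rather than as extra generators.
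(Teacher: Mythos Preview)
Your proposal is correct and follows essentially the same route as the paper: the paper applies the dual-basis functional $\delta_{S_{y'\circ x',\,y'}}$ directly to $s^A$, which is exactly your computation with $(u,v)=(y'\circ x',y')$, and then declares the right-coefficient case analogous. Your additional remarks on empty fibres and the need for $\dim_k A<\infty$ are valid and simply make explicit what the paper leaves implicit.
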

\begin{proof}
We start with the left coefficients. Consider a fixed tuple $(x',y')\in S^2$ and the associate Kronecker delta-function $ \delta_{S_{y'\circ x',y'}} \in \End_k(k[S])^*$. Applying $\id \ot \delta_{S_{y'\circ x',y'}}$ on $s^{A}$ yields
\begin{align*}
    (\id \ot \delta_{S_{y'\circ x',y'}})(s^{A}) & = (\id \ot \delta_{S_{y'\circ x',y}}) \left( \sum\limits_{x,y\in S} S_{\psi_y(x), x}\otimes S_{y \circ x,y} \right) \\
    & =  \sum\limits_{x,y \in S} S_{\psi_y(x), x}\,  \delta_{S_{y'\circ x',y'} \,,\, S_{y \circ x,y}} \\
    & = \sum\limits_{x \in \mu(x',y')} S_{\psi_{y'}(x),x}\
\end{align*}
The right coefficients follows in an analogue way.
\end{proof}

In \cite[Theorem 2.1]{Mi04} it was shown that $R_{(\ell)} \cong R_{(r)}^*$. In fact there is a bialgebra pairing between them \cite[Lemma 5.2]{Dav}. At a set-theoretic level this translates to the following. 

\begin{proposition}\label{coefficients PE vs RPE}
Let $(S,s)$ be a set-theoretic solution of RPE. We have that:
$$H_{\ell}(s) \cong H_r(\tau s^{-1}\tau).$$
Furthermore, if $s = s_1 \times s_2$, then $H_{\ell}(s) \cong H_{\ell}(s_1) \ot H_{\ell}(s_2)$.
\end{proposition}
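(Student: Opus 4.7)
The plan is to treat the two statements separately, with the bulk of the work on the first.

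For the isomorphism $H_\ell(s) \cong H_r(\tau s^{-1}\tau)$, set $\sigma := \tau s^{-1}\tau$. Using the inverse relations, $s^{-1}(x,y)=(xy,\theta_x(y))$, and hence $\sigma(x,y) = (\theta_y(x),\, yx)$, which is of the standard form with $\psi'_y(x)=\theta_y(x)$ and $y\circ' x = yx$. The RPE axioms \eqref{eq1}--\eqref{eq3} for $\sigma$ then read, respectively, $z(yx)=(zy)x$, $\theta_z(yx)=\theta_z(y)\,\theta_{zy}(x)$, and $\theta_{\theta_z(y)}\theta_{zy}(x)=\theta_y(x)$, which are exactly the PE axioms for $s^{-1}$ after the relabelling $(x,y,z)\leftrightarrow (z,y,x)$. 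Since $s^{-1}$ solves PE whenever $s$ solves RPE, it follows that $\sigma$ is a bijective set-theoretic RPE solution, so $H_r(\sigma)$ is defined.

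I would then lift the identification to the algebra level. Since $\phi_{(2)}$ is a $k$-algebra morphism, $(s^{-1})^A = (s^A)^{-1}$; moreover the linear flip $\overline{\tau}$ on $k[S]^{\otimes 2}$ equals $\phi_{(2)}\bigl(\sum_{k,l} S_{kl}\otimes S_{lk}\bigr)$, and a direct computation shows that conjugation of $f\in \End_k(V^{\otimes 2})$ by $\overline{\tau}$ corresponds under $\phi_{(2)}^{-1}$ to the swap $R^{\mathrm{op}}:=\sum R^2\otimes R^1$ in $A\otimes A$. Setting $R := s^A$, this yields $\sigma^A = ((s^A)^{-1})^{\mathrm{op}}$. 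Using the tautology $(a^*\otimes \id)(R^{\mathrm{op}}) = (\id\otimes a^*)(R)$, the definition of $H_r$ becomes
\[
H_r(\sigma) \;=\; \{(\id \otimes a^*)(R^{-1}) \mid a^*\in A^*\},
\]
so $H_r(\sigma)$ is the \emph{left}-coefficient image of $R^{-1}$. The isomorphism can then be produced in one of two equivalent ways: either by matching the spanning sets of \Cref{leftrightinv} applied to $s$ and to $\sigma$ and verifying directly that the structural maps of \Cref{structure maps coefficients hopf algebras} (co-multiplication $\D_\ell(x)=R^{-1}(1\otimes x)R$, antipode $S_\ell((1\otimes a^*)(R))=(1\otimes a^*)(R^{-1})$, co-unit) are preserved under the swap of roles $R\leftrightarrow R^{-1}$; or by composing the duality $H_\ell(s)\cong H_r(s)^*$ of \cite[Theorem 2.1]{Mi04} and \cite[Lemma 5.2]{Dav} with a second identification $H_r(\sigma)\cong H_r(s)^*$ coming from the same pairing.

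For the product statement, the identifications $k[S_1\times S_2]\cong k[S_1]\otimes k[S_2]$ and, after reordering middle factors, $\End_k(k[S_1\times S_2])\cong \End_k(k[S_1])\otimes \End_k(k[S_2])$, show that $(s_1\times s_2)^v = s_1^v\otimes s_2^v$. Applying $\phi_{(2)}^{-1}$ gives $s^A = s_1^A\otimes s_2^A$, and the left-coefficient image factors as $H_\ell(s_1)\otimes H_\ell(s_2)$. All Hopf-algebraic structure maps from \Cref{structure maps coefficients hopf algebras} respect this decomposition, since both the algebra structure and the elements $R, R^{-1}$ entering $\D_\ell$ and $S_\ell$ split accordingly.

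The main obstacle is the Hopf algebra part of the first claim: $H_\ell(s)$ and $H_r(\sigma)$ are typically distinct subspaces of $A$, so the isomorphism is not given by inclusion. Matching the two Hopf structures requires careful book-keeping of how $\D_\ell$, $S_\ell$ and $\epsilon_\ell$ transform under the interchange $R\leftrightarrow R^{-1}$; the essential symmetry of these formulas in the pair $(R, R^{-1})$ is precisely what makes the isomorphism go through.
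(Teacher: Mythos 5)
The paper offers no written proof of this proposition (it is presented as a translation of Militaru's duality $R_{(\ell)}\cong R_{(r)}^*$ and Davydov's pairing), so the comparison has to be with the statement itself; and there your argument has a genuine gap at the final step. Your reductions are correct and useful: $\sigma:=\tau s^{-1}\tau$ is again a bijective RPE solution, $\sigma^A=((s^A)^{-1})^{\mathrm{op}}$, hence the underlying subspace of $H_r(\sigma)$ is the span of the left legs of $R^{-1}$, where $R=s^A$. (Contrary to your closing paragraph, this subspace is \emph{not} the obstacle: by \Cref{structure maps coefficients hopf algebras} the antipode of $H_\ell(s)$ sends $(1\ot a^*)(R)$ to $(1\ot a^*)(R^{-1})$ and is bijective in the finite-dimensional setting, so the left legs of $R$ and of $R^{-1}$ span the \emph{same} subspace of $A$.) The real problem is your claim that the structural maps are ``preserved under the swap $R\leftrightarrow R^{-1}$''. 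With the paper's conventions, for $x$ in the common subspace one computes
\[
\D_{H_r(\sigma)}(x)=\sigma^A(x\ot 1)(\sigma^A)^{-1}=(R^{-1})^{\mathrm{op}}(x\ot 1)\,R^{\mathrm{op}}=\tau\bigl(R^{-1}(1\ot x)R\bigr)=\tau\,\D_{H_\ell(s)}(x),
\]
because the flip $\tau$ is an algebra automorphism of $A\ot A$. So on the nose $H_r(\tau s^{-1}\tau)=H_\ell(s)^{\mathrm{cop}}$: same algebra, co-opposite coproduct, inverse antipode. The identity map is an algebra isomorphism but a coalgebra \emph{anti}-isomorphism, and composing with the antipode only trades $\mathrm{cop}$ for $\mathrm{op}$. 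You can already see this in the group solution $s(g,h)=(g,gh)$ of \Cref{sol from grp alg}: there $H_\ell(s)\cong k[G]^*$ with $\D(\delta_g)=\sum_{ab=g}\delta_a\ot\delta_b$, whereas $H_r(\tau s^{-1}\tau)$ carries $\sum_{ba=g}\delta_a\ot\delta_b$; the two are isomorphic, but only through the non-identity map induced by $g\mapsto g^{-1}$, not by the ``symmetry in $(R,R^{-1})$''.

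Your second route inherits the same leak: applying Militaru's pairing to $\sigma$ gives $H_r(\sigma)\cong H_\ell(\sigma)^*$, and the same computation identifies $H_\ell(\sigma)$ with $H_r(s)^{\mathrm{cop}}$, so you obtain $H_r(\sigma)\cong (H_r(s)^*)^{\mathrm{op}}\cong H_\ell(s)^{\mathrm{op}}$, again off by an (anti)opposite. Thus what your proposal actually proves is $H_r(\tau s^{-1}\tau)\cong H_\ell(s)^{\mathrm{cop}}$; to reach the proposition one still needs an isomorphism $H_\ell(s)\cong H_\ell(s)^{\mathrm{cop}}$, which is not a formal property of Hopf algebras and is exactly where input specific to these coefficient Hopf algebras of set-theoretic solutions (for instance their explicit bases as in \Cref{Th basis right coeff}, or the bicrossed-product description, where an inversion-type anti-automorphism must be produced) has to enter. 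The tensor-factorisation part of your argument for $s=s_1\times s_2$ is fine, modulo the standing finiteness and bijectivity assumptions needed for the coefficient Hopf algebras to exist.
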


\begin{example}\label{Hopf op grp and dual solution}
Let $G$ and $H$ be finite groups and consider the following map
$$s:(H \times G)^{\times 2} \rightarrow (H \times G)^{\times 2}: ((a,g),(b,h)) \mapsto ((ab^{-1},g), (b,hg)).$$
A direct verification shows that this is a solution of the RPE $s_{12}s_{13}s_{23} = s_{23}s_{12}.$ We claim that the associated Hopf algebra $H_l(s)$ has following nice form:
$$H_l(s) \cong k[H] \otimes_k (k[G^{op}])^*.$$
Indeed, in this case \Cref{leftrightinv} becomes
$$H_l(s) = \SPAN_k \{ \sum_{a\in H} S_{(ab^{-1},g), (a,g)} \mid b\in H, g \in G \}.$$
Using the '$g$-coordinate' one sees readily that the generating set is a $k$-basis. With a lengthy but direct computation one verifies that 
$$f : H_l(s) \rightarrow k[H] \otimes_k (k[G^{op}])^*: \sum_{a\in H} S_{(ab^{-1},g), (a,g)} \mapsto b \otimes \rho_g$$
is a Hopf-algebra isomorphism. 
\end{example}

\section{Combinatorial RPE solutions arising from Hopf algebras}\label{section From hopf to sol}

The aim of this section is to recall the canonical RPE solution associated to a Hopf module over a (not necessarily finite dimensional) Hopf algebra and investigate interesting examples. More precisely, \Cref{subsection sol from hopf} contains background and introduce the core concept of a $\Phi$-set theoretic basis. In \Cref{section bicrossed} we recall the construction of mashed pair of groups and Hopf algebras, yielding the bicrossed product Hopf algebra. Thereof we compute the associated RPE solution and propose a formal framework of mashed pair of RPE solutions.

\subsection{Background on multiplier Hopf algebra}\addtocontents{toc}{\protect\setcounter{tocdepth}{2}} \label{background multiplier}

We refer to \cite{Tim} for a good account on multiplier Hopf algebras. Here we only recall the bare minimum to understand statements later in the paper.

\noindent {\it Convention:} In the setting of multiplier algebras, we no longer assume that an algebra has an identity. Therefore in subsequent section, the terminology (Hopf) algebra will always mean unital, but when multiplier is added we do not assume unitality. 

To start,

\begin{definition}
An associative $k$-algebra $A$ is \emph{non-degenerate} if $\Span \{ab \mid a,b \in A\}=A$ and for all $a \in A$ one has 
$$
Aa=0 \;\Rightarrow\; a=0 \qquad\text{and}\qquad aA=0 \;\Rightarrow\; a=0.
$$
A \emph{multiplier} of $A$ is a pair $(L,R)$ of $k$-linear maps $L,R:A\to A$ such that for all $x,y \in A$ holds
$$
L(xy)=L(x)y,\qquad R(xy)=xR(y),\qquad xL(y)=R(x)y\quad(x,y\in A).
$$
The set of all multipliers is denoted $M(A)$. 
\end{definition}

The set $M(A)$ can be made into an algebra, called the \emph{multiplier algebra of $A$}.

\begin{proposition}
    The set $M(A)$ endowed with the operations
    \begin{align*}
    (L_1,R_1) + \lambda (L_2,R_2) & := (L_1 + \lambda L_2, R_1 + \lambda R_2) \\
    (L_1,R_1)\cdot (L_2,R_2) & := (L_1\circ L_2, R_2 \circ R_1)
    \end{align*}
    is an algebra. Furthermore, $A$ embeds into $M(A)$ via $a\mapsto (L_a,R_a)$ with $L_a(x)=ax$, $R_a(x)=xa$.
\end{proposition}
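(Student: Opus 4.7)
The statement is a foundational one, so the plan is to verify directly that the proposed operations satisfy the algebra axioms and respect the multiplier relations, and then check injectivity of $a\mapsto (L_a,R_a)$ via non-degeneracy. First I would check that $M(A)$ is closed under the proposed operations. Closure under addition and scalar multiplication is immediate, as each of the three multiplier identities is linear in the pair $(L,R)$. For the product, given $(L_1,R_1),(L_2,R_2)\in M(A)$, the identity $(L_1\circ L_2)(xy)=(L_1\circ L_2)(x)y$ follows by first applying $L_2(xy)=L_2(x)y$ and then $L_1$ together with the same property for $L_1$; dually for $R_2\circ R_1$. The compatibility condition $x(L_1\circ L_2)(y)=(R_2\circ R_1)(x)y$ is the point where the reversal of order in the definition of the product is essential: using $xL_1(z)=R_1(x)z$ with $z=L_2(y)$ gives $xL_1(L_2(y))=R_1(x)L_2(y)$, and then $R_1(x)L_2(y)=R_2(R_1(x))y$.

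Next I would verify the algebra axioms. Associativity and the distributive laws are inherited from composition and addition of $k$-linear maps on $A$, once one observes that the product on $M(A)$ is just component-wise composition with the right slot reversed. The unit is $(\id_A,\id_A)$, which is clearly a multiplier and acts as identity under the product. These checks are entirely mechanical.

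Finally, for the embedding, I would verify three items. (i) $(L_a,R_a)$ is a multiplier: $L_a(xy)=a(xy)=(ax)y=L_a(x)y$ and symmetrically for $R_a$, while $xL_a(y)=x(ay)=(xa)y=R_a(x)y$ by associativity in $A$. (ii) The map $a\mapsto (L_a,R_a)$ is a $k$-algebra morphism: linearity is clear, and for the product one has $L_{ab}(x)=abx=L_a(L_b(x))$ together with $R_{ab}(x)=xab=R_b(R_a(x))$, which matches the reversed product on the second coordinate. (iii) Injectivity: if $(L_a,R_a)=0$ then in particular $aA=L_a(A)=0$, and by non-degeneracy of $A$ this forces $a=0$.

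The only mildly delicate point — and the one I would foreground — is the order reversal in the product, both because it is necessary for the compatibility relation to close and because it is what makes $a\mapsto (L_a,R_a)$ a homomorphism rather than an anti-homomorphism. Non-degeneracy is used only at the very end to guarantee injectivity of the embedding; without it the map $a\mapsto(L_a,R_a)$ would still be a well-defined algebra morphism but could fail to be faithful.
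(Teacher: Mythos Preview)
Your proof is correct and is exactly the direct verification one expects; the paper itself states this proposition as background and does not supply a proof, so there is nothing to compare against. Your emphasis on the order reversal in the second component and the use of non-degeneracy only for injectivity are the right highlights.
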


Note that a unital algebra is non-degenerate. In fact $A = M(A)$ if and only if $A$ is unital.

Give two non-degenerate algebras $A$ and $B$. A homomorphism $\phi: A \rightarrow M(B)$ is non-degenerate if $\Span \{\phi(A)\, B \} = \Span \{B \, \phi(A) \} = B$. 

\begin{definition}
A \emph{multiplier bialgebra} is an algebra $A$ equipped with a non-degeneratealgebra homomorphism $\Delta: A\to M(A\otimes A)$ such that
\begin{enumerate}
\item[(i)] $ \Delta(a)(1\otimes b),\ (1\otimes b)\Delta(a),\ \Delta(a)(b\otimes 1),\ (b\otimes 1)\Delta(a) \in A\otimes A$  for all $a,b\in A$.
\item[(ii)] $(\Delta\otimes \mathrm{id})\Delta = (\mathrm{id}\otimes \Delta)\Delta$  in $M(A\otimes A\otimes A)$.
\end{enumerate}
A \emph{multiplier Hopf algebra} is a multiplier bialgebra $(A,\Delta)$ such that the canonical maps $T_1,T_2: A\otimes A\to A\otimes A$ defined by $$ T_1(a\otimes b)=\Delta(a)(1\otimes b) \, \text{ and } \,  T_2(a\otimes b)=(a\otimes 1)\Delta(b)
$$
are bijective. 
\end{definition}

Any Hopf algebra is a multiplier Hopf algebra.

\begin{example}\label{finite support functions}
Let $G$ be a discrete group and denote by $k^G$ the algebra of functions $G \rightarrow k$ with the pointwise operations. We will write $k^{G}_{\fin}$ for the \emph{finitely supported functions.} Note that $k^{G}_{fin} = \bigoplus_{g\in G} k \, \delta_g.$ where $\delta_g(h) = \delta_{g,h}$ for any $g,h \in G.$ Note that $\delta_g . \delta_h = 0$ if $g\neq h$ and equal to $\delta_g$ otherwise. Given $f \in k^G$ one can consider $L_f = R_f : k^{G}_{\fin} \rightarrow k^{G}_{\fin}: g \mapsto f.$ Then $T_f = (L_f,R_f)$ is a multiplier on $k^{G}_{\fin}$. The associated map 
$$k^G \rightarrow M(k^{G}_{\fin}): f \mapsto T_f$$
can be verified to be an isomorphism. Furthermore, $k^{G\times G} \cong M(k^{G}_{\fin} \ot k^{G}_{\fin}).$

The algebra $k^{G}_{\fin}$ is a multiplier Hopf algebra. Concretely, using the aformentioned identification, the coproduct is given by
$$
\D : k^{G}_{\fin} \to M(k^{G}_{\fin} \ot k^{G}_{\fin})
\text{ with } (\D f)(x,y) := f(xy).
$$
Note that on the basis elements $\D (\delta_g) = \sum_{ab=g} \delta_a \ot \delta_b$ which possibily is an infinite sum, i.e. it does not lie in $k^{G}_{\fin} \ot k^{G}_{\fin}$. The counit and antipode are $\eps(f) = f(e)$ and $(Sf)(g) = f(g^{-1})$.

Note that if $G$ is finite, then the sum $\sum_{ab=g} \delta_a \otimes \delta_b$ is finite. Hence in that case $k^G_{\fin}$ is isomorphic to the linear dual $k[G]^*$.
\end{example}

\subsection{RPE solutions from a bialgebra}\label{subsection sol from hopf}
Given a Hopf algebra $H$, following Davydov \cite[Section 3]{Dav} one can associate to each Hopf $H$-module a bijective solution of the PE. Recall that a vector space $M$ is called a \emph{Hopf $H$-module} if it satisfies the following:
\begin{itemize}
    \item $M$ is a left $H$-module, given by $\mu_M: H \ot M \rightarrow M$. We denote $\mu_M(h\ot m)= : hm$.
    \item $M$ is a right $H$-comodule, given by $\D_M: M \rightarrow M \ot H$. We use Sweedler's notation $\D_M(m) = m_{(0)} \ot m_{(1)}$.
    \item Both structures are compatible via $\D_M(hm) = \D_H(h)\D_M(m)$, i.e. $(hm)_{(0)} \ot (hm)_{(1)} = h_{(1)}m_{(0)} \ot h_{(2)} m_{(1)}.$ 
\end{itemize}

Note that the definition of a Hopf module do not require the antipode of $H$ and hence is also defined for bialgebras. When $H$ is a bialgebra, then Davydov's construction still yields a solution, but not necessarily bijective. For convenience of the reader we include a proof of the latter fact.

\begin{proposition}[\cite{Dav}]\label{from bialg to PE}
Let $B$ be a bialgebra and $(M, \mu_M, \D_M)$ a Hopf $B$-module. Define
$$\Phi_M := (I_M \otimes \mu_M) \circ (\D_M \otimes I_M): M\ot M \rightarrow M \ot M .$$ 
Then $(\Phi_M, M)$ is a vector space solution of the RPE.   
\end{proposition}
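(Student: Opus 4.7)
The plan is a direct Sweedler-notation computation: compute both sides of the RPE on an elementary tensor $m\otimes n\otimes p\in M^{\otimes 3}$ and identify them using the three defining axioms of a Hopf module (plus associativity and coassociativity). Writing $\Delta_M(m)=m_{(0)}\otimes m_{(1)}$ and $hm=\mu_M(h\otimes m)$, the operator is simply
\[
\Phi_M(m\otimes n)=m_{(0)}\otimes m_{(1)}n,
\]
so everything reduces to tracking two pieces of data simultaneously: a ``comodule leg'' in $M$ and an ``action leg'' in $B$.

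First I would expand the left-hand side $\Phi_{12}\Phi_{13}\Phi_{23}(m\otimes n\otimes p)$ from the inside out. After $\Phi_{23}$ one obtains $m\otimes n_{(0)}\otimes n_{(1)}p$; then $\Phi_{13}$ acts as $\Phi_M$ on the outer two factors, producing $m_{(0)}\otimes n_{(0)}\otimes m_{(1)}(n_{(1)}p)$; finally $\Phi_{12}$ gives
\[
(m_{(0)})_{(0)}\otimes(m_{(0)})_{(1)}n_{(0)}\otimes m_{(1)}\bigl(n_{(1)}p\bigr).
\]
Next I would expand the right-hand side $\Phi_{23}\Phi_{12}(m\otimes n\otimes p)$. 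After $\Phi_{12}$ we get $m_{(0)}\otimes m_{(1)}n\otimes p$, and then applying $\Phi_{23}$ requires computing $\Delta_M(m_{(1)}n)$. Here the Hopf-module compatibility $\Delta_M(hm)=\Delta_B(h)\,\Delta_M(m)$ kicks in and gives
\[
(m_{(1)}n)_{(0)}\otimes(m_{(1)}n)_{(1)}=(m_{(1)})_{(1)}n_{(0)}\otimes(m_{(1)})_{(2)}n_{(1)},
\]
so the right-hand side becomes
\[
m_{(0)}\otimes(m_{(1)})_{(1)}n_{(0)}\otimes(m_{(1)})_{(2)}n_{(1)}p.
\]

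To match the two expressions, I would invoke the coassociativity of the $B$-comodule structure on $M$, namely $(\Delta_M\otimes\mathrm{id})\Delta_M=(\mathrm{id}\otimes\Delta_B)\Delta_M$, which identifies
\[
(m_{(0)})_{(0)}\otimes(m_{(0)})_{(1)}\otimes m_{(1)}
\;=\;
m_{(0)}\otimes(m_{(1)})_{(1)}\otimes(m_{(1)})_{(2)}.
\]
Applying this to the first two legs of the left-hand side and then using associativity of the $B$-action on the third leg, $m_{(1)}(n_{(1)}p)=\bigl(m_{(1)}n_{(1)}\bigr)p$ (together with the just-obtained identification of $(m_{(0)})_{(1)}$ with $(m_{(1)})_{(1)}$ and of $m_{(1)}$ with $(m_{(1)})_{(2)}$ after relabelling), turns the left-hand side into exactly the right-hand side.

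There is no real obstacle here: the statement is a formal consequence of the Hopf-module axioms, and the only care required is bookkeeping of Sweedler indices to avoid confusing the two legs of $(\Delta_M\otimes\mathrm{id})\Delta_M(m)$ with those of $(\mathrm{id}\otimes\Delta_B)\Delta_M(m)$. I will simply emphasise that antipode is never invoked, so the argument works verbatim for bialgebras; bijectivity of $\Phi_M$, which needs the antipode, is the content of the second (separate) assertion of the cited result.
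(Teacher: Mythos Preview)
Your proposal is correct and follows essentially the same route as the paper's proof: a direct Sweedler-notation computation of both sides of the RPE on a pure tensor, using the Hopf-module compatibility to expand $\Delta_M(m_{(1)}n)$ on the right-hand side. The only cosmetic difference is that the paper adopts the extended Sweedler convention $m_{(0)}\otimes m_{(1)}\otimes m_{(2)}$ from the outset, whereas you keep the nested indices $(m_{(0)})_{(0)}$ etc.\ and then explicitly invoke comodule coassociativity to identify the two expressions; this extra bookkeeping is harmless and arguably clearer.
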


\begin{remark}\label{sol for hopf multipliier}
Note that by definition if $H$ is a multiplier bialgebra, then $(I \ot m_H)\circ(\Delta_H \ot I)(x \ot y)  \in H \ot H$ for any $x,y\in H$. Furthermore the map $\Phi_H$ is still a RPE solution. More generally, Hopf modules for multiplier Hopf algebras have been introduced in \cite{KvDZ}. In that case the map $\Phi_M$ still makes sense and is a solution of the RPE.
\end{remark}

\begin{remark}\label{fund th hopf mod}
    If $B$ is a Hopf algebra, then $\Phi_M$ is bijective with inverse 
    $$\Psi : M\ot M \rightarrow M \ot M : m \ot n \mapsto m_{(0)} \ot S(m_{(1)})n.$$
    Furthermore, due to the fundamental theorem of Hopf modules, $H \ot M_H \rightarrow M: h\ot m \mapsto hm$ is an isomorphism of Hopf modules, where $$M_H :=\{ m \in M \mid \D_M(m)=m\ot1_H \}$$
    is the subspace of \emph{coinvariants}. The inverse is given by
    $$M \rightarrow H \ot M_H:  m \mapsto m_{(1)}\ot S(m_{(2)})m_{(0)}.$$
    This isomorphism induces an isomorphsim of RPE solutions between $\Phi_M$ and $\Phi_H \ot I_{M_H}$, where $H$ is viewed as Hopf module over itself, see \cite[Corollary 3.4]{Dav}.
\end{remark}

\begin{proof}[Proof of \Cref{from bialg to PE}]
The proof is a direct verification. Note that $\Phi_M(m\ot n) = m_{(0)} \ot m_{(1)}n$ and denote for simplicity $\Phi_M$ by $\Phi$. We need to verify that $\Phi_{12}\Phi_{13}\Phi_{23} = \Phi_{23}\Phi_{12}.$ For $m,n,l \in M$, the left hand side becomes:
$$\begin{array}{lcl}
  \Phi_{12}\Phi_{13}\Phi_{23}(m \ot n \ot l)   & =  & \Phi_{12}\Phi_{13}(m \ot n_{(0)} \ot n_{(1)}l )\\
     & =& \Phi_{12}(m_{(0)} \ot n_{(0)} \ot m_{(1)}n_{(1)}l)\\
     & =& m_{(0)} \ot m_{(1)}n_{(0)} \ot m_{(2)} n_{(1)}l \\
\end{array}$$
And the right hand side:
$$\begin{array}{lcl}
   \Phi_{23}\Phi_{12}(m \ot n \ot l)  & = & \Phi_{23}(m_{(0)} \ot m_{(1)}n \ot l)\\
     & =& m_{(0)} \ot m_{(1)}n_{(0)} \ot m_{(2)}n_{(1)}l
\end{array}$$
where in the last equality we used the compatibility of the structural maps of $M$.
\end{proof}

Suppose that $s \in \End_k(V\ot V)$ is a RPE solution on a finite dimensional vector space $V$. Then one can consider its right coefficients Hopf algebra $H_{r}(s)$. In \cite[Section 5]{Dav}, cf. \Cref{militaru vs davydov}, the space $V$ is equipped with the structure of Hopf module over $H_{r}(s)$. For this structure the following is shown in \cite[Theorem 5.7]{Dav} and implicitily in \cite[Theorem 2.1]{Mi04}.

\begin{theorem}[Davydov, Militaru]\label{reconstruction fd RPE}
    Let $(s,V)$ be a finite dimensional RPE solution. Then $V$ has a Hopf module structure over $H_{r}(s)$ such that $\Phi_V = s$. Therefore, $s \cong \Phi_{H_r(s)} \ot \id_{V_H}$ with $V_H = \{  x \in V \mid s(x \ot y) = x \ot y \text{ for all } y \in V\}.$
\end{theorem}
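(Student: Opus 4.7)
The plan is to equip $V$ with the tautological left action of $H_r(s) \subseteq A := \End_k(V)$ together with the right coaction
\[
\Delta_V : V \to V \ot A, \qquad v \mapsto \sum_i R^1_i(v) \ot R^2_i,
\]
where $s = R = \sum_i R^1_i \ot R^2_i \in A \ot A$ via \eqref{from VS to alg solution}. To verify that $\Delta_V(v) \in V \ot H_r(s)$, I would fix a basis $\{e_j\}$ of $V$ with dual basis $\{e_j^*\}$: the coefficient at $e_j$ is $\sum_i (e_j^* \circ \mathrm{ev}_v)(R^1_i)\, R^2_i$ with $e_j^* \circ \mathrm{ev}_v \in A^*$, so it lies in $H_r(s)$ by definition.

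The bulk of the proof consists in the three Hopf module axioms, each of which reduces to an identity following from the RPE or from $R \cdot R^{-1} = 1_A \ot 1_A$ in $A \ot A$. For coassociativity I view both sides of $(\Delta_V \ot \id)\Delta_V(v) = (\id \ot \Delta_r)\Delta_V(v)$ as elements of $V \ot A \ot A \hookrightarrow \Hom_k(V \ot V,\, V \ot V \ot V)$ and evaluate at $w \ot u$; a direct computation shows that the left-hand side becomes $R_{12}R_{13}(v \ot w \ot u)$, while using $\Delta_r(x) = R(x \ot 1)R^{-1}$ the right-hand side unpacks to $R_{23} R_{12} R_{23}^{-1}(v \ot w \ot u)$. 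The desired equality is therefore precisely the RPE $R_{12}R_{13} R_{23} = R_{23} R_{12}$ rearranged. The counit axiom is immediate from the definition of $\epsilon_r$ in \Cref{structure maps coefficients hopf algebras}: pairing the first factor of $\Delta_V(v)$ against $v^* \in V^*$ yields $\sum_i v^*(R^1_i(v))\, R^2_i \in H_r(s)$, whose image under $\epsilon_r$ is $(v^* \circ \mathrm{ev}_v)(1_A) = v^*(v)$. Finally, the compatibility $\Delta_V(hv) = \Delta_r(h)\Delta_V(v)$ unfolds, upon expansion, to the identity $\sum_{k,j} \bar R^1_k R^1_j \ot \bar R^2_k R^2_j = 1_A \ot 1_A$ (with $R^{-1} = \sum_k \bar R^1_k \ot \bar R^2_k$), which is just $R^{-1} \cdot R = 1_A \ot 1_A$ in $A \ot A$.

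Once this Hopf module structure is in place, I compute
\[
\Phi_V(v \ot w) = (\id_V \ot \mu_V)(\Delta_V(v) \ot w) = \sum_i R^1_i(v) \ot R^2_i(w) = s(v \ot w),
\]
so $\Phi_V = s$. For the second assertion I will invoke the fundamental theorem of Hopf modules recalled in \Cref{fund th hopf mod}: the canonical isomorphism of Hopf modules $V \cong H_r(s) \ot V_H$ intertwines $\Phi_V$ with $\Phi_{H_r(s)} \ot \id_{V_H}$. The alternative description $V_H = \{x \in V \mid s(x \ot y) = x \ot y \text{ for all } y \in V\}$ follows by unwinding: $\Delta_V(x) = x \ot 1_A$ is equivalent to $\sum_i R^1_i(x) \ot R^2_i(y) = x \ot y$ for every $y \in V$, which is exactly $s(x \ot y) = x \ot y$; since $1_A \in H_r(s)$ the condition $\Delta_V(x) = x \ot 1_A$ makes sense in $V \ot H_r(s)$.

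The main technical point will be the coassociativity verification: although writing down $\Delta_V$ is immediate, the manipulation must be carried out in $V \ot A \ot A$ (rather than in the a priori smaller $V \ot H_r(s) \ot H_r(s)$) in order to have access to both $R$ and $R^{-1}$ and to invoke the RPE as an identity in $\End_k(V^{\ot 3})$.
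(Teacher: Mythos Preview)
Your argument is correct. The paper does not give its own proof of this theorem: it is attributed to Davydov \cite[Theorem 5.7]{Dav} and Militaru \cite[Theorem 2.1]{Mi04}, with only the remark that the ``therefore-part'' follows from \Cref{fund th hopf mod} and the comodule structure constructed in \cite[Section 5]{Dav}. Your construction is precisely that one --- the tautological $H_r(s)$-action together with the slice coaction $v\mapsto \sum_i R^1_i(v)\ot R^2_i$ --- and your verifications (coassociativity from the RPE, compatibility from $R^{-1}R=1$, counit from the defining formula for $\epsilon_r$) are the standard ones underlying those references. The identification of $V_H$ with the set of $x$ satisfying $s(x\ot y)=x\ot y$ and the appeal to the fundamental theorem of Hopf modules match the paper's remark verbatim, so there is nothing to compare: you have simply supplied the proof the paper outsources.
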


The `therefore-part' in \Cref{reconstruction fd RPE} follows from \Cref{fund th hopf mod} and the exact definition of the co-multiplication on $V$ constructed in \cite[Section 5]{Dav}. 

Up to the knowledge of the authors, if $(s,V)$ is an infinite dimensional RPE solution, then no alternative for the coefficient Hopf algebras is known. In particular, it is not known whether every infinite solution can be obtained via the construction in \Cref{from bialg to PE}. Therefore we introduce following terminology.

\begin{definition}\label{def reachable sol}
Let $(s,V)$ a RPE solution. Then it is called \emph{reachable} if there exists some Hopf algebra $H$ and Hopf $H$-module $M$ such that $s \cong \Phi_M.$
\end{definition}

Thus by \Cref{reconstruction fd RPE} every finite dimensional solution is reachable. Now consider a set-theoretic solution $(S,s)$. Recall that the associated linearisation is denoted $(s^v,k[S])$. An inconvenient aspect of the isomorphism $\Phi_M \cong \Phi_H \ot \id_{M_H}$ for a Hopf $H$-module $M$ is that it does not behave well with a fixed basis of $M$. In particular if $M = k[S]$, then it seems hard to detect the basis $S$ at the right hand side. As we will see in \Cref{solutions from grp alg}, this subtility hids the interesting fact that $\Phi_H$ for a fixed Hopf algebra $H$ can yield many set-theoretic solutions. More precisely, $H$ can have many bases as in the following definition.

\begin{definition}\label{phi set theoretic def}
Let $H$ be a Hopf algebra and $\Phi_H=(\mathrm{id}\otimes m)(\D \otimes \id)$ the associated RPE solution.
A basis $\mc{B}$ of $H$ is called \emph{$\Phi$-set theoretic} if for all $b,c\in \mc{B}$ the element $\Phi(b\otimes c)$ is a pure tensor in $\mc{B}\otimes \mc{B}$.
\end{definition}

Altough the set-theoretic level is quite subtile, at the vector space level one has following corollary of \Cref{reconstruction fd RPE}.

\begin{corollary}\label{Vs sol versus Hopf alg}
Let $(s_i,S_i)$, for $i=1,2$, be finite set-theoretic solutions of the RPE. Suppose that $|S_2|\geq |S_1|$. Then $H_r(s_1^{v}) \cong H_r(s_2^{v})$ if and only if $s_1^v \cong s_2^v\times 1_W$ for some vector space $W$.
\end{corollary}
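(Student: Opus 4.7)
The plan is to deduce both implications from the Davydov--Militaru reconstruction theorem (\Cref{reconstruction fd RPE}). My first step is to apply it to each solution separately: for $i\in\{1,2\}$ it equips $k[S_i]$ with a Hopf module structure over $H_i := H_r(s_i^v)$ together with an isomorphism
\[
s_i^v \;\cong\; \Phi_{H_i}\otimes \id_{W_i},\qquad W_i := (k[S_i])_{H_i}.
\]
By the fundamental theorem of Hopf modules (\Cref{fund th hopf mod}), $k[S_i]\cong H_i\otimes W_i$ as vector spaces, yielding the dimension identity $|S_i|=(\dim H_i)(\dim W_i)$ that I will use repeatedly.

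For the forward direction I would assume $H_1\cong H_2 =: H$ and exploit that both displayed isomorphisms then share the same central piece $\Phi_H$, so the two solutions differ only in the size of their coinvariant factors $W_i$. The cardinality assumption $|S_2|\geq |S_1|$ translates via the dimension identity into $\dim W_2 \geq \dim W_1$, which I would use to produce a vector space $W$ together with a decomposition of $W_2$ compatible with the tensor structure on $\Phi_H \otimes \id_{-}$. Tensoring with $\Phi_H$ and applying associativity of the tensor product then yields
\[
s_2^v \;\cong\; \Phi_H \otimes \id_{W_2} \;\cong\; \Phi_H \otimes \id_{W_1} \otimes \id_W \;\cong\; s_1^v \otimes \id_W,
\]
which is the desired factorisation (the statement's $\times 1_W$ being read as $\otimes \id_W$, in line with the convention used in \Cref{subsec:connection}).

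For the converse direction the strategy is to prove that the right-coefficient construction $(-)_{(r)}$ is invariant under tensoring a solution with $\id_W$. I would approach this in two complementary ways. Either by a direct calculation from \Cref{structure maps coefficients hopf algebras}: unwinding the definition, the slices $\sum a^*(R^1)R^2$ span the same subspace whether $R = t^A$ or $R = t^A \otimes 1^A_W$, since the extra tensor factor on $W$ is spanned by a one-dimensional subspace. Or more conceptually, by invoking the uniqueness built into \Cref{reconstruction fd RPE}: from $s_1^v \cong s_2^v \otimes \id_W$ both sides must reconstruct to the same Hopf algebra, so $H_r(s_1^v) \cong H_r(s_2^v)$.

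The main obstacle is the forward direction, more precisely the construction of $W$: one needs to choose a decomposition of $W_2$ so that the three tensor factors recombine cleanly into the form $s_1^v \otimes \id_W$. This is essentially linear algebra in the finite-dimensional setting, but it is also where the hypothesis $|S_2|\geq |S_1|$ is essential, since it fixes the direction of the trivial extension and guarantees that the dimensional gap $\dim W_2 - \dim W_1$ can actually be realised by a tensor factor $W$.
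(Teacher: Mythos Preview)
Your forward direction follows exactly the paper's strategy: apply \Cref{reconstruction fd RPE} to each $s_i^v$ and then match the common factor $\Phi_H$. The paper's argument here is just as terse as yours (it simply asserts the conclusion and names the size of the extra factor). One point to flag: your chain
\[
s_2^v \cong \Phi_H\otimes\id_{W_2}\cong \Phi_H\otimes\id_{W_1}\otimes\id_W\cong s_1^v\otimes\id_W
\]
requires $W_2\cong W_1\otimes W$, i.e.\ $\dim W_1\mid \dim W_2$, not merely $\dim W_2\ge \dim W_1$. Your closing sentence speaks of realising ``the dimensional gap $\dim W_2-\dim W_1$'' by a tensor factor, but tensoring multiplies dimensions rather than adding them; this is a genuine slip, and the paper's proof commits the same additive bookkeeping, so neither argument actually pins down $W$ beyond the case where the relevant divisibility holds.

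For the converse the paper takes a more concrete route than either of your two options. It invokes \Cref{coefficients PE vs RPE} to obtain $H_r(s_2^v\times 1_W)\cong H_r(s_2^v)\otimes H_r(1_W)$, and then computes $H_r(1_W)\cong k$ directly from the generating set of \Cref{leftrightinv}: for the identity solution the generators collapse to the single element $\sum_{y}S_{y,y}=1$. Your ``direct calculation'' suggestion is in the same spirit but does not isolate this splitting lemma, and your alternative via ``uniqueness built into \Cref{reconstruction fd RPE}'' is not quite available as stated---that theorem produces $H_r(s)$ from $s$, but it does not by itself tell you that tensoring $s$ with $\id_W$ leaves $H_r$ unchanged; that is precisely what the computation $H_r(1_W)\cong k$ supplies.
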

\begin{proof}
    Suppose that $H_r(s_1^{V}) \cong H_r(s_2^{V})$, then by \Cref{reconstruction fd RPE} we have that 
    $$s_1^v \cong \Phi_{H_r(s_1^{v})} \ot \id_{k[S_1]} \text{ and } s_2^v \cong \Phi_{H_r(s_2^{v})} \ot \id_{k[S_2]}.$$
    Therefore $s_1^v \cong s_2^v\times 1_{k[X]}$ with $X$ a set of cardinality $|S_2| - |S_1|.$

    Conversely, suppose that  $s_1^v \cong s_2^v\times 1_W$. By \Cref{coefficients PE vs RPE} we have that $H_r(s_2^v\times 1_W) \cong H_r(s_2^v) \ot H_r(1_W)$. Now note that for the trivial solution $1_W$ the generating set in \Cref{leftrightinv} boils down to the single generator $\sum_{y \in W} S_{y,y}$ which is the unit of the right cofficient algebra. Thus $H_r(1_W) \cong k$ and therefore $H_r(s_1^v) \cong H_r(s_2^v\times 1_W) \cong H_r(s_2^v)$, as desired.
\end{proof}

\begin{example}\label{sol from grp alg}
Consider the semigroup algebra $k[S]$ which is a cocommutative bialgebra with coproduct $\D(g) = g \ot g$ for $g \in S$. Denote by $\Phi_S$ the PE solution associated via \Cref{from bialg to PE}. On $g,h \in k[S]$ it is given by:
$$\Phi_{S} (g,h) = (1 \ot m) (g \ot g \ot h)= g \ot gh$$
Hence restricting to the basis $S$ one obtains the set-theoretic solution $s(g,h) = (g,gh)$. If $S$ is a group and $p_2$ denotes the projection of $(g,h)$ on the second copy of $S$, then $s(g,h) = (g,gh)$ is the unique bijective set-theoretic RPE solution on $S$ for which $p_{2} \circ s$ coincides with the group structure of $S$  \cite{MR1637789,CMM19}.  
\end{example}

\begin{example}\label{sol from dual grp alg}
Let $G$ be a group and $k^G_{\fin}$ the finitely support functions on $G$. Its structure as mutliplier Hopf algebras was recalled in \Cref{finite support functions}. In particular, on the basis  $\{ \delta_g \}_{g \in G}$, the coproduct of $k^G_{\fin}$ is given by
$$\D(\delta_g) = \sum_{h \in G} \delta_h \ot \delta_{h^{-1}g}$$
and the co-unit by $\epsilon(\delta_g) = \delta_{1,g}.$
Hence on the canonical basis the associated RPE solution $\Phi$ takes the value $ \Phi(\delta_g \ot \delta_h)  =  \sum_{x} \delta_x \ot \delta_{x^{-1}g}.\delta_h$. Since
 $$
\delta_{x^{-1}g}.\delta_h = \left\lbrace \begin{array}{ll}
            0  & \text{if } x^{-1}g \neq h \\
            \delta_h  & \text{else }
         \end{array} \right.
$$
we obtain that $\Phi(\delta_g \ot \delta_h) = \delta_{gh^{-1}} \ot \delta_h$ preserves the basis $\{ \delta_g \}_{g\in G}$. Identifying sets $\{ \delta_g \}_{g\in G}$ and $\{ g \in G \}$, we see that $\Phi$ yields the set-theoretic map $s : G^{\times 2} \rightarrow G^{\times 2}: (g,h) \mapsto (gh^{-1},h).$ Following \Cref{dual PE group solution}, this the pullback of the unique PE group solution on $G$. \smallskip

In case that $G$ is finite abelian, then the solution obtained from the $k$-linear dual $k[G]^*= k^G_{\fin}$ coincides with the group solution from \Cref{sol from grp alg}, but on the pontryagin dual $G^{\vee}= \hom(G,k^*)$ of $G$, where $k$ is some splitting field of $G$. To see this, we need to recall an explicit Hopf algebra isomorphism, where $\{ e_{\chi} \mid \chi \in G^{\vee} \}$ denotes the basis of $k[G^{\vee}] $: 
\begin{equation}\label{map between two types of dual}
\Psi: k[G^{\vee}] \rightarrow k[G]^*: e_{\chi} \mapsto \sum_{g\in G} \chi(g) \delta_g.
\end{equation}
Its inversion is given via Fourier inversion, i.e. $\Psi^{-1}(\delta_g) = \sum_{\chi \in G^{\vee}}\chi(g^{-1})e_{\chi}.$\smallskip

\noindent {\it Claim:} $\Phi_{G^{\vee}}(e_{\chi_1} \ot e_{\chi_2}) = e_{\chi_1}\ot e_{\chi_1\chi_2}= (\Psi^{-1}\ot \Psi^{-1}) \circ \Phi \circ (\Psi \ot \Psi)$\medskip

We first verify that
$$\left( \Phi \circ (\Psi \ot \Psi) \right) (e_{\chi_1} \ot e_{\chi_2}) = \Phi(\sum_{g,h\in G} \chi_1(g) \chi_2(h) \delta_g \ot \delta_h ) = \sum_{g,h\in G} \chi_1(g) \chi_2(h) \delta_{gh^{-1}} \ot \delta_h.$$
Next we do the change of variables $u= gh^{-1}$ and use that the characters $\chi$ are mulitplicative to rewrite the latest sum as following:
$$\sum_{g,h\in G} \chi_1(g) \chi_2(h) \delta_{gh^{-1}} \ot \delta_h = \sum_{u,h\in G} \chi_1(u)\chi_1(h) \chi_2(h) \delta_{u} \ot \delta_h= \left( \sum_{u\in G} \chi_1(u)\delta_u \right) \ot \left( \sum_{h\in G} (\chi_1\chi_2)(h)\delta_h \right).$$
In other words, we computed that $\left( \Phi \circ (\Psi \ot \Psi) \right) (e_{\chi_1} \ot e_{\chi_2}) = \Psi(e_{\chi_1})\ot \Psi(e_{\chi_1\chi_2})$, which entails the claim.\medskip

An interesting feature of this is example is that we obtained on the vector space $k[G]^*$ two bases, namely $\mc{B}_1 = \{ \delta_g \}_{g\in G}$ and $\mc{B}_2= \{ \Psi(e_{\chi})\}_{\chi \in G^{\vee}}$, such that $\Phi$ preserves both bases. In particular $\restr{\Phi}{\mc{B}_i\ot \mc{B}_i}$ corresponds to set-theoretic solutions of RPE, but which are not isomorphic by \Cref{sol from dual grp alg}.
\end{example}

\begin{example}\label{sol from lie algebra}
 Consider $H = U(\mathfrak{g})$ for some Lie algebra $\mathfrak{g}$ of which we fix a totally ordered $k$-basis $\mathcal{B}$ of $\mathfrak{g}$. Recall that by the Poincar\'e-Birkhoff-Witt theorem the following set is a basis of $U(\mathfrak{g})$:  $$\{ y_1^{n_1}. \cdots . y_{\ell}^{n_{\ell}} \mid y_1 < \cdots < y_{\ell} \in \mathcal{B} \}.$$
Now note that for $x,y \in \mathcal{B}$,
$$\Phi_{\mathfrak{g}} (x,y) =(I \otimes \mu)(\D(x) \otimes y)= x\otimes y + 1 \otimes xy.$$
Hence restricting the solution $\Phi_{\mathfrak{g}}$ to the basis $\mathcal{B}$ does {\it not} yield a set-theoretic solution. We will prove in \Cref{no set for Lie} that there is also no other basis yielding one.
\end{example}

\subsection{Bicrossed product Hopf algebra and matched pair of solutions}\label{section bicrossed}

\subsubsection{Background on bicrossed products}\label{subsecti bicrossed}

Let $H$ and $K$ be bialgebras. Following \cite{Takeuchi} they are said to form a (left-right) matched pair if
\begin{itemize}
\item There is a left action $\triangleright:K\otimes H\to H$ making $H$ into a left $K$-module coalgebra.\smallskip
    \item There is a right co-action $\rho:K \to K \ot H : a \mapsto a_K \ot a_H$ making $K$ into a right $H$-comodule coalgebra.\smallskip
\item Compatibility identities, for all $a,b\in K$ and $h,h'\in H$:
\begin{align*}
a\triangler (hh') &= (a_{(1)}\triangler h)\bigl( a_{(2)}\triangler h'\bigr),\\
\D(a_K) \ot a_H &= (a_{(1)})_K \ot (a_{(2)})_K \ot (a_{(1)})_{H}(a_{(2)})_{H} \\
\rho(ab) & = \rho(a_{(1)})(b_K \ot (a_{(2)}\triangler b_H)) \\
\D_K(a \triangler h) & = (a_{(1)})_K \triangler h_{(1)} \ot (a_{(1)})_H \, (a_{(2)} \triangler h_{(2)})
\end{align*}
and
$a\triangler 1_H=\eps_K(a)1_H$, $1_K\triangler h=h$.
\end{itemize}
The compatibility will ensure that the multiplication below is associative and that the bialgebra axioms hold.

\begin{definition}
Let $H$ and $K$ be Hopf algebras. The (left-right) \emph{bicrossed product} Hopf algebra $H\bowtie K$ is the vector space $H\otimes K$ endowed with the following structural maps.
\begin{itemize}
\item unit: $1_{H\bowtie K}=1_H\ot 1_K$,
\item multiplication:
\begin{equation*}
(h\otimes a)(y\otimes b)
=
h\,(a_{(1)}\triangler y_{(1)})\ \ot a_{(2)}\,b,
\end{equation*}
\item counit: $\eps (h\ot k)=\eps_H(h)\eps_K(k)$,
\item coproduct:
\[
\D(h\ot a)=(h_{(1)}\ot (a_{(1)})_K)\ot (a_{(2)}(a_{(1)})_H\ot a_{(2)}),
\]
\item Antipode: $S(h \ot a) = (1 \ot S_K(a_K)) (S_H(h\,a_H) \ot 1)$
\end{itemize}
\end{definition}

It was proven in \cite{Takeuchi} that $H \bowtie K$ is again a Hopf algebra. To avoid confusion we will write the simple tensors as $h \# k := h \ot k.$

\begin{example}\label{example matched pair groups}
A main protagonist will be with  $H = k[B]^*$ the dual of a group algebra and $K= k[N]$ a group algebra where $B$ and $N$ form a matched pair of groups. Note that for $k[B]^*$ to be a co-algebra \emph{we need that $B$ is finite}. Recall that a matched pair of groups consists of maps
\begin{itemize}
\item  A right action of $N$ on $B$ : $\triangleleft: B\times N\to B$
\item A left action of $B$ on $N$: $\triangleright: B\times N\to N$
\end{itemize}
such that for all $b,b'\in B$ and $u,v\in N$,
\begin{align}
b\triangler (uv) &= (b\triangler u)\bigl((b\trianglel u)\triangler v\bigr),\label{eq:grpMP1}\\
(bb')\trianglel u &= \bigl(b\trianglel (b'\triangler u)\bigr)\,(b'\trianglel u).\label{eq:grpMP2}
\end{align}
and unital conditions $b\triangleright e_N=e_N$, $e_B\triangleleft n=e_B$, $e_B\triangleright n=n$, $b\triangleleft e_N=b$.
If $B$ and $N$ are subgroups of a group $G$, the above data is equivalent to a factorisation $G=BN$. From a matched pair $(B,N,\triangleleft,\triangleright)$ one can construct the following Hopf actions as follows.

\begin{itemize}
\item Left action of $k[B]^*$ on $k[N]$: for $u\in N$ we define 
$$
\delta_b\triangler u := b \, \triangler \,u$$ 
with $b \in B$ and $u \in N$. 

\item Right coaction of $k[N]$ on $k[B]^*$:
$$
\rho(u) = \sum_{b\in B} (b \triangler u) \ot \delta_b \in k[N] \ot k[B]^*
$$
\end{itemize}

A straightforward verification shows that the (co-)multiplication of $k[B]^*\bowtie k[N]$ takes the following form:

\begin{align*}
(\delta_s\# u)\,(\delta_t\# v)
&:=\delta_{\,s\trianglel h,\ t}\;(\delta_s\# uv),\\
\D (\delta_s\# u)
&:=\sum_{xy=s} (\delta_x\# (y\triangler u)) \otimes (\delta_y\# u) = \sum_{x \in B} (\delta_x\# (x^{-1}s\triangler u)) \otimes (\delta_{x^{-1}s}\# u), \\
1_{k[B]^*\bowtie k[N]}&:= 1_{k[B]^*} \# 1_{k[N]}= \sum_{s\in B}\delta_s\# 1_N \\
\eps (\delta_s\# v)&:=\delta_{s,1_B}
\end{align*}
for $s,t \in B$ and $u,v \in N$.
\end{example}

\subsubsection{RPE solution of matched pairs of Hopf algebras}

We now compute the RPE solutions associated via \Cref{from bialg to PE} to the above bicrossed product Hopf algebras. Note that if we take for the Hopf module $M$ a Hopf algebra $H$, then Davydov's constructions can be written as following:
\begin{equation}\label{concrete for Davydov}
    \Phi_H: H \ot H \rightarrow H \ot H : a\otimes b \mapsto a_{(1)}\otimes a_{(2)}b
\end{equation}

\begin{proposition} \label{solutions of hopf mashed pair} 
Let $(H,K, \triangler,\rho)$ a matched pair of Hopf algebras. then 
$$
\Phi_{H\bowtie K}\bigl((h\# a)\otimes (y\# b)\bigr)
=
(h_{(1)}\# (a_{(1)})_K)\otimes
\Bigl(
h_{(2)}(a_{(1)})_H\,(a_{(2)}\triangler y_{(1)})\# a_{(3)}b
\Bigr).
$$

In particular if the matched pair is trivial, i.e. $H \bowtie K \cong H \ot K$, then $\Phi_{H \bowtie K} = \Phi_{H} \times \Phi_{K}$. Moreover if $H$ and $K$ are as in \Cref{example matched pair groups}, then:
$$
\Phi_{k[B]^* \bowtie k[N]}\bigl((\delta_s\# u)\otimes(\delta_t\# v)\bigr)
=
\bigl(\delta_{\,s\,(t\triangleleft u^{-1})^{-1}}\# ((t\triangleleft u^{-1})\triangleright u)\bigr)
\ \otimes\
\bigl(\delta_{\,t\triangleleft u^{-1}}\# hv\bigr).
$$
preserves the basis $\{ \delta_b \ot u \mid b \in B, u \in N \}$.
\end{proposition}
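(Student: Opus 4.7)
The plan is to carry out a direct computation from the intrinsic formula $\Phi_H(x \otimes y) = x_{(1)} \otimes x_{(2)} y$ of \eqref{concrete for Davydov}, unfolded inside the bicrossed product. Applying the coproduct of $H \bowtie K$ recalled above to $x = h \# a$ gives $(h \# a)_{(1)} = h_{(1)} \# (a_{(1)})_K$ and $(h \# a)_{(2)} = h_{(2)}(a_{(1)})_H \# a_{(2)}$, so the whole problem reduces to expanding
\[
\bigl(h_{(2)}(a_{(1)})_H \# a_{(2)}\bigr)\cdot (y\# b)
\]
via the multiplication of $H \bowtie K$. That expansion produces a factor $(a_{(2)})_{(1)} \triangler y_{(1)}$ together with a Sweedler index $(a_{(2)})_{(2)}$ on the $K$-component. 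Applying coassociativity to $a$---rewriting $(a_{(2)})_{(1)} \otimes (a_{(2)})_{(2)}$ as $a_{(2)} \otimes a_{(3)}$---immediately yields the displayed closed form.

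For the trivial matched pair case I would substitute $\rho(a) = a \otimes 1_H$ (so $(a_{(1)})_K = a_{(1)}$ and $(a_{(1)})_H = 1_H$) together with $a \triangler y = \eps_K(a) y$ into the formula just derived, and then collapse the resulting counit scalars via $\sum a_{(1)} \eps_K(a_{(2)}) = a$. What remains is exactly $\Phi_H \times \Phi_K$ under the canonical identification $H \bowtie K = H \otimes K$.

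For the group case I would specialise to $H = k[B]^*$ and $K = k[N]$ and substitute $\D(\delta_s) = \sum_{xz=s}\delta_x \otimes \delta_z$, $\D(u) = u \otimes u$ together with $\rho(u) = \sum_{b \in B}(b \triangler u) \otimes \delta_b$ into the general formula. The key collapses happen in $k[B]^*$: the orthogonality $\delta_z \delta_{z'} = \delta_{z,z'}\delta_z$ together with the dualised left action $u \triangler \delta_t = \delta_{t \trianglel u^{-1}}$ (read off from $(u \triangler \delta_t)(c) = \delta_t(c \trianglel u)$) force the sole surviving summand to be the one with $z = t \trianglel u^{-1}$, and hence $x = s(t\trianglel u^{-1})^{-1}$. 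Substitution yields the displayed closed formula, and since both tensor factors lie again in the form $\delta_b \# u'$, the basis $\{\delta_b \# u\}_{b\in B,\, u\in N}$ is preserved.

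I expect the main obstacle to be bookkeeping rather than conceptual. In the general step one must juggle Sweedler sums on $h$, $a$ and $y$ simultaneously while keeping straight which leg of $\rho$ lives in $H$ and which in $K$, and apply coassociativity at the right moment to merge indices. In the group case the delicate point is the dualisation of $\trianglel : B \times N \to B$ into a left action of $k[N]$ on $k[B]^*$, which is easily confused with the left action $\triangler$ of $B$ on $N$; correctly identifying $u \triangler \delta_t = \delta_{t\trianglel u^{-1}}$ is what makes all the indices collapse to the claimed closed form.
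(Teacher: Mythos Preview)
Your proposal is correct and follows essentially the same route as the paper: a direct Sweedler computation using the bicrossed coproduct and multiplication, with coassociativity on $a$ to pass from $(a_{(2)})_{(1)},(a_{(2)})_{(2)}$ to $a_{(2)},a_{(3)}$. For the group case the paper plugs in the explicit formula $\D(\delta_s\# u)=\sum_{xy=s}(\delta_x\# (y\triangler u))\otimes(\delta_y\# u)$ from \Cref{example matched pair groups} and then multiplies, whereas you substitute into the general formula and collapse via $u\triangler\delta_t=\delta_{t\trianglel u^{-1}}$; these are two presentations of the same computation and lead to the same surviving term $y=t\trianglel u^{-1}$.
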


Interpreting the basis element $\delta_b\otimes u$ as the tuple $(b,u)\in B\times N$,
this corresponds to following set-theoretic RPE solution on $B \times N$: 

\begin{equation}\label{set mashed pair}
\phi_{B\bowtie N}\bigl((s,u)\, ,\, (t,v)\bigr)
=
\Bigl( \bigl(s\,(t\triangleleft u^{-1})^{-1}, (t\triangleleft u^{-1})\triangleright u\bigr) \, , \,
(t\triangleleft u^{-1},  uv ) \Bigr)
\end{equation}
\begin{proof}
The expression for $\Phi_{k[B]^* \bowtie k[N]}$ follows directly from the definition and inserting the crossed coproduct and crossed multiplication of $H\bowtie K$, followed by expanding in Sweedler's notation:
\begin{align*}
\Phi_{H\bowtie K}\bigl((h\# a)\otimes (y\# b)\bigr)
 & =
(h_{(1)}\# (a_{(1)})_K)\otimes
\Bigl(
h_{(2)}(a_{(1)})_H\,(a_{(2)(1)}\triangler y_{(1)})\# a_{(2)(2)}b
\Bigr) \\
 & =
(h_{(1)}\# (a_{(1)})_K)\otimes
\Bigl(
h_{(2)}(a_{(1)})_H\,(a_{(2)}\triangler y_{(1)})\# a_{(3)}b
\Bigr) 
\end{align*}
From the obtained expression we see that if the both action are trivial, then we obtain the direct product of the solutions.

Now suppose that $H = k[B]^*$ and $K= k[N]$ with $(B,K,\trianglel, \triangler)$ a matched pair of groups. In that case the above expression can be further simplified using the form of the structural maps written in \Cref{example matched pair groups}. Filling in the above formula yields

$$
\Phi_{k[B]^* \bowtie k[N]}( (\delta_s\# u) \ot (\delta_t\# v))
=\sum_{xy=s} (\delta_x\# (y\triangleright u))\ \otimes\ \bigl((\delta_y\# u)(\delta_t\# v)\bigr).
$$
Now use that $ (\delta_y\# u)(\delta_t\# v)=\delta_{y\triangleleft u, t}\;(\delta_y\# uv)$. Hence only the unique term with $y\triangleleft u=t$, i.e. $y = t\triangleleft u^{-1}$, survives. For that $y$ we have that $x=s\,y^{-1} = s\,(t\triangleleft u^{-1})^{-1}.$ Therefore, substituting yields
\[
\Phi_{k[B]^* \bowtie k[N]}( (\delta_s\# u) \ot (\delta_t\# v))
=
(\delta_{\,s\,(t\triangleleft u^{-1})^{-1}}\# ((t\triangleleft u^{-1})\triangleright u))
\ \otimes\
(\delta_{\,t\triangleleft u^{-1}}\# uv).
\]
\end{proof}

\subsubsection{A formal notion of mashed pair of RPE solutions}
At this point we have seen how the RPE solutions $\Phi_H$ and $\Phi_K$ in \eqref{concrete for Davydov} and the solution 
$$(h\# a)\otimes (y\# b)\bigr)
\mapsto 
(h_{(1)}\# (a_{(1)})_K)\otimes
\Bigl(
h_{(2)}(a_{(1)})_H\,(a_{(2)}\triangler y_{(1)})\# a_{(3)}b
\Bigr)$$
from \Cref{solutions of hopf mashed pair}  are related at a Hopf theoretical level. We now introduce a formal setting that connects the maps $\Phi_H$ and $\Phi_K$ through an operation on solutions. 

\begin{definition}\label{def:mp_davydov}
Let $H$ and $K$ be bialgebras, and let 
\[
\Phi_H=(\id\otimes m_H)(\Delta_H\otimes\id),
\qquad
\Phi_K=(\id\otimes m_K)(\Delta_K\otimes\id)
\]
be their associated RPE solutions. A (left--right)\emph{ matched-pair datum of RPE solutions} consists of two linear maps
\[
\Xi_{\mathrm{mult}}:K\otimes H\to H\otimes K,
\qquad
\Xi_{\mathrm{cop}}:H\otimes K\to K\otimes H,
\]
satisfying following equations: 
\begin{enumerate}
\item Multiplicative mixed pentagon:
as maps $K\otimes H\otimes H\to H\otimes H\otimes K$,
\[
(\Phi_H\otimes \id_K)
(\id_H\otimes \Xi_{\mathrm{mult}})
(\Xi_{\mathrm{mult}}\otimes \id_H)
=
(\id_H\otimes \Xi_{\mathrm{mult}})
(\Xi_{\mathrm{mult}}\otimes \id_H)
(\id_K\otimes \Phi_H).
\]

\item Comultiplicative mixed pentagon:
as maps $H\otimes K\otimes K\to K\otimes H\otimes H$,
\[
(\id_K\otimes \Phi_H)
(\Xi_{\mathrm{cop}}\otimes \id_H)
(\id_H\otimes \Xi_{\mathrm{cop}})
=
(\Xi_{\mathrm{cop}}\otimes \id_H)
(\id_H\otimes \Xi_{\mathrm{cop}})
(\Phi_H\otimes \id_K).
\]

\item Action--coaction compatibility: 
as maps $K\otimes K\otimes H\to K\otimes H\otimes K$,
\[
(\Xi_{\mathrm{cop}}\otimes \id_K)
(\id_H\otimes m_K)
(\Xi_{\mathrm{mult}}\otimes \id_K)
=
(\id_K\otimes m_H)
(\id_K\otimes \Xi_{\mathrm{mult}})
(\Xi_{\mathrm{cop}}\otimes \id_H).
\]
\end{enumerate}
\end{definition}

The maps $\Xi_{\mathrm{mult}}$ and $\Xi_{\mathrm{cop}}$ will be called respectively the \emph{multiplicative} and \emph{comultiplicative interfaces}. The following can be verified with a direct computation

\begin{proposition}\label{thm:matched_construction}
Let $(H,K,\Xi_{\mathrm{mult}},\Xi_{\mathrm{cop}})$ be a matched-pair datum of RPE solutions. Define on $H \ot K$ the maps 
\begin{align*}
\Delta_{\bowtie }
&:=
(\id_H\otimes \Xi_{\mathrm{cop}}\otimes \id_K)
\circ
(\Delta_H\otimes \Delta_K),\\[2mm]
m_{\bowtie }
&:=
(m_H\otimes m_K)
\circ
(\id_H\otimes \Xi_{\mathrm{mult}}\otimes \id_K).
\end{align*}
Then 
\[
\Phi_{\bowtie }
:=
(\id\otimes m_{H\bowtie K})
(\Delta_{H\bowtie K}\otimes \id)
\]
is a solution of the RPE.
\end{proposition}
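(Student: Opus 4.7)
The plan is to verify the reversed pentagon equation for $\Phi_\bowtie$ directly on $(H\otimes K)^{\otimes 3}$, using the three mixed pentagon axioms of \Cref{def:mp_davydov} as rewrite rules together with the RPEs already satisfied by $\Phi_H$ and $\Phi_K$. It is tempting to first upgrade $(H\otimes K,m_\bowtie,\Delta_\bowtie)$ to a bialgebra and then invoke \Cref{from bialg to PE}; however, the axioms are phrased as pentagon identities involving $\Phi_H$ and $\Phi_K$ rather than as the classical associativity/coassociativity/multiplicativity statements, and extracting the latter would require additional counit-compatibilities for $\Xi_{\mathrm{mult}}$ and $\Xi_{\mathrm{cop}}$ that are not assumed. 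So direct verification is the natural route.

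First I would unpack $\Phi_\bowtie$ using the given formulas for $\Delta_\bowtie$ and $m_\bowtie$. Writing, in Sweedler notation, $\Xi_{\mathrm{mult}}(b\otimes c)=c^{[1]}\otimes b^{[2]}$ and $\Xi_{\mathrm{cop}}(x\otimes y)=y^{\{1\}}\otimes x^{\{2\}}$, a short computation gives
\[
\Phi_\bowtie\bigl((a\otimes b)\otimes(c\otimes d)\bigr)
= a_{(1)} \otimes (b_{(1)})^{\{1\}} \otimes (a_{(2)})^{\{2\}} c^{[1]} \otimes (b_{(2)})^{[2]} d,
\]
which exhibits $\Phi_\bowtie$ as an explicit interleaved composition of $\Delta_H, \Delta_K, \Xi_{\mathrm{cop}},\Xi_{\mathrm{mult}}, m_H, m_K$. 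Next I would iterate this formula to expand $(\Phi_\bowtie)_{12}(\Phi_\bowtie)_{13}(\Phi_\bowtie)_{23}$ and $(\Phi_\bowtie)_{23}(\Phi_\bowtie)_{12}$ as words in the same six operations, and then reduce both sides to a common normal form. The multiplicative mixed pentagon lets one slide $\Xi_{\mathrm{mult}}$ past the pattern $\Phi_H=(\id\otimes m_H)(\Delta_H\otimes \id)$; the comultiplicative one does the dual job for $\Xi_{\mathrm{cop}}$ against the pattern inside $\Delta_\bowtie$; and the action--coaction compatibility swaps $\Xi_{\mathrm{mult}}$ with $\Xi_{\mathrm{cop}}$ across an intervening $m_H$ or $m_K$ arising between two $\Phi_\bowtie$'s. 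Combined with the RPEs for $\Phi_H$ and $\Phi_K$, which reorganise the purely $H$- or $K$-legs of the expressions, these three rewrites are precisely what is required to match the two sides.

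The main obstacle will be the bookkeeping. After applying $\Delta_H$ and $\Delta_K$ once in every tensor slot of $(H\otimes K)^{\otimes 3}$ one works with six copies of $H$ and six of $K$ in a specific interleaved order, and the three axioms must be applied in the correct sequence and at the correct legs for the reduction to go through. A string-diagrammatic presentation, analogous to the Baaj--Skandalis leg notation for matched pairs of multiplicative unitaries (with $\Phi_H$ and $\Phi_K$ living on distinguished strands and $\Xi_{\mathrm{mult}}, \Xi_{\mathrm{cop}}$ appearing as braiding-like crossings between $H$- and $K$-strands), makes the three mixed pentagons visible as local sliding moves and turns the verification into a planar reduction. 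Once this diagrammatic setup is in place, each individual application of an axiom is routine.
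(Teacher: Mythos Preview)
Your proposal is correct and follows precisely the approach the paper takes: the paper simply states that the proposition ``can be verified with a direct computation'' and gives no further details, so your outlined direct verification---expanding $\Phi_\bowtie$ explicitly, then reducing $(\Phi_\bowtie)_{12}(\Phi_\bowtie)_{13}(\Phi_\bowtie)_{23}$ and $(\Phi_\bowtie)_{23}(\Phi_\bowtie)_{12}$ using the three mixed pentagon axioms together with the RPEs for $\Phi_H$ and $\Phi_K$---is exactly what is intended. Your observation that one cannot shortcut via \Cref{from bialg to PE} (because the needed counit compatibilities for $\Xi_{\mathrm{mult}}$ and $\Xi_{\mathrm{cop}}$ are not assumed) and your suggestion to organise the bookkeeping diagrammatically are both apt.
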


Note that if we one would write 
$$
\Xi_{\mathrm{cop}}(h_{(2)}\otimes a_{(1)})=a^{(0)}\otimes h_{(2)}^{\sharp} \,
\text{ and }\,
\Xi_{\mathrm{mult}}(a_{(2)}\otimes y)=y^{\flat}\otimes a'
$$
then $\Phi_{\bowtie}$ takes following form:
\begin{equation}\label{form mashed pair sol formal}
\Phi_{\bowtie}\bigl((h\otimes a)\otimes (y\otimes b)\bigr)
=
(h_{(1)}\otimes a^{(0)})
\otimes
\bigl(
(h_{(2)}^{\sharp}y^{\flat})\otimes (a'b)
\bigr).
\end{equation}

\begin{example}
If $(H,K, \triangler, \rho)$ is mashed pair of bialgebras, then one considers
\[
\Xi_{\mathrm{mult}}(a\otimes h)=(a_{(1)}\triangleright h_{(1)})\otimes a_{(2)}
\text{ and }
\Xi_{\mathrm{cop}}(h\otimes a)=a_K\otimes h\,a_H.
\]
In that case, the relations from \Cref{def:mp_davydov} correspond to the matched-pair axioms. Moreover the formula \eqref{form mashed pair sol formal} for $\Phi_{\bowtie}$ exactly becomes the solution $\Phi_{H \bowtie K}$ from \Cref{solutions of hopf mashed pair}.
\end{example}

\section{On relation between positive basis property and set-theoretic solutions}\label{section positive basis}

\noindent {\it Convention:} In this section $k$ will any field of characteristic $0$.\smallskip

For such fields one can speak about positivity of scalars, e.g by viewing $k$ as subfield of $\C$.  Recall, cf \cite{zbMATH01616308}, that a Hopf algebra $H$ is said to have {\it the positive basis property} if it has a basis $B$ for which the structure constants of all structure maps are positive, i.e.:
\begin{enumerate}
    \item the coordinates of the unit $1$ are nonnegative;
    \item the coordinates of the counit $\epsilon$ (with respect to the dual basis $B^{\ast}$) are nonnegative, i.e. $\epsilon(b)\geq 0$, for any $b\in B$;
    \item for any $b_1, b_2\in B$, the coordinates of $b_1 b_2$ are nonnegative;
    \item for any $b\in B$, the coordinates of $\D(b)$ with respect to the tensor product basis $B \otimes B$  are nonnegative;
    \item for any $b\in B$, the coordinates of the antipode $S(b)$ are nonnegative.
\end{enumerate}

If all structural constants are positive except those for the antipode, then $H$ is said to have the \emph{nearly positive basis property}.

In this section we study the relation between Hopf algebras with the positive basis property and set-theoretic solutions. In the finite dimensional setting we show in \Cref{pos basis prop theorem} that such Hopf algebras correspond to finite bijection solutions of the RPE. 

We start with \Cref{section generalitis exotic basis} where we show that any hopf algebra with a $\Phi$-set theoretic basis is close to be nearly positive.

\subsection{Generalities on $\Phi$-set theoretic bases}\label{section generalitis exotic basis}

Now suppose that $\mc{B}$ is a $\Phi$-set theoretic basis of some Hopf algebra H. In particular $\restr{\Phi}{\mc{B} \ot \mc{B}}$ yields a set-theoretic solution of the RPE. Thus it can be written in the form
\begin{equation}\label{eq:psi-circ-B}
\phi_{\mc{B}}(b,c)=\bigl(\psi_c(b),\ c\circ b\bigr).
\end{equation}
where for each $c\in \mc{B}$ the map $\psi_c:\mc{B}\to \mc{B}$ is a permutation and $\circ:\mc{B}\times \mc{B}\to \mc{B}$
is a binary operation. 
Thus, for all $b,c\in B$,
\begin{equation}\label{eq:Phi-psi-circ}
\Phi_H(b\otimes c)=\psi_c(b)\otimes c\circ b.
\end{equation}

\begin{lemma}\label{lem:right-monomial}
Let $\mc{B}$ be a $\Phi$-set theoretic basis of a Hopf algebra $H$.
Then for all $b,c\in \mc{B}$ one has the following:
\begin{enumerate}
    \item[(i)]\label{bc and circ} $bc = \eps(\psi_c(b))\,(c\circ b)$,
    \item[(ii)]\label{b and psi} $\eps(c)b= \eps(c \circ b)\psi_c(b)$,
    \item[(iii)] $\D(b) = \sum_{c \in \mc{B}} \lambda_c \, \eps(c)\eps(c \circ b)^{-1}b \ot c \circ b$ with $1_H = \sum_{c \in \mc{B}} \lambda_c c$ and if $\eps(c\circ b) \neq 0$ whenever $\lambda_c \neq0$,
    \item[(iv)]\label{expression antipode general} $S(\psi_c(b))\,(c\circ b) = \eps(b) c$ and $\eps(c) S(b) (c\circ b) = \eps(c\circ b) \eps( b) c.$. 
\end{enumerate}
\end{lemma}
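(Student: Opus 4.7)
The whole lemma is extracted from a single master identity. By \eqref{eq:Phi-psi-circ}, unpacking the definition $\Phi_H(b\otimes c)=(\id\otimes m)(\Delta\otimes\id)(b\otimes c)$ in Sweedler notation gives, for all $b,c\in\mc{B}$,
\begin{equation}\label{master}
b_{(1)}\otimes b_{(2)}c \;=\; \psi_c(b)\otimes (c\circ b).
\end{equation}
Every item is then obtained by applying an appropriate map to \eqref{master}.

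For (i), apply $\eps\otimes\id$ to \eqref{master}: the left-hand side collapses to $bc$ via the counit axiom, and the right becomes $\eps(\psi_c(b))(c\circ b)$. For (ii), apply $\id\otimes\eps$ to \eqref{master} and use multiplicativity of $\eps$: the left gives $b\,\eps(c)$ and the right $\psi_c(b)\,\eps(c\circ b)$.

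For (iii), the starting point is $\Delta(b)=\Delta(b)(1\otimes 1_H)$ combined with the expansion $1_H=\sum_{c\in\mc{B}}\lambda_c\,c$. Since $\Delta(b)(1\otimes c)=b_{(1)}\otimes b_{(2)}c$, applying \eqref{master} termwise yields
\[
\Delta(b)=\sum_{c\in\mc{B}}\lambda_c\,\psi_c(b)\otimes(c\circ b).
\]
Under the assumption that $\eps(c\circ b)\neq 0$ for all $c$ with $\lambda_c\neq 0$, (ii) can be inverted to give $\psi_c(b)=\eps(c)\eps(c\circ b)^{-1}b$ on each contributing term, producing the stated formula. One subtle point worth flagging in the proof is that whenever $\eps(c\circ b)\neq 0$ the identity (ii) forces $\eps(c)\neq 0$ (since $\psi_c(b)\in\mc{B}$ is nonzero), so the factors $\eps(c)\eps(c\circ b)^{-1}$ are genuinely nonzero scalars.

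For (iv), apply $m\circ(S\otimes\id)$ to both sides of \eqref{master}. The left-hand side becomes $S(b_{(1)})b_{(2)}c=\eps(b)\,c$ by the defining property of the antipode, while the right-hand side is $S(\psi_c(b))(c\circ b)$, giving the first formula. For the second formula, substitute $\eps(c\circ b)\psi_c(b)=\eps(c)b$ from (ii): multiplying the first formula of (iv) by $\eps(c\circ b)$ and using $S$-linearity transforms $\eps(c\circ b)S(\psi_c(b))(c\circ b)=S(\eps(c\circ b)\psi_c(b))(c\circ b)=\eps(c)S(b)(c\circ b)$, and the right-hand side becomes $\eps(b)\eps(c\circ b)c$. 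When $\eps(c\circ b)=0$, both sides of the second identity vanish: the left because (ii) then forces $\eps(c)=0$, and the right by inspection. No step is particularly delicate; the only point requiring care is the bookkeeping of zeros when inverting (ii) in (iii) and (iv).
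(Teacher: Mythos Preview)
Your proof is correct and follows essentially the same route as the paper: both extract all four items from the single identity $b_{(1)}\otimes b_{(2)}c=\psi_c(b)\otimes(c\circ b)$ by applying $\eps\otimes\id$, $\id\otimes\eps$, evaluation at $c=1_H$, and $m\circ(S\otimes\id)$ respectively, then use (ii) to rewrite (iii) and the second half of (iv). Your extra remarks on the vanishing cases in (iii) and (iv) are sound and slightly more careful than the paper's treatment, but the underlying argument is the same.
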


The expression for the (co-)multiplication in \Cref{lem:right-monomial} directly imply the following.

\begin{corollary}\label{nearly positive}
Let $H$ be a Hopf algebra and $\mc{B}$ a $\Phi$-set theoretic basis of $H$. Suppose $\mc{B}$ satisfies positivity with respect to the unit and co-unit, i.e. $\eps(b) \geq 0$ for all $b \in \mc{B}$ and $\lambda_c \geq 0$ in the decomposition $1_H = \sum_{c \in \mc{B}} \lambda_c c$. Then $H$ has the nearly positive basis property.
\end{corollary}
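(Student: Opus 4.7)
The plan is to verify conditions (1)--(4) of the positive basis property one by one; condition (5), positivity of the antipode, is exactly what the \emph{nearly} positive basis property drops, so it need not be treated. Conditions (1) and (2) are the two hypotheses on $1_H$ and on $\eps$ respectively, so nothing is to prove there.

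For condition (3), positivity of the multiplication, I would invoke \Cref{lem:right-monomial}(i), which gives $bc=\eps(\psi_c(b))\,(c\circ b)$ for all $b,c\in \mc{B}$. Since $\psi_c(b)$ lies in $\mc{B}$, the coefficient $\eps(\psi_c(b))$ is nonnegative by hypothesis. Thus every product of two basis elements is a nonnegative scalar multiple of a single basis element, and all structural constants of $m$ in the basis $\mc{B}$ are nonnegative (with at most one nonzero coefficient per product).

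For condition (4), positivity of the comultiplication, I would substitute into the formula from \Cref{lem:right-monomial}(iii), namely
\[
\D(b)=\sum_{c\in \mc{B}} \lambda_c\,\eps(c)\,\eps(c\circ b)^{-1}\, b\otimes (c\circ b).
\]
Each $\lambda_c$ and each $\eps(c)$ is nonnegative by hypothesis, and since a nonnegative scalar is either zero or strictly positive, the factor $\eps(c\circ b)^{-1}$ is positive wherever it contributes. Hence every coefficient appearing on a pure tensor $b\otimes(c\circ b)$ is nonnegative.

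The main subtlety is the side condition attached to \Cref{lem:right-monomial}(iii): the formula is asserted only under the assumption that $\eps(c\circ b)\neq 0$ whenever $\lambda_c\neq 0$. To handle this uniformly, I would invoke \Cref{lem:right-monomial}(ii), which reads $\eps(c)\,b=\eps(c\circ b)\,\psi_c(b)$: if $\lambda_c\ne 0$ but $\eps(c\circ b)=0$, then $\eps(c)\,b=0$ and hence $\eps(c)=0$, so the would-be term drops out with a vanishing coefficient anyway. Discarding those degenerate contributions, the expansion of $\D(b)$ has only nonnegative coefficients in $\mc{B}\otimes\mc{B}$, and the verification of the nearly positive basis property is complete.
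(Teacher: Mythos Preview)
Your argument is essentially the paper's approach (the paper simply cites \Cref{lem:right-monomial} for the (co)multiplication), and conditions (1)--(3) are handled correctly.

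For condition (4), however, your treatment of the degenerate case is not quite right. You argue that when $\lambda_c\neq 0$ and $\eps(c\circ b)=0$ the term ``drops out with a vanishing coefficient''; but the formula in \Cref{lem:right-monomial}(iii) is stated \emph{conditionally}, and in the degenerate case the corresponding summand of $\D(b)$ does not vanish---it is $\lambda_c\,\psi_c(b)\otimes(c\circ b)$, with $\psi_c(b)$ a basis element that need not equal $b$. The clean fix is to use the intermediate identity (visible in the proof of \Cref{lem:right-monomial}(iii))
\[
\D(b)=\Phi_H(b\otimes 1_H)=\sum_{c\in\mc{B}}\lambda_c\,\psi_c(b)\otimes(c\circ b),
\]
which holds unconditionally and expresses $\D(b)$ as a nonnegative combination of pure tensors in $\mc{B}\otimes\mc{B}$, since each $\lambda_c\ge 0$. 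This avoids the $\eps(c\circ b)^{-1}$ issue entirely and is in fact simpler than the route through (iii).
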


It would be interesting to know whether under the conditions of \Cref{nearly positive} the expression \eqref{expression antipode general} also yield positivity for the antipode.

\begin{example}
\Cref{lem:right-monomial} implies that $\Phi_H(b\ot c) = \lambda_1 \lambda_2 b \ot bc $ for some scalars $\lambda_i \in k.$ Hence it is tempting to conclude that $\Phi_H$ is equivalent to the group solution. This is however not the case. As good illustration thereof consider $H = k[G]^*$ with $G$ a finite group and the associated basis $\{ \delta_g \mid g \in G\}.$ In \Cref{sol from dual grp alg} it was verified that $\Phi_{k[G]}(\delta_g \ot \delta_h) = \delta_{gh^{-1}} \ot \delta_h.$ 

The identity \eqref{b and psi} takes the form $\delta_g = \delta_{h,1}^{-1} \delta_{h,1} \delta_{gh^{-1}} = \delta_{h,1} \delta_{gh^{-1}}$. And the equation \eqref{bc and circ} is $\delta_{g,h}\delta_{h} = \delta_{gh^{-1},1} \delta_h$. Both statements are indeed correct and illustrate that the scalars $\lambda_i$ can rewrite expression in a non-expected way.
\end{example}

\begin{proof}[Proof of \Cref{lem:right-monomial}]
We start with the equation $bc = \eps(\psi_c(b))\,(c\circ b)$. For this apply $(\eps\otimes \id)$ to \eqref{eq:Phi-psi-circ}. On the one hand,
\begin{align*}
(\eps\ot \id)\Phi_H(b\ot c)
 & = (\eps\ot \id)(\id\ot m)(\D(b)\ot c) \\
 & = \eps(b_{(1)}) b_{(2)}c \\
&= m\bigl((\eps\ot \id)\D(b)\ot c\bigr)\\
&= bc,
\end{align*} 
using $(\eps\ot \id)\D=\id$.
On the other hand, $(\eps\ot \id)(\psi_c(b)\ot (c\circ b))=\eps(\psi_c(b))\,(c\circ b).$

Now we prove that $b= \eps(c)^{-1}\eps(c \circ b)\psi_c(b)$ in an analogue way. Namely compute $(\id \ot \eps)\Phi_H(b\ot c) = \eps(b_{(2)})\eps(c)b_{(1)}= \eps(c) b$ using that $(\id \ot \eps)\D = \id.$ On the other hand, $(\id \ot \eps)\Phi_H(b\ot c) = \eps(c \circ b) \psi_c(b)$, yielding the equation. \medskip

Next write $1_H = \sum_{c \in \mc{B}} \lambda_c c$. Then 
$$\D(b) = \Phi_H(b\ot 1_H) = \sum_{c \in \mc{B}} \lambda_c \,\psi_c(b)\otimes c\circ b = \sum_{c \in \mc{B}} \lambda_c \, \eps(c)\eps(c \circ b)^{-1}b \ot c \circ b.$$

Finally we consider the antipode. For this note that 
$$m(S \ot 1_H)\Phi_H(b \ot c) = S(b_{(1)})b_{(2)}c = \eps(b)c,$$ 
using the definition of $\Phi_H$ and the convolution rule $m(S\ot id)\D= \eps$. Now by \eqref{eq:Phi-psi-circ} and the second item, we also have $m(S \ot 1_H)\Phi_H(b \ot c)= S(\psi_c(b))\, (c \circ b).$ Therefore if we compare and use the second item,
$$\eps(c) S(b) (c\circ b) =  \eps(c\circ b) S(\psi_c(b)) (c\circ b) = \eps(c\circ b) \eps( b) c.$$
\end{proof}

\subsection{Finite dimensional Hopf algebras have positive basis}

\subsubsection{Main result and consequences}

The first main result of this section is the following.
\begin{theorem}\label{pos basis prop theorem}
Let $(S,s)$ be a finite bijective solution to RPE. Then $H_{\ell}(s)$ and $H_{r}(s)$ have a basis which is both positive and $\Phi$-set theoretic.
\end{theorem}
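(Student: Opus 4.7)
The plan is to work directly with the spanning sets from \Cref{leftrightinv} and exhibit a basis whose structure constants already lie in $\{0,1\}$. It suffices to treat $H_{\ell}(s)$: the case of $H_r(s)$ follows by applying the duality $H_{\ell}(s)\cong H_r(\tau s^{-1}\tau)$ of \Cref{coefficients PE vs RPE} to the bijective solution $\wt{s}:=\tau s^{-1}\tau$ (for which $\tau \wt{s}^{-1}\tau = s$), and noting that both positivity and the $\Phi$-set theoretic property are preserved under Hopf-algebra isomorphism.

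Extract a basis $\mc{B}_{\ell}$ as follows. Since $\circ$ is associative by \eqref{eq1}, the relation $x\sim^{y} x'\iff y\circ x = y\circ x'$ partitions $S$ into the equivalence classes $\mu(x',y)$. The spanning vectors in \eqref{leftinv} are thus naturally parametrised by pairs $(y,C)$ with $C$ a $\sim^y$-class, giving
\[
b_{y,C}:=\sum_{x\in C}S_{\psi_y(x),x}.
\]
Two pairs $(y,C)$ and $(y',C')$ produce the same vector precisely when $C=C'$ and $\psi_y|_C=\psi_{y'}|_C$; pick one representative per equivalence class to obtain $\mc{B}_{\ell}$. Linear independence is immediate, since for a fixed class $C$ exactly one matrix unit $S_{\bullet,x}$ appears in each column $x\in C$, and matrix units are linearly independent in $\End_k(k[S])$.

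Next, verify the five positivity conditions on $\mc{B}_{\ell}$. The unit $1_{H_\ell}=\sum_{x\in S}S_{x,x}$ decomposes as a $\{0,1\}$-sum over $\mc{B}_{\ell}$ by grouping columns. The counit formula from \Cref{structure maps coefficients hopf algebras} gives $\eps_{\ell}(b_{y',\mu(x',y')}) = \delta_{S_{y'\circ x',y'}}(1_A)$, which equals $1$ iff $y'\circ x'=y'$ and $0$ otherwise. For the product $b_{y_1,C_1}\cdot b_{y_2,C_2}$, I would combine the matrix-unit law $S_{a,x}\,S_{a',x'}=\delta_{x,a'}\,S_{a,x'}$ with the cocycle identities \eqref{eq2}--\eqref{eq3} to re-express surviving cross terms as a single $b_{y_3,C_3}$ (or zero). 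The antipode $S_{\ell}((1\ot a^*)(R))=(1\ot a^*)(R^{-1})$ with $R^{-1}=(s^{-1})^A=\sum S_{xy,x}\ot S_{\theta_x(y),y}$ permutes $\mc{B}_{\ell}$ (up to zero) by using the inverse relations \eqref{eq:psitheta} together with $\psi_{\theta_x(y)}(xy)=x$.

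The central calculation is the coproduct $\D_{\ell}(b_{y,C})=R^{-1}(1\ot b_{y,C})R$ with $R=s^A=\sum S_{\psi_y(x),x}\ot S_{y\circ x,y}$. Expanding yields a sum over $S\times S$ of fourfold matrix-unit products which must be re-organised as a $\{0,1\}$-combination of tensors $b_{y',C'}\ot b_{y'',C''}$. This combinatorial collapse, relying on repeated use of the cocycle identity \eqref{eq3} together with the inverse relations to rule out cross-cancellations, is the main obstacle. Granted this, positivity of $\D_{\ell}$ is established, and the $\Phi$-set theoretic property of $\mc{B}_{\ell}$ then follows immediately: indeed $\Phi_{H_\ell}(b\ot b')=b_{(1)}\ot b_{(2)}b'$ is a pure tensor in $\mc{B}_{\ell}\ot\mc{B}_{\ell}$ by combining the coproduct and multiplication formulas just established.
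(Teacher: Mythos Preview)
Your outline has the right architecture, but several load-bearing steps are either missing or do not go through as stated.

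\textbf{Linear independence is not immediate.} Your argument says that for a fixed class $C$, each $b_{y,C}$ contributes exactly one matrix unit per column $x\in C$. True, but this does not give linear independence of the family $\{b_{y,C}\}_y$: for distinct representatives $y_1,y_2$ you only know $\psi_{y_1}|_C\neq \psi_{y_2}|_C$, i.e.\ they differ at \emph{some} $x\in C$, not at \emph{every} $x$. The paper closes this gap via the structural results of \Cref{prop:structureSs} (imported from \cite{COvA,CJK}): each $\psi_y$ is either the identity or fixed-point free, and $\{\psi_y\}$ is a group, so $\psi_{y_1}(x)=\psi_{y_2}(x)$ for one $x$ forces $\psi_{y_1}=\psi_{y_2}$. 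Without this, your candidate set is a spanning set with distinct elements, which need not be a basis. The paper in fact does not argue independence directly: it computes $\dim H_r(s)$ via the coinvariants $A^{co\text{-}inv}$ (\Cref{lemma:co-invariants}) and matches it against the cardinality of the spanning set.

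\textbf{The coproduct computation is not done.} You flag the expansion of $\D_\ell(b_{y,C})=R^{-1}(1\ot b_{y,C})R$ as the ``main obstacle'' and then write ``Granted this''. This is the heart of the theorem. In the paper's computation (carried out for $H_r$, display \eqref{pos decomp coproduct}) the fourfold sum collapses only after repeated use of the inverse relations \emph{and} the group structure on $\Psi=\{\psi_x\}$ from \Cref{prop:structureSs}: one needs that $\psi_{y'}\psi_b^{-1}$ is again some $\psi_z$ to recognise the summands as basis elements. The identities \eqref{eq2}--\eqref{eq3} alone are not enough.

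\textbf{$\Phi$-set theoretic does not follow from separate positivity.} Even if $\D(b)=\sum_i b_i'\ot b_i''$ and each $b_i'' b'$ is either $0$ or in $\mc{B}_\ell$, the sum $\Phi(b\ot b')=\sum_i b_i'\ot b_i'' b'$ is a pure tensor only if exactly one term survives. This requires an extra argument; in the paper it is a separate computation (display \eqref{expression phi-set theoretic}), again relying on the group structure of $\Psi$.

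In short, the missing ingredient throughout is \Cref{prop:structureSs}: the left-group decomposition $S\cong E\times G$, the fixed-point-free dichotomy, and the fact that $\Psi$ is a group. You should invoke these explicitly; they are what turn the combinatorial sums into single basis elements.
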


The basis alluded to in \Cref{pos basis prop theorem} will be refinement of the generating set in \Cref{leftrightinv}. 

Recall that  \Cref{reconstruction fd RPE} gives a correspondence between finite dimensional Hopf algebras and coefficient algebras of RPE solutions. Therefore we obtain as a consequence of \Cref{pos basis prop theorem} that having the positive basis property and having a $\Phi$-set theoretic basis are equivalent in the finite dimensional setting.

In \cite{zbMATH01616308} finite dimensional Hopf algebras over the complexes $\C$ with the positive basis property have been classified. It was namely shown that they are all isomorphic to a bicrossed product Hopf algebra $k[B]^* \bowtie k[N]$ for some smashed pair of finite groups $(B,N, \trianglel,\triangler)$. More precisely, they show that given a positive basis $\mc{B}$ of $H$, that one can rescale the basis elements so that $\mc{B}$ corresponds to a group $G$ with a factorisation $G = B.N$, uniquely determined by $H$. The existence of such rescaling, obtained in \cite[Section 4]{zbMATH01616308}, however really makes use of the complex numbers in order to be able to work analytically on the spaces of bialgebra structures on a fixed vector space. Now, using the above together with the classification of finite bijective set-theoretic solutions \cite{COvA} and \Cref{Hopf op grp and dual solution} we obtain that their classification also holds over arbitrary fields of characteristic $0$. 

In summary, we have following generalization of \cite[Theorem 1]{zbMATH01616308}.

\begin{corollary}\label{coro set basis iff positive}
    Let $H$ be a finite dimensional Hopf $k$-algebra with $\Char(k)=0$. Then the following are equivalent:
    \begin{itemize}
    \item $H$ has a $\Phi$-set theoretic basis,
    \item $H$ has the positive basis property,
    \item $H \cong k[B]^* \bowtie k[N]$ for a mashed pair of finite groups $(B,N, \trianglel,\triangler)$.
    \end{itemize}
    Moreover, if above holds, then the mashed pair is uniquely determined.
\end{corollary}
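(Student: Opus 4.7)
I would establish the three-way equivalence through the cycle (a)$\Leftrightarrow$(c) and (c)$\Leftrightarrow$(b), the main obstacle being (b)$\Rightarrow$(c) over an arbitrary characteristic zero field, since Lu--Yan--Zhu's analytic rescaling argument genuinely requires $k=\C$.

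For (a)$\Rightarrow$(c), starting from a $\Phi$-set theoretic basis $\mc{B}$ of $H$, the restriction $\phi_\mc{B}:=\restr{\Phi_H}{\mc{B}\times\mc{B}}$ is a finite bijective set-theoretic RPE solution. By the COvA classification it decomposes as $\phi_\mc{B}\cong\phi_{A\bowtie N}\times 1_X$ for a matched pair of finite groups $(A,N,\trianglel,\triangler)$ and a trivial factor set $X$. Since $\Phi_H$ coincides with the linearisation $\phi_\mc{B}^v$, the Davydov--Militaru reconstruction \Cref{reconstruction fd RPE} gives $H\cong H_r(\Phi_H)\cong H_r(\phi_\mc{B}^v)$; absorbing the trivial factor via \Cref{coefficients PE vs RPE} and \Cref{Vs sol versus Hopf alg} reduces this to $H_r(\phi_{A\bowtie N}^v)$. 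A second application of reconstruction, together with \Cref{solutions of hopf mashed pair}---which exhibits $\{\delta_a\#u\}$ as a $\Phi$-set theoretic basis of $k[A]^*\bowtie k[N]$ whose associated set-theoretic solution is precisely $\phi_{A\bowtie N}$---identifies $H\cong k[A]^*\bowtie k[N]$, yielding (c).

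Both (c)$\Rightarrow$(a) and (c)$\Rightarrow$(b) follow by direct inspection: \Cref{solutions of hopf mashed pair} shows that the canonical basis $\{\delta_b\#u:b\in B, u\in N\}$ is $\Phi$-set theoretic, while the unit, (co)multiplication and counit formulas of \Cref{example matched pair groups}, together with the antipode $S(h\#a)=(1\#S_K(a_K))(S_H(h\,a_H)\#1)$, all have structure constants in $\{0,1\}$ on this basis; in particular the basis is positive.

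The implication (b)$\Rightarrow$(c) is the main obstacle. My plan is to first prove (b)$\Rightarrow$(a): given a positive basis $B$, exploit the combinatorial rigidity enforced by positivity to produce (possibly after a diagonal rescaling with scalars in $k$) a $\Phi$-set theoretic basis, after which (a)$\Rightarrow$(c) concludes. Writing $\D(b)=\sum_{c,d} \lambda^b_{cd}\,c\otimes d$ with $\lambda^b_{cd}\geq 0$ and combining with the counit axioms under the assumption $\eps\geq 0$ forces $\lambda^b_{cd}=0$ whenever $d\neq b$ and $\eps(c)>0$, and symmetrically; analogous sparsity propagates through the product into $\Phi_H(b\otimes c)=\sum \lambda^b_{c'd'}\,c'\otimes d'c$. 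Together with bijectivity of $\Phi_H$, these restrictions should pin each $\Phi_H(b\otimes c)$ down to a single basis tensor after rescaling. A fallback is to extend scalars to $\C$, apply Lu--Yan--Zhu to $H\otimes_k\C$, and Galois-descend the matched pair structure. For uniqueness of $(B,N)$, both sets are intrinsic to $H$: $B$ parametrises the primitive idempotents in the basis (the elements $\delta_b\#1_N$), while $N$ is recovered as the group of grouplike elements of the quotient $H/I$, where $I$ is the augmentation ideal of the commutative subalgebra $k[B]^*\subseteq H$.
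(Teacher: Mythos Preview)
Your cycle (a)$\Leftrightarrow$(c) and (c)$\Rightarrow$(b) is correct and matches the paper exactly: COvA classifies the set-theoretic solution $\phi_{\mc B}$, Davydov--Militaru reconstruction identifies $H$ with $H_r(\phi_{\mc B}^v)$ (here $V_H=k\cdot 1_H$, so there is no trivial tensor factor at the Hopf level), and \Cref{solutions of hopf mashed pair} together with \Cref{Hopf op grp and dual solution} compute this coefficient algebra as a bicrossed product. The paper adds one shortcut you omit: (a)$\Rightarrow$(b) is obtained directly from \Cref{pos basis prop theorem}, whose explicit basis of $H_r(s)$ is proved to be \emph{simultaneously} positive and $\Phi$-set theoretic, so once $H\cong H_r(\phi_{\mc B}^v)$ you are done without passing through (c).

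You are right that (b)$\Rightarrow$(c) is the genuine obstacle, and you are more candid about it than the paper. The paper's surrounding text asserts that Theorem~A together with COvA yields a new proof of Lu--Yan--Zhu over any characteristic~$0$ field, but those ingredients only give (a)$\Rightarrow$(b) and (a)$\Leftrightarrow$(c); none of them produces a $\Phi$-set theoretic basis starting from a merely positive one. The paper's implicit route is your fallback: the convention at the head of Section~5 fixes an embedding $k\hookrightarrow\C$ so that positivity makes sense, one applies LYZ to $H\otimes_k\C$, and one needs a descent to $k$ that neither you nor the paper writes down. Your proposed combinatorial attack on (b)$\Rightarrow$(a) is only a plan: the sparsity constraints from $\eps\ge 0$ are real, but bijectivity of $\Phi_H$ is a linear condition on all of $H\otimes H$ and does not by itself force each $\Phi_H(b\otimes c)$ to collapse to a single basis tensor after rescaling; that step is the whole content of LYZ's analytic argument and is not bypassed here.

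One small correction on uniqueness: the elements $1\# u$ are grouplike in $k[B]^*\bowtie k[N]$ only when the left $B$-action on $N$ is trivial, so $N$ is not the group of grouplikes of $H$ in general. Passing to the quotient $H/H\,(k[B]^*)^+$ is the right idea, but you should verify that $k[B]^*$ is a normal sub-Hopf-algebra and that the quotient is $k[N]$; the paper itself defers uniqueness to LYZ rather than giving an intrinsic characterisation.
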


Combined with the content of \Cref{section generalitis exotic basis} we expect that the existence of a $\Phi$-set theoretic basis always imply the existence of a (potentially different) positive basis. However we do not expect the converse, because there exists examples of nearly positive Hopf algebras without a $\Phi$-set theoretic basis, such as the universal enveloping $U(\mathfrak{g})$ of a finite dimensional Lie algebra $\mathfrak{g}$, see \Cref{no set for Lie}. 

\begin{conjecture}\label{conjecture on positive basis}
Let $H$ be a Hopf $k$-algebra with $\Char(k)=0.$ If $H$ has a $\Phi$-set theoretic basis, then $H$ has the positive basis property.
\end{conjecture}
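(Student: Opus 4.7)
The plan is to show that a $\Phi$-set theoretic basis $\mc{B}$ of $H$ can be rescaled element-wise to a positive basis, by systematically exploiting the four identities of \Cref{lem:right-monomial} and then treating the antipode as a separate step. Write $\phi_\mc{B}(b,c) = (\psi_c(b),\, c\circ b)$. The key opening observation is that identity (ii), $\eps(c)\,b = \eps(c\circ b)\,\psi_c(b)$, forces $\psi_c(b) = b$ and $\eps(c) = \eps(c\circ b)$ as soon as $\eps(c) \neq 0$: indeed, two basis elements that are proportional must coincide. Plugged into (i) this gives $bc = \eps(b)(c\circ b)$ on the subset $\mc{B}_+ := \{b \in \mc{B} \mid \eps(b) \neq 0\}$, and the rescaling $\widetilde{b} := b/\eps(b)$ produces $\widetilde{b}\,\widetilde{c} = \widetilde{c\circ b}$ together with $\eps(\widetilde{b}) = 1$, so that multiplication and counit acquire structure constants in $\{0,1\}$.

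For the comultiplication, identity (iii) rewritten in the rescaled basis becomes $\D(\widetilde{b}) = \sum_{c} \mu_c\, \widetilde{b} \otimes \widetilde{c\circ b}$ with $\mu_c$ determined by the decomposition $1_H = \sum_c \lambda_c c$; positivity amounts to $\lambda_c \geq 0$. The relation $1_H \cdot b = b$ combined with the multiplicative formula above should force the non-vanishing $\lambda_c$ to act as orthogonal idempotents on $\mc{B}_+$, so that after a further normalisation one obtains $\lambda_c \in \{0,1\}$ and \Cref{nearly positive} delivers the nearly positive basis property. The antipode is then handled via identity (iv), which in the rescaled basis reads $S(\widetilde{b})(c\circ b) = c$; since $\phi_\mc{B}$ is bijective, the map $c \mapsto c \circ b$ is a bijection of $\mc{B}_+$, so one can solve this equation to express $S(\widetilde{b})$ as a single element of $\mc{B}_+$, giving antipode positivity.

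The principal obstacle is the possibly non-empty complement $\mc{B}_0 := \{b \in \mc{B} \mid \eps(b) = 0\}$, on which the rescaling strategy breaks down and the identities of \Cref{lem:right-monomial} degenerate. From (i)--(ii) one deduces only that $\mc{B}_0 \cdot \mc{B}_+ = 0$ and that $\mc{B}_0$ is closed under the second operation, suggesting that $\Span_k \mc{B}_0$ forms a one-sided ideal which must be removable by a base change preserving the $\Phi$-set theoretic property. In the finite dimensional case this is automatically delivered by \Cref{coro set basis iff positive} via the bicrossed product classification of \cite{zbMATH01616308}, but in infinite dimension no analogous classification is available, and this combinatorial absorption is the crux of the conjecture. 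A secondary subtlety is that $1_H = \sum_c \lambda_c c$ is a finite sum; if the support required for positivity is too restrictive, one may need to reformulate the statement within the multiplier Hopf algebra framework of \Cref{background multiplier}, and argue that the "true" orthogonal idempotent decomposition of the unit lives in a multiplier completion rather than in $H$ itself.
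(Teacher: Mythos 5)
The statement you are trying to prove is stated in the paper as \Cref{conjecture on positive basis}, i.e.\ as an open conjecture: the paper offers no proof of it, only the finite-dimensional case (via \Cref{pos basis prop theorem} and \Cref{coro set basis iff positive}) and the weaker \Cref{nearly positive}, which yields the \emph{nearly} positive basis property under the extra hypotheses $\eps(b)\ge 0$ and $\lambda_c\ge 0$. Your proposal, read as a proof, therefore has a genuine gap by construction, and to your credit you say so yourself: the treatment of $\mc{B}_0=\{b\in\mc{B}\mid \eps(b)=0\}$ and of the antipode is left as "should force'' and "the crux of the conjecture''. What you do establish correctly is essentially the content of \Cref{lem:right-monomial} and \Cref{nearly positive}: from identity (ii), $\eps(c)\neq 0$ forces $\psi_c(b)=b$ and $\eps(c\circ b)=\eps(c)$, and the rescaling $\widetilde b=b/\eps(b)$ then normalises multiplication and counit on $\mc{B}_+$. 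That part is sound and matches the paper's Section on generalities of $\Phi$-set theoretic bases.

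Two of your concrete steps are, however, off even as a plan. First, the antipode: from $S(\widetilde b)\,(c\circ b)=\widetilde c$ you cannot "solve for $S(\widetilde b)$'' unless $c\circ b$ is invertible in $H$; in the model examples ($H=k[B]^*\bowtie k[N]$ with basis $\delta_s\# u$, or $k[G]^*$ with basis $\{\delta_g\}$) the element $c\circ b$ is a non-invertible idempotent, so this equation underdetermines $S(\widetilde b)$. The paper explicitly leaves antipode positivity open even under the hypotheses of \Cref{nearly positive}. Second, your picture of $\mc{B}_0$ as a one-sided ideal "to be removed by a base change'' is not the right one: in every known positive basis of $k[B]^*\bowtie k[N]$ one has $\eps(\delta_s\# u)=\delta_{s,1_B}$, so \emph{most} basis elements lie in $\mc{B}_0$ and must stay there; positivity requires $\eps(b)\ge 0$, not $\eps(b)\neq 0$. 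The actual difficulty is that on $\mc{B}_0$ identity (ii) degenerates to $0=0$ and gives no constraint on how to rescale, so the normalisation is underdetermined exactly where it matters. This is the rescaling problem that Lu--Yan--Zhu solve in finite dimension by analytic arguments on the variety of bialgebra structures, and for which no infinite-dimensional substitute is known. In short: your sketch faithfully reproduces the paper's partial results but does not prove the conjecture, and the paper does not either.
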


\subsubsection{Recollection of some structural results}

To obtain a precise basis of the coefficient algebras, we will use some recent result by Colazzo-Okninski-Van Antwerpen \cite[Proposition 2.4]{COvA} and Colazzo-Jespers-Kubat \cite[Lemma 2.4]{CJK}. 

\begin{proposition}\label{prop:structureSs} 
    Let $(S,s)$ be a finite bijective solution to the RPE. Then $(S, \circ)$ is a left group, i.e. there exist a set $E$ and a group $G$ such that $S=E\times G$ and 
    $$(e,g)\circ (f,h) = (e,g\circ h) \qquad \forall e,f\in E, g,h\in G.$$
    Moreover, for every $x\in S$, the map $\psi_x$ is bijective and  either $\psi_x=\id_S$ or $\psi_x$ is fixed-point free.
    Finally the set $$\Psi=\left\{\left. \psi_x \right| x\in S\right\} = \left\{\left. \psi_{(e,1)} \right| e\in E \right\}$$ is a group (under composition).
\end{proposition}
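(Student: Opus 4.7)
The plan is to handle the three conclusions in order, each extracting information from the expanded pentagon equations \eqref{eq1}--\eqref{eq3} together with the four inverse relations following from $s\circ s^{-1}=\id$.

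\textbf{Left group structure.} Equation \eqref{eq1} immediately gives associativity of $\circ$, so $(S,\circ)$ is a finite semigroup. I would then verify the classical characterisation of a finite left group: right simple with an idempotent. Right simplicity, i.e.\ $S\circ y = S$ for every $y$, amounts to showing that the map $x\mapsto y\circ x$ is surjective for each fixed $y$; this follows by combining the surjectivity of $s$ with the inverse relation $\psi_y(x)(y\circ x)=x$, which shows each element of $S$ arises as a value $y\circ x$. To upgrade to the explicit decomposition $S=E\times G$ with left-zero multiplication on $E$, analyse idempotents: for idempotents $e,f\in(S,\circ)$, a substitution in \eqref{eq2} forces $e\circ f=e$, so the set $E$ of idempotents forms a left-zero band, while any maximal subgroup provides the factor $G$.

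\textbf{Bijectivity and dichotomy of the $\psi_x$.} Rewriting \eqref{eq3} as
\[
\psi_{\psi_z(y)}\circ\psi_{z\circ y}\;=\;\psi_y,
\]
one sees immediately that $\Psi=\{\psi_x\mid x\in S\}$ is closed under composition: for any $u,v\in S$, the bijectivity of $s$ produces a unique $(y,z)$ with $s(y,z)=(u,v)$, and then $\psi_u\circ\psi_v=\psi_y\in\Psi$. The inverse relation \eqref{eq:psitheta} provides a functional right inverse to each $\psi_x$, so by finiteness of $S$ each $\psi_x$ is a bijection. For the dichotomy, suppose $\psi_x(a)=a$. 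Specialising \eqref{eq3} to $y=a$, $z=x$ gives $\psi_a\circ\psi_{x\circ a}=\psi_a$; bijectivity of $\psi_a$ forces $\psi_{x\circ a}=\id_S$. Feeding this back into \eqref{eq2} with $y=x\circ a$ and using the decomposition $S=E\times G$ from Step~1, one propagates the triviality across the $E$-component of $x$, yielding $\psi_x=\id_S$.

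\textbf{Group structure on $\Psi$.} Closure under composition combined with bijectivity from Step~2 makes $\Psi$ a finite subsemigroup of $\mathrm{Sym}(S)$, hence automatically a group. The identification $\Psi=\{\psi_{(e,1_G)}\mid e\in E\}$ follows from the decomposition of Step~1 by applying \eqref{eq3} with the second argument varying only in the $G$-coordinate; this should show $\psi_{(e,g)}$ is independent of $g$. The main obstacle I anticipate is the first step: extracting enough structural cancellativity out of the pentagon relations to invoke the classical semigroup structure theorem. The crucial lever is the compatibility \eqref{eq2} between $\psi$ and $\circ$, as this is precisely what forces the idempotents of $(S,\circ)$ to organise into a left-zero band rather than a more general rectangular band.
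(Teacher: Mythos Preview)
The paper's own proof is entirely by citation --- it invokes \cite[Propositions 2.4, 2.10, 2.11]{COvA} and \cite[Lemma 2.4]{CJK} for each clause and only adds a one-line closure argument for $\Psi$ --- so there is no in-paper argument to match; you are effectively attempting to reprove those external results directly.

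The main gap is Step~1. You write ``$S\circ y=S$ \ldots amounts to showing that the map $x\mapsto y\circ x$ is surjective'', but $S\circ y$ is the image of $x\mapsto x\circ y$, not of $x\mapsto y\circ x$; in a left group with $|E|>1$ the latter map is \emph{never} surjective (its image is $\{e_y\}\times G$), so if your argument worked it would prove too much. What is actually needed is surjectivity of $x\mapsto x\circ y$ for each fixed $y$, and neither bijectivity of $s$ nor the relation $\psi_y(x)\cdot(y\circ x)=x$ yields this: they only say every element arises as $y'\circ x'$ for \emph{some} pair, not with a prescribed second argument. This is precisely the non-trivial content of \cite[Proposition 2.4]{COvA}. (I also do not see how \eqref{eq2} alone forces $e\circ f=e$ for idempotents.) A parallel gap sits in Step~2: relation \eqref{eq:psitheta}, namely $\psi_{\theta_x(y)}(xy)=x$, produces a right inverse to a \emph{specific} $\psi_z$ only once you can solve $\theta_x(y)=z$ for $y$, i.e.\ once each $\theta_x$ is already known to be surjective --- the same flavour of statement you are trying to establish for $\psi$. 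Without bijectivity of the $\psi_x$ you cannot place $\Psi$ inside $\mathrm{Sym}(S)$, so the ``finite subsemigroup of a group is a group'' shortcut in Step~3 has no footing. Your dichotomy argument and the identification $\Psi=\{\psi_{(e,1)}\}$ are plausible \emph{after} these two inputs are secured, but as written they rest on foundations you have not built.
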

\begin{proof}
    Applying \cite[Proposition 2.4]{COvA} to the set-theoretic solution of the PE given by $t=\tau s \tau$, we obtain that $(S,\circ)$ is a left group.

    Next, by \cite[Proposition 2.10]{COvA} applied to $(S,t)$, the map $\psi_x$ is bijective for every $x\in S$. Combining \cite[Proposition 2.11]{COvA} with \cite[Lemma~2.4]{CJK}, we conclude that for each $x\in S$ either $\psi_x=\id_S$ or $\psi_x$ is fixed-point free.

    Finally, using \eqref{eq3} and the bijectivity of $s$, one checks that $\Psi$ is closed under composition. Moreover, $\Psi$ is finite, contains the identity (see \cite[Proposition 2.11]{COvA}), and each element is invertible; hence $\Psi$ is a group. The equality
    $$\Psi=\left\{\left. \psi_x \right| x\in S\right\} = \left\{\left. \psi_{(e,1)} \right| e\in E\right\}$$
    follows from \cite[Proposition 2.11]{COvA}.
\end{proof}

\subsubsection{Construction of the basis}

From now on, since in a left group $E\times G$ the set $E$ coincides with the set of idempotents $E(S,\circ)$, we will identify $e\in E$ with $(e,1)\in S$ and simply write $e$ for $(e,1)$.
Since $S$ is a left group, the group $G$ is isomorphic to $e\circ S$ for every $e \in E(S,\circ)$. As a consequence of the previous proposition, there exists an idempotent, which we denote by $1$, such that $\psi_1=\id_S$. In particular, we identify $G$ with $1\circ S$.

We use \Cref{prop:structureSs} to give a nicer description of the sets $H_{\ell}(s)$ and $H_r(s)$ from \Cref{leftrightinv}.

\begin{corollary}\label{the sets nice via left grp result}
    Let $(S, s)$ be a finite bijective solution to the RPE and $x',y' \in S$. Then:
    $$\nu(x',y')=\left\{y\in S \mid \psi_y=\psi_{y'}\right\} \qquad\text{and}\qquad
    \mu(x',y')=\left\{e\circ x' \mid e\in E(S,\circ)\right\}.$$
\end{corollary}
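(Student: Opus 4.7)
The plan is to treat the two equalities separately, each time reducing to the structural results recalled in \Cref{prop:structureSs}.

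For the description of $\nu(x',y')$, the inclusion $\supseteq$ is immediate from evaluating at $x'$. For the reverse inclusion, suppose $\psi_y(x')=\psi_{y'}(x')$. Since $\Psi=\{\psi_z\mid z\in S\}$ is a group under composition by \Cref{prop:structureSs}, the element $\psi_{y}^{-1}\psi_{y'}$ lies in $\Psi$, and by construction it fixes $x'$. By the dichotomy from \Cref{prop:structureSs}, every element of $\Psi$ is either the identity or fixed-point free; as $\psi_y^{-1}\psi_{y'}$ has a fixed point, it must be $\id_S$. Hence $\psi_y=\psi_{y'}$, which yields $y\in\{z\in S\mid \psi_z=\psi_{y'}\}$.

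For the description of $\mu(x',y')$, I would pass to the left group decomposition $S\cong E\times G$ with $(e,g)\circ(f,h)=(e,g\circ h)$ given by \Cref{prop:structureSs}. Writing $x'=(e',g')$, $x=(e,g)$ and $y'=(f,h)$, the condition $y'\circ x=y'\circ x'$ becomes $(f,h\circ g)=(f,h\circ g')$, which, using left cancellation in the group $G$, reduces to $g=g'$. Therefore $\mu(x',y')=\{(e,g')\mid e\in E\}$, and since $(e,1)\circ(e',g')=(e,g')$ under the identification of $e\in E$ with $(e,1)$, we obtain $\mu(x',y')=\{e\circ x'\mid e\in E(S,\circ)\}$.

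Neither step presents a real obstacle: the main point is that \Cref{prop:structureSs} has already done the structural work, converting the defining conditions of $\nu$ and $\mu$ into, respectively, a fixed-point statement inside the group $\Psi$ and a cancellation statement inside the group component of the left group $S$. A minor subtlety worth flagging is that the description of $\mu(x',y')$ is independent of $y'$, which is exactly what is needed later when this set indexes the basis refinement of $H_\ell(s)$.
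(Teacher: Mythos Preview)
Your proof is correct and follows essentially the same approach as the paper: for $\nu$ you use that $\Psi$ is a group to write $\psi_y^{-1}\psi_{y'}=\psi_t$ and then invoke the fixed-point dichotomy, and for $\mu$ you pass to the $E\times G$ coordinates and use cancellation in $G$. The paper's argument is identical in substance, only differing in notation.
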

\begin{proof}
    Let $x',y'\in S$ and let $y\in \nu(x',y')$. Then $\psi_{y'}(x')=\psi_y(x')$, and since $\psi_y$ is bijective we obtain $\psi_y^{-1}\psi_{y'}(x') = x'$. 
    Since $\Psi=\{\psi_x\mid x\in S\}$ is a group, there exists $t\in S$ such that $\psi_y^{-1}\psi_{y'}=\psi_t$. Hence $\psi_t(x')=x'$, and by \cref{prop:structureSs} it follows that $\psi_t=\id_S$. 
    Therefore $\psi_y=\psi_{y'}$, $\nu(x',y') = \left\{ y \in S\left| \psi_y = \psi_{y'}\right.\right\}$.
    
   Now let $x\in \mu(x',y')$. Then $y'\circ x=y'\circ x'$. Since $(S,\circ)$ is a left group, we may write $S=E\times G$. Write $x=(e_x,g_x)$, $x'=(e_{x'},g_{x'})$, and $y'=(e_{y'},g_{y'})$. Then 
    $y'\circ x=(e_{y'},g_{y'})\circ (e_x,g_x)=(e_{y'},\,g_{y'}g_x)$
    and 
    $y'\circ x' = (e_{y'}, g_{y'}\circ g_{x'})$. Let $e=(e_x,1)\in E(S,\circ)$. Then $e\circ x' = x$. Therefore $x \in \left\{e\circ x' \mid e\in E(S,\circ)\right\}$. The reverse inclusion is immediate from
    $y'\circ (e\circ x')=y'\circ x'$, for all $e\in E(S,\circ)$.
    Hence $\mu(x',y') = \left\{e \circ x'\mid e \in E(S,\circ) \right\}$.
\end{proof}

Note that, by \Cref{the sets nice via left grp result}, the dependence on $x'$ in $\nu(x',y')$ disappears (and similarly the dependence on $y'$ in $\mu(x',y')$ disappears). Therefore, we introduce the notation
$$
\nu_{\psi}(y') = \left\{\, y\in S \mid \psi_y = \psi_{y'} \right\},
\qquad
\mu_{\circ}(x') = \left \{e \circ x' \mid e \in E(S,\circ) \right\}.
$$

Let $(S,s)$ be a finite bijective set-theoretic solution to the RPE. Now by \cite[Proposition 3.4]{COvA} applied to $t = \tau s \tau$,we obtain an equivalence relation $\sim$ called \emph{retraction} on $(S,\circ)$ defined by: $(e,g) \sim (f,h)$ if and only if $\psi_e = \psi_f$ and $g=h$. This clearly restricts to a congruence on the left zero semigroup $E(S,\circ)$ and simplify as $e\sim f$ if and if $\psi_e = \psi_f$.

Let $\bar{E}$ be a set of representatives of $E(S,\circ)/\sim$. Moreover, fix $1\in \bar{E}$ such that
$\psi_{1}=\id_S$, and identify $G$ with $1\circ S\subseteq S$.

We now show that the $\sim$-classes (equivalently, the retract classes) all have the same cardinality.

\begin{lemma}\label{cardinality_of_fibers}
    Let $(S,s)$ be a finite bijective set-theoretic solution of the RPE, written as
    $s(x,y)=(\psi_y(x),\,y\circ x)$. Then the equivalence classes with respect to the retract relation all have the same cardinality.
\end{lemma}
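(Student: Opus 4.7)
The plan is to reduce to the analogous statement for the set of idempotents $E$. Since $S=E\times G$ and the retract relation has classes of the form $[e]_\sim^E\times\{g\}$ (with $[e]_\sim^E$ denoting the $\sim$-class of $e$ inside $E$), it is enough to show that all classes $[e]_\sim^E$ in $E$ have the same cardinality. The strategy is to produce, for any two idempotents $e,e'\in E$, a bijection $[e]_\sim^E\to[\psi_{e'}(e)]_\sim^E$ coming from the restriction of $\psi_{e'}$, and then to check that as $e'$ varies over $E$ the target class exhausts $E/\sim$.

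First I would show that each $\psi_{e'}$ restricts to a bijection of $E$. Applying \eqref{eq2} with $y=x=e$ and $z=e'$, and using that $e'\circ e=e'$ in the left group $(S,\circ)$, gives $\psi_{e'}(e)=\psi_{e'}(e)\circ\psi_{e'}(e)$, so $\psi_{e'}(e)\in E$. Since $\psi_{e'}$ is already a bijection of $S$ by \Cref{prop:structureSs} and $E$ is finite, it restricts to a bijection $E\to E$.

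Next, specialising \eqref{eq3} to $y=e$, $z=e'$ and using again $e'\circ e=e'$, I obtain the key identity
\[
\psi_{\psi_{e'}(e)}=\psi_{e}\,\psi_{e'}^{-1}
\]
in the group $\Psi$. This has two consequences. If $f\sim e$, that is $\psi_f=\psi_e$, the same formula yields $\psi_{\psi_{e'}(f)}=\psi_{\psi_{e'}(e)}$, so $\psi_{e'}$ descends to a map of classes and restricts to a bijection $[e]_\sim^E\to[\psi_{e'}(e)]_\sim^E$: injectivity is inherited from $\psi_{e'}$, and surjectivity follows because the formula uniquely determines $\psi_f$ from $\psi_{\psi_{e'}(f)}$. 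On the other hand, as $e'$ ranges over $E$, the element $\psi_{e'}$ ranges over the whole of $\Psi=\{\psi_e\mid e\in E\}$, so $\psi_e\psi_{e'}^{-1}$ takes every value in $\Psi$.

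Hence, for an arbitrary idempotent $f\in E$, one can choose $e'\in E$ with $\psi_e\psi_{e'}^{-1}=\psi_f$, which by the key identity means $\psi_{e'}(e)\sim f$, i.e.\ $[\psi_{e'}(e)]_\sim^E=[f]_\sim^E$. Composing with the bijection $[e]_\sim^E\to[\psi_{e'}(e)]_\sim^E$ gives $|[e]_\sim^E|=|[f]_\sim^E|$, and the lemma follows. The only step where one has to be attentive is the derivation of the key identity: everything else is bookkeeping once one knows that $\Psi$ is a group and that $e'\circ e=e'$ in the left group structure on $S$, both supplied by \Cref{prop:structureSs}.
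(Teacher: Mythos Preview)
Your proof is correct and follows essentially the same approach as the paper: both reduce to the idempotent set $E$, show that a suitable $\psi_a$ restricts to a bijection of $E$, and use the identity $\psi_{\psi_a(e)}\psi_a=\psi_e$ from \eqref{eq3} (together with $a\circ e=a$) to conclude that $\psi_a$ carries one $\sim$-class bijectively onto another. The only organisational difference is that the paper fixes the two classes first and solves for $a$, whereas you fix $e$ and let $e'$ vary; the content is the same.
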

\begin{proof}
    For $g\in G=1\circ S$ and $e\in E$, we have
    $$[e\circ g]=\{\,f\circ g \mid f\in E,\ \psi_f=\psi_e\,\},$$
    and therefore
    $$|[e\circ g]|=\left|\{\,f\in E \mid \psi_f=\psi_e\,\}\right|=|\nu_{\psi}(e)\cap E|.$$
    Thus it suffices to show that $|\nu_{\psi}(e)\cap E|$ is independent of $e\in E$.
    
    Let $e,f\in E$. By \Cref{prop:structureSs}, the set $\Psi=\{\psi_x\mid x\in E\}$ is a group under composition. Hence
    $\psi_e^{-1}\psi_f\in \Psi$, so there exists $a\in E$ such that
    $$\psi_a=\psi_e^{-1}\psi_f.$$
    Define $\varphi:E\to E$ by $\varphi(b)=\psi_a(b)$.
    We first check that $\varphi$ is well defined. If $b\in E$, then $b=b\circ b$, and using \eqref{eq3} we obtain
    $$\psi_a(b)=\psi_a(b\circ b)=\psi_a(b)\circ \psi_{a\circ b}(b).$$
    Since $a, b\in E$, we have $a\circ b=a$, hence $\psi_{a\circ b}=\psi_a$, and so
    $\psi_a(b)=\psi_a(b)\circ \psi_a(b)$, which shows that $\psi_a(b)\in E$. Thus $\varphi(b)\in E$ and $\varphi$ is well defined. Moreover, $\varphi$ is bijective, with inverse $b\mapsto \psi_a^{-1}(b)$.
    
    Now let $b\in \nu_{\psi}(f)\cap E$, i.e. $\psi_b=\psi_f$. Using \eqref{eq3} and the fact that $a\circ b=a$, we get
    $\psi_{\psi_a(b)}\psi_a= \psi_{\psi_a(b)}\psi_{a\circ b}=\psi_b$.
    Hence
    $\psi_{\psi_a(b)}=\psi_b\psi_a^{-1}
    =\psi_f(\psi_e^{-1}\psi_f)^{-1}
    =\psi_f\psi_f^{-1}\psi_e
    =\psi_e$,
    so $\psi_{\varphi(b)}=\psi_e$, i.e.\ $\varphi(b)\in \nu_{\psi}(e)\cap E$.
    
    Therefore $\varphi$ restricts to a bijection from $\nu_{\psi}(f)\cap E$
    and $ \nu_{\psi}(e)\cap E$,
    and in particular $|\nu_{\psi}(e)\cap E|=|\nu_{\psi}(f)\cap E|$ for all $e,f\in E$. This proves that all retract classes have the same cardinality.
\end{proof}

With this at hand we can now describe a basis of the coefficient algebras.

\begin{theorem}\label{Th basis right coeff}
    Let $(S,s)$ be a finite bijective set-theoretic solution of the RPE. Then
    \begin{align*}
        \left\{\left.\sum_{y\in \nu_{\psi}(y')}S_{y\circ x', y} \ \right|\ y' \in \bar{E},\ x'\in G\right\}.
    \end{align*}
is a $k$-basis of $H_r(s)$, and 
    \begin{align*}
        \left\{\left. \sum_{x\in \mu_{\circ}(x')}S_{\psi_{y'}(x),x} \right|\ y' \in \bar{E},\ x'\in G\right\}
    \end{align*}
is a $k$-basis of $H_{\ell}(s)$.
\end{theorem}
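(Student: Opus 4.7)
My plan is to combine the spanning descriptions from \Cref{leftrightinv} with the structural refinements in \Cref{prop:structureSs,the sets nice via left grp result} to first cut the parameter range down to $\bar E\times G$, and then to obtain linear independence from a disjoint-support argument for $H_r(s)$, deducing the $H_\ell(s)$ case by dimension counting via the duality $H_\ell(s)\cong H_r(s)^*$ recalled from \cite[Theorem 2.1]{Mi04}.

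For the spanning step, \Cref{the sets nice via left grp result} already replaces $\nu(x',y')$ by $\nu_\psi(y')$ and $\mu(x',y')$ by $\mu_\circ(x')$, so only $y'$ (through $\psi_{y'}$) and $x'$ (through its $G$-component) actually matter. For $y'$, \Cref{prop:structureSs} gives $\Psi=\{\psi_e:e\in E\}$; since the restriction of $\sim$ to $E$ is precisely the kernel of $e\mapsto\psi_e$, one may restrict $y'$ to a set of representatives $\bar E$ without changing the spanning set. For $x'$, writing $x'=(e,g)\in E\times G$ and using the left-group formula $(e_y,g_y)\circ(e,g)=(e_y,g_yg)$, the sum $\sum_{y\in\nu_\psi(y')}S_{y\circ x',y}$ depends on $x'$ only through $g$, and similarly $\mu_\circ(x')=E\times\{g\}$ depends only on $g$; thus $x'$ may be taken in $1\circ S\cong G$. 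This produces the claimed spanning sets.

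For linear independence in $H_r(s)$, I plan to show that for distinct $(y_1',x_1'),(y_2',x_2')\in \bar E\times G$, the sums $T^{r}_{y_i',x_i'}:=\sum_{y\in\nu_\psi(y_i')}S_{y\circ x_i',y}$ have pairwise disjoint support in the standard basis $\{S_{a,b}\}_{a,b\in S}$. Indeed, if $S_{a,b}$ occurs in both, then $b\in\nu_\psi(y_1')\cap\nu_\psi(y_2')$ forces $\psi_{y_1'}=\psi_b=\psi_{y_2'}$, and since both $y_i'$ lie in $\bar E$ this forces $y_1'=y_2'$. The equality $a=b\circ x_1'=b\circ x_2'$ then gives, via cancellation in the $G$-component of the left group, $x_1'=x_2'$. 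Hence the $T^{r}_{y',x'}$ form a basis of $H_r(s)$ and $\dim_k H_r(s)=|\bar E|\cdot|G|$.

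The $H_\ell(s)$ case then follows quickly from $H_\ell(s)\cong H_r(s)^{\ast}$ (see \cite[Theorem 2.1]{Mi04} and \cite[Lemma 5.2]{Dav}), which yields $\dim_k H_\ell(s)=|\bar E|\cdot|G|$: the spanning set has at most $|\bar E|\cdot|G|$ elements and spans a space of exactly this dimension, so the parameters must index pairwise distinct and linearly independent vectors. The main obstacle, and the reason for the detour through duality, is that a direct disjoint-support proof for $H_\ell(s)$ would require $\psi_{y_1'}$ and $\psi_{y_2'}$ to differ somewhere on $\mu_\circ(x')=E\times\{g\}$ whenever $y_1'\neq y_2'$ in $\bar E$; this amounts to faithfulness of the $\Psi$-action on each $g$-slice, a property not immediately recorded in \Cref{prop:structureSs} and seemingly more delicate than the dimension route.
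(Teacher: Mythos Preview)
Your proof is correct and takes a genuinely different, more elementary route than the paper for the $H_r(s)$ basis. The paper obtains the dimension of $H_r(s)$ indirectly: it computes the coinvariants $A^{co\text{-}inv}$ (this is \Cref{lemma:co-invariants}, which in turn relies on \Cref{cardinality_of_fibers}), invokes the fundamental theorem of Hopf modules in the guise of the isomorphism $A\cong A^{co\text{-}inv}\otimes H_r(s)$, and deduces $\dim_k H_r(s)\ge |\bar E|\,|G|$; combined with the spanning set of size $\le|\bar E|\,|G|$ this gives equality. Your disjoint-support argument bypasses all of this: once the parameter range is cut to $\bar E\times G$, the observation that the second index $y$ of $S_{y\circ x',y}$ already determines the $\sim$-class of $y'$, and then $y\circ x'$ determines $x'$ by left cancellation in $G$, gives linear independence for free. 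For $H_\ell(s)$ both proofs proceed identically, via $\dim_k H_\ell(s)=\dim_k H_r(s)$ from \cite[Theorem 2.1]{Mi04}. Your approach is shorter and makes \Cref{cardinality_of_fibers} and \Cref{lemma:co-invariants} unnecessary for this theorem; the paper's route, on the other hand, produces the explicit description of $A^{co\text{-}inv}$ as a byproduct, which may be of independent interest but is not used elsewhere in the paper.
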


To prove \Cref{Th basis right coeff} we need to understand first the associated ring of co-invariants. For a general algebra solution $(A,R)$ this is:
$$A^{co-inv} = \left\{a \in A\ \left|\ (1\otimes a) R = 1\otimes a\right.\right\}.$$

\begin{lemma}\label{lemma:co-invariants}
Let $(S,s)$ be a finite bijective set-theoretic solution of the RPE. Then the algebra of left co-invariant is given by 
$$A^{co-inv}=\operatorname{span}_{k}\left\{\left.\sum_{l \in d\circ S}S_{x,l}\ \right|\ x \in S, \, d \in \nu_{\psi}(1)\cap E\right\}.$$

In particular,
$$\dim_k A^{co-inv}\le |S| \frac{|E|}{\bar{E}}.$$
\end{lemma}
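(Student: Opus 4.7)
The plan is to unfold the defining equation $(1\otimes a)\,s^{A}=1\otimes a$ for a generic $a=\sum_{p,q\in S} a_{p,q}\,S_{p,q}\in A=\End_k(k[S])$ and extract linear constraints on the coefficients $a_{p,q}$ by comparing basis expansions. Using $s^{A}=\sum_{x,y\in S} S_{\psi_y(x),x}\otimes S_{y\circ x,y}$ together with $S_{p,q}S_{y\circ x,y}=\delta_{q,\,y\circ x}\,S_{p,y}$, one obtains
\[
(1\otimes a)\,s^{A}=\sum_{x,y,p\in S} a_{p,\,y\circ x}\; S_{\psi_y(x),x}\otimes S_{p,y},
\]
while $1\otimes a=\sum_{v,p,q} a_{p,q}\,S_{v,v}\otimes S_{p,q}$. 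Equating coefficients of $S_{u,v}\otimes S_{p,q}$ yields the system
\[
\delta_{u,\,\psi_q(v)}\; a_{p,\,q\circ v} \;=\; \delta_{u,v}\; a_{p,q} \qquad\text{for all } u,v,p,q\in S.
\]

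Next I would invoke the dichotomy from \Cref{prop:structureSs}: each $\psi_q$ is either $\id_S$ or fixed-point free. Specialising the system to $u=v$ gives $\delta_{v,\,\psi_q(v)}\,a_{p,\,q\circ v}=a_{p,q}$. If $\psi_q\neq \id_S$, then $\psi_q(v)\neq v$ for every $v$, forcing $a_{p,q}=0$. Writing $q=(e,g)\in E\times G$ and noting that $\psi_q$ depends only on the idempotent part $e$, the possibly non-zero coefficients are concentrated on the $q$ whose idempotent part lies in $D:=\nu_{\psi}(1)\cap E$, that is on $q\in\bigsqcup_{d\in D} d\circ S$. For such $q$, with idempotent part $d\in D$, one has $\psi_q=\id_S$, and the constraint at $u=v$ becomes $a_{p,\,q\circ v}=a_{p,q}$. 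Since $q\circ v$ sweeps $d\circ S=\{d\}\times G$ as $v$ varies over $S$, the function $l\mapsto a_{p,l}$ must be constant on each fibre $d\circ S$. A quick check of the case $u\neq v$ produces no additional condition: if $\psi_q=\id_S$ then $\psi_q(v)=v\neq u$ and both sides vanish, while if $\psi_q\neq \id_S$ then $a_{p,q}=0$ already, and $q\circ v$ keeps the same idempotent part as $q$, still outside $D$, so $a_{p,\,q\circ v}=0$ as well.

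Putting this together, a co-invariant $a$ is precisely a $k$-linear combination of the elements $\sum_{l\in d\circ S}S_{p,l}$ with $p\in S$ and $d\in D$, which is the claimed spanning set. For the dimension bound I would invoke \Cref{cardinality_of_fibers}: all $\sim$-classes in $E$ have the same cardinality $|E|/|\bar E|$, so in particular $|D|=|\nu_{\psi}(1)\cap E|=|E|/|\bar E|$. Therefore $\dim_k A^{co-inv}\le |S|\cdot|D|=|S|\,|E|/|\bar E|$. The only delicate point in the argument is the clean decoupling of the system into independent blocks indexed by pairs $(p,d)$; this rests entirely on the identity-or-fixed-point-free alternative from \Cref{prop:structureSs}, without which the equations relating distinct $q$'s could entangle the coefficients in a much less transparent way.
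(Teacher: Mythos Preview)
Your proof is correct and follows essentially the same route as the paper: expand $(1\otimes a)\,s^{A}$, compare coefficients against $1\otimes a$, and invoke the identity-or-fixed-point-free dichotomy from \Cref{prop:structureSs} to kill the columns $q$ with $\psi_q\neq\id_S$ and force constancy of the remaining coefficients along the fibres $d\circ S$.

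One point deserves a sentence of justification. You assert that ``$\psi_q$ depends only on the idempotent part $e$'', but this is neither proved in the paper nor strictly what you use; what you actually need is the equivalence $\psi_{(e,g)}=\id_S \Longleftrightarrow \psi_{(e,1)}=\id_S$. The paper supplies the forward implication explicitly via \eqref{eq3} (taking $z=(e,g)$ and $y=(e,g^{-1})$ gives $\psi_{(e,g^{-1})}\psi_{(e,1)}=\psi_{(e,g^{-1})}$, hence $\psi_{(e,1)}=\id_S$). The reverse implication, which you need for ``$q\circ v$ keeps the same idempotent part, still outside $D$'', follows by the same identity: if $\psi_{(e,1)}=\id_S$, take $z=(e,1)$ and any $y=(f,g)$ in \eqref{eq3} to get $\psi_y\,\psi_{(e,g)}=\psi_y$, whence $\psi_{(e,g)}=\id_S$. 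With this equivalence stated, your argument and the paper's coincide.
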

\begin{proof}
Write $a=\sum_{x,y\in S}\alpha_{x,y}S_{x,y}$. Then
\begin{align*}
    (1\otimes a)R
    &=\left(\sum_{u\in S}S_{u,u}\otimes \sum_{x,y\in S}\alpha_{x,y}S_{x,y}\right)
    \left(\sum_{c,d\in S}S_{\psi_d(c),c}\otimes S_{d\circ c,d}\right)\\
    &=\sum_{u,c,d,x,y\in S}\alpha_{x,y}\, S_{u,u}S_{\psi_d(c),c}\otimes S_{x,y}S_{d\circ c,d}\\
    &=\sum_{c,d,x\in S}\alpha_{x,d\circ c}\, S_{\psi_d(c),c}\otimes S_{x,d}.
\end{align*}
Indeed, $S_{u,u}S_{\psi_d(c),c}=S_{\psi_d(c),c}$ if $u=\psi_d(c)$ and $0$ otherwise, while
$S_{x,y}S_{d\circ c,d}=S_{x,d}$ if $y=d\circ c$ and $0$ otherwise.
Moreover, 
\begin{align*}
    1\otimes a=\left(\sum_{c\in S}S_{c,c}\right)\otimes \left(\sum_{x,d\in S}\alpha_{x,d}S_{x,d}\right)
    =\sum_{c,x,d\in S}\alpha_{x,d}\, S_{c,c}\otimes S_{x,d}.
\end{align*}
Since the elements $\{S_{p,q}\otimes S_{r,s}\}_{p,q,r,s\in S}$ are linearly independent, we compare coefficients. A term $S_{\psi_d(c),c}\otimes S_{x,d}$ can match a term $S_{c',c'}\otimes S_{x',d'}$ if and only if
$c=c'$, $x=x'$, $d=d'$ and $\psi_d(c)= c$. 
By \Cref{prop:structureSs}, for each $d\in S$ either $\psi_d=\id_S$ or $\psi_d$ is fixed-point free; hence
$\psi_d(c)=c$ forces $\psi_d=\id_S$. Therefore, if $\psi_d\neq\id_S$ then all coefficients
$\alpha_{x,d\circ c}$ must vanish (otherwise $(1\otimes a)R$ would contain terms not appearing in $1\otimes a$).
Thus $a$ is a $k$-linear combination of elements of the form
$$\sum_{l\in d\circ S} S_{x,l}
\qquad\text{with}\qquad d\in \nu_{\psi}(1)=\left\{d\in S\mid \psi_d=\id_S\right\}.$$
with $\alpha_{x,d\circ c}\neq 0$ if and only if $\psi_d =\operatorname{id}$, i.e. if and only if $d\in \nu_{\psi}(1)$. 
This yields
\begin{align*}
    A^{\mathrm{co\mbox{-}inv}}
    =\operatorname{span}_{k}\left\{
    \left.\sum_{l \in d\circ S} S_{x,l}\ \right|\ x \in S,\ d \in \nu_{\psi}(1)
    \right\}.
\end{align*}

Since $S=E\times G$ is a left group, if $d=(e,g)$ then $d\circ S=\{e\}\times G$. Let $E_0=\left\{\left. (e,1) \in E(S,\circ) \right| \exists g \in G \colon \psi_{(e,g)}=\id_S\right\}$. So 
\begin{align*}
    A^{\mathrm{co\mbox{-}inv}}
    =\operatorname{span}_{k}\left\{
    \left.\sum_{l \in d\circ S} S_{x,l}\ \right|\ x \in S,\ d \in E_0
    \right\}.
\end{align*}
We claim that $E_0=\nu_{\psi}(1)\cap E=\left\{\left.e \in E(S,\circ)\right|\psi_{e}=\id\right\}$. Clearly, $E_0$ contains $\nu_{\psi}(1)\cap E$ (indeed, if $(e,1)\in E(S,\circ)=E\times \{1\}$, then $\psi_(e,1)=\id_S$, then $e\in E_0$ by choosing $g=1$). Conversely, let $e\in E$ and assume that there exists $g\in G$ such that $\psi_{(e,g)}=\id_S$.
We want to prove that $\psi_{(e,1)}=\id_S$. By \eqref{eq3} applied to $z=(e,g)$ and $y=(e,g^{-1})$, we have $\psi_{\psi_{(e,g)}(e,g^{-1})}\,\psi_{(e,g)\circ (e,g^{-1})}=\psi_{(e,g^{-1})}$. Since $\psi_{(e,g)}=\id_S$, this becomes $\psi_{(e,g^{-1})}\,\psi_{(e,g)\circ (e,g^{-1})}=\psi_{(e,g^{-1})}$. Now, since $(e,g)\circ (e,g^{-1})=(e,1)$ we have $\psi_{(e,g^{-1})}\,\psi_{(e,1)}=\psi_{(e,g^{-1})}$. Finally, as $\psi_{(e,g^{-1})}$ is bijective, we have $\psi_{(e,1)}=\id$ as required. 

It follows that $\dim_k A^{co-inv}\leq |S||\nu_\psi(1)\cap E|$. Now, note that by \Cref{cardinality_of_fibers} (applied to the restriction of $\sim$ to $E$), all $\sim$-classes in $E$ have the same cardinality. Hence
$$|E|
=\sum_{[e]\in \bar{E}} |[e]\cap E|
=\sum_{[e]\in \bar{E}} |[1]\cap E|
=|\bar{E}|\cdot |[1]\cap E|
=|\bar E|\cdot |\nu_\psi(1)\cap E|.$$
Hence, $\dim_k A^{co-inv} \leq |S|\frac{|E|}{\bar{E}}$, as required. 
\end{proof}

Now, 
\begin{proof}[Proof of \Cref{Th basis right coeff}]

We know that $A^{co-inv}\otimes H_r(s)$ is isomorphic, as an $H_r(s)$-module, to $A$. Respectively, $H_{\ell}(s) \otimes A^{co-inv}$ is isomorphic, as $H_{\ell}$-module, $A$. Hence
$$\dim_k H_r(s)=\frac{\dim_k A}{\dim_k A^{co-inv}}
=\frac{|S|^2}{\dim_k A^{\mathrm{co-inv}}}.$$ 
By \Cref{lemma:co-invariants}, we have $\dim_k A^{co-inv}\le |S|\frac{|E|}{\bar{E}}$. Hence, 
$$\dim_k H_r(s)= \dfrac{|S|^2|\bar{E}|}{|S||E|}\leq |G||\bar{E}|$$

Furthermore, we have the upper bound
$$\dim_k H_r(s) \leq |G||\bar{E}|,$$ since by \Cref{the sets nice via left grp result} we have that the set $\left\{\left.\sum_{y\in \nu_{\psi}(y')}S_{y\circ x', y} \ \right|\ y' \in \bar{E},\ x'\in G\right\}$ is a generating set. This yields that a basis for $H_r(s)$ is given by $\left\{\left.\sum_{y\in \nu_{\psi}(y')}S_{y\circ x', y} \ \right|\ y' \in \bar{E},\ x'\in G\right\}$.

By \cite[Theorem 2.1]{Mi04} we know that $R_{(\ell)} \cong R_{(r)}^*$. In particular, $\dim H_{\ell}(s) = \dim H_r(s)$. The combination of \Cref{leftrightinv} and \Cref{the sets nice via left grp result} yields that the set  
$$\left\{\left. \sum_{x\in \mu_{\circ}(x')}S_{\psi_{y'}(x),x} \right| y'\in \bar{E}, x'\in G \right\}$$
is a generating set of $H_{\ell}(s)$. The latter set has cardinality $|\bar{E}|.|G|$ which equals $\dim H_r(s)$ by the just obtained basis for the right coefficients. Therefore the aforementioned generating set for $H_{\ell}(s)$ is a basis.
\end{proof}

\subsubsection{Proof that the basis is positive and $\Phi$-set theoretic}
With this basis at hand and the description of the structure maps in \Cref{structure maps coefficients hopf algebras}, now we can prove that $H_{\ell}(s)$ and $H_r(s)$ have a basis which is both positive and $\phi$-set-theoretic.

\begin{proof}[Proof of \Cref{pos basis prop theorem}]

Let us write $s(x,y)=(\psi_y(x),y\circ x)$ and $s^{-1}(x,y)= (xy, \theta_x(y))$. Since $H_r(s) \cong H_{\ell}(s)^*$ it is enough to prove that $H_r(s)$ has a positive basis. Recall that by \Cref{Th basis right coeff}, a basis for $H_r(s)$ is given by $\mc{B}_r := \left\{\left.\sum_{y\in \nu_{\psi}(y')}S_{y\circ x',y}\right| y'\in \bar{E}, x' \in G\right\}$. 

Let us first prove that $1_{H_r(s)}$ is positive. For this we rewrite: 
\begin{equation}\label{id positive}
\sum_{y'\in \bar{E}}\left( \sum_{y\in\nu_{\psi}(y')}S_{y\circ 1_G,y}\right) = \sum_{y\in S}S_{y,y} = 1_{H_r(s)}.
\end{equation}
So the identity expreses with coefficents $0$ or $1$ in terms of $\mc{B}_r$. To prove positivity of the co-unit $\epsilon_r$, it is enough to recall that $\epsilon_r(\sum_{y\in \nu_{\psi}(y')}S_{y\circ x',y}) = \delta_{S_{\psi_{y'}(x'),x'}}(1)$ (this follows from \Cref{structure maps coefficients hopf algebras} applied to basis elements). This yields 
\begin{align*}
\epsilon_r\left(\sum_{y\in \nu_{\psi}(y')}S_{y\circ x',y}\right) &= \delta_{S_{\psi_{y'}(x'),x'}} \left(\sum_{x\in S}S_{x,x} \right)\\
&=\begin{cases}
    1 \qquad &\text{if } \psi_{y'}(x') = x'\\
    0&\text{otherwise}
\end{cases}\\
 &= \begin{cases}
    1 \qquad &\text{if } \psi_{y'}= \id_S\\
    0&\text{otherwise}.
\end{cases}
\end{align*}
where in the last equality we used that the $\psi_{y'}$ are fixed point free by \Cref{prop:structureSs}.
Now we can prove that also the multiplication has positive coefficients.
Let $y', z' \in \bar{E}$ and $x',u' \in G$ then 
\begin{equation}\label{pos mult}
\begin{array}{lcl}
    \left(\sum_{y\in \nu_{\psi}(y')}S_{y\circ x',y}\right)\left(\sum_{z\in \nu_{\psi}(z')}S_{z\circ u',z}\right)
    &= & \sum\limits_{\substack{y\in \nu_{\psi}(y')\\ z\in\nu_{\psi}(z')}}S_{y\circ x', y}\,S_{z\circ u',z} \\
    &= & \sum\limits_{\substack{z\in\nu_{\psi}(z')\, : \, \\ z\circ u' \in \nu_{\psi}(y')}}S_{z\circ (u'\circ x'),z}\\
    &=  &\delta_{\nu_{\psi}(y'), \nu_{\psi}(z'\circ u')}\sum\limits_{z\in\nu_{\psi}(z')}S_{z\circ (u'\circ x'),z}.
\end{array}
\end{equation}

Next we prove that the co-multiplication is positive. For this recall that $$s^{A} = \sum\limits_{c,d\in S} S_{\psi_d(c), c}\otimes S_{d \circ c,d} \text{ and } (s^{A})^{-1} = \sum\limits_{a,b\in S} S_{ab, a}\otimes S_{\theta_a(b),b}. $$
Thus we now compute that
\begin{align*}
\D_r(\sum_{y\in \nu_{\psi}(y')}S_{y\circ x',y}) & = s^{A} (\sum_{y\in \nu_{\psi}(y')}S_{y\circ x',y}) (s^{A})^{-1} \\
& = \left( \sum_{c,d\in S}\sum_{y \in \nu_{\psi}(y')} S_{\psi_d(c),c}.S_{y\circ x',y} \,\ot \, S_{d\circ c,d} \right) (s^{A})^{-1}\\
& = \sum_{d \in S} \sum_{y \in \nu_{\psi}(y')}  \sum_{a,b\in S} S_{\psi_d(y\circ x'), y} . S_{ab,a} \, \ot \, S_{d\circ c,d}S_{\theta_a(b),b} \\
\end{align*}

Since 
$$S_{\psi_d(y\circ x'), y} . S_{ab,a} = \left\{ \begin{array}{ll}
0, & \text{ if } y \neq ab\\
S_{\psi_d(ab \circ x'),a}, &\text{ if } y = ab
\end{array}\right.$$
and similarly $ S_{d\circ c,d}S_{\theta_a(b),b} = S_{\theta_a(b)\circ c, b}$ if $d = \theta_a(b)$ and $0$ otherwise, the triple sum simplifies to
\begin{align*}
\D_r(\sum_{y\in \nu_{\psi}(y')}S_{y\circ x',y}) & = \sum\limits_{a,b\in S \, : \, ab \in \nu_{\psi}(y')} S_{\psi_{\theta_a(b)}(ab \circ x'), a} \ot S_{\theta_a(b) \circ ((ab)\circ x'),b}\\
& = \sum\limits_{a,b\in S \, : \, ab \in \nu_{\psi}(y')} S_{\psi_{\theta_a(b)}(ab \circ x'), a} \ot S_{b\circ x',b} \\
& = \sum\limits_{a,b\in S \, : \, ab \in \nu_{\psi}(y')} S_{a \circ \psi_b(x'), a} \ot S_{b\circ x',b} \\
\end{align*}
where we used \eqref{eq2}, \eqref{eq3} and inverse relations to obtain $\theta_a(b) \circ ((ab)\circ x') = (\theta_a(b) \circ (ab))\circ x' = b \circ x'$ and 
$$\psi_{\theta_a(b)}(ab \circ x') = \psi_{\theta_a(b)}(ab) \circ \psi_{\theta_a(b)\circ ab}(x') = a \circ \psi_{\theta_a(b)\circ ab}(x') =a \circ \psi_b(x').$$

Since $\psi_{ab}=\psi_a\psi_b$ we have that $ab \in \nu_{\psi}(y')$ if and only if $\psi_a = \psi_{y'}\psi_b^{-1}$. By \cref{prop:structureSs} the set of $\psi$'s is a group and hence $\psi_{y'}\psi_b^{-1} = \psi_z$ for some $z \in E$ depending on $y'$ and $b$. Furthermore, $\psi_z = \psi_{y'}\psi_d^{-1}$ for any $d \in \nu_{\psi}(b)$. Thus we can rewrite the latest sum to obtain 
\begin{equation}\label{pos decomp coproduct}
\D_r(\sum_{y\in \nu_{\psi}(y')}S_{y\circ x',y})= \sum_{[d] \in \bar{E}}\left( \sum_{a \, : \, \psi_{ad}= \psi_{y'}} S_{a \circ \psi_d(x'), a} \ot \sum_{b \in \nu_{\psi}([d])}S_{b\circ x',b}\right)
\end{equation}
which is a linear combination with coefficients $0,1$ in the basis $\mc{B}_r.$ Note that the notation $\nu_{\psi}([d])$ is fine as, by definition of $\bar{E} =E / \sim$, the set do not depend on the chosen representative $d$.\medskip

Next, we prove that the basis is $\Phi$-set theoretic. If we denote $g_{(x',y')}:= \sum_{y\in \nu_{\psi}(y')}S_{y\circ x',y}$, then the latter means that
$$\D_r(g_{(x_1',y_1')}).(1 \ot g_{(x_2',y_2')}) \in \mc{B}_r \ot \mc{}B_r.$$
for $(x_i',y_i') \in \bar{E} \times G$, $i=1,2$. A direct computation yields:
\begin{align*}
\D_r(g_{(x_1',y_1')})(1 \ot g_{(x_2',y_2')} ) & = \sum_{\substack{a,b \in S\ : \\ ab \in \nu_{\psi}(y_1')}} \sum_{y \in \nu_{\psi}(y_2')} S_{\psi_{\theta_a(b)}(ab \circ x_1'), a} \ot S_{\theta_a(b)\circ c, b}.S_{y \circ x_2',y} \\
& = \sum_{y \in \nu_{\psi}(y_2')} \sum\limits_{\substack{a \in S \, : \,\\ ab \in \nu_{\psi}(y_1')}} S_{\psi_{\theta_a(b)}(c),a} \ot S_{\theta_a(b)\circ c,y} \\
\end{align*}
with $b = y\circ x_2' $ and $c = ab \circ x_1'= a(y\circ x_2')\circ x_1'$. Using \eqref{eq2}, \eqref{eq3} and inverse relations we note that
\begin{align*}
\psi_{\theta_a(b)}(c) &= a \circ \psi_{\theta_a(b)\circ ab}(x_1') = a \circ\psi_{b}(x_1') \\
\theta_a(b)\circ c & = b\circ x_1' = y\circ (x_2' \circ x_1')
\end{align*}
Thus we can rewrite as
\begin{equation}\label{expression phi-set theoretic}
\D_r(g_{(x_1',y_1')})(1 \ot g_{(x_2',y_2')} ) = \sum\limits_{a \in S \, : \, ab \in \nu_{\psi}(y_1')} S_{a \circ\psi_{b}(x_1'),a} \ot  \sum_{y \in \nu_{\psi}(y_2')} S_{y\circ (x_2' \circ x_1'), y}
\end{equation}
which is a pure tensor in $\mc{B}_r \ot \mc{B}_r$ since $\psi_{y_1'}\psi_b^{-1}$ equals some $\psi_z$ by \cref{prop:structureSs}. Expression \eqref{expression phi-set theoretic} could also have obtained using the obtained expressions for the (co-)multiplication, i.e. from \eqref{pos decomp coproduct} and \eqref{pos mult}. \medskip

 Finally we consider the antipode. By definition  $S(a^{\ast}\otimes 1)(s^{A}) = (a^{\ast}\otimes 1)((s^{A})^{-1}).$
Therefore
 $$S((a^{\ast}\otimes 1)(s^{A})) = \sum_{x,y\in S}a^{\ast}(S_{xy,x})\,  S_{\theta_x(y),y}.$$
 Now, 
 $$\delta_{S_{\psi_{y'}(x')}, x'}(S_{xy,x})
 =\begin{cases}
     1, \qquad &\text{if } x=x'\text{ and } xy=\psi_{y'}(x')\\    
     0, &\text{otherwise}.
 \end{cases}$$
 Hence
 $$S((a^{\ast}\otimes 1)(s^{A})) = \sum_{\substack{y \in S\, : \,\\  \psi_{y'}(x') = x'y}} S_{\theta_{x'}(y),y}$$
We claim that $S((a^{\ast}\otimes 1)(s^{A}))$ is again an element in $\mc{B}_r$, namely:
\begin{equation}\label{antipode positive}
    S((a^{\ast}\otimes 1)(s^{A})) = \sum_{\substack{y \in S\,:  \, \\  \psi_y = \psi_{y'\circ x'}^{-1}}} S_{y \circ \psi_{y'}(x')^{-1}\ , \,y}
\end{equation}
The claim follows by similar arguments with the same identities as for the previous operations. Firstly note that $\theta_{x'}(y) = y\circ \psi_{y'}(x')^{-1}$. Indeed $\psi_{y'}(x') = x'y$ and thus $y = \theta_{x'}(y) \circ x'y = \theta_{x'}(y) \circ \psi_{y'}(x').$ Next, one verifies that the condition $\psi_{y'}(x') = x'y$ is equivalent to $\psi_y = \psi_{y'\circ x'}^{-1}$. For instance, 
$$\psi_{x'}\psi_y = \psi_{x'y} = \psi_{\psi_{y'}(x')} = \psi_{x'}\psi_{y' \circ x'}^{-1}$$
which is equivalent to $\psi_y = \psi_{y'\circ x'}^{-1}$ as the $\psi$ maps are bijective.
\end{proof}

\section{Description of cocommutative set-theoretic PE solutions}\label{cocomm classif section}
Our next aim is to classify all cocommutative set-theoretic solutions $(S,s)$ of the RPE. Recall following properties for a RPE solution
\begin{align}
Z_{12}Z_{13}=Z_{13}Z_{12} \tag{cocommutative}\\
Z_{13}Z_{23}=Z_{23}Z_{13} \tag{commutative}
\end{align}
where the equations need to be interpreted appropriatly depending on the type of solution (set, vector or algebra). 

The main result of this section classifies their associated vector space solution $(k[S],s^v)$. Recall that $\Phi_H$ denotes the RPE solution given in \eqref{concrete for Davydov} associated to a Hopf algebra $H$. 

\begin{theorem}\label{classif cocomm sol}
Let $(S,s)$ be a reachable bijective solution of RPE on the set $S$. Then $(S,s)$ is cocommutative if and only if there exists a group $G$ and $\Phi$-set theoretic basis $\mc{B}$ of $k[G]$ such that 
$$ s = \phi_{\mc{B}} \times 1_{X}$$ 
for some set $X$, where $\phi_{\mc{B}}$ is the set-theoretic solution on $\mc{B} \times \mc{B}$ associated to $\restr{\Phi_{k[G]}}{\mc{B} \ot \mc{B}}$.
\end{theorem}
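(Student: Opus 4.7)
The plan is to prove both directions separately.

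For $(\Leftarrow)$, I verify that $\Phi_{k[G]}$ is a cocommutative vector space RPE solution by a direct Sweedler computation:
\[
\Phi_{12}\Phi_{13}(a\ot b\ot c) = a_{(1)(1)}\ot a_{(1)(2)}b\ot a_{(2)}c, \qquad
\Phi_{13}\Phi_{12}(a\ot b\ot c) = a_{(1)(1)}\ot a_{(2)}b\ot a_{(1)(2)}c,
\]
which coincide because $\Delta_{k[G]}$ is cocommutative. The property transfers to $\phi_{\mc{B}}$ by restriction to a $\Phi$-set theoretic basis, and direct product with the trivial solution $1_X$ trivially preserves cocommutativity.

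For $(\Rightarrow)$, use reachability to write $s^v \cong \Phi_M$ for some Hopf algebra $H$ and Hopf $H$-module $M$. By the fundamental theorem of Hopf modules (\Cref{fund th hopf mod}), this is equivalent as a vector-space solution to $\Phi_H \ot \id_{M_H}$. Running the Sweedler computation in reverse, cocommutativity of $\Phi_H$ evaluated at $b=c=1_H$ yields
\[
a_{(1)(1)}\ot a_{(1)(2)}\ot a_{(2)} = a_{(1)(1)}\ot a_{(2)}\ot a_{(1)(2)};
\]
applying $\eps$ in the first tensor slot and invoking the counit axiom collapses this to $\Delta_H = \tau \Delta_H$. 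Hence $H$ is cocommutative, and by the Cartier--Kostant--Milnor--Moore theorem $H \cong U(\mf{g}) \rtimes k[G]$ with $\mf{g} = P(H)$ and $G = G(H)$.

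Next, I transfer the basis $S$ through the FTHM iso $M \cong H \ot M_H$ to produce a product basis $\mc{B}' \times X$ on which $\Phi_H \ot \id$ remains set-theoretic; this makes $\mc{B}'$ a $\Phi$-set theoretic basis of $H$ and realises $s = \phi_{\mc{B}'} \times 1_X$. Finally I invoke \Cref{thm:groupalgebra-under-coalgebasis}: any Hopf algebra admitting a $\Phi$-set theoretic basis must be isomorphic to a group algebra, which forces $\mf{g} = 0$ and $H \cong k[G]$; we then set $\mc{B} := \mc{B}'$.

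The main obstacle is the basis-transfer step: the FTHM isomorphism is neither canonical nor basis-preserving, so extracting a genuine product basis $\mc{B}' \times X$ from $S$ requires careful use of the cocommutative structure --- which allows $M$ to decompose well as a comodule over the group-like part of $H$ --- together with $\Phi_M$-invariance of $S$ to pin down the explicit form of $\mc{B}'$. An alternative route is to bypass this step by arguing directly at the level of the set $S$: use the cocommutative Hopf-module decomposition to identify $S$ with a union of $H$-orbits, each free on a single coinvariant representative, and then recognise the orbit basis as $\mc{B}' \times X$.
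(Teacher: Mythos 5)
Your overall route coincides with the paper's: use reachability and the fundamental theorem of Hopf modules to reduce to $\Phi_H\ot\id$, transfer cocommutativity of $s$ to cocommutativity of $\Phi_H$ and hence of $H$ (this is exactly \Cref{phi versus H cocomm}), and then invoke Cartier--Kostant--Milnor--Moore. The basis-transfer step that you flag as the main obstacle is also asserted rather than elaborated in the paper's own proof, so leaving it sketchy is not where your proposal diverges from the paper.

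The genuine gap is your final step. You quote \Cref{thm:groupalgebra-under-coalgebasis} as saying that any Hopf algebra with a $\Phi$-set theoretic basis is a group algebra, but that proposition assumes a \emph{coalgebra basis}, i.e.\ $\D(\mc{B})\subseteq k^*\,\mc{B}\ot\mc{B}$, which is a much stronger condition. The general statement you use is false: $k[G]^*$ for a finite nonabelian group $G$ (\Cref{sol from dual grp alg}), and more generally the bicrossed products $k[B]^*\bowtie k[N]$ of \Cref{coro set basis iff positive}, admit $\Phi$-set theoretic bases without being group algebras. Even in your cocommutative situation the implication ``$\Phi$-set theoretic $\Rightarrow$ coalgebra basis'' fails: for $G=A\rtimes N$ with $A$ finite abelian nontrivial, the basis $\mc{B}_{A^\vee}$ of \Cref{thm:dual} is $\Phi$-set theoretic but $\D(e_\chi u)=\sum_{\alpha\beta=\chi}e_\alpha u\ot e_\beta u$ is not a scalar times a pure tensor. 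A coalgebra basis is only forced when $H$ is a domain (\Cref{domain no set solution}), and $U(\mathfrak{g})\rtimes k[G]$ need not be one. The paper instead isolates the Lie part: by \Cref{solutions of hopf mashed pair} the solution decomposes as $\Phi_{U(\mathfrak{g})}\#\Phi_{k[G]}$, and \Cref{no set for Lie} (which does rest on the domain argument, but applied to $U(\mathfrak{g})$ itself) rules out any set-theoretic restriction of $\Phi_{U(\mathfrak{g})}$, forcing $\mathfrak{g}=0$ and $H\cong k[G]$. Replacing your citation by this argument repairs the proof.
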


In \Cref{solutions from grp alg} we will study the $\Phi$-set theoretic bases of a group algebra and hence the possible maps for $\phi_{\mc{B}}$. 

The solution $\Phi$ for $k_{\fin}^{A}$ with $A$ an infinite abelian, see \Cref{sol from dual grp alg} and \Cref{basic example cocomm}, is cocommutative but not reachable. The latter fact will follow from \Cref{classif cocomm sol} combined with the upcoming \Cref{Classification theorem basis grp alg}. However the aforementioned solution, can be obtained via multiplier Hopf algebras. We expect that all cocommutative solutions are either obtained via a $\phi$-set theoretic basis of a group algebra $k[G]$ or of $k_{\fin}^{A}$ via $A$ an infinite abelian group. It is tempting to prove the latter fact by lifting the solution $(k[S],s^v)$ to a multiplicative unitary on a seperable Hilbert space, which allows to apply Baaj-Skandalis result \cite[Theorem 0.1]{BaSk03} (or \cite[Theorem 2.1]{BaSk93}). This process however do not preserve bases and thus it is not clear how to use it to recover information on the starting set-theoretic solution.\smallskip

Our proof in the setting of Hopf algebras will obtain on its way that certain (non cocommutative) Hopf algebras do not admit a $\Phi$-set theoretic basis. Concretely, the two main ingredients for \Cref{classif cocomm sol} will be Cartier-Konstant-Milnor-Moore classification of cocommutative Hopf algebras and the study in \Cref{sectie universal enveloping} of basis preserving solutions made out of the universal enveloping of a Lie algebra. More generally we will consider Hopf algebras which are domains, see \Cref{domain no set solution} and \Cref{thm:groupalgebra-under-coalgebasis}.

\subsection{Relation (co)commutativity of Hopf algebras and solutions}\label{cocomm from sol to hopf}

A conceptual important property of set-theoretic solutions is that it has duality, as for multiplier Hopf algebras but unlike Hopf algebras, cf \Cref{dual in set } and \Cref{connection linear and pullback sol}. Under this duality commutativity and cocommutativity get swapped, as one would like to.

\begin{proposition}\label{duality in set sol}
Let $(S,s)$ be a set-theoretic solution of the RPE and set $t=\tau\circ s^{-1}\circ \tau$.
Then $(S,s)$ is commutative if and only if $(S,t)$ is cocommutative.
\end{proposition}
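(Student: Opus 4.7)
My plan is to reduce the equivalence to a symmetry argument: conjugation by an order-reversing permutation of $S^3$ turns the commutativity equation for $s$ into the cocommutativity equation for $\tau s\tau$. This avoids any componentwise unraveling of $s$, $s^{-1}$, or $t$.

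First I would record two trivial closure properties. Applying inversion to a commuting relation $ab=ba$ yields $b^{-1}a^{-1}=a^{-1}b^{-1}$, so both commutativity ($s_{13}s_{23}=s_{23}s_{13}$) and cocommutativity ($s_{12}s_{13}=s_{13}s_{12}$) of a bijective solution are preserved under passage to the inverse solution. Since $t=\tau s^{-1}\tau=(\tau s\tau)^{-1}$, it follows immediately that $t$ is cocommutative if and only if $\tau s\tau$ is.

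Next I would introduce the order-reversing involution $\sigma:S^3\to S^3$, $(x,y,z)\mapsto(z,y,x)$, and verify the three conjugation identities
$$
\sigma f_{12}\sigma=(\tau f\tau)_{23},\qquad \sigma f_{23}\sigma=(\tau f\tau)_{12},\qquad \sigma f_{13}\sigma=(\tau f\tau)_{13},
$$
valid for every $f\in\End(S^2)$. Each is a one-line check on a generic triple: $\sigma$ swaps the roles of positions $1$ and $3$ in $S^3$, and the ``internal'' swap of arguments this forces on $f$ is precisely conjugation by $\tau$ on $S^2$.

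Finally I would apply these identities to $f=s$. Conjugating the commutativity relation $s_{13}s_{23}=s_{23}s_{13}$ by $\sigma$ on both sides gives
$$(\tau s\tau)_{13}(\tau s\tau)_{12}=(\tau s\tau)_{12}(\tau s\tau)_{13},$$
which is the cocommutativity of $\tau s\tau$. Combining with the first step yields the chain
$$
s\text{ commutative}\iff \tau s\tau\text{ cocommutative}\iff t\text{ cocommutative},
$$
as required. The only mild obstacle is bookkeeping with the three conjugation identities; once those are in place the rest of the argument is purely formal and the RPE axiom for $s$ is not even used.
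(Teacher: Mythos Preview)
Your proof is correct and takes a genuinely different route from the paper's. The paper proceeds by writing $s^{-1}(x,y)=(xy,\theta_x(y))$ and $t(x,y)=(\theta_y(x),yx)$, then reduces commutativity of $s$ (equivalently of $s^{-1}$) to a pair of explicit identities on $(\cdot,\theta)$, and verifies by direct computation on a triple $(x,y,z)$ that these force $t_{12}t_{13}=t_{13}t_{12}$; the converse is handled symmetrically via the component form $t^{-1}(x,y)=(x\circ y,\psi_x(y))$. Your argument replaces all of this with a single symmetry observation: conjugation by the order-reversing involution $\sigma$ on $S^3$ interchanges the indices $12\leftrightarrow 23$ while twisting by $\tau$, so commutativity of $s$ becomes cocommutativity of $\tau s\tau$, and inversion then passes this to $t$. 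This is shorter, coordinate-free, and---as you note---does not use the RPE at all; it works for any bijection $s\in\End(S^2)$. The paper's computation, while more laborious, has the side benefit of recording the explicit two-identity characterisations of commutativity and cocommutativity (e.g.\ $x\theta_y(z)=xz$ and $\theta_y\theta_x=\theta_x\theta_y$), which may be of independent use; your approach bypasses these entirely.
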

\begin{proof}
    Write $s^{-1}(x,y)=(xy,\theta_x(y))$. Then $t(x,y)=(\theta_y(x),yx)$. Note that if $s$ satisfies $s_{13}s_{23}=s_{23}s_{13}$, then so does $s^{-1}$.
    A direct computation shows that the commutativity of $s^{-1}$ is equivalent to the two identities
    $$x\theta_y(z)=xz, \qquad \theta_y\theta_x=\theta_x\theta_y, \qquad \forall\,x,y,z\in S.$$
    Now compute, for $(x,y,z)\in S^3$,
    $$t_{12}t_{13}(x,y,z) = \left(\theta_y\theta_z(x),\,y\,\theta_z(x),\,zx\right)$$
    and 
    $$t_{13}t_{12}(x,y,z) = \left(\theta_z\theta_y(x),\,yx,\,z\,\theta_y(x)\right).$$
    It follows that $t_{12}t_{13}=t_{13}t_{12}$, i.e. $(S,t)$ is cocommutative.

    Conversely, assume that $t$ is cocommutative. Then $t^{-1}$ is cocommutative as well. Indeed, write $t^{-1}(x,y)=(x\circ y,\psi_x(y))$, so $s(x,y)= (\psi_y(x),y\circ x)$. A direct computation shows that the cocommutativity of $t^{-1}$ is equivalent to the two identities
    $$(x\circ z)\circ y=(x\circ y)\circ z, \qquad \psi_{x\circ z}=\psi_x \qquad \forall\,x,y,z\in S.$$
    Now, for $(x,y,z)\in S^3$ we compute
    $$s_{13}s_{23}(x,y,z) = (\psi_{z\circ y}(x),\,\psi_z(y),\,(z\circ y)\circ x)$$
    and 
    $$s_{23}s_{13}(x,y,z) = (\psi_z(x),\,\psi_{z\circ x}(y),\,(z\circ x)\circ y).$$
    Hence, $s_{13}s_{23}= s_{23}s_{13}$, i.e. $(S,s)$ is commutative. 
\end{proof}

\begin{example}\label{basic example cocomm}
Consider the solution $s(g,h) = (g,gh)$ from \Cref{sol from grp alg} on the set semigroup $S$. Then 
$$s_{12}s_{13}(g,h,t) = s_{23}(g,h,gt) = (g,gh,gt) = s_{13}(g,gh,t)=s_{13}s_{12}(g,h,t).$$
Thus the solutions is cocommutative. It is readily verified to be commutative if and only if $S$ is abelian. Recall that this solution corresponded to $\Phi_{k[S]}$ and note that $k[S]$ is also cocommutative and commutative exactly when $S$ is abelian.\smallskip

Next consider the solution $s(g,h) = (gh^{-1},h)$ from \Cref{sol from dual grp alg} on the group $G$ which is associated to $\Phi_{k[G]^*}$ . Verifying the definition would show that the solution is commutative. Moreover it is cocommutative if and only if $G$ is abelian. Again this reflects the properties of $k[G]^*$. In \Cref{matched pair cocomm example} below we will generalize above examples to the solution \eqref{set mashed pair} associated to mashed pair.
\end{example}

In case that $R = s^{A}$ for a set-theoretic solution $(S,s)$, it is enough to verify (co)commutativity on the basis. In other words, $s^A$ is (co)commutative exactly when $(S,s)$ is. Interestingly, in general (co)-commutativity translates into the analogue concept for the associated coefficient Hopf algebras. 
\begin{proposition}\label{behaviour cocomm}
If $(A,R)$ is a solution of RPE, then the following are equivalent:
\begin{itemize}
    \item $R_{(r)}$ is commutative (resp. cocommutative) 
    \item\label{left coeff cocomm} $R_{(l)}$ is cocommutative (resp. commutative)
    \item\label{alg sol comm} $(A,R)$ is commutative (resp. cocommutative).
    \end{itemize}
\end{proposition}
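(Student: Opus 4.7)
My plan is to split the four equivalences into two ``direct'' computations, namely $(A,R)$ commutative $\iff R_{(r)}$ commutative and $(A,R)$ cocommutative $\iff R_{(\ell)}$ commutative, and then to fill in the remaining ones using the Hopf algebra isomorphism $R_{(\ell)}\cong R_{(r)}^*$ recalled in \Cref{militaru vs davydov}, together with the elementary fact that a Hopf algebra is commutative iff its dual is cocommutative.

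For the first direct equivalence I would use the description $R_{(r)}=\Ima(\lambda)$ with $\lambda(a^*):=(a^*\otimes 1)(R)$. A straightforward componentwise expansion in $A$ yields, for all $a^*,b^*\in A^*$,
\[
\lambda(a^*)\,\lambda(b^*)=(a^*\otimes b^*\otimes \id_A)(R_{13}R_{23}),\qquad \lambda(b^*)\,\lambda(a^*)=(a^*\otimes b^*\otimes \id_A)(R_{23}R_{13}).
\]
Hence commutativity of $R_{(r)}$ amounts to $(a^*\otimes b^*\otimes \id_A)(R_{13}R_{23}-R_{23}R_{13})=0$ for every $a^*,b^*\in A^*$, which by the standard separation property of tensor products---namely, $Z\in A^{\otimes 3}$ vanishes iff $(a^*\otimes b^*\otimes \id_A)(Z)=0$ for all $a^*,b^*$---is equivalent to $R_{13}R_{23}=R_{23}R_{13}$, i.e.\ to commutativity of $(A,R)$. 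The mirror computation for $R_{(\ell)}=\Ima(\rho)$ with $\rho(b^*):=(1\otimes b^*)(R)$ gives
\[
\rho(b^*)\,\rho(b'^*)=(\id_A\otimes b^*\otimes b'^*)(R_{12}R_{13}),\qquad \rho(b'^*)\,\rho(b^*)=(\id_A\otimes b^*\otimes b'^*)(R_{13}R_{12}),
\]
so that $R_{(\ell)}$ is commutative iff $R_{12}R_{13}=R_{13}R_{12}$, i.e.\ iff $(A,R)$ is cocommutative. In both cases a small bookkeeping point is that permutations of the tensor legs of $A^{\otimes 3}$ act as algebra automorphisms, which lets one relabel $a^*\leftrightarrow b^*$ and correctly identify the reversed product with the appropriate $R_{ij}R_{kl}$.

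The main obstacle I expect is purely notational rather than conceptual. Attempting instead to verify cocommutativity of $R_{(r)}$ directly from $\D_r(x)=R(x\otimes 1)R^{-1}$ would drag $R^{-1}$ into the computation and become substantially more delicate, so routing that half through the duality is what keeps the argument short. Combining the two direct computations with the duality then yields the circle $(A,R)$ commutative $\iff R_{(r)}$ commutative $\iff R_{(\ell)}$ cocommutative, and the parallel circle with ``commutative'' and ``cocommutative'' interchanged, which is the claim.
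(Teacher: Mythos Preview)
Your proposal is correct and follows essentially the same strategy as the paper: reduce via the duality $R_{(\ell)}\cong R_{(r)}^*$ and establish the remaining equivalence by expanding in $A^{\otimes 3}$ and separating tensor legs. The only cosmetic difference is that the paper fixes a minimal decomposition $R=\sum_i a_i\otimes b_i$ (so that $\{a_i\}$ is a basis of $R_{(\ell)}$ and the $b_i$ are linearly independent) and reads off $a_ia_j=a_ja_i$ directly, whereas you use the slice maps $\lambda,\rho$ and the separation property of $A^*\otimes A^*$; these are dual phrasings of the same argument.
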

 It would be interesting to translate other properties of Hopf algebras to set-theoretic solutions.  For example as cocommutative Hopf algebras are pointed, one could wonder whether there exists a senseful notion of pointed solutions.
\begin{proof}
By \cite[Theorem 2.1]{Mi04} one has that $R_{(l)}$ and $R_{(r)}^*$ are isomorphic as Hopf algebras. The effect of taking the dual is to interchange commutativity and cocommutativity. Hence it is sufficient to prove the equivalence between \eqref{left coeff cocomm} and \eqref{alg sol comm}. We will give the details for when $(A,R)$ is cocommutative, as commutativity would follow from an analogue proof.

Let $R = \sum_{i \in I} a_i \ot b_i$ be a minimal decomposition of $R$ in $A^{\ot 2}$. Then the set $\{ a_i \mid i \in I\}$ forms a basis of $R_{(l)}$. Using that decomposition of $R$ one has that 
$$R_{12}R_{13} = R_{13}R_{12} \Leftrightarrow \sum_{i,j} a_ia_j \ot b_i \ot b_j = \sum_{i,j} a_ja_i\ot b_i \ot b_j.$$
In other words, cocommutativity of $(A,R)$ is equivalent to $\sum_{i,j} (a_ia_j - a_ja_i) \ot b_i \ot b_j=0.$ Since the $b_j$ are linearly independent, the evaluation of $(\id_{A} \ot b_i)\circ(\id_{A \ot A} \ot \delta_{b_j})$ on the latter equation yields that
$$ a_i a_j - a_ja_i  = 0$$
for any $i, j \in I$, as desired.
\end{proof}

\begin{example} \label{matched pair cocomm example}
Consider the RPE solution from \eqref{set mashed pair}:
$$
\phi_{B\bowtie N}\bigl((s,u)\, ,\, (t,v)\bigr)
=
\Bigl( \bigl(s\,(t\triangleleft u^{-1})^{-1}, (t\triangleleft u^{-1})\triangleright u\bigr) \, , \,
(t\triangleleft u^{-1},  uv ) \Bigr)
$$
\noindent {\it Claim:} $\phi_{B\bowtie N}$ is cocomutative if and only if $B$ is abelian and the left action of $B$ on $N$ is trivial. In other words $B \bowtie N$ is a semidirect product $B \rtimes N$ with $B$ abelian. \medskip

 Take $x=(s,u), y=(t,v), z=(r,w)\in B \times N$. For ease of notation denote $\phi := \phi_{B\bowtie N}, a := r\trianglel u^{-1}$ and $n = a \triangler u$. We compute that
 \begin{align*}
 \phi_{12}\phi_{13}(x,y,z) & = \phi_{12}\bigl(
(s a^{-1}, n),\ (t,v),\ (a,uw)
\bigr).\\ 
& = \Bigl(
(s a^{-1}( t\trianglel n^{-1})^{-1},\, (t\trianglel n^{-1})\triangler n),\ 
( t\trianglel n^{-1},\, n v),\ 
(a,\, uw)
\Bigr). 
 \end{align*}
Next denote $a_0 := t\trianglel u^{-1}$ and $n_0 := a_0\triangler u.$ Then,
\begin{align*}
 \phi_{13}\phi_{12}(x,y,z) & = \phi_{13} \bigl(
(s a_0^{-1}, n_0),\ (a_0,uv),\ (r,w)
\bigr). \\
& = \Bigl(
(s a_0^{-1}(r\triangleleft n_0^{-1})^{-1},\, (r\triangleleft n_0^{-1})\triangleright n_0),\ 
(a_0,\, uv),\ 
(r\triangleleft n_0^{-1},\, n_0 w)
\Bigr). 
\end{align*}
Equality of both expressions would yield that $(a_0,\, uv) = (t\trianglel n^{-1}, nv)$ for all $v \in N$. Therefore, $uv = nv$ and hence $u = (r\trianglel u^{-1}) \triangler u$. As the map $r\mapsto r\triangleleft u^{-1}$ is bijective, this is equivalent to $ b\triangleright u = u$ for all $b\in B,\ u\in N$.

The triviality of the left action, entails that $n = u = n_0$. Therefore, the equality of the first coordinates is now equivalent to $sa^{-1}(t \trianglel u^{-1})^{-1} = sa_0^{-1}(r\trianglel u^{-1})^{-1}.$ Which is can be rewritten as $(t \trianglel u^{-1}) (r \trianglel u^{-1}) = (r\trianglel u^{-1}) (t \trianglel u^{-1})$. Since the image of $b\mapsto b\triangleleft u^{-1}$ is all of $B$, this holds for all $r,t$ if and only if $B$ is abelian. This finishes the proof of the claim.
\end{example}

Next we provide a variant of \Cref{behaviour cocomm} but for the solution $\Phi_H$ of a Hopf algebra, see also \cite[Proposition 2.7]{Mi98}.

\begin{lemma}\label{phi versus H cocomm}
Let $H$ be a bialgebra. The following are equivalent:
    \begin{enumerate}
        \item $H$ is cocommutative,
        \item $\Phi_H = (1\ot m) (\Delta \ot 1)$ is a cocommutative RPE solution.
    \end{enumerate}
\end{lemma}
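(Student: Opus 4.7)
The plan is a direct computation in Sweedler notation, together with an application of the counit to reduce the general equation to the cocommutativity of $\Delta$. First I would expand both sides of the cocommutativity condition $(\Phi_H)_{12}(\Phi_H)_{13}=(\Phi_H)_{13}(\Phi_H)_{12}$ on a pure tensor $a\ot b\ot c\in H^{\ot 3}$. Using $\Phi_H(x\ot y)=x_{(1)}\ot x_{(2)}y$ twice and invoking coassociativity, one obtains
$$(\Phi_H)_{12}(\Phi_H)_{13}(a\ot b\ot c)=a_{(1)}\ot a_{(2)}b\ot a_{(3)}c$$
and
$$(\Phi_H)_{13}(\Phi_H)_{12}(a\ot b\ot c)=a_{(1)}\ot a_{(3)}b\ot a_{(2)}c.$$
Thus $\Phi_H$ is a cocommutative RPE solution if and only if the identity
$$a_{(1)}\ot a_{(2)}b\ot a_{(3)}c = a_{(1)}\ot a_{(3)}b\ot a_{(2)}c \qquad (\ast)$$
holds for all $a,b,c\in H$.

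For the implication $(1)\Rightarrow (2)$, assume $H$ is cocommutative, i.e.\ $\tau\Delta=\Delta$. Applied to $a_{(2)}$ inside $(\id\ot\Delta)\Delta(a)=a_{(1)}\ot a_{(2)}\ot a_{(3)}$, this gives $a_{(1)}\ot a_{(2)}\ot a_{(3)}=a_{(1)}\ot a_{(3)}\ot a_{(2)}$. Multiplying the second tensor factor by $b$ and the third by $c$ produces $(\ast)$.

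For the converse $(2)\Rightarrow (1)$, specialise $(\ast)$ to $b=c=1_H$ to obtain $a_{(1)}\ot a_{(2)}\ot a_{(3)}=a_{(1)}\ot a_{(3)}\ot a_{(2)}$. Applying $\eps\ot\id\ot\id$ and using the counit axiom $(\eps\ot\id)\Delta=\id$ together with coassociativity, the left-hand side collapses to $\Delta(a)=a_{(1)}\ot a_{(2)}$ and the right-hand side to $\tau\Delta(a)=a_{(2)}\ot a_{(1)}$. Hence $\Delta=\tau\Delta$, so $H$ is cocommutative. The only delicate point is keeping the Sweedler indices straight when applying $\eps$; there is no genuine obstacle.
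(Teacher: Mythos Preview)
Your proof is correct and follows essentially the same route as the paper: expand $(\Phi_H)_{12}(\Phi_H)_{13}$ and $(\Phi_H)_{13}(\Phi_H)_{12}$ in Sweedler notation, and then specialise $b=c=1_H$ (and apply the counit) to recover $\Delta=\tau\Delta$. The only difference is that you spell out the counit step explicitly, whereas the paper leaves it implicit.
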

\begin{proof}
We first write out what it means for $\Phi := \Phi_H$ to be cocommutative:
\begin{align*}
    \Phi_{12}\Phi_{13}(x\otimes y\otimes z) & = x_{(1)}\otimes y\otimes x_{(2)}z \\
    & = x_{(1)}\otimes x_{(2)}y\otimes x_{(3)}z
\end{align*}
And on the other hand 
\begin{align*}
\Phi_{13}\Phi_{12}(x\otimes y\otimes z) & = x_{(1)}\otimes x_{(2)}y\otimes z\\
& = x_{(1)}\otimes x_{(3)}y\otimes x_{(2)}z
\end{align*}

Hence $\Phi_{12}\Phi_{13}=\Phi_{13}\Phi_{12}$ if and only if $x_{(2)} \ot x_{(3)} = x_{(3)} \ot x_{(2)}$ for all $x \in H$ (necessaity is seen by choosing $y=z= 1_H$ ). The latter is the defintion for $H$ to be cocommutative.
\end{proof}

\subsection{set-theoretic solutions and universal enveloping of a Lie algebra}\label{sectie universal enveloping}

The aim of this section is to show that the phenomena noticed in \Cref{sol from lie algebra} was not an isolated fact, i.e. that $U(\mathfrak{g})$ has no $\Phi$-set theoretic basis. We obtain such statments for more general classes of Hopf algebras.

\subsubsection{Non-existence of (non group like) Set theoretic bases}
We will obtain the non-existence for the more general class of Hopf algebras that are a domain.

\begin{theorem}\label{domain no set solution}
Let $H$ be a Hopf algebra which is a domain. If $H$ has a $\Phi$-set-theoretic basis $\mc{B}$, then for each $b \in \mc{B}$ we have that $\D_H(b) = \eps_H(b)^{-1}\,  b \otimes b$.
\end{theorem}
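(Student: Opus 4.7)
The plan is to use items (i) and (ii) of \Cref{lem:right-monomial} together with the domain hypothesis to force $\psi_c=\id$ on $\mc{B}$ for every $c\in\mc{B}$, after which the formula for $\D(b)$ drops out of the counit axiom. The central task is to show that $\eps$ does not vanish on any element of $\mc{B}$.

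First, I would use item (i) to observe that, since $H$ is a domain, $bc\neq 0$ for any $b,c\in\mc{B}$, and writing $bc=\eps(\psi_c(b))(c\circ b)$ with $c\circ b\in\mc{B}$ nonzero forces $\eps(\psi_c(b))\neq 0$ for all $b,c\in\mc{B}$. From $1=\eps(1_H)=\sum_c\lambda_c\,\eps(c)$, with $1_H=\sum_c\lambda_c c$ the expansion in $\mc{B}$, at least one $c^*\in\mc{B}$ satisfies $\eps(c^*)\neq 0$. Item (ii) applied to $c=c^*$ reads $\eps(c^*)\,b=\eps(c^*\circ b)\,\psi_{c^*}(b)$; if $\eps(c^*\circ b)$ vanished we would get $\eps(c^*)b=0$, absurd, so $\eps(c^*\circ b)\neq 0$ and
\[
\psi_{c^*}(b)\;=\;\frac{\eps(c^*)}{\eps(c^*\circ b)}\,b.
\]
Since $\psi_{c^*}(b)\in\mc{B}$, linear independence of $\mc{B}$ forces $\psi_{c^*}(b)=b$ and $\eps(c^*\circ b)=\eps(c^*)$. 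Re-applying item (i) to the product $bc^*$, now with $\psi_{c^*}(b)=b$, gives $bc^*=\eps(b)\,(c^*\circ b)$; since $bc^*\neq 0$, this yields $\eps(b)\neq 0$ for every $b\in\mc{B}$.

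Now that $\eps$ is nonvanishing on $\mc{B}$, the same linear-independence argument used for $c^*$ applies to every $c\in\mc{B}$ via item (ii), giving $\psi_c(b)=b$ for all $b,c\in\mc{B}$. Hence $\Phi_H(b\otimes c)=b\otimes(c\circ b)$, and expanding $1_H=\sum_c\lambda_c c$ in $\mc{B}$ yields
\[
\D(b)\;=\;\Phi_H(b\otimes 1_H)\;=\;\sum_c\lambda_c\, b\otimes(c\circ b)\;=\;b\otimes z,\qquad z:=\sum_c\lambda_c\,(c\circ b).
\]
Applying $(\eps\otimes\id)$ and using the counit axiom gives $\eps(b)\,z=b$, so $z=\eps(b)^{-1}\,b$ and $\D(b)=\eps(b)^{-1}(b\otimes b)$. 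The only delicate point is the collapse of $\psi_c$ to the identity via the observation that two basis elements proportional to each other must coincide; once this is in place, the domain hypothesis is used twice via item (i), first to ensure $\eps(\psi_c(b))\neq 0$ and then to propagate nonvanishing of $\eps$ from the distinguished $c^*$ to all of $\mc{B}$.
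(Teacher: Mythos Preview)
Your proof is correct, but it takes a genuinely different route from the paper's.

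The paper argues directly that $\D(b)$ is a pure tensor: writing $\D(b)=\sum_{i=1}^n a_i\otimes b_i$ in left-reduced form (so both $\{a_i\}$ and $\{b_i\}$ are linearly independent), one has $\Phi_H(b\otimes c)=\sum_i a_i\otimes b_i c$; since $H$ is a domain, right multiplication by $c$ is injective and the $b_i c$ remain linearly independent, so the only way this can be a single pure tensor in $\mc{B}\otimes\mc{B}$ is if $n=1$. Then the counit axiom forces $\D(b)=\eps(b)^{-1}\,b\otimes b$.

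Your approach instead leverages \Cref{lem:right-monomial}: the domain property enters twice through item (i), first to get $\eps(\psi_c(b))\neq 0$ for all $b,c$, and then to propagate nonvanishing of $\eps$ from a single $c^*$ to all of $\mc{B}$. Item (ii) together with linear independence of $\mc{B}$ then collapses $\psi_c$ to the identity for every $c$, so $\Phi_H(b\otimes c)=b\otimes(c\circ b)$, and $\D(b)=\Phi_H(b\otimes 1_H)$ is visibly of the form $b\otimes z$.

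What each approach buys: the paper's argument is shorter and conceptually clean, using the domain hypothesis once via injectivity of right multiplication on tensor decompositions. Your argument is more elaborate but extracts a sharper intermediate conclusion, namely that $\eps$ is nowhere zero on $\mc{B}$ and that $\psi_c=\id_{\mc{B}}$ for every $c$; in particular the induced set-theoretic solution $\phi_{\mc{B}}$ already has the ``group-like'' shape $(b,c)\mapsto(b,c\circ b)$. This additional structure is not made explicit in the paper's proof, though it is of course a consequence of the theorem once $\D(b)=\eps(b)^{-1}b\otimes b$ is known.
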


It is well-known that Poincar\'e-Birkhoff-Witt theorem implies that $U(\mathfrak{g})$ has the following properties:
\begin{enumerate}
    \item[(i)] The associated graded algebra of $U(\mathfrak{g})$ is the symmetric algebra. In particular it is a domain and hence also $U(\mathfrak{g})$ is a domain.
    \item[(ii)] The universal enveloping contains no group-like elements (this holds more generally for any connected Hopf algebra).
\end{enumerate}

Now note if $\D (b) = \eps(b)^{-1}b \ot b$, then $\D(\eps^{-1}(b) b) =\eps^{-1}(b) \D(b) = \eps^{-1}(b) b \ot \eps^{-1}(b)b.$ Thus $\eps^{-1}(b) b$ is group like. Hence \Cref{domain no set solution} implies the desired conclusion.

\begin{corollary}\label{no set for Lie}
Let $\mathfrak{g}$ be a finite dimenisonal Lie algebra. Then $U(\mathfrak{g})$ has no $\Phi$-set-theoretic basis.
\end{corollary}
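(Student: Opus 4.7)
The plan is to chain Theorem \ref{domain no set solution} with two standard structural facts about universal enveloping algebras: that $U(\mathfrak{g})$ is a domain and that its only group-like element is $1$.

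First I would verify the hypothesis of Theorem \ref{domain no set solution}, namely that $U(\mathfrak{g})$ is a domain. This follows from the Poincar\'e--Birkhoff--Witt theorem: once an ordered basis of $\mathfrak{g}$ is fixed, the associated graded of $U(\mathfrak{g})$ with respect to the standard filtration by total degree is isomorphic to the symmetric algebra $S(\mathfrak{g})$, which is a polynomial ring and hence a domain; and a filtered algebra whose associated graded is a domain is itself a domain.

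Next I would apply the theorem: if $\mc{B}$ were a $\Phi$-set-theoretic basis of $U(\mathfrak{g})$, then every $b\in \mc{B}$ would satisfy $\D(b)=\eps(b)^{-1}\,b\ot b$ (in particular $\eps(b)\neq 0$). A direct computation shows that $b':=\eps(b)^{-1}b$ is then group-like, since $\D(b')=b'\ot b'$ and $\eps(b')=1$. Invoking the classical fact that $G(U(\mathfrak{g}))=\{1\}$, which holds because $U(\mathfrak{g})$ is a connected Hopf algebra (its coradical is $k\cdot 1$, group-likes always lie in the coradical, and distinct group-likes are linearly independent), we conclude $b=\eps(b)\cdot 1$ for every $b\in \mc{B}$. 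Hence $\mc{B}\subseteq k\cdot 1$, contradicting the basis property as soon as $\mathfrak{g}\neq 0$.

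Since this corollary is essentially an immediate consequence of Theorem \ref{domain no set solution}, there is no genuine obstacle in its proof; all of the technical content is packaged into that theorem. The only conceptual ingredient worth naming is the identification $G(U(\mathfrak{g}))=\{1\}$, and this is precisely where non-triviality of $\mathfrak{g}$ enters to force the contradiction.
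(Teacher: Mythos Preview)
Your argument is correct and follows exactly the route the paper takes: use PBW to see that $U(\mathfrak{g})$ is a domain, apply Theorem~\ref{domain no set solution} to force each basis element to be a scalar multiple of a group-like, and then invoke $G(U(\mathfrak{g}))=\{1\}$ (connectedness) to reach a contradiction. Your observation that the contradiction only fires when $\mathfrak{g}\neq 0$ is a fair caveat the paper leaves implicit.
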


As notice above, by changing the elements $b \in \mc{B}$ by a scalar multiply, \Cref{domain no set solution} shows that the existence of $\Phi$-set-theoretic basis implies the existence of a coalgebra basis, whenever $H$ is a domain.

\begin{definition}
A basis $\mc{B}$ of a coalgebra $(C,\D,\eps)$ will be called a \emph{coalgebra basis} if
\[
\D(\mc{B})\subseteq k^* \mc{B}\otimes \mc{B}
\]
where $k^* \mc{B}\otimes \mc{B}= \{ \lambda b_1 \ot b_2 \mid \lambda \in k^*, b_1,b_2 \in \mc{B}\}.$
\end{definition}

We now complement \Cref{domain no set solution}, by showing that the existence of a coalgebra basis entails that $H$ is isomorphic to a group algebra.

\begin{proposition}\label{thm:groupalgebra-under-coalgebasis}
Let $H$ be a Hopf algebra. Assume there exists a coalgebra basis $\mc{B}$ of $H$. Then $\{\eps^{-1}(b)b \mid b \in \mc{B} \} = G(H)$ the group of group-like elements. Moreover, $H \cong k[G(H)]$.
\end{proposition}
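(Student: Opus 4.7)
The plan is to use coassociativity and the counit axioms to show that every $b\in\mc{B}$ is, after a scalar rescaling, a group-like element. Linear independence of group-likes will then force the rescaled basis to coincide with $G(H)$, from which $H\cong k[G(H)]$ follows at once.

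First I would fix $b\in\mc{B}$ and write $\D(b)=\lambda_b\,c_b\otimes d_b$ with $\lambda_b\in k^*$ and $c_b,d_b\in\mc{B}$. Applying $(\eps\otimes\id)\circ\D=\id$ and $(\id\otimes\eps)\circ\D=\id$ would give
\[
\lambda_b\,\eps(c_b)\,d_b\;=\;b\;=\;\lambda_b\,\eps(d_b)\,c_b.
\]
Since these equations express the basis element $b$ as a scalar multiple of another basis element, and $b\neq 0$, linear independence of $\mc{B}$ forces $c_b=d_b=b$ together with $\lambda_b\eps(b)=1$. In particular $\eps(b)\neq 0$ and $\D(b)=\eps(b)^{-1}\,b\otimes b$, at which point coassociativity is automatic.

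Next I would set $\tilde b:=\eps(b)^{-1}b$ and $\mc{B}':=\{\tilde b\mid b\in\mc{B}\}$. A direct check yields $\D(\tilde b)=\tilde b\otimes\tilde b$ and $\eps(\tilde b)=1$, so $\mc{B}'\subseteq G(H)$; and since rescaling a basis by nonzero scalars still produces a basis, $\mc{B}'$ is a $k$-basis of $H$. To upgrade this inclusion to an equality, I would invoke the classical fact that $G(H)$ is a $k$-linearly independent subset of $H$: for any $g\in G(H)$, expanding $g$ in the basis $\mc{B}'\subseteq G(H)$ exhibits $g$ as a finite $k$-linear combination of distinct group-likes, and linear independence of $G(H)$ then forces $g=\tilde b$ for some $b\in\mc{B}$. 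Thus $G(H)=\mc{B}'$, which is simultaneously a $k$-basis of $H$ and a group under multiplication. The canonical Hopf algebra morphism $k[G(H)]\to H$ is therefore surjective (its image contains the basis $\mc{B}'$) and injective (by linear independence of $G(H)$), giving the isomorphism $H\cong k[G(H)]$.

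The only real obstacle is the first step, where the hypothesis that $\mc{B}$ is a basis (rather than merely a spanning set) is used essentially: it is exactly this that allows the single-tensor form $\lambda_b\,c_b\otimes d_b$ to be pinned down uniquely via the counit identities, so that no hidden sum over Sweedler indices can appear. The rest of the argument is a transparent unwinding of the Hopf algebra axioms together with the standard linear independence of group-like elements.
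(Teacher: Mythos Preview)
Your proof is correct and, in fact, cleaner than the paper's. The first half is identical: both you and the paper pin down $\D(b)=\eps(b)^{-1}\,b\otimes b$ via the counit axioms and linear independence of $\mc{B}$. The divergence is in how the isomorphism $H\cong k[G(H)]$ is extracted. The paper observes that the formula $\D(b)=\eps(b)^{-1}b\otimes b$ forces $H$ to be cocommutative, then invokes the Cartier--Kostant--Milnor--Moore classification $H\cong U(P(H))\rtimes k[G(H)]$ and argues that the Lie part must vanish because $U(\mathfrak g)$ has no nontrivial group-likes. You bypass this entirely, using only the standard fact that distinct group-like elements are linearly independent to conclude $G(H)=\mc{B}'$ directly. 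Your route is more elementary and self-contained; the paper's route is heavier but consistent with its later use of CKMM in the proof of \Cref{classif cocomm sol}.
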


\begin{remark*}
As a consequence of \Cref{domain no set solution} and \Cref{thm:groupalgebra-under-coalgebasis} we obtain that a Hopf algebra which is a domain and posses a $\Phi$-set theoretic basis, must be isomorphic to a group algebra. Now, recall that if $G$ is a group having a torsion element, then $k[G]$ is not a domain. Indeed, a torsion element $g$ yields a non-trivial idempotent $\hat{g} := \frac{1}{o(g)} \sum_{i=1}^{o(g)} g^{i}$ and hence a zero divisor. On other hand if $G$ is torsion-free, it is a notorious conjecture of Kaplansky that $k[G]$ is a domain. In conclusion, a basis as in \Cref{thm:groupalgebra-under-coalgebasis} can not exist if $H$ is finite dimensional.
\end{remark*}

Finally, we point out that except for group algebras, a $\Phi$-set theoretic basis do not contain the unit element.

\begin{proposition}\label{lem:unit-obstruction}
Let $H$ be a Hopf algebra and $\mc{B}$ a $\Phi$-set theoretic basis of $H$ such that $1\in B$. Then $H\cong k[G(H)]$ with $G(H)$ the group-like elements of $H$ and $\{\eps^{-1}(b)b \mid b \in \mc{B} \}=G(H).$
\end{proposition}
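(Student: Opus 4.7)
The plan is to reduce Proposition \ref{lem:unit-obstruction} directly to Proposition \ref{thm:groupalgebra-under-coalgebasis} by showing that the hypothesis $1\in\mc{B}$ forces $\mc{B}$ to be a coalgebra basis. The key observation is that the unit can be used to extract the coproduct from $\Phi_H$: for any $b\in\mc{B}$,
\[
\Phi_H(b\otimes 1)=(\id\otimes m)(\Delta\otimes \id)(b\otimes 1)=(\id\otimes m)(b_{(1)}\otimes b_{(2)}\otimes 1)=b_{(1)}\otimes b_{(2)}=\Delta(b).
\]
Since $1\in\mc{B}$ by assumption and $\mc{B}$ is $\Phi$-set theoretic (Definition \ref{phi set theoretic def}), the left-hand side must be a pure tensor in $\mc{B}\otimes\mc{B}$; consequently $\Delta(b)\in\mc{B}\otimes\mc{B}\subseteq k^*\,\mc{B}\otimes\mc{B}$ for every $b\in\mc{B}$. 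This is precisely the condition from the definition preceding Proposition \ref{thm:groupalgebra-under-coalgebasis} for $\mc{B}$ to be a coalgebra basis.

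Applying Proposition \ref{thm:groupalgebra-under-coalgebasis} then yields both conclusions simultaneously: $H\cong k[G(H)]$ and $\{\eps^{-1}(b)b\mid b\in\mc{B}\}=G(H)$. There is really no genuine obstacle; the entire content of the proposition is the observation that evaluating the second slot of $\Phi_H$ at $1$ strips off the multiplication and exposes $\Delta$, thereby upgrading the $\Phi$-set theoretic property to the coalgebra-basis property. If one preferred a self-contained argument, the same conclusion can be reached by combining Lemma \ref{lem:right-monomial}(ii)--(iii) with the fact that $1=\sum_{c\in\mc{B}}\lambda_c c$ reduces to $\lambda_1=1$ when $1\in\mc{B}$, to show directly that $\eps(b)=1$ and $\Delta(b)=b\otimes b$ for each $b\in\mc{B}$, so $\mc{B}\subseteq G(H)$ and then $\mc{B}=G(H)$ by linear independence of the group-likes.
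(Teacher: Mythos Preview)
Your proof is correct and follows essentially the same approach as the paper: both observe that $\Phi_H(b\otimes 1)=\Delta(b)$, use the $\Phi$-set theoretic hypothesis with $1\in\mc{B}$ to force $\Delta(b)$ to be a pure tensor in $\mc{B}\otimes\mc{B}$, and then invoke Proposition~\ref{thm:groupalgebra-under-coalgebasis}. The only cosmetic difference is that the paper pauses to spell out $\Delta(b)=\lambda\,b\otimes b$ (via $\eps\otimes\id$ and $\id\otimes\eps$, as in Claim~2 of Theorem~\ref{domain no set solution}) before citing Proposition~\ref{thm:groupalgebra-under-coalgebasis}, whereas you go there directly; since that step is already contained in the proof of Proposition~\ref{thm:groupalgebra-under-coalgebasis}, your shortcut is perfectly fine.
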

\begin{proof}
Take $b\in \mc{B}$. Then $\Phi_H(b \ot 1) = \D(b)$. Hence, as $\mc{B}$ is $\Phi$-set theoretic, this entails that $\D(b) = c \ot d$ for some $c,d \in  \mc{B}$. Applying $\eps\otimes \id$ and $\id\ot \eps$, one obtains in the same way as in the proof of \Cref{domain no set solution}  that $\eps(c)d=b = \eps(d)c$. Thus $D(b) = \lambda\, b \ot b$ for some $\lambda \in k^*$. The conclusion now follows from \Cref{thm:groupalgebra-under-coalgebasis}.
\end{proof}

\subsubsection{Proofs of the statements}\label{section of proofs domain case}

To start we prove the statement on Hopf algebras that are a domain.

\begin{proof}[Proof of \Cref{domain no set solution}]
For $k$-vector spaces $V$ and $W$ and $T \in V \ot W$, we say that $T$ is written in left-reduced form if $T=\sum_{i=1}^n v_i\otimes w_i$
with $v_1,\dots,v_n\in V$ linearly independent and $n$ minimal among all such expressions with linearly independent left factors.\smallskip

\noindent {\it Claim 1:} $\D (y)$ is a pure tensor in $H\otimes H$ for each $y\in \mc{B}$.\smallskip

Take a left-reduced form of  $\D (y)=\sum_{i=1}^n a_i\otimes b_i$. Thus $a_1,\dots,a_n$ are linearly independent and with $n$ minimal. For any $z\in H$ we have
\begin{equation}\label{reduced form}
\Phi_H(y\otimes z)=\sum_{i=1}^n a_i\otimes b_i z.
\end{equation}
Note that the left-reducedness implies that the $b_1, \ldots, b_n$ are linearly independent. Indeed, if the $b_i$ satisfied a nontrivial relation, one could rewrite \eqref{reduced form} with fewer summands and still keep the left factors independent. 

Since $H$ is a domain, right multiplication $\rho_z(h)=hz$ is injective on $H$. This entails that also the $b_1z, \ldots, b_nz$ are linearly independent. Now taking $z \in \mc{B}$ and using that $\mc{B}$ is $\Phi$-set theoretic we obtain Claim $1$.\medskip

\noindent {\it Claim 2: }If $\D (y)$ is a simple tensor, then $\D (y)=\eps(y)^{-1}\, y\otimes y$.\smallskip

Let $y \neq 0$ and assume $\D (y)=u\otimes v$ for $u,v \in H$. Applying $\eps\otimes \id$ and using that $(\eps \ot \id)\D = \id$ gives
\[
y=(\eps\otimes\id )\D (y)=\eps(u)\,v.
\]
Hence $v=\eps(u)^{-1}y$. Similarly applying $\id\ot \eps$ and using that $(\id \ot \eps)\D = \id$, yields that $u=\eps(v)^{-1}y$.
Therefore $\D(y)=\lambda\, y\ot y$ for some $\lambda\in k^*$. Finally, apply again $(\eps\ot \id)$ to see that $y=(\eps\ot \id)\D(y)=\lambda\,\eps(y)\,y. $ Consequently, $\lambda\eps(y)=1$, finishing the proof of Claim 2.
\end{proof}

Next, we consider the statement about Hopf algebras having a basis that is both $\Phi$-set theoretic and a coalgebra basis.

\begin{proof}[Proof of \Cref{thm:groupalgebra-under-coalgebasis}]
Take $b\in \mc{B}$. By the coalgebra-basis condition, $\D(b)=\lambda b'\otimes b''$ for some $b',b''\in \mc{B}$. Applying $\eps\otimes \id$ and $\id\ot \eps$, one obtains in the same way as in the proof of Claim 2 that $b' = b = b''$ and $\lambda = \eps(b)^{-1}$.

With this property at hand we obtain that $H$ must be a cocommative Hopf algebra. Indeed, take $h \in H$ and decompose $h = \sum_{b \in \mc{B}} \lambda_b b$ into the basis. Then 
$$\D (h) = \sum_{b \in \mc{B}} \lambda_b \D(b) = \sum_{b \in \mc{B}} \lambda_b \eps(b)^{-1} b\ot b = \sum_{b \in \mc{B}} \lambda_b \D^{op}(b) =\D^{op}(h).$$

Now, recall Cartier-Konstant-Milnor-Moore classification, saying that $H$ is isomorphic to $U(P(H)) \rtimes k[G(H)]$, where $\mathfrak{g} := P(H)$ is the Lie algebra consisting of primitive elements in $H$ and $G := G(H)$ is the group of group-like elements. However the universal enveloping has no group-like elements, thus the Lie part must be trivial and $H \cong k[G(H)].$

Finally, note that $\eps^{-1}(b)b$ is group-like for each $b \in \mc{B}$. This entails that $G(H) = \{ \eps^{-1}(b)b \mid b \in \mc{B} \}$.
\end{proof}

Finally, using the classification of cocommutative algebras, together with \Cref{subsection sol from hopf}, we can relate classification of reachable cocommutative set-theoretic solutions of RPE to $\Phi$-set theoretic bases.

\begin{proof}[Proof of \Cref{classif cocomm sol} for reachable solutions]
Let $(S,s)$ be a reachable set-theoretic solution of the RPE. By definition this means that there exists a Hopf algebra $H$ and Hopf $H$-module $M$ such that $s^v = \Phi_M \cong \Phi_H \ot 1_V$ for $V$ a vector space with $\dim_k V = |S| - \dim_k H$. The fact that it reaches the linearisation of a set-theoretic solution implies that $H$ must contain a $\Phi$-set theoretic basis $\mc{B}$ such that $s \cong \phi_{\mc{B}} \times 1_{B(V)}$ with $B(V)$ a basis of $V$.

Thus it remains to show that $H$ is a group algebra. Note that by \Cref{phi versus H cocomm} the Hopf algebra $H$ must be cocommutative. Therefore Cartier-Konstant-Milnor-Moore classification implies that 
$$H \cong U(\mathfrak{g}) \rtimes k[G]$$
with $\mathfrak{g} := P(H)$ the Lie algebra consisting of primitive elements in $H$ and $G := G(H)$ the group of group-like elements. Hence by \Cref{solutions of hopf mashed pair} one has that
\begin{equation}\label{sol cocomm decomp}
\Phi_{U(\mathfrak{g})} \# \Phi_{k[G]} = \Phi_{H}
\end{equation}
However by \Cref{no set for Lie} the solution $\Phi_{U(\mathfrak{g})}$ can not be restricted to a set-theoretic solution. Thus $H \cong k[G]$, finishing the proof.
\end{proof}

\section{Set-theoretic solutions constructable from group algebra}\label{solutions from grp alg}

In \Cref{sol from dual grp alg} we have see that one can construct two non-isomorphic set-theoretic solutions from the group algebra $k[G]$ of an abelian group, through different choices of bases. Further motivated by \Cref{classif cocomm sol} we study in this section $\Phi$-set theoretic bases of any group algebra $k[G]$.

If $G$ can be decomposed as a semidirect product $A \rtimes N$ for some finite abelian group $A$, we construct in \Cref{sectie make out of finite abelian} a $\Phi$-set theoretic basis which yields on the matched pair solution from \Cref{solutions of hopf mashed pair}  on $A^{\vee} \times N.$ To so we first recall in \Cref{fourier transform section} the necessary background on the Fourier transform for finite abelian groups. Thereafter, in \Cref{exotic basis for infinite groups}, we provide restrictions on general $\Phi$-set theoretic bases in terms of non-torsioin elements of $G$. For instance we show that if $G$ is torsion-free, then $G$ is the only $\Phi$-set theoretic basis. 

\subsection{The abelian case: recollection on Fourier transform}\label{fourier transform section}
Let $A$ be a finite abelian group. From now on we assume that the ground field $k$ satisfies $\Char(k) \nmid |A|$ and that $k$ contains all $|A|$-roots of unity. As in \Cref{sol from grp alg} we want to related $k[A]$ with its dual through the Fourier transform. Concretely, define the $k$-linear map
\begin{equation}\label{eq:ThetaDef}
\Theta_A:k[A^\vee]^*\longrightarrow k[A]: \delta_\chi \mapsto \frac{1}{|A|}\sum_{a\in A}\chi(a^{-1})\,a.
\end{equation}

The following fact is well-known.

\begin{proposition}\label{prop:ThetaInv}
The map $\Theta_A$ is an isomorphism of Hopf algebras. Its inverse $\Theta_A^{-1}:k[A]\to k[A^\vee]^*$ is given on group elements $a \in A$ by
\begin{equation}\label{eq:ThetaInv}
\Theta_A^{-1}(a)=\sum_{\chi\in A^\vee}\chi(a)\,\delta_\chi,
\end{equation}
extended $k$-linearly.
\end{proposition}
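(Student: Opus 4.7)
The key ingredient will be the standard orthogonality relations for characters of $A$. Since $\Char(k)\nmid |A|$ and $k$ contains all $|A|$-th roots of unity, these hold in their familiar form: $\sum_{a\in A}\chi(a)\psi(a^{-1})=|A|\delta_{\chi,\psi}$ for $\chi,\psi\in A^\vee$, and dually $\sum_{\chi\in A^\vee}\chi(a)\chi(b^{-1})=|A|\delta_{a,b}$ for $a,b\in A$. My plan is to spell out the Hopf structure on $k[A^\vee]^*$ on the dual basis $\{\delta_\chi\}$, and then verify each of the five Hopf-algebra compatibilities directly on this basis by reducing to one of these two orthogonality identities.

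First I would record the Hopf structure on $k[A^\vee]^*$ dualised from the group algebra $k[A^\vee]$: multiplication is the pointwise product $\delta_\chi\cdot\delta_\psi=\delta_{\chi,\psi}\,\delta_\chi$; unit $1=\sum_\chi\delta_\chi$; coproduct $\Delta(\delta_\eta)=\sum_{\chi\psi=\eta}\delta_\chi\otimes\delta_\psi$; counit $\eps(\delta_\chi)=\delta_{\chi,1}$; antipode $S(\delta_\chi)=\delta_{\chi^{-1}}$. Setting $e_\chi:=\Theta_A(\delta_\chi)=\tfrac{1}{|A|}\sum_{a\in A}\chi(a^{-1})\,a$, I would then check that $\{e_\chi\}_{\chi\in A^\vee}$ is a complete system of orthogonal idempotents summing to $1_A$. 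The relation $e_\chi e_\psi=\delta_{\chi,\psi}e_\chi$ follows after the substitution $c=ab$ and an application of $\sum_a\chi(a^{-1})\psi(a)=|A|\delta_{\chi,\psi}$; similarly $\sum_\chi e_\chi=1_A$ collapses via $\sum_\chi\chi(a^{-1})=|A|\delta_{a,1}$. This handles unit and multiplication.

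For the coproduct I would expand
\[
\sum_{\chi\psi=\eta}e_\chi\otimes e_\psi
=\frac{1}{|A|^2}\sum_{a,b}\eta(b^{-1})\Bigl(\sum_\chi\chi(a^{-1}b)\Bigr)\,a\otimes b,
\]
collapse the inner sum using $\sum_\chi\chi(a^{-1}b)=|A|\delta_{a,b}$, and so obtain $\tfrac{1}{|A|}\sum_a\eta(a^{-1})\,a\otimes a$, which equals $\Delta_{k[A]}(e_\eta)$ because $\Delta_{k[A]}(a)=a\otimes a$. The counit is the direct computation $\eps_{k[A]}(e_\chi)=\tfrac{1}{|A|}\sum_a\chi(a^{-1})=\delta_{\chi,1}$. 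The antipode is handled by the substitution $a\mapsto a^{-1}$, giving $S_{k[A]}(e_\chi)=\tfrac{1}{|A|}\sum_a\chi(a)\,a=e_{\chi^{-1}}$. Finally, the inverse formula \eqref{eq:ThetaInv} is verified by the one-line calculation $\Theta_A\bigl(\sum_\chi\chi(a)\delta_\chi\bigr)=\tfrac{1}{|A|}\sum_b\bigl(\sum_\chi\chi(ab^{-1})\bigr)b=a$, and bijectivity follows on dimension grounds (or by the symmetric computation in the other direction).

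There is no real obstacle here: every step collapses to one of the two orthogonality sums. The only subtlety is keeping the duality conventions straight, namely that the product on $k[A^\vee]^*$ is dual to the coproduct on $k[A^\vee]$ (hence pointwise), that $1_{k[A^\vee]^*}=\eps_{k[A^\vee]}=\sum_\chi\delta_\chi$, and that the standing hypotheses on $k$ are precisely what is needed to divide by $|A|$ and to have $|A^\vee|=|A|$ so that both orthogonality relations are available.
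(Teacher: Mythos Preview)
Your proposal is correct and follows essentially the same approach as the paper: both proofs record the Hopf structure on $k[A^\vee]^*$ in the $\{\delta_\chi\}$ basis and then verify each structure map (product, unit, coproduct, counit, antipode, inverse) by a direct computation that collapses via one of the two character orthogonality relations. The only cosmetic difference is that the paper also writes out the second direction $\Theta_A^{-1}\circ\Theta_A=\id$ explicitly, whereas you appeal to dimension; either suffices.
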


We now show that the group solution $s(a \ot b) = a \ot ab$ on $k[A]$ seen in \Cref{sol from grp alg} restrict to the solution from \Cref{sol from dual grp alg} by choosing the basis $\{ \Theta_A(\delta_{\chi}) \mid \chi \in A^{\vee} \}$.

\begin{proposition}\label{prop:s2Theta}
For $\chi_1,\chi_2\in A^\vee$ one has
\[
s\bigl((\Theta_A\otimes\Theta_A)(\delta_{\chi_1}\otimes \delta_{\chi_2})\bigr)
=\Theta_A(\delta_{\chi_1\chi_2^{-1}})\otimes \Theta_A(\delta_{\chi_2}).
\]
\end{proposition}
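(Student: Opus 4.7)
The plan is a direct computation, unwinding the definition of $\Theta_A$ on both sides and exploiting multiplicativity of characters.

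First I would expand the left-hand side. By the definition \eqref{eq:ThetaDef},
\[
(\Theta_A\otimes\Theta_A)(\delta_{\chi_1}\otimes\delta_{\chi_2})
=\frac{1}{|A|^2}\sum_{a,b\in A}\chi_1(a^{-1})\chi_2(b^{-1})\,a\otimes b,
\]
and since $s(a\otimes b)=a\otimes ab$ is $k$-linear, we obtain
\[
s\bigl((\Theta_A\otimes\Theta_A)(\delta_{\chi_1}\otimes\delta_{\chi_2})\bigr)
=\frac{1}{|A|^2}\sum_{a,b\in A}\chi_1(a^{-1})\chi_2(b^{-1})\,a\otimes ab.
\]

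Next I would substitute $c:=ab$ (equivalently $b=a^{-1}c$), which is a bijection of $A$ for every fixed $a$. Using that $\chi_2$ is a group homomorphism, $\chi_2(b^{-1})=\chi_2(c^{-1}a)=\chi_2(a)\chi_2(c^{-1})$, so the double sum becomes
\[
\frac{1}{|A|^2}\sum_{a,c\in A}\chi_1(a^{-1})\chi_2(a)\,\chi_2(c^{-1})\,a\otimes c.
\]
Combining the two characters at $a$ via $\chi_1(a^{-1})\chi_2(a)=(\chi_1\chi_2^{-1})(a^{-1})$, the expression factors as
\[
\left(\frac{1}{|A|}\sum_{a\in A}(\chi_1\chi_2^{-1})(a^{-1})\,a\right)\otimes\left(\frac{1}{|A|}\sum_{c\in A}\chi_2(c^{-1})\,c\right),
\]
which by \eqref{eq:ThetaDef} is exactly $\Theta_A(\delta_{\chi_1\chi_2^{-1}})\otimes\Theta_A(\delta_{\chi_2})$.

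There is no real obstacle here: the whole argument is the change of variable $c=ab$ together with multiplicativity of $\chi_2$, and the factorisation of the resulting double sum into a tensor product of single sums is automatic. Conceptually this is exactly the verification carried out, in dual form, in \Cref{sol from dual grp alg} where one computes $\Phi\circ(\Psi\otimes\Psi)$; the present statement is its pullback through the Hopf isomorphism $\Theta_A$ of \Cref{prop:ThetaInv}.
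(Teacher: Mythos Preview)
Your proof is correct and follows essentially the same approach as the paper's own proof: expand via the definition of $\Theta_A$, apply $s$, change variables $c=ab$, use multiplicativity of $\chi_2$, and factor the resulting double sum. The only cosmetic difference is that the paper writes $\chi_1(a^{-1})\chi_2(a)=(\chi_2\chi_1^{-1})(a)$ whereas you write it as $(\chi_1\chi_2^{-1})(a^{-1})$, which are of course the same.
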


\begin{proof}
By definition
\begin{align*}
(s\circ(\Theta_A\otimes\Theta_A))(\delta_{\chi_1}\otimes\delta_{\chi_2})
 & =s \left( \frac1{|A|^2}\sum_{a,b\in A}\chi_1(a^{-1})\chi_2(b^{-1})\,(a\otimes b) \right)\\
 & =\frac1{|A|^2}\sum_{a,b\in A}\chi_1(a^{-1})\chi_2(b^{-1})\,(a\otimes ab). \\
\end{align*}
Now reindex the sum via $c:=ab$ and hence $\chi_2(b^{-1})=\chi_2(c^{-1}a)=\chi_2(c^{-1})\chi_2(a)$. The latter sum becomes
\[
\frac1{|A|^2}\sum_{a,c}(\chi_2\chi_1^{-1})(a)\chi_2(c^{-1})\,(a\otimes c)
=\Bigl(\frac1{|A|}\sum_{a}(\chi_2\chi_1^{-1})(a)\,a\Bigr)\otimes\Bigl(\frac1{|A|}\sum_c\chi_2(c^{-1})\,c\Bigr)
\]
which exactly equals $\Theta_A(\delta_{\chi_1\chi_2^{-1}})\otimes \Theta_A(\delta_{\chi_2})$, as desired.
\end{proof}

For convenience of the non-expert reader we will now include a prove of \Cref{prop:ThetaInv}.  Recall that $k[A^\vee]^*$ is equipped with the Hopf algebra structure dual to the group algebra $k[A^\vee]$:
\begin{align*}
\delta_\chi\delta_\psi &= \delta_{\chi,\psi}\,\delta_\chi,\qquad 1=\sum_{\chi\in A^\vee}\delta_\chi,\\
\D(\delta_\chi) &= \sum_{\alpha\beta=\chi}\delta_\alpha\otimes \delta_\beta,\qquad
\varepsilon(\delta_\chi)=\delta_{\chi,1},\qquad
S(\delta_\chi)=\delta_{\chi^{-1}}.
\end{align*}

\begin{proof}[Proof of \Cref{prop:ThetaInv}]
We will regularly use the character orthogonality relations which hold due to our assumptions on the ground field $k$:
\begin{equation}\label{eq:orth}
\sum_{\chi\in A^\vee}\chi(x)=
\begin{cases}
|A|,& x=1,\\
0,& x\neq 1,
\end{cases}
\qquad
\sum_{a\in A}\chi(a)=
\begin{cases}
|A|,& \chi=1,\\
0,& \chi\neq 1.
\end{cases}
\end{equation}

\medskip\noindent
{\it Algebra map.} By definition 
$$ \Theta_A(\delta_\chi)\Theta_A(\delta_\psi)
=\frac1{|A|^2}\sum_{a,b\in A}\chi(a^{-1})\psi(b^{-1})\,ab.
$$
The coefficient of a fixed $t\in A$ equals
\[
\frac1{|A|^2}\sum_{a\in A}\chi(a^{-1})\psi\bigl((a^{-1}t)^{-1}\bigr)
=\frac{\psi(t^{-1})}{|A|^2}\sum_{a\in A}(\chi^{-1}\psi)(a).
\]
By \eqref{eq:orth} this is $\frac1{|A|}\chi(t^{-1})$ if $\chi=\psi$ and $0$ otherwise. Hence
$\Theta_A(\delta_\chi)\Theta_A(\delta_\psi)=\delta_{\chi,\psi}\Theta_A(\delta_\chi)=\Theta_A(\delta_\chi\delta_\psi)$.
The orthogonality relations also yield
\[
\Theta_A(1)=\Theta_A\Bigl(\sum_{\chi}\delta_\chi\Bigr)
=\frac1{|A|}\sum_{a\in A}\Bigl(\sum_{\chi}\chi(a^{-1})\Bigr)a
=1.
\]

\medskip\noindent
{\it Coalgebra map.}
We show $\D_{k[A]} (\Theta_A(\delta_\chi))=(\Theta_A\otimes \Theta_A)\D_{k[A^{\vee}]^*} (\delta_\chi)$ and drop the indices of the coproducts.
The left-hand side is
\[
\D (\Theta_A(\delta_\chi))=\frac1{|A|}\sum_{a\in A}\chi(a^{-1})\,(a\otimes a).
\]
The right-hand side:
\[
(\Theta_A\otimes\Theta_A)\D (\delta_\chi)
=\sum_{\alpha\beta=\chi}\Theta_A(\delta_\alpha)\otimes\Theta_A(\delta_\beta)
=\frac1{|A|^2}\sum_{\alpha\beta=\chi}\sum_{a,b\in A}\alpha(a^{-1})\beta(b^{-1})\,(a\otimes b).
\]
Writing $\beta=\alpha^{-1}\chi$, the coefficient of a fixed $a\otimes b$ is
\[
\frac1{|A|^2}\sum_{\alpha\in A^\vee}\alpha(a^{-1})(\alpha^{-1}\chi)(b^{-1})
=\frac{\chi(b^{-1})}{|A|^2}\sum_{\alpha\in A^\vee}\alpha(a^{-1}b)
=\begin{cases}
\frac1{|A|}\chi(a^{-1}),& a=b,\\
0,& a\neq b,
\end{cases}
\]
which equals $\D (\Theta_A(\delta_\chi))$.

\medskip\noindent
{\it Counit and antipode.}
For the counit,
\[
\varepsilon(\Theta_A(\delta_\chi))=\frac1{|A|}\sum_{a\in A}\chi(a^{-1})=\delta_{\chi,1}=\varepsilon(\delta_\chi).
\]
For the antipode,
\[
S(\Theta_A(\delta_\chi))
=\frac1{|A|}\sum_{a\in A}\chi(a^{-1})\,a^{-1}
=\frac1{|A|}\sum_{a\in A}\chi(a)\,a
=\Theta_A(\delta_{\chi^{-1}})
=\Theta_A(S(\delta_\chi)).
\]

\medskip\noindent
{\it Inverse.}
We have show that $\Theta_A$ is a Hopf algebra map and hence it remains to verify the inverse given as in \Cref{eq:ThetaInv}.

Fix $a\in A$. Then by \eqref{eq:ThetaDef},
\[
\Theta_A(\Theta_A^{-1}(a))
=\sum_{\chi\in A^\vee}\chi(a)\,\Theta_A(\delta_\chi)
=\frac1{|A|}\sum_{\chi\in A^\vee}\ \sum_{x\in A}\chi(a)\chi(x^{-1})\,x
=\frac1{|A|}\sum_{x\in A}\Bigl(\sum_{\chi\in A^\vee}\chi(ax^{-1})\Bigr)x.
\]
By orthogonality \eqref{eq:orth}, $\sum_{\chi}\chi(ax^{-1})=|A|$ iff $x=a$ and $0$ otherwise. Hence the above equals $a$.
Conversely, for $\delta_\psi$ we compute using \eqref{eq:ThetaInv}:
\begin{align*}
\Theta_A^{-1}(\Theta_A(\delta_\psi))
& =\frac1{|A|}\sum_{x\in A}\psi(x^{-1})\,\Theta_A^{-1}(x)\\
& =\frac1{|A|}\sum_{x\in A}\psi(x^{-1})\sum_{\chi\in A^\vee}\chi(x)\delta_\chi \\
& =\sum_{\chi\in A^\vee}\Bigl(\frac1{|A|}\sum_{x\in A}(\chi\psi^{-1})(x)\Bigr)\delta_\chi
=\delta_\psi,
\end{align*}
again by \eqref{eq:orth}. Thus $\Theta_A^{-1}$ is indeed the inverse.
\end{proof}

\subsection{Matched pairs with a finite abelian factor}\label{sectie make out of finite abelian}

Suppsoe that $G$ is the matched pair $B \bowtie N$ with $B$ and $N$ some subgroups. Consider the group solutions $\Phi_{k[G]}(g\ot h)= g \ot gh$ and denote $\phi := \Phi_{k[G]}$. The aim of this section is to construct a $\Phi$-set theoretic basis $\mc{B}$ of $k[G]$ such that $\restr{\Phi}{\mc{B} \ot \mc{B}}$ corresponds to the solution \eqref{set mashed pair} associated to $k[B]^*\bowtie k[N]$. However by \Cref{matched pair cocomm example} this is only possible if $B \bowtie N = B \rtimes N$ with $B$ abelian. Indeed $k[G]$ is cocommutative and thus by \Cref{phi versus H cocomm} so is $\Phi_{k[G]}$ and its restriction to any $\Phi$-set theoretic basis. In that case \eqref{set mashed pair} takes following form
\begin{equation}\label{mashed pair sol for semid}
\phi_{B\bowtie N}\bigl((s,u)\, ,\, (t,v)\bigr)
=
\Bigl( \bigl(s\,(t\triangleleft u^{-1})^{-1}, u\bigr) \, , \,
(t\triangleleft u^{-1},  uv ) \Bigr)
\end{equation}

We first consider the case that $B$ is finite abelian and denote it by $A$. We will make $k[A]^*$ appear using the maps introduced in \Cref{fourier transform section}.\medskip

\subsubsection{A dual like basis inside $k[A]\subset k[G]$:} for each $\chi \in A^{\vee}$ denote 
$$e_{\chi} := \Theta_A(\delta_{\chi}) = \frac{1}{|A|}\sum_{a\in A}\chi(a^{-1})\,a \in k[A].$$
Then by \Cref{prop:ThetaInv} the standard relations 
$e_\chi e_\psi=\delta_{\chi, \psi} e_\chi$ and $\sum_{\chi \in A^{\vee}} e_\chi=1$ hold. Moreover,
\begin{equation}\label{eq:trans}
b\,e_\chi=\chi(b)\,e_\chi.
\end{equation}
for all $b\in A$. Indeed, 
\[
b\,e_\chi=\frac{1}{|B|}\sum_{t\in B}\chi(t^{-1})\,bt
=\frac{1}{|B|}\sum_{t\in B}\chi\bigl((b^{-1} t)^{-1}\bigr)\,t
=\chi(b)\,\frac{1}{|B|}\sum_{t\in B}\chi(t^{-1})\,t=\chi(b)\,e_\chi.
\]

\noindent Note that $\{e_\chi\}_{\chi\in A^\vee}$ is a basis of $k[A]$ which motivates to consider the following set:
\[
\mathcal{B}_{A^\vee} := \{\, e_\chi\,u \mid \chi\in A^\vee,\ u\in N\,\}\subset k[G].
\]
\smallskip
The set $\mathcal{B}_{A^\vee}$ is a $k$-basis of $k[G]$ since $k[G]\cong k[A \times N]$ as vector spaces.

\subsubsection{Restriction of the group solution to the basis on $A^{\vee} \times N$}

We will now show that the basis $\mathcal{B}_{A^\vee}$ is $\Phi_{k[G]}$-set theoretic, see \Cref{thm:dual} below. First note that the right action $\triangleleft$ of $N$ on $A$ induces a left action of $N$ on $A^\vee$ by pullback:
\begin{equation}\label{eq:dualaction}
(u\cdot\chi)(a):=\chi(a\triangleleft u^{-1})
\end{equation}
for $u\in N,\ \chi\in A^\vee,\ a\in A$. This action permutes the Fourier idempotents $e_{\chi}$:
\begin{equation}\label{eq:conj}
u\,e_\chi\,u^{-1} = e_{u\cdot \chi}.
\end{equation}

\begin{proposition}\label{thm:dual}
Let $(A,N)$ a mashed pair with $A$ a finite abelian group acting trivially on $N$ and let $G = A \rtimes N$. With notations as above, we have for all $\alpha,\beta\in A^\vee$ and $u,v\in N$,
\begin{equation}\label{eq:Phi-dual}
\Phi_{k[G]}\bigl(e_\alpha u\ot e_\beta v \bigr)
=
\bigl(e_{\alpha\,(u\cdot\beta)^{-1}}\,u\bigr)\ \ot\ \bigl(e_{u\cdot\beta}\,uv\bigr).
\end{equation}
In particular, $\Phi_{k[G]}$ restricts to a set-theoretic map on $A^\vee\times N$,
\[
\phi_{A^\vee\bowtie N}:\ (A^\vee\times N)\times(A^\vee\times N)\to (A^\vee\times N)\times(A^\vee\times N),
\]
given by
\begin{equation}\label{eq:setdual}
\phi_{A^\vee\bowtie N}\bigl((\alpha,u),(\beta,v)\bigr)
=
\bigl((\alpha(u\cdot\beta)^{-1},u),\ (u\cdot\beta,uv)\bigr).
\end{equation}
\end{proposition}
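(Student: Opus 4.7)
The plan is to compute $\Phi_{k[G]}(e_\alpha u \otimes e_\beta v)$ directly via the formula $\Phi_H(x \otimes y) = x_{(1)} \otimes x_{(2)} y$ and then collapse the resulting sum using the orthogonality of the Fourier idempotents $e_\chi$. The essential input will be:
\begin{itemize}
\item the coproduct of each idempotent, namely $\Delta(e_\chi) = \sum_{\alpha\beta=\chi} e_\alpha \otimes e_\beta$, which is an immediate consequence of Proposition~\ref{prop:ThetaInv}, since $\Theta_A$ is a coalgebra map and the coproduct of $\delta_\chi$ in $k[A^\vee]^*$ has this form;
\item the multiplicativity of $\Delta_{k[G]}$ together with $\Delta(u) = u \otimes u$ for $u \in N \subset G$, to conclude $\Delta(e_\alpha u) = \sum_{\alpha_1\alpha_2 = \alpha} e_{\alpha_1} u \otimes e_{\alpha_2} u$;
\item the conjugation identity \eqref{eq:conj}, $u e_\beta u^{-1} = e_{u \cdot \beta}$, and the idempotent relation $e_\gamma e_{\gamma'} = \delta_{\gamma,\gamma'} e_\gamma$.
\end{itemize}

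Putting these together, I would expand
\[
\Phi_{k[G]}(e_\alpha u \otimes e_\beta v) = \sum_{\alpha_1 \alpha_2 = \alpha} e_{\alpha_1} u \otimes (e_{\alpha_2} u)(e_\beta v),
\]
and rewrite the second tensor factor as $e_{\alpha_2} (u e_\beta u^{-1}) uv = e_{\alpha_2} e_{u\cdot\beta} uv$. Orthogonality of the $e_\chi$ then forces $\alpha_2 = u \cdot \beta$, hence $\alpha_1 = \alpha (u\cdot\beta)^{-1}$, leaving exactly one surviving summand
\[
e_{\alpha (u\cdot\beta)^{-1}} u \otimes e_{u\cdot\beta} uv,
\]
which is \eqref{eq:Phi-dual}.

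The second assertion is then essentially bookkeeping: since each $\Phi_{k[G]}$-image of a basic tensor $(e_\alpha u) \otimes (e_\beta v)$ is itself a pure tensor from $\mathcal{B}_{A^\vee} \otimes \mathcal{B}_{A^\vee}$, the basis $\mathcal{B}_{A^\vee}$ is $\Phi$-set theoretic, and the induced map on $A^\vee \times N \times A^\vee \times N$ under the identification $e_\chi u \leftrightarrow (\chi,u)$ is exactly \eqref{eq:setdual}. The only subtle point, which is not really an obstacle but worth a brief remark, is verifying that the triviality of the action of $A$ on $N$ is genuinely used: it guarantees that $u$ and elements of $A$ commute up to the conjugation action producing another element of $A$, so that $(e_{\alpha_2} u)(e_\beta v)$ simplifies cleanly via \eqref{eq:conj} without extra $\triangleright$-terms; this is also consistent with Example~\ref{matched pair cocomm example}, which identified trivial left action as the condition for $\phi_{B\bowtie N}$ to be cocommutative, as it must be since it is restricted from $\Phi_{k[G]}$ and $k[G]$ is cocommutative.
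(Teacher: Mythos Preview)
Your proof is correct and follows essentially the same overall strategy as the paper---a direct computation of $\Phi_{k[G]}(e_\alpha u\otimes e_\beta v)$ that collapses to a single term via \eqref{eq:conj}---but the organisation differs in one point worth noting. The paper expands $e_\alpha$ explicitly as $\frac{1}{|A|}\sum_{a\in A}\alpha(a^{-1})\,a$ and then uses the translation identity \eqref{eq:trans}, $a\,e_\chi=\chi(a)\,e_\chi$, to simplify the second tensor factor; the collapse is achieved by regrouping the character values $\alpha(a^{-1})(u\cdot\beta)(a)$ into $(\alpha(u\cdot\beta)^{-1})(a^{-1})$. You instead invoke the coproduct formula $\Delta(e_\chi)=\sum_{\alpha_1\alpha_2=\chi}e_{\alpha_1}\otimes e_{\alpha_2}$ directly and let the orthogonality $e_\gamma e_{\gamma'}=\delta_{\gamma,\gamma'}e_\gamma$ do the collapsing. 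These are Fourier-dual to each other; your version is arguably cleaner and is in fact exactly the route the paper itself takes later in the proof of \Cref{lem:set-theoretic-kills-cocycles} (see \eqref{coprod of fourier idempotents}), so both arguments coexist in the text.
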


Before proving \Cref{thm:dual} we explain why \eqref{eq:conj} holds. For this  recall that by definition conjugation by $u$ acts on $A$ as the automorphism
$a\mapsto a\triangleleft u^{-1}$. Thus $u\,e_\chi\,u^{-1}
=\frac{1}{|A|}\sum_{a\in A}\chi(a^{-1})\,(a\triangleleft u^{-1}).$ Now use that $(\cdot )\trianglel u^{-1}$ is a bijection of $A$ to relabel the sum via $c=a\triangleleft u^{-1}$ and obtain \eqref{eq:conj}:
\[
u\,e_\chi\,u^{-1}
=\frac{1}{|A|}\sum_{c\in A}\chi\bigl((c\triangleleft u)^{-1}\bigr)\,c
=\frac{1}{|A|}\sum_{c\in A}(u\cdot\chi)(c^{-1})\,c
=e_{u\cdot\chi}.
\]

\begin{proof}[Proof of \Cref{thm:dual}]
Rolling out the definitions we readily obtain that:
\begin{equation}\label{eq:stepB}
\Phi_{k[G]}\bigl(e_\alpha u \ot e_\beta v \bigr)
=
\frac{1}{|A|}\sum_{a\in A}\alpha(a^{-1})\, au\ot (au)(e_\beta v).
\end{equation}

By \eqref{eq:conj} and subsequently \eqref{eq:trans}, we have 
$$(au)(e_\beta v)=a\,e_{u\cdot\beta}\,(uv) = (u\cdot\beta)(a)\,e_{u\cdot\beta}\,(uv).$$

Substituting into \eqref{eq:stepB} yields
\begin{equation*}
\Phi_{k[G]}\bigl(e_\alpha u \ot e_\beta v\bigr)
=
\frac{1}{|A|}\sum_{a\in A}\alpha(a^{-1})\,(u\cdot\beta)(a)\, au \ot  e_{u\cdot\beta}\,uv.
\end{equation*}

Since $\alpha(a^{-1})(u\cdot\beta)(a)=\bigl(\alpha (u\cdot\beta)^{-1}\bigr)(a^{-1})$, we can rewrite following sum as
\[
\frac{1}{|A|}\sum_{a\in A}\alpha(a^{-1})(u\cdot\beta)(a)\, au
=
\left(\frac{1}{|A|}\sum_{a\in A}\bigl(\alpha (u\cdot\beta)^{-1}\bigr)(a^{-1})\,a\right)u
=
e_{\alpha (u\cdot\beta)^{-1}}\,u.
\]
Substituting this in the obtained expression for $\Phi_{k[G]}\bigl(e_\alpha u \ot e_\beta v\bigr)$, finishes the proof.
\end{proof} 

\subsubsection{A solution on $A \times N$}

To obtain a solution on the set $A \times N$ instead of the canonical $A^{\vee} \times N$ we need to invoke a non-canonical isomorphism $\iota: A \cong A^{\vee}.$ Such as isomorphism is equivalent to the choice of a nondegenerate bicharacter 
$$\langle\cdot, \cdot\rangle: A \times A \rightarrow k^*.$$
The associated isomorphism $A \cong A^{\vee}$ is given by $s \mapsto \chi_s$ where $\chi_s(t):=\langle s, t\rangle$. Then for $s \in A$ consider
$$
e_s:=e_{\chi_s}=\frac{1}{|A|} \sum_{a \in A}\left\langle s, a^{-1}\right\rangle a \in k[A].
$$
In this setting one would consider the basis $\{e_s \, u \mid s\in A, u\in N  \}$ of $k[G]$. To however be able to recover the desired solution \eqref{mashed pair sol for semid} one needs an $N$-equivariant isomorphism:
\begin{equation}\label{eq:eqv}
\iota (a\triangleleft u^{-1})=u\cdot \iota (a)
\qquad (a\in A,\ u\in N),
\end{equation}
where $u\cdot(-)$ is the pullback action \eqref{eq:dualaction}. In that case  for $t\in A$ we have that $ue_{\iota(t)} = e_{\iota(t) \trianglel u^{-1}}u$ using \Cref{eq:conj}. Such isomorphism unfortunately do not always exists.

\begin{proposition}\label{prop:pairing}
Let $(A,N,\triangleleft, \triangler)$ be a mashed pair of groups with $A$ abelian. Then the following are equivalent:
\begin{enumerate}
    \item \label{eq iso} There exists an $N$-equivariant isomorphism $\iota :A\overset{\sim}{\to}A^\vee$, 
    \item there exists a nondegenerate bicharacter
$\langle\cdot,\cdot\rangle:A\times A\to k^*$ such that
\[
\langle a\triangleleft u^{-1},\ b\rangle=\langle a,\ b\trianglel u\rangle
\qquad (a,b\in A,\ u\in N).
\]
\item \label{iso as mod} $A$ and $A^{\vee}$ are isomorphic as $\Z[N]$-modules.
\end{enumerate}
\end{proposition}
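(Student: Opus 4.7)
The plan is to use the standard bijection between group homomorphisms $\iota : A \to A^\vee$ and bicharacters $\beta : A \times A \to k^*$, given by $\iota(a)(b) = \beta(a,b)$. Under this correspondence, $\iota$ is a group isomorphism if and only if $\beta$ is non-degenerate (using finiteness of $A$ together with $|A|=|A^\vee|$), and the $N$-equivariance of $\iota$ translates exactly into the compatibility identity displayed in (2). This will immediately yield $(1) \Leftrightarrow (2)$.

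To verify the equivariance translation, I would evaluate the equivariance identity $\iota(a \trianglel u^{-1}) = u \cdot \iota(a)$ from \eqref{eq:eqv} at an arbitrary $b\in A$. The left-hand side becomes $\beta(a\trianglel u^{-1},b)$, and the right-hand side, after unfolding the pullback action \eqref{eq:dualaction}, becomes $\beta(a, b\trianglel u)$, giving exactly the stated compatibility. Conversely, starting from a bicharacter as in (2) one defines $\iota(a) := \beta(a,\cdot)$ and reads the same calculation backwards, noting that bi-multiplicativity of $\beta$ ensures $\iota$ is a group homomorphism landing in $A^\vee$.

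For $(1) \Leftrightarrow (3)$, the right $N$-action $\trianglel$ on $A$ is converted to a left $\Z[N]$-module structure via $u\cdot a := a\trianglel u^{-1}$, while $A^\vee$ carries the $\Z[N]$-module structure coming from the pullback action \eqref{eq:dualaction}. With respect to these structures, a map $A\to A^\vee$ is $\Z[N]$-linear precisely when it is $N$-equivariant in the sense of \eqref{eq:eqv}, so $\Z[N]$-module isomorphisms coincide with $N$-equivariant group isomorphisms. The only delicate point in the whole argument is bookkeeping of inverses when passing between the right action on $A$ and the induced left actions on $A$ and on $A^\vee$; once the conventions are aligned, (1), (2) and (3) become three re-phrasings of the same statement and no substantial obstacle remains.
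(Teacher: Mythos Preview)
Your proposal is correct and follows essentially the same approach as the paper: define $\langle a,b\rangle:=\iota(a)(b)$ (and conversely $\iota(a)(b):=\langle a,b\rangle$), then check that the $N$-equivariance \eqref{eq:eqv} is equivalent to the displayed invariance by evaluating at $b$ via \eqref{eq:dualaction}, while $(1)\Leftrightarrow(3)$ is immediate once the module structures are spelled out. Your write-up is in fact slightly more explicit than the paper's on the $(1)\Leftrightarrow(3)$ step and on the caveat about inverse bookkeeping, which is appropriate.
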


\begin{proof}
If $\iota$ is given, define $\langle a,b\rangle:=\iota(a)(b)$.
Then \eqref{eq:eqv} is equivalent to the displayed invariance by evaluating both sides at $b$ and using \eqref{eq:dualaction}.
Conversely, given such a pairing, define $\iota (a)(b):=\langle a,b\rangle$. Non-degeneracy makes $\iota$ into an isomorphism and the invariance yields \eqref{eq:eqv}. Clearly also \eqref{eq iso} and \eqref{iso as mod} are equivalent.
\end{proof}

Now suppose that there exists an $N$-equivariant isomorphism $\iota$.
Then the bijection
\[
A\times N \xrightarrow{\sim} A^\vee\times N : (a,u)\mapsto (\iota(a),u)
\]
transports \eqref{eq:setdual} to a cocommutative solution on $A\times N$.
Concretely, one obtains as desired the solution
\[
\phi\bigl((a,u),(b,v)\bigr)
=
\bigl((a\,(b\triangleleft u^{-1})^{-1},\ u),\ (b\triangleleft u^{-1},\ uv)\bigr).
\]

\subsection{Group algebra of infinite groups and non-existence criterion}\label{exotic basis for infinite groups}
Our aim now is to prove that a torsion-free group $G$ has no $\Phi$-set theoretic basis other than $\lambda.G= \{ \lambda g \mid g \in G\}$ for some scalar $\lambda \in k^*$. In particular, for such groups $k[G]$ only affords the group set theoretic solution. This will readily follow from following general obstruction. Recall that for $x\in k[G]$ the support of $x$ is the set
$$\Supp(x)=\{g:x_g\neq 0\}.$$

\begin{theorem}\label{thm:two-point-obstruction}
Let $G$ be a group, $t\in G$ of infinite order and let $\mc{B}$ be a $\Phi$-set theoretic basis. Then no element of $\mc{B}$ can have support containing two distinct points in the same left coset of $\langle t\rangle$:
\[
\bigl|\Supp(b)\cap \langle t\rangle g\bigr|\le 1.
\]
for every $b\in \mc{B}$ and every $g\in G$.

\end{theorem}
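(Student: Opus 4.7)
The plan is to translate the $\Phi$-set theoretic condition into a rank-one condition on a concrete tensor and then exploit the fact that $k[G]$ consists of \emph{finitely} supported elements.

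First I would unravel $\Phi$ on pure tensors of basis elements. Write $b=\sum_{g\in G}b_g\,g\in\mc{B}$. Since every group element is group-like, $\D(b)=\sum_g b_g\,g\otimes g$, so for any $c\in\mc{B}$,
\[
\Phi(b\otimes c)=\sum_{g\in\Supp(b)} b_g\,g\otimes gc.
\]
By the $\Phi$-set theoretic hypothesis, this must equal a pure tensor $b'\otimes c'$ with $b',c'\in\mc{B}$, in particular $c'\neq 0$. Using the canonical decomposition $k[G]\otimes k[G]=\bigoplus_{g\in G}(g\otimes k[G])$, the equality
\[
\sum_{g\in\Supp(b)} g\otimes (b_g\,gc)=b'\otimes c'
\]
forces all the nonzero ``slices'' $b_g\,gc$ (for $g\in\Supp(b)$) to be scalar multiples of $c'$, hence of one another.

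Now suppose, for contradiction, that some $b\in\mc{B}$ has two distinct support points $g_1,g_2$ lying in the same coset $\langle t\rangle g$. Set $u:=g_1^{-1}g_2$; since $g_1=t^{m_1}g$, $g_2=t^{m_2}g$ with $m_1\neq m_2$, the element $u=g^{-1}t^{m_2-m_1}g$ is a conjugate of a nontrivial power of $t$ and therefore has infinite order. Fix any $c\in\mc{B}$; as $\mc{B}$ is a basis of a nonzero algebra we may take $c\neq 0$, and then $g_ic\neq 0$ as well. The slice analysis above applied to $g_1,g_2\in\Supp(b)$ yields a scalar $\mu\in k^{\times}$ such that $b_{g_2}\,g_2 c=\mu\,b_{g_1}\,g_1 c$, equivalently
\[
uc=\lambda\,c\qquad\text{for some }\lambda\in k^{\times}.
\]

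The final step is to observe that such an eigenvalue equation is incompatible with $u$ having infinite order. Writing $c=\sum_h c_h\,h$, the identity $uc=\lambda c$ reads $c_{u^{-1}h}=\lambda\,c_h$ for all $h\in G$. Iterating gives $c_{u^{-n}h}=\lambda^n c_h$ for every $n\in\Z$, so if $c_h\neq 0$ for some $h$, then $\{u^{-n}h:n\in\Z\}\subseteq\Supp(c)$; as $u$ has infinite order this is an infinite set, contradicting $c\in k[G]$.

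The only substantive point is the slice-wise rank-one analysis in the middle step; the rest is formal, and no step uses more than finite support and the group-like coproduct on $k[G]$.
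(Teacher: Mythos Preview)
Your proof is correct and follows essentially the same approach as the paper: both apply slice functionals $\delta_g\otimes\id$ to $\Phi(b\otimes c)=b'\otimes c'$ to deduce that $gc$ is a scalar multiple of $c'$ for every $g\in\Supp(b)$, and then derive an eigenvector for left multiplication by an infinite-order element, contradicting finite support. The only cosmetic difference is that the paper first reduces to the case $g,tg\in\Supp(b)$ and obtains an eigenvector for $L_t$, whereas you work directly with $u=g_1^{-1}g_2$ (a conjugate of a nontrivial power of $t$) and obtain an eigenvector for $L_u$; your route is arguably cleaner since it avoids that preliminary reduction.
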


This indeed implies the desired application:

\begin{corollary}\label{no non-triv sol of torsion-fre}
Let $G$ be a torsion-free group and $\mc{B}$ a $\Phi$-set theoretic basis of $k[G]$. Then as a set $\mc{B} = \lambda. G$ for some $\lambda \in k^*$. In particular, the set-theoretic solution $t_{\mc{B}}$ is equivalent to the group solution on $G$.
\end{corollary}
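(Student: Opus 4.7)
My plan is to let \Cref{thm:two-point-obstruction} do essentially all of the work. First, I fix any $b \in \mc{B}$ and suppose towards a contradiction that there exist two distinct elements $g_1,g_2 \in \Supp(b)$. Setting $t := g_2 g_1^{-1}$, one has $t \neq 1$, and since $G$ is torsion-free, $t$ has infinite order. But $g_1$ and $g_2$ both lie in the single left coset $\langle t\rangle g_1$ (as $g_1 = 1\cdot g_1$ and $g_2 = t g_1$), contradicting \Cref{thm:two-point-obstruction}. Hence every $b \in \mc{B}$ has support of size exactly one, so I may uniquely write $b = \lambda_b\, g_b$ with $\lambda_b \in k^\ast$ and $g_b \in G$.

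Next I would observe that $b \mapsto g_b$ is a bijection $\mc{B} \to G$. Rescaling basis vectors by nonzero scalars preserves the basis property, so $\{g_b\}_{b \in \mc{B}}$ is itself a $k$-basis of $k[G]$; and any $k$-basis of $k[G]$ contained in $G$ must equal $G$, since $G$ is already a basis and any proper subset of $G$ is too small to span.

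Finally, writing $\lambda_g$ for the scalar attached to the unique $b \in \mc{B}$ with $g_b = g$, I would check that $g \mapsto \lambda_g$ is constant using the $\Phi$-set theoretic hypothesis. Since $\Phi_{k[G]}(g_1 \ot g_2) = g_1 \ot g_1 g_2$, we have
\[
\Phi_{k[G]}(b_{g_1} \ot b_{g_2}) = \lambda_{g_1}\lambda_{g_2}\,(g_1 \ot g_1 g_2),
\]
which must be a pure tensor in $\mc{B}\ot\mc{B}$; matching group components the only candidate is $(\lambda_{g_1} g_1) \ot (\lambda_{g_1 g_2}\, g_1 g_2)$, forcing $\lambda_{g_1}\lambda_{g_2} = \lambda_{g_1}\lambda_{g_1 g_2}$ and hence $\lambda_{g_2} = \lambda_{g_1 g_2}$. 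Letting $g_1$ range over $G$ (so $g_1 g_2$ sweeps out $G$), I conclude that $\lambda$ is constant; calling the common value $\lambda$, this yields $\mc{B} = \lambda \cdot G$. The induced bijection $\mc{B} \to G,\ \lambda g \mapsto g$ then transports $\phi_{\mc{B}}$ to the map $(g_1,g_2) \mapsto (g_1, g_1 g_2)$, which is the group solution of \Cref{sol from grp alg}.

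Because \Cref{thm:two-point-obstruction} already encapsulates the nontrivial combinatorial content, I do not anticipate any genuine obstacle. The only minor care point is verifying that the scalars $\lambda_g$ are genuinely all equal and not merely locally related; this is immediate from the displayed identity once one notes that right multiplication by a fixed $g_2$ is a bijection of $G$.
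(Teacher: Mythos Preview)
Your proposal is correct and follows essentially the same approach as the paper: first invoke \Cref{thm:two-point-obstruction} to force singleton supports, then use the $\Phi$-set theoretic condition to pin down the scalars. Your scalar argument (deriving $\lambda_{g_2}=\lambda_{g_1g_2}$ directly) is in fact slightly cleaner than the paper's version, which makes an unnecessary case split on whether some $\lambda_b\neq 1$ before reaching the same conclusion $\mc{B}=\lambda_c\cdot G$.
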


\begin{remark}
On any group $G$ the map 
$$G^2 \rightarrow G^2: (g,h) \mapsto (gh^{-1},h)$$
is RPE solution. \Cref{no non-triv sol of torsion-fre} highlights that it is not clear how to obtain it in general as restriction of $\Phi_H$ on a $\Phi$-set theoretic basis of a Hopf algebra. Note however that one can always realize it as the solution associated to the multiplier Hopf algebra $k^{G}_{\fin}$, see \Cref{sol from dual grp alg}.
\end{remark}

\begin{proof}[Proof of \Cref{no non-triv sol of torsion-fre}]
First we show that any $b \in \mc{B}$ has $|\Supp(b)|=1$. Suppose otherwise and let $b\in \mc{B}$ have $|\Supp(b)|\geq 2$. Pick distinct $g,h\in \Supp(b)$ and consider the element $u:=gh^{-1}$ of infinite order. Note that $\Supp(b)$ meets the left coset $\langle u\rangle h$ in at least two points, namely $h$ and $uh=g$, which contradicts \Cref{thm:two-point-obstruction}. In conclusion, every basis element $b$ is a scalar multiple of a group element, say $b = \lambda_b g_b \in k^*G$. It remains to prove that all scalars $\lambda_b$ are equal.

Now consider the RPE solution $\Phi_{k[G]}$ which on basis elements is $g \ot h \mapsto g \ot gh$ and linearly extended. Take $b,c \in \mc{B}$ and write $b= \lambda_b g$ and $c= \lambda_c h$. Then $\Phi(\lambda_b g \ot \lambda_c h) = \lambda_b\lambda_c g \ot gh = g \ot bc. $ If all $\lambda_b= 1$, there is nothing to prove. Thus assume $b$ is such that $\lambda_b \neq 1$. Then as $g\ot bc= b \ot gc \in \mc{B} \ot \mc{B}$, we must have that $gc \in \mc{B}$. However if we fix $c$, then this holds for all $g \in G$. Consequently $\mc{B}= \{gc \mid g \in G \}= \lambda_c. G$, as desired.

Finally, denoting $\lambda_c$ by $\lambda$, the bijection $f : \mc{B} \rightarrow G: \lambda g \mapsto g$ yields an equivalence between $t_{\mc{B}}$ and the group solution $(g,h) \mapsto (g,gh).$
\end{proof}

We now works towards proving \Cref{thm:two-point-obstruction}. For this we need the following lemma. 

\begin{lemma}\label{lem:no-eigen}
Let $G$ be a group and let $t\in G$ have infinite order.
Then the left multiplication $L_t:k[G]\rightarrow k[G]: x \mapsto tx$ has no nonzero eigenvectors, i.e. if $tx=\lambda x$ for some $\lambda\in k^*$, then $x=0$.
\end{lemma}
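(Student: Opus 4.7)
The plan is to exploit that elements of $k[G]$ have finite support, which for an eigenvector of $L_t$ will clash with $t$ having infinite order.

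First, I would expand $x=\sum_{g\in G} x_g\, g$ with only finitely many nonzero coefficients and rewrite the eigenvalue equation $tx=\lambda x$ coefficient-wise. Since $tx=\sum_g x_g (tg)$, reindexing via $h=tg$ yields $tx=\sum_h x_{t^{-1}h}\, h$. Comparing with $\lambda x$ gives the recursion
\[
x_{tg}=\lambda^{-1}\, x_g \qquad \text{for all } g\in G.
\]

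Next, I would iterate this recursion. For any $g\in G$ and any $n\in\Z$ one obtains $x_{t^n g}=\lambda^{-n}\, x_g$. Hence if there exists $g_0\in \Supp(x)$, then $x_{t^n g_0}\neq 0$ for every $n\in\Z$ (using that $\lambda\in k^*$).

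Finally, because $t$ has infinite order, the elements $\{t^n g_0\mid n\in\Z\}$ are pairwise distinct, so $\Supp(x)$ is infinite, contradicting the fact that elements of $k[G]$ have finite support. Therefore $\Supp(x)=\emptyset$ and $x=0$. There is essentially no main obstacle here: the only subtlety is the correct bookkeeping of the index shift when passing from $tx=\lambda x$ to a recursion on coefficients, after which the infinite-order hypothesis on $t$ immediately forces the conclusion.
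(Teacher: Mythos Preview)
Your proof is correct and follows essentially the same approach as the paper: both argue that if $x\neq 0$ then the support of $x$ must contain the full orbit $\{t^n g_0\mid n\in\Z\}$ of any $g_0\in\Supp(x)$, which is infinite since $t$ has infinite order, contradicting the finiteness of supports in $k[G]$. The only cosmetic difference is that you track the actual coefficient values via the recursion $x_{t^n g_0}=\lambda^{-n}x_{g_0}$, whereas the paper works directly at the level of supports by noting $t\,\Supp(x)=\Supp(x)$.
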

\begin{proof}
Take $x=\sum_{g\in \Supp(x)} c_g g \in k[G]$. Note that $\Supp(x) = \Supp(\lambda x)$ if $\lambda \neq 0.$ Thus $t \Supp(x) = \Supp(x)$. Inductively this implies that $\Supp(x)= t^n \Supp(x)$ for all $n \in \N.$ There for any $g_0 \in \Supp(x)$, the infinite set $\{ t^n g_0 \mid n \in \N\}$ is contained in the finite set $\Supp(x)$, a contradiction.
\end{proof}

\begin{proof}[Proof of \Cref{thm:two-point-obstruction}]
Assume for contradiction that there exists $b\in \mc{B}$ and a left coset $\langle t\rangle g$ such that
$|\Supp(b)\cap \langle t\rangle g| \geq 2$, i.e. there exist integers $m\neq n$ with $t^m g,\ t^n g\in \Supp(b).$

Replace $b$ by $t^{-m}b$. Note that $t^{-m}b \in \Supp(b')$ for some $b' \in \mc{B}$ since $\mc{B}$ is a basis and left multiplication is an automorphism of $k[G]$. Thus we may assume $g,\ t^r g\in \Supp(b)$ for some $r\neq 0$. Furthermore replacing $t$  by $t^{r}$ we may (and will) assume $g$ and $tg$ both lie in $\Supp(b)$. In other words, we can write $b=\sum_{h\in \Supp(b)} c_h h$ with $c_g,c_{tg}\neq 0$.

Since $\mc{B}$ is $\Phi$-set theoretic, there exist uniquely determined $b',c'\in \mc{B}$ such that
\[
\sum_{h\in \Supp(b)} c_h\, h\ot hc = \Phi(b\ot c)=b'\ot c'.
\]

Applying, for $g \in G$ the functional $\delta_g\ot \id$ to both sides yields $ c_g\,gc=\delta_g(b')\, c'.$
In particular, $gc$ is a scalar multiple of $c'$.
Similarly, applying $(\delta_{tg}\ot \id)$ yields $c_{tg}\, tgc=\delta_{tg}(b')\, c'$. So $tgc$ is also a scalar multiple of $c'$.
Therefore there exists $\lambda_c\in k^*$ such that
\[
tgc=\lambda_c\,gc.
\]
Thus $gc$ is an eigenvector for left multiplication by $t$.
Since $g$ is a unit in $k[G]$, the map $x\mapsto gx$ is a vector-space automorphism of $k[G]$,
hence $gc\neq 0$.
This contradicts \Cref{lem:no-eigen}. Therefore no such $b$ exists, finishing the proof.
\end{proof}

\section{A classification theorem for solutions on a group algebra}\label{reconstruction theorem section} 

Let $\mc{B}$ be $\Phi$-set theoretic basis of $k[G]$ for $G$ any group. The aim of this section is to show that it must come from a splitting $G = A \rtimes N$ as in \Cref{thm:dual} and in particular the associated set theoretic solution $\phi_{\mc{B}}$ is of the form given in \eqref{eq:Phi-dual}.

\begin{theorem}\label{Classification theorem basis grp alg}
Let $G$ be a group and $\mc{B}$ a $\Phi$-set theoretic basis of $k[G]$. Then there exists $A,N \leq G$ such that
\begin{enumerate}
    \item $G \cong A \rtimes N$ with $A$ a finite abelian group,
    \item $\mc{B}$ is a scalar multiple of the basis $\mc{B}_{A^{\vee}}= \{ e_{\chi} u \mid  \chi \in A^{\vee}, u \in N\}$,
    \item $A= \{ b\in \mc{B} \mid 1 \in \Supp(b) \}$ are the idempotents in $\mc{B}$,
    \item $\phi_{\mc{B}} := \restr{\Phi}{\mc{B} \ot \mc{B}}$ is equivalent to the solution \eqref{eq:Phi-dual}.
\end{enumerate}
\end{theorem}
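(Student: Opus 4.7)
The plan is to extract from $\mc{B}$ the finite abelian subgroup $A$ as the common support of the ``idempotent part'' of $\mc{B}$, realise a complement $N$ with $G = A\rtimes N$, and then match $\mc{B}$ with $\mc{B}_{A^{\vee}}$. The starting point is that in a group algebra $\D(b) = \sum_{g} b_g(g\otimes g)$, so
\[
\Phi_{k[G]}(b\otimes c) = \sum_{g \in \Supp(b)} b_g\,(g\otimes gc),
\]
and the $\Phi$-set-theoretic condition is a rigid support constraint: this sum is a pure tensor in $\mc{B}\otimes \mc{B}$ only when the translates $g\,\Supp(c)$ agree for all $g \in \Supp(b)$, i.e.\ $\Supp(b)^{-1}\Supp(b) \subseteq \Stab(\Supp(c))$. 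Since basis elements are finitely supported, every relevant stabiliser is a finite subgroup of $G$.

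First, I would focus on $\mathcal{E} := \{b \in \mc{B} : 1 \in \Supp(b)\}$. This is non-empty because the coefficient of $1\in G$ in the decomposition $1_{k[G]} = \sum_c \lambda_c c$ equals $1$ and can only be contributed by basis elements $c$ with $c_1 \neq 0$. Applying the pure-tensor constraint to $\Phi(e \otimes e)$ for $e\in \mathcal{E}$ and using $1 \in \Supp(e)$ forces $\Stab(\Supp(e)) = \Supp(e)$, so $A := \Supp(e)$ is a finite subgroup of $G$ containing $1$; comparing $\Phi(e_1\otimes e_2)$ with its swap pins down $A$ independent of $e \in \mathcal{E}$. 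Reading the two factors of $\Phi(e\otimes e)$ explicitly shows that the second factor is $e$ itself and the first is a non-zero scalar multiple of $\hat{A} := \frac{1}{|A|}\sum_{a \in A}a$, so (up to rescaling) $\hat{A} \in \mathcal{E}$; moreover \Cref{lem:right-monomial}(i) forces $e^2 \in k^{*} e$, making each element of $\mathcal{E}$ a scalar multiple of an idempotent in $k[A]$.

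Next I would analyse $\Phi(e_1\otimes e_2)$ for distinct $e_1, e_2 \in \mathcal{E}$ inside $k[A]\otimes k[A]$ to obtain $e_1e_2 \in k\, e_2$ and, by symmetry, $e_2 e_1 \in k\, e_1$; together with $e_i^2 \in k^{*} e_i$ and the linear independence of distinct basis elements this forces $e_1 e_2 = 0$ whenever $e_1 \neq e_2$. Showing $\mc{B}\cap k[A] \subseteq \mathcal{E}$ by multiplication with $\hat{A}$ and comparing dimensions yields $|\mathcal{E}| = |A|$, so $k[A]$ is split by $|A|$ orthogonal idempotents and in particular is commutative; hence $A$ is abelian and $\mathcal{E}$ coincides, up to a common scalar, with the family of character idempotents $\{e_\chi\}_{\chi \in A^\vee}$ of \Cref{fourier transform section}. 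This establishes claim (3). For the remaining basis elements, the support analysis of $\Phi(e_\chi \otimes b)$ and $\Phi(b \otimes e_\chi)$ combined with \Cref{thm:two-point-obstruction} forces each $\Supp(b)$ to be a full coset $g_0 A$, and simultaneously shows that $A$ is normalised by every such $g_0$, hence normal in $G$. The relation \eqref{eq:trans} and the pure-tensor formula then yield $b = \lambda_b\, e_\chi\, n_b$ for unique $(\chi, n_b)$ and non-zero $\lambda_b$. The set $N$ of the representatives $n_b$ is closed under multiplication (apply $\Phi$ to products of basis elements and extract the resulting coset via \Cref{lem:right-monomial}(i)), meets $A$ only in $\{1\}$, and normalises $A$, giving $G = A\rtimes N$ with $A$ abelian, establishing (1). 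Matching the scalars $\lambda_b$ across $\mc{B}$ by iterating $\Phi$ reduces them to a single global scalar, establishing (2); claim (4) then follows from \Cref{thm:dual}.

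The main technical obstacle I anticipate is the equality $|\mathcal{E}| = |A|$ \emph{before} abelianness of $A$ is known: the support rigidity alone does not obviously rule out basis elements of $\mc{B}$ supported inside $A$ but missing the identity, nor does it bound $|\mathcal{E}|$ from below. The cleanest route seems to be to first establish mutual orthogonality within $\mathcal{E}$ (an algebraic argument that does not require abelianness), then use multiplication by $\hat{A}$ and by arbitrary $e \in \mathcal{E}$ to ``move'' any basis element of $\mc{B}\cap k[A]$ into $\mathcal{E}$, and only then deduce abelianness as a dimension-count corollary.
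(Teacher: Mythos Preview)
Your overall architecture matches the paper's: extract $A$ as the common support of $\mathcal{E}=B_1$, prove it is a finite abelian normal subgroup, show each basis element lies in a single $A$-coset and is a scalar multiple of some $e_\chi u$, pin down a common scalar, and finally produce a complement $N$. Your handling of the abelianness step (bounding $|\mathcal{E}|\le |A^\vee|$ via one-dimensional weight spaces and $|\mathcal{E}|\ge |A|$ via $\mc{B}\cap k[A]=\mathcal{E}$) is essentially the same mechanism as the paper's Proposition~7.6, and your anticipated ``move into $\mathcal{E}$'' argument is exactly what is needed there.

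The one genuine soft spot is the splitting step. You write ``$b=\lambda_b\,e_\chi\,n_b$ for unique $(\chi,n_b)$'' and then take $N$ to be ``the set of the representatives $n_b$''. But $n_b$ is \emph{not} determined by $b$: for $a\in A$ one has $e_\chi (na)=\chi(nan^{-1})\,e_\chi n$, so any $n' = na$ with $nan^{-1}\in\ker\chi$ gives the same element up to a scalar, and after absorbing that scalar into $\lambda_b$ the pair $(\lambda_b,n_b)$ is ambiguous. Thus ``the set $N$'' is not well-defined, and closure under multiplication cannot be read off from \Cref{lem:right-monomial}(i) alone. The paper circumvents this by first fixing an arbitrary transversal $\tau:G/A\to G$, computing
\[
\Phi_{k[G]}\bigl(e_\chi\tau(p)\otimes e_\psi\tau(q)\bigr)
=(p\cdot\psi)\bigl(\sigma(p,q)\bigr)\;e_{\chi(p\cdot\psi)^{-1}}\tau(p)\otimes e_{p\cdot\psi}\tau(pq),
\]
where $\sigma$ is the $2$-cocycle of the extension, and then using the $\Phi$-set-theoretic property (together with the already-established equality of all scalars $\lambda_b$) to force $(p\cdot\psi)(\sigma(p,q))=1$ for all $\psi$, hence $\sigma\equiv 1$. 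In your outline you should either adopt this cocycle computation, or, equivalently, first match all $\lambda_b$ to a single $\lambda$, then \emph{define} $N$ as the unique set of coset representatives for which $\lambda^{-1}\mc{B}=\{e_\chi n:\chi\in A^\vee,\,n\in N\}$ (uniqueness coming from letting $\chi$ range over all of $A^\vee$), and only afterwards verify closure.
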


In \Cref{subsectie support } we record very useful facts on the support of elements in $\mc{B}$ and the coordinates of the solution  $\phi_{\mc{B}}$. Subsequently in \Cref{subsectie A} we construct $A$ and obtain the desired properties. That $G$ is a split extension of $A$ and $G/A$ is obtained in \Cref{Classification theorem basis grp alg}. At this stage the description of the solution follows readily. Note that $N$ is not canonical, i.e. it depends on the choice of a complement of $A$ in $G$.

\subsection{On the support of basis elements}\addtocontents{toc}{\protect\setcounter{tocdepth}{1}}\label{subsectie support }

\Cref{thm:two-point-obstruction} illustrated the strength of understanding the support of basis elements of a $\Phi$-set theoretic basis. In this section we record some further observations which will be instrumental to prove \Cref{Classification theorem basis grp alg}.

Recall that we denoted 
$$\Phi(b\otimes c)=\psi_c(b)\otimes (c\circ b).$$
Now we consider the support of the coordinates $\psi_c(b)$ and $c\circ b$.

\begin{lemma}\label{lem:support-preserve}
 Let $S$ be a monoid and $\mc{B}$ a $\Phi$-set theoretic basis of $k[S]$. Let $b,c\in \mc{B}$. Then the following holds
    \begin{enumerate}
    \item\label{suppd} $\Supp(c\circ b)\subseteq \Supp(b)\Supp(c)$.
    \item\label{gsuppb} For every $g\in \Supp(b)$ one has\footnote{With $\delta_g(x)$ for $x \in k[G]$ we mean the coefficient of $g$ in the decomposition of $x$ with respect to the basis $G$.} $\delta_g(b)gc= \delta_g(\psi_c(b))  (c \circ b)$. Thus 
    $$g\, \Supp(c) = \Supp (c\circ b).$$
    \item\label{supp first coord} $\Supp(b)=\Supp(\psi_c(b))$.
    \item\label{If 1 support b} If $1 \in \Supp(b)$, then $\Supp(c \circ b) = \Supp(c)$.
    \item\label{If 1 support c} If $1 \in \Supp(c)$, then $\Supp(b) \subseteq \Supp(c \circ b).$
    \end{enumerate}
Moreover, if $S$ is a group, there exists a constant $M$ such that $|\Supp(b)| \leq M$ for all $b \in \mc{B}.$
\end{lemma}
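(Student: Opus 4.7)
The plan is to exploit that in $k[S]$ the coproduct on group elements is diagonal, $\Delta(g)=g\otimes g$, so that linearly extending to $b=\sum_{g\in S}\delta_g(b)\,g$ gives the explicit formula
\[
\Phi(b\otimes c)=\sum_{g\in S}\delta_g(b)\,g\otimes gc.
\]
Since $\mc{B}$ is $\Phi$-set theoretic, this sum must coincide with the pure tensor $\psi_c(b)\otimes (c\circ b)$ in $k[S]\otimes k[S]$. Applying the functional $\delta_g\otimes \id$ (which extracts the coefficient of $g$ in the first tensor factor with respect to the basis $S$) to both sides immediately yields the master identity
\[
\delta_g(b)\,gc=\delta_g(\psi_c(b))\,(c\circ b)\qquad(g\in S),
\]
which is claim~\eqref{gsuppb}. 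Everything else will be squeezed out of this one identity.

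From the master identity I would deduce \eqref{supp first coord} and the second half of \eqref{gsuppb}: if $g\notin \Supp(b)$ the left-hand side vanishes and, as $c\circ b\in\mc{B}$ is nonzero, we obtain $\delta_g(\psi_c(b))=0$; conversely if $g\in\Supp(b)$, then $gc\neq 0$ forces $\delta_g(\psi_c(b))\neq 0$. Hence $\Supp(b)=\Supp(\psi_c(b))$. Moreover, for $g\in\Supp(b)$ the identity shows that $c\circ b$ is a nonzero scalar multiple of $gc$, giving $\Supp(c\circ b)=g\,\Supp(c)$. Claim~\eqref{suppd} is then immediate since $g\,\Supp(c)\subseteq \Supp(b)\Supp(c)$, and claims~\eqref{If 1 support b} and~\eqref{If 1 support c} follow by specialising: if $1\in\Supp(b)$, take $g=1$ to get $\Supp(c\circ b)=\Supp(c)$; if $1\in\Supp(c)$, then for every $g\in\Supp(b)$ we have $g=g\cdot 1\in g\,\Supp(c)=\Supp(c\circ b)$.

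For the moreover part, assume $S$ is a group. Then left multiplication by any $g\in S$ is a bijection of $S$, so $|\Supp(c\circ b)|=|g\,\Supp(c)|=|\Supp(c)|$ for all $b,c\in \mc{B}$. To produce the uniform bound $M$, I would expand $1_{k[S]}=\sum_{c\in\mc{B}}\lambda_c\,c$ in the basis $\mc{B}$: comparing coefficients of $1_S$ in the basis $S$ shows that at least one basis element $c_0$ with $\lambda_{c_0}\neq 0$ satisfies $1_S\in\Supp(c_0)$. Applying~\eqref{If 1 support c} to this $c_0$ gives $\Supp(b)\subseteq \Supp(c_0\circ b)$ for every $b\in\mc{B}$, and combining with the size identity above yields
\[
|\Supp(b)|\;\le\;|\Supp(c_0\circ b)|\;=\;|\Supp(c_0)|\;=:M,
\]
independent of $b$. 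I do not expect any real obstacle here; the only subtle point is making sure to use that $c\circ b$ and $\psi_c(b)$ are honest nonzero basis elements (so that dividing by them or reading off supports is legitimate), which is exactly the content of the $\Phi$-set theoretic hypothesis.
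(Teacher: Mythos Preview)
Your argument for items \eqref{suppd}--\eqref{If 1 support c} is essentially identical to the paper's: both derive the master identity $\delta_g(b)\,gc=\delta_g(\psi_c(b))\,(c\circ b)$ by applying $\delta_g\otimes\id$ to $\Phi(b\otimes c)=\psi_c(b)\otimes(c\circ b)$ and then read everything off from it. (A minor cosmetic difference: the paper invokes the earlier relation $bc=\eps(\psi_c(b))(c\circ b)$ for item~\eqref{suppd}, whereas you obtain it directly from the support equality $\Supp(c\circ b)=g\,\Supp(c)$, which is arguably cleaner.)

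For the ``moreover'' clause your route differs from the paper's and is in fact sharper. The paper fixes an \emph{arbitrary} $c\in\mc{B}$, observes that $g\,\Supp(c)=\Supp(c\circ b)$ forces $g\in\Supp(c\circ b)\Supp(c)^{-1}$ for each $g\in\Supp(b)$, and hence $|\Supp(b)|\le|\Supp(c\circ b)|\cdot|\Supp(c)|=|\Supp(c)|^2$, taking $M=|\Supp(c)|^2$. You instead expand $1=\sum_{c}\lambda_c c$ to locate a specific $c_0\in\mc{B}$ with $1\in\Supp(c_0)$, then use item~\eqref{If 1 support c} to get $\Supp(b)\subseteq\Supp(c_0\circ b)$ and conclude $|\Supp(b)|\le|\Supp(c_0)|$. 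Both are correct; your argument gives the tighter constant $M=|\Supp(c_0)|$ and avoids the product set $\Supp(c\circ b)\Supp(c)^{-1}$, at the small price of first having to produce a basis element whose support contains $1$.
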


\begin{example}
Let $G$ be a group which can be decomposed as $A \rtimes N$ with $A$ a finite abelian group. Consider the basis $\mathcal{B}_{A^\vee} := \{\, e_\chi\,u \mid \chi\in A^\vee,\ u\in N\,\}$ introduced in \Cref{sectie make out of finite abelian}. Thus $\Supp(e_{\chi}u) = Au$ is a coset of $A$ for any $\chi \in A^{\vee}.$ Now recall that 
$e_{\chi} : = \frac{1}{|A|}\sum_{a\in A}\chi(a^{-1})\,a \in k[A].$ By \Cref{thm:dual} we have that
$$\Phi_{k[G]}\bigl(e_\alpha u\ot e_\beta v \bigr)
=
\bigl(e_{\alpha\,(u\cdot\beta)^{-1}}\,u\bigr)\ \ot\ \bigl(e_{u\cdot\beta}\,uv\bigr).$$
Thus $\psi_{e_\beta v}(e_\alpha u)= e_{\alpha\,(u\cdot\beta)^{-1}}\,u$ whose support is $Au$, confirming \Cref{lem:support-preserve}.
\end{example}

\begin{proof}[Proof of \Cref{lem:support-preserve}]
Statement \eqref{suppd} follows directly from \Cref{lem:right-monomial}.  

Next we consider \eqref{gsuppb}. For ease of notation we denote $d:=c\circ b\in \mc{B}$ and write $b=\sum_{g\in \Supp(b)} b_g g$. Thus $\Phi(b\ot c)=\sum_{g\in G} b_g\, g\ot g c$. Applying the functional $\delta_g\ot \id$ to $\Phi(b\ot c)=\psi_c(b)\ot d$ we obtain for any $g \in G$
\[
(\delta_g\ot \id)\Phi(b\otimes c)= 1 \ot b_g\,gc \text{ and } (\delta_g\ot \id)(\psi_c(b)\ot d)= 1 \ot \delta_g(\psi_c(b))\cdot d
\]
where $\delta_g(\psi_c(b))$ equals the coefficient of $g$ in the expression of $\psi_c(b)$ in terms of $G$. Hence $b_g\,gc=\delta_g(\psi_c(b))\,d.$ This entails that $\delta_g(b) \neq 0$ if and only if  $\delta_g(\psi_c(b)) \neq 0$, i.e. we obtained \eqref{supp first coord}. It also shows that $g\Supp(c) = \Supp(c \circ b)$ since scalar mulitples do not change support, finishing to prove \eqref{gsuppb}.
 \smallskip

Statement \eqref{If 1 support b} directly follows from \eqref{gsuppb} by taking $g =1$ and that scalar mutliples do not change supports. Note that part \eqref{gsuppb} also implies that $g \Supp(c)  = \Supp(c \circ b)$ and thus $g \in \Supp(c\circ b)$ if $1 \in \Supp(c)$. In other words, statement \eqref{gsuppb} implies \eqref{If 1 support c}.

To finish we show the existence of the constant when $S$ is a group. Take $b \in \mc{B}$ and $g\in \Supp(b)$. Consider a fixed $c \in \mc{B}$. By \eqref{gsuppb}, $gc=\lambda_g \,  c \circ b$ for some $\lambda_g\neq 0$. Therefore $\Supp(gc)=\Supp(c\circ b)$. But $\Supp(gc)=g\,\Supp(c)$, hence $g\in \Supp(c \circ b)\Supp(c)^{-1}$. Therefore, $\Supp(b) \subseteq \Supp(c \circ b)\Supp(c)^{-1}$. By  \eqref{gsuppb}, we have that $|\Supp(c\circ b)|= |\Supp(c)|$ and therefore $|\Supp(b)| \leq |\Supp(c)|^2$ for the arbitary fixed $c$. The we may take $M = |\Supp(c)|^2$.
\end{proof}

\subsection{Construction of the finite abelian group $A$}\label{subsectie A}
It the remainder of the paper we assume the following.\smallskip

\noindent {\it Convention:} The ground field has $\Char(k) = 0$ and contains enough roots of unity (in practice it will mean that it contains some primitive $|A|$-th root of unity for the finite group $A$ constructed below).

\subsubsection{Description of $A$}
Recall that $B_1:=\{b\in \mc{B} \mid 1\in\Supp(b)\}$. Now consider the (finite) set
\begin{equation}\label{def A}
A:=\bigcup_{b\in B_1}\Supp(b)
\end{equation}
In \Cref{lem:A-abelian} we will show that $A$ is a finite abelian group. We start by showing that it is a finite group.

\begin{lemma}\label{A is subgroup}
    The set $A$ defined in \eqref{def A} is a subgroup of $G$. Moreover for all $b,c \in B_1$ we have that $\Supp(b) = \Supp(c)$. In particular, $A$ and $B_1$ are finite.
\end{lemma}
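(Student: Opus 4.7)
The plan has two parts. First, I will prove that $\Supp(b)=\Supp(c)$ for every pair $b,c\in B_1$; this immediately identifies $A$ with the common support, hence shows $A$ is finite, and reduces $B_1$ to a linearly independent subset of the finite-dimensional space $k[A]$, so $|B_1|\le |A|$. Second, I will use this to show that $A$ is closed under multiplication and inversion in $G$.

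For the equality of supports, take $b,c\in B_1$. Since $1\in\Supp(b)$, \Cref{lem:support-preserve}\eqref{If 1 support b} gives $\Supp(c\circ b)=\Supp(c)$. Since $1\in\Supp(c)$, \Cref{lem:support-preserve}\eqref{If 1 support c} gives $\Supp(b)\subseteq\Supp(c\circ b)=\Supp(c)$. Swapping the roles of $b$ and $c$ yields the reverse inclusion, and hence $\Supp(b)=\Supp(c)$. In particular $A=\Supp(b)$ for any chosen $b\in B_1$, so $|A|\le M$ by the uniform bound in \Cref{lem:support-preserve}, and $B_1$ is finite as explained above.

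For the subgroup property, fix any $b,c\in B_1$, so $\Supp(b)=\Supp(c)=A$. By \Cref{lem:support-preserve}\eqref{gsuppb}, $g\,\Supp(c)=\Supp(c\circ b)$ for every $g\in\Supp(b)$; by \Cref{lem:support-preserve}\eqref{If 1 support b}, $\Supp(c\circ b)=\Supp(c)=A$. Combining the two gives $gA=A$ for every $g\in A$. Since $1\in A$, the identity $aA=A$ for each $a\in A$ yields simultaneously closure under products ($aa'\in aA=A$ for every $a'\in A$) and closure under inversion (solving $ax=1$ for $x\in A$ gives $a^{-1}=x\in A$). Therefore $A$ is a subgroup of $G$.

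The main obstacle, if any, is a conceptual rather than a technical one: one must notice that items \eqref{gsuppb} and \eqref{If 1 support b} of \Cref{lem:support-preserve} combine to force the strong equality $gA=A$, rather than only the weaker inclusion $AA\subseteq A$. The latter together with finiteness of $A$ would still give a subgroup by the standard cancellation argument, but the former makes the closure under inverses entirely transparent and matches the later use of $A$ as a normal factor in the semidirect decomposition $G\cong A\rtimes N$.
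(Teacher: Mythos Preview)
Your proof is correct and follows essentially the same approach as the paper, relying on the same items of \Cref{lem:support-preserve}; the only difference is organizational, as you first establish the equality of supports via items \eqref{If 1 support b} and \eqref{If 1 support c} and then deduce the subgroup property from the single identity $gA=A$, whereas the paper proves closure under multiplication separately before obtaining the common support. Your ordering is arguably a bit cleaner, but the underlying argument is the same.
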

\begin{proof}
We start by showing that $A$ is closed under multiplication. As by definition it contains the neutral element, this would imply that $A$ is a submonoid.

Let $a,a'\in A$ and choose $b,c\in B_1$ with $a\in\Supp(b)$ and $a'\in\Supp(c)$.
Consider $\Phi(b\ot c)=b'\ot c'$ with $b',c'\in \mc{B}$.
By \Cref{lem:support-preserve}, we have that $ a\,\Supp(c)=\Supp(c')$. In particular, $aa'\in \Supp(c')$. Moreover, as $b \in B_1$, also $\Supp(c')= \Supp(c)$ which is a subet of $A$ since $c \in B_1.$ Thus $aa' \in A$, as desired. \smallskip

Now we prove that all elements in $B_1$ have equal support. 
Fix $c\in B_1$ and take $h \in A$. By definition, there exists $b\in \mc B_1$ with $h\in \Supp(b)$. From \Cref{lem:support-preserve}.\eqref{gsuppb}, with $g=1$, we obtain that $c\circ b=c$ as two basis elements can not be a scalar multiple of each other. Now applying again \Cref{lem:support-preserve}.\eqref{gsuppb} for $h$, yields $h\, \Supp(c) = \Supp(c).$ As the righ hand side contains $1$, this means that $h \in \Supp(c)$. As $h$ was arbitrary, this shows that $A = \Supp(c)$, as desired.\smallskip

Since the support of an element is finite, we obtain that $A$ is finite. Moreover by definition all $b \in B_1$ are in $k[A]$, which is finite dimenisonal. Thus linear independence of $B_1$ yields that $|B_1| \leq |A| < \infty.$\smallskip

Finally, we can show that $A$ is also closed under taking inverses. Indeed, take $h \in A$. As $hA = h\, \Supp(c)  = \Supp(c) = A$ and $1 \in A$, the element $h$ has an inverse in $A$, finishing the proof.
\end{proof}

Next we relate the character group $A^{\vee} := \Hom(A,k^*)$ with $B_1$. Note that \Cref{A is subgroup} yields that $\delta_g(b)\neq 0$ for every $g\in A$ and $b \in B_1$, where $\delta_g(b)$ denotes the coefficient of $b$ with respect to the basis $G$. Thus for each $c\in B_1$ we have a well-defined function
\begin{equation}\label{eq:def-chi_c}
\chi_c:A\to k^\ast: g \mapsto \frac{\delta_g(\psi_c(b_0))}{\delta_g(b_0)}.
\end{equation}
for some fixed $b_0 \in \mc{B}_1$ 

\begin{lemma}\label{lem:gc-eigenvector}
For every $c\in B_1$ and every $g\in A$ one has the following
\begin{enumerate}
    \item\label{weight space}$ g\,c=\chi_c(g)\,c$ and $\chi_c(g)$ is independant of the choice of $b_0$.
    \item The map  $\chi_c$ is a group homomorphism, i.e. $\chi_c\in A^\vee$.
    \item The weight space
    $$
V_{\chi_c}:=\{x\in k[A]\mid gx=\chi_c(g)x\ \text{ for all } g\in A\}
$$
is $1$-dimensional with $V_{\chi_c} = k . \sum_{g\in A} \chi(g^{-1})g$.
\item The map $\iota: B_1\rightarrow A^\vee: c\mapsto \chi_c$ is injective.
\end{enumerate}
\end{lemma}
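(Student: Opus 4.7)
My plan is to extract from \Cref{lem:support-preserve} the key identity $c\circ b_0 = c$ for all $b_0, c\in B_1$, and then reduce everything to classical Fourier analysis on the finite abelian group $A$.

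For (1), I would apply \Cref{lem:support-preserve}(ii) twice. Taking $g = 1\in\Supp(b_0)$ gives
\[
\delta_1(b_0)\,c \;=\; \delta_1(\psi_c(b_0))\,(c\circ b_0);
\]
since $\delta_1(b_0)\neq 0$ (because $b_0 \in B_1$), the two basis elements $c$ and $c\circ b_0$ are proportional, hence equal, and the two scalars coincide. Re-applying the identity for arbitrary $g\in A = \Supp(b_0)$ then yields $\delta_g(b_0)\,gc = \delta_g(\psi_c(b_0))\,c$, and since $\delta_g(b_0)\neq 0$ by \Cref{A is subgroup} we may divide to obtain $gc = \chi_c(g)\,c$. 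Because $c\neq 0$, this scalar is uniquely determined by the identity, which gives independence of $b_0$. Item (2) then follows by cancelling $c$ in $\chi_c(gh)\,c = (gh)c = g(hc) = \chi_c(g)\chi_c(h)\,c$.

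For (3), the conventions on $k$ make $k[A]$ a semisimple commutative $k$-algebra whose primitive idempotents are the Fourier idempotents $e_\chi = \frac{1}{|A|}\sum_{g\in A}\chi(g^{-1})g$, and \eqref{eq:trans} gives $g\,e_\chi = \chi(g)\,e_\chi$. This provides the weight-space decomposition $k[A] = \bigoplus_{\chi\in A^\vee} k\,e_\chi$ for the left multiplication action of $A$, and one sees immediately that $V_{\chi_c} = k\,e_{\chi_c}$, which coincides with $k\cdot \sum_{g\in A}\chi_c(g^{-1})g$ and is one-dimensional. For (4), if $\chi_b = \chi_c$ with $b,c\in B_1$, then by (1) both $b$ and $c$ are nonzero elements of the one-dimensional space $V_{\chi_c}$, so $c = \lambda b$ for some $\lambda\in k^*$; since distinct basis elements of $\mc{B}$ cannot be proportional, $b = c$. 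The only genuinely nontrivial step is the identity $c\circ b_0 = c$; once it is in hand, the rest is routine character theory.
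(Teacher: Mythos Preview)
Your proof of parts (1), (2) and (4) is correct and essentially identical to the paper's: first use \Cref{lem:support-preserve}\eqref{gsuppb} with $g=1$ to get $c\circ b_0=c$, then again with arbitrary $g\in A=\Supp(b_0)$ (via \Cref{A is subgroup}) to obtain $gc=\chi_c(g)c$; multiplicativity and injectivity follow exactly as you say.

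The issue is in (3). You invoke that $k[A]$ is a \emph{commutative} semisimple algebra and decompose it as $\bigoplus_{\chi\in A^\vee} k\,e_\chi$, but at this point in the argument $A$ is only known to be a finite subgroup of $G$ --- abelianness is the content of the \emph{next} result, \Cref{lem:A-abelian}. There is no actual circularity (the proof of abelianness in \Cref{lem:A-abelian} uses only \Cref{lem:support-preserve}, not this lemma), so your argument could be repaired by a forward reference; but as written it assumes something not yet established. The paper instead gives a direct computation that needs no hypothesis on $A$: if $x=\sum_{g\in A}x_g\,g$ satisfies $hx=\chi(h)x$ for all $h\in A$, then comparing coefficients forces $x_g=\chi(g^{-1})x_1$ for every $g$, so $x$ is determined by $x_1$ and $V_{\chi_c}=k\cdot\sum_{g\in A}\chi_c(g^{-1})g$. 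This one-line argument is what you should use here in place of the full Fourier decomposition.
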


\begin{proof}
Take $c\in B_1$ and $g\in A$. Choose $b\in \mc B_1$ with $g\in\Supp(b)$. \Cref{lem:support-preserve}.\eqref{gsuppb}gives $c\circ b=c$ and also that $\delta_g(b)\,g c=\delta_g(\psi_c(b))\,(c\circ b)=\delta_g(\psi_c(b))\,c$, which shows $gc=\alpha\,c$ for the scalar $ \alpha:=\delta_g(\psi_c(b))/\delta_g(b)\in k^\ast$. However by \Cref{A is subgroup} we have that $g \in \Supp(b)$ for all $b \in B_1$. Thus the fractions  $\delta_g(\psi_c(b))/\delta_g(b)$ coincide for all $b$, finishing the first part.\smallskip

For multiplicativity, let $g,h\in A$. Then
$(gh)c=\chi_c(h)\,gc=\chi_c(h)\chi_c(g)c,$ so $\chi_c(gh)=\chi_c(g)\chi_c(h)$. Also $\chi_c(1)=1$.
Thus $\chi_c\in \Hom(A,k^\ast)=A^\vee$.\smallskip

Now we prove the injective of $\iota$. Let $c,d\in B_1$ and assume $\chi_c=\chi_d$.
By the first part, for every $g\in A$ we have $gc=\chi_c(g)c=\chi_d(g)c$ 
and similarly $gd=\chi_d(g)d$.
Thus both $c$ and $d$ lie in $ V_{\chi}:=\{x\in k[A]\mid gx=\chi(g)x\ \text{ for all } g\in A\},
$
with $\chi:=\chi_c=\chi_d$.
This eigenspace is $1$--dimensional: indeed, if $x=\sum_{g\in A}x_g g\in V_\chi$ then the relation
$ h x=\chi(h) x$ implies $x_{hg}=\chi(h)x_g$ for all $g,h\in A$, so $x$ is determined by the single coefficient $x_1$.
Hence $V_\chi=k\cdot \Big(\sum_{g\in A}\chi(g^{-1})g\Big)$.
Therefore $c$ and $d$ are scalar multiples of each other.
Since $\mc{B}$ is a basis, two distinct basis elements cannot be nontrivial scalar multiples, so $c=d$.
Thus $\iota$ is injective.
\end{proof}

Now consider the subspace generated by $B_1$,
$$K:=\Span_{k}\{ b \mid b \in B_1\}.$$
By definition $K \subseteq k[A]$. Crucially, it turns out that they are equal.

\begin{proposition}\label{lem:A-abelian}
With notations as above we have the following:
\begin{itemize}
    \item $A$ is abelian,
    \item $K := \Span \{ b \mid b\in B_1\} = k[A]$.
\end{itemize}
Consequently, the map $\iota:B_1\to A^\vee$ from \Cref{lem:gc-eigenvector} is bijective.
\end{proposition}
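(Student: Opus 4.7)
The plan is to deduce all three conclusions from the single equality $|B_1|=|A|$. Combined with the injection $\iota:B_1\hookrightarrow A^\vee$ from \Cref{lem:gc-eigenvector} and the general inequality $|A^\vee|=|A/[A,A]|\leq|A|$ (valid over our ground field), the equality $|B_1|=|A|$ will force $|A^\vee|=|A|$, so $A$ is abelian and $\iota$ is bijective. Since $K=\Span(B_1)\subseteq k[A]$ has dimension $|B_1|$, it will also immediately give $K=k[A]$.

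The first main step is to show that for every $b\in\mc{B}$ the support $\Supp(b)$ is contained in a single left coset of $A$. Fix any $c\in B_1$ (such $c$ exists because $1\in G$ belongs to the support of some basis element in the expansion of $1\in k[G]$ in $\mc{B}$). For any $g_1,g_2\in\Supp(b)$, \Cref{lem:support-preserve}(ii) applied to $\Phi(b\otimes c)$, together with part (iii), provides nonzero scalars $\lambda_1,\lambda_2$ with $g_i\,c=\lambda_i(c\circ b)$, hence $g_2^{-1}g_1\,c=(\lambda_1/\lambda_2)\,c$. Comparing supports and using $\Supp(c)=A$ yields $g_2^{-1}g_1\,A=A$, i.e.\ $g_2^{-1}g_1\in A$. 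This produces the partition $\mc{B}=\bigsqcup_{gA\in G/A}\mc{B}_{gA}$ with $\mc{B}_{gA}:=\{b\in\mc{B}\mid\Supp(b)\subseteq gA\}$. Matching the two direct-sum decompositions $k[G]=\bigoplus_{gA}k[gA]=\bigoplus_{gA}\Span(\mc{B}_{gA})$, with $\Span(\mc{B}_{gA})\subseteq k[gA]$, forces $\Span(\mc{B}_{gA})=k[gA]$ for every coset; in particular $\Span(\mc{B}_A)=k[A]$.

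The second main step is to show $\mc{B}_A=B_1$. Given $b\in\mc{B}_A$, apply \Cref{lem:support-preserve}(ii) with the roles of the two factors exchanged, i.e.\ to $\Phi(c\otimes b)=\psi_b(c)\otimes(b\circ c)$ for some $c\in B_1$: for every $g\in\Supp(c)=A$ one obtains $g\,\Supp(b)=\Supp(b\circ c)$. Since the right-hand side is independent of $g$, $\Supp(b)$ is invariant under left multiplication by every element of $A$. A non-empty $A$-invariant subset of $A$ must be all of $A$ (left multiplication by $A$ on itself is transitive), whence $\Supp(b)=A$ and $b\in B_1$.

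Combining the two steps yields $K=\Span(B_1)=\Span(\mc{B}_A)=k[A]$ and $|B_1|=|A|$, from which the remaining conclusions follow as in the first paragraph. The main subtlety I expect is in the second step: one has to invoke \Cref{lem:support-preserve}(ii) with $c\in B_1$ placed in the \emph{left} factor of $\Phi$, so that $\Supp(c)=A$ supplies a \emph{full} subgroup of ``test multipliers''. This is what upgrades the weaker coset-containment obtained in the first step into the stronger $A$-invariance of $\Supp(b)$ needed to rule out basis elements supported in proper subsets of $A$ that avoid the identity.
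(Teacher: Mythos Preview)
Your proof is correct and takes a genuinely different route from the paper's. The paper argues abelianness first and directly: for $g\in A$ and $c\in B_1$ one has $gc\in k^*c$ (this is \Cref{lem:gc-eigenvector}\eqref{weight space}), so every left-multiplication operator $L_g$ is diagonal in the basis $B_1$ of $K$; diagonal operators commute, and since $1\in K$ (it expands only in $B_1$), evaluating $L_{gh}=L_{hg}$ at $1$ gives $gh=hg$. The equality $K=k[A]$ is then obtained by writing $1=\sum_{c\in B_1}\lambda_c c$ and left-multiplying by $g\in A$, using $gc=\chi_c(g)c$.

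Your argument instead front-loads the coset structure of supports (which the paper postpones to \Cref{lem:single-coset}), matches the two direct-sum decompositions of $k[G]$ to obtain $\Span(\mc{B}_A)=k[A]$, and then proves the nontrivial inclusion $\mc{B}_A\subseteq B_1$ by putting $c\in B_1$ in the \emph{left} slot of $\Phi$ to force $A$-invariance of $\Supp(b)$. Abelianness is then deduced purely by counting, via the chain $|A|=|B_1|\le|A^\vee|=|A/[A,A]|\le|A|$ (valid under the standing convention on roots of unity). This is a clean and slightly more conceptual path: you never need to verify commutativity by hand, and you obtain the coset decomposition of $\mc{B}$ as a by-product. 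The paper's route, by contrast, is shorter and entirely local to this lemma, at the cost of re-proving the coset statement later.
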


\begin{proof}[Proof of \Cref{lem:A-abelian}]
Fix $b\in B_1$ and $c\in B_1$.
Since $1\in\Supp(b)$, \Cref{lem:support-preserve}\,\eqref{gsuppb} applied with $g=1$ yields $c \in k^\ast (c\circ b).$ Because $\mc B$ is $\Phi$--set--theoretic, the element $c\circ b$ is itself an element of the basis $\mc B$.
A basis element cannot be a nontrivial scalar multiple of another basis element, hence $c\circ b=c$.
Therefore, for every $g\in\Supp(b)$, \Cref{lem:support-preserve}\,\eqref{gsuppb} gives $gc \in k^\ast (c\circ b)=k^\ast c.$ Thus for each $g\in\Supp(b)$ the left multiplication operator
\[
L_g:K\to K,\qquad x\mapsto gx
\]
preserves every $1$--dimensional subspace $kc$ with $c\in B_1$ (i.e.\ $L_g$ is diagonal in the basis $B_1$).

By definition, $A=\bigcup_{b\in\mc B_1}\Supp(b)$, hence the same conclusion holds for every $g\in A$:
for all $c\in B_1$ one has $gc\in k^\ast c$, so each $L_g$ is diagonal in the basis $B_1$ of $K$.
Diagonal operators commute, hence for all $g,h\in A$ we have $L_gL_h=L_hL_g$ as endomorphisms of $K$.
But $L_gL_h=L_{gh}$ and $L_hL_g=L_{hg}$, so $L_{gh}=L_{hg}$. It is easily seen that $1 \in K$, thus evaluating in the identity element yields that $gh = hg.$ Therefore $A$ is abelian.\medskip

Now we consider the equality of $K$ and $k[A]$. By definition we have that $K \subseteq k[A]$.  For the reverse inclusion, let $g\in A$ and write $1=\sum_{c\in B_1}\lambda_c c$ with $\lambda_c\in k$.
Multiplying by $g$ on the left gives
\(
g=\sum_{c\in B_1}\lambda_c\,g c.
\)
By \Cref{lem:gc-eigenvector} we have that $gc=\chi_c(g)c$ for each $c\in B_1$. Hence $ g=\sum_{c\in  B_1}\lambda_c\,\chi_c(g)\,c\in K.$ Therefore every basis element $g\in A$ lies in $K$, so $k[A]\subseteq K$.

The claim about $\iota$ follows from the injectivity provided by \cref{lem:gc-eigenvector} and the fact $|B_1|=|A|$ yielded by the first part above.
\end{proof}

\begin{proposition}\label{thm: A normal}
The group $A$ is normal in $G$.
\end{proposition}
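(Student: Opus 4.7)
The plan is to exploit the $\Phi$-set theoretic property of $\mc{B}$ together with \Cref{lem:support-preserve}\eqref{gsuppb}, applied in the two orders $\Phi(b\ot c)$ and $\Phi(c\ot b)$ for a fixed $c\in B_1$ (so that $\Supp(c)=A$ by \Cref{A is subgroup}). These two applications will pin down the support of an arbitrary $b\in \mc{B}$ as (i) a subset of a single left coset of $A$, and (ii) a union of right cosets of $A$. Together, these will force every coset representative of $A$ in $G$ to normalise $A$, and combined with the abelianness of $A$ from \Cref{lem:A-abelian} this yields normality.

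Concretely, first fix $c\in B_1$ and any $b\in \mc{B}$. Since $\Phi(b\ot c)=\psi_c(b)\ot(c\circ b)$ is a pure tensor on basis elements, \Cref{lem:support-preserve}\eqref{gsuppb} gives $gA = g\,\Supp(c)=\Supp(c\circ b)$ for every $g\in \Supp(b)$. Independence of the right-hand side from $g$ shows $\Supp(b)\subseteq x_b A$ for some left coset $x_bA$. Second, applying the same item to $\Phi(c\ot b)=\psi_b(c)\ot(b\circ c)$ gives $a\,\Supp(b)=\Supp(b\circ c)$ for every $a\in A$, hence $A\,\Supp(b)=\Supp(b)$. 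Picking any $g\in \Supp(b)$, the inclusion $Ag\subseteq x_bA$ rewrites as $x_b^{-1}ag\in A$ for every $a\in A$; since $x_b^{-1}g\in A$, this yields $x_b^{-1}ax_b=(x_b^{-1}ag)(x_b^{-1}g)^{-1}\in A$, so $x_b^{-1}Ax_b\subseteq A$, and hence $x_b^{-1}Ax_b=A$ by finiteness of $A$.

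To finish, every $y\in G$ appears in $\Supp(b)$ for some $b\in \mc{B}$: expanding $y=\sum_i\lambda_ib_i$ in the basis $\mc{B}$, the coefficient of $y$ on the left-hand side is $1$, so $y\in \Supp(b_i)$ for at least one $i$. Writing $y=x_b a'$ with $a'\in A$ and using that $A$ is abelian, we then conclude $yAy^{-1}=x_b a'Aa'^{-1}x_b^{-1}=x_bAx_b^{-1}=A$, so $A\trianglelefteq G$. There is no real obstacle here: the two applications of \eqref{gsuppb} constrain $\Supp(b)$ in complementary ways, and normality drops out immediately from the combination.
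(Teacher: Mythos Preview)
Your proof is correct and is in fact cleaner than the paper's argument. The paper proves normality by taking $b\in\mc{B}$ supported in a left coset $Au$ (via a forward reference to \Cref{lem:single-coset}), then expanding $u^{-1}b$ in the basis $\mc{B}$ and arguing that some $b_i$ in that expansion lies in $B_1$, forcing $\Supp(b_i)\subseteq A\cap u^{-1}Au$; varying $b$ then yields $A\subseteq u^{-1}Au$. Your route is more direct: you apply \Cref{lem:support-preserve}\eqref{gsuppb} twice, once to $\Phi(b\ot c)$ and once to $\Phi(c\ot b)$ with $c\in B_1$, obtaining simultaneously that $\Supp(b)$ sits in a single left $A$--coset and is left $A$--invariant. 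The combination $Ag\subseteq x_bA$ immediately gives $x_b^{-1}Ax_b\subseteq A$. This avoids the forward reference and the basis--expansion argument entirely, and only uses \Cref{lem:support-preserve} and \Cref{A is subgroup}. One minor remark: the appeal to \Cref{lem:A-abelian} in your last step is unnecessary, since $a'Aa'^{-1}=A$ holds for any $a'\in A$ regardless of commutativity.
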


\begin{proof}
It suffices to prove $uAu^{-1}\subseteq A$ for $u$ a coset representative of $A$ in $G$.

Because $u\in N$ is a product of generators coming from occurring labels, it suffices to prove the inclusion for a generator.
So assume $Au$ occurs, i.e. $\mc B_{Au}\neq\varnothing$, and choose $b\in\mc B_{Au}$.

By \Cref{lem:single-coset}, $\Span(\mc B_{Au})=k[A]u$. In particular, there exists $x\in k[A]$ such that
\[
b=xu.
\]
Left-multiply by $u^{-1}$ in $k[G]$:
\[
u^{-1}b=u^{-1}xu.
\]
Since $x\in k[A]$, the support of $u^{-1}xu$ is contained in $u^{-1}Au$.
Thus $u^{-1}b$ lies in $k[u^{-1}Au]$.

Now expand $u^{-1}b$ in the basis $\mc B$:
\[
u^{-1}b=\sum_{i} \lambda_i b_i,\qquad \lambda_i\ne 0,\ b_i\in\mc B.
\]
By \Cref{lem:single-coset}, each $b_i$ has support contained in a single left $A$--coset.
Moreover, since the left-hand side is supported in $u^{-1}Au$, every $b_i$ occurring in the sum must satisfy
\(
\Supp(b_i)\subseteq u^{-1}Au.
\)

At this point we use the assumption $K=k[A]$.
Because $b\in k[A]u$ we may choose such a $b$ with $u\in\Supp(b)$ (replace $b$ by another basis element in the same fibre if needed:
the fibre spans $k[A]u$ and the group element $u$ itself lies in $k[A]u$, so some basis element must contribute to the coefficient of $u$).
Then the coefficient of $1$ in $u^{-1}b$ is nonzero, i.e. $1\in \Supp(u^{-1}b)$.
Therefore at least one basis element $b_i$ in the expansion satisfies $1\in\Supp(b_i)$, hence $b_i\in\mc B_1$.
Consequently $\Supp(b_i)\subseteq A$ by definition of $A$, but also $\Supp(b_i)\subseteq u^{-1}Au$.
So $\Supp(b_i)\subseteq A\cap u^{-1}Au$ and in particular $A\cap u^{-1}Au$ is nonempty.

Varying $b$ over a spanning set of $k[A]u$ and repeating the argument forces all of $A$ to sit inside $u^{-1}Au$.
More concretely: for each $a\in A$, choose $x=a\in k[A]$ and consider an element $b\in\Span(\mc B_{Au})$ with $b=au$.
Expanding $u^{-1}b=u^{-1}(au)=u^{-1}au$ in the basis $\mc B$ yields at least one element of $\mc B_1$ (because its coefficient at $1$ is nonzero precisely when $u^{-1}au=1$,
and running $a$ over $A$ hits enough coefficients to force containment). This yields $A\subseteq u^{-1}Au$.
Hence $uAu^{-1}\subseteq A$ and thus $uAu^{-1}=A$.
\end{proof}

\subsubsection{Connection $A^{\vee}$ and $B_1$ through the associated solution}

As $\mc{B}$ is a $\Phi$-set theoretic basis of $k[G]$ with $G$ a group, the associated RPE solution $\phi_{\mc{B}}(x,y) = (\psi_{y}(x),  y\circ x)$ is cocommutative. A direct computation shows that this is equivalent to have
\begin{align}
y\circ \psi_z(x) &= y\circ x,\label{eq:rc1}\\
    \psi_y\psi_z&=\psi_z\psi_y \label{eq:rc2}
\end{align}
for all $x,y,z\in \mc{B}$.

\begin{proposition}\label{construction Gamma}
Let $B$ be a $\Phi$-set theoretic basis of $k[G]$. Then the following holds:
\begin{enumerate}
    \item Every $z \in \mc{B}$ the map $\psi_z$ restricts to a bijection of the set $B_1$.
    \item The solution $\phi_{\mc{B}}$ restricts to a bijective solution of the form
    $$\restr{\phi_{\mc{B}}}{B_1 \times B_1}: (b,c) \mapsto (\psi_c(b),c)$$
    \item The group 
    $$\Gamma=\langle \restr{\psi_z}{B_1} \mid z\in  B\rangle$$
    is a finite abelian group.
\end{enumerate}
\end{proposition}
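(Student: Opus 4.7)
The plan is to isolate a single structural identity that drives the whole proposition: for every $b \in B_1$ and every $c \in \mc{B}$ one has $c \circ b = c$. Once this is in hand, items (1)--(3) all follow quickly by combining bijectivity of $\Phi_{k[G]}$, the support-preservation statements of \Cref{lem:support-preserve}, the finiteness of $B_1$ established in \Cref{A is subgroup}, and the cocommutativity identity \eqref{eq:rc2}. The only step that requires any care is this initial identity; the rest is bookkeeping.

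To prove $c \circ b = c$, I would specialise \Cref{lem:support-preserve}.\eqref{gsuppb} at $g = 1 \in \Supp(b)$ to obtain
\[
\delta_1(b)\,c \;=\; \delta_1(\psi_c(b))\,(c \circ b).
\]
Both scalars are nonzero: $\delta_1(b) \neq 0$ by definition of $B_1$, and $\delta_1(\psi_c(b)) \neq 0$ because $\Supp(\psi_c(b)) = \Supp(b) \ni 1$ by \Cref{lem:support-preserve}.\eqref{supp first coord}. As $c$ and $c \circ b$ are both elements of the basis $\mc{B}$ and related by a nonzero scalar, linear independence forces $c = c \circ b$.

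For part (1), $\psi_z(B_1) \subseteq B_1$ is immediate from \Cref{lem:support-preserve}.\eqref{supp first coord}. For injectivity on $B_1$, suppose $\psi_z(b_1) = \psi_z(b_2)$ with $b_1, b_2 \in B_1$; using the structural identity $z \circ b_i = z$ we then get
\[
\Phi_{k[G]}(b_1 \otimes z) = \psi_z(b_1) \otimes z = \psi_z(b_2) \otimes z = \Phi_{k[G]}(b_2 \otimes z),
\]
so bijectivity of $\Phi_{k[G]}$ (see \Cref{fund th hopf mod}) yields $b_1 = b_2$. Since $B_1$ is finite, injectivity upgrades to bijectivity.

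Parts (2) and (3) are then short. For (2), on $B_1 \times B_1$ the map reads $\phi_{\mc{B}}(b,c) = (\psi_c(b), c \circ b) = (\psi_c(b), c)$, which by part (1) is a bijection $B_1 \times B_1 \to B_1 \times B_1$. For (3), each generator $\psi_z|_{B_1}$ lies in the finite symmetric group on $B_1$, so $\Gamma$ is a finite group; the generators commute by \eqref{eq:rc2}, so $\Gamma$ is abelian.
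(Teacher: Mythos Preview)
Your proof is correct and follows essentially the same approach as the paper: both hinge on the identity $c\circ b=c$ for $b\in B_1$ and $c\in\mc{B}$, derived from \Cref{lem:support-preserve}\eqref{gsuppb} at $g=1$ together with \eqref{supp first coord}. Your organisation is in fact slightly cleaner for part~(1): the paper deduces bijectivity of $\psi_c|_{B_1}$ from bijectivity of the restricted solution $\phi_{B_1}$, which as written only covers $c\in B_1$, whereas your direct argument via injectivity of $\Phi_{k[G]}$ applied to $\Phi(b_i\otimes z)=\psi_z(b_i)\otimes z$ handles all $z\in\mc{B}$ uniformly.
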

\begin{proof}
    By \Cref{lem:support-preserve}.\eqref{supp first coord}  we have that $\Supp(\psi_z(b)) = \Supp (b)$ and hence $\psi_z(B_1) \subseteq B_1$.  Since $1\in\Supp(b)$, \Cref{lem:support-preserve}\,\eqref{If 1 support b} implies that $\Supp(c\circ b)=\Supp(c)$ for all $c\in \mc{B}$. Also the converse holds, i.e. $\psi_c(b) \in B_1$ implies that $b \in B_1$ and hence $c \in B_1$ if and only if $c\circ b \in B_1$.
 Thus the solution $\phi_{\mc{B}}$ can be restricted to yield a solution 
$$\phi_{B_1}:=\restr{\phi_{\mc{B}}}{B_1 \times B_1}: B_1^2 \rightarrow B_1^2.$$
Since $\phi_{\mc{B}}$ is injective, also $\phi_{B_1}$ will be injective. It also surjective, because $\phi_{\mc{B}}$  is and $B_1 \otimes B_1$ can only be reached through elements in $B_1 \ot B_1$. Next note that in fact $c \circ b =c$ if $1 \in \Supp(b)$ by \Cref{lem:support-preserve}\,\eqref{If 1 support b} as two basis elements can not be scalar multiples of each other. Hence $\phi_{B_1}(b,c) = (\psi_c(b), c)$. As the latter is bijective, one must have that the map $\psi_c$ is a bijection. Thus we have obtained the first two statements. 
    
That $\Gamma$ is an abelian group follows from \eqref{eq:rc2}. Further, finiteness of $B_1$ obtained in \Cref{A is subgroup} ensures finiteness of $\Gamma$.
\end{proof}

The properties of $A$ obtained in \Cref{lem:A-abelian} can be combined with \Cref{lem:right-monomial} and \Cref{lem:gc-eigenvector} to identify $B_1$ with the Fourier idempotents of $k[A]$. As a by-product we obtain the description of the set theoretic solution $\restr{\phi_{\mc{B}}}{B_1 \times B_1}$ from \Cref{construction Gamma}.

\begin{corollary}\label{B_1 and Fourier}
For $b \neq c \in B_1$, we have that 
$$bc =0 = cb\,\, ,\,\, b^2 = \eps(\psi_b(b)) \, b \text{ and } \eps(\psi_c(c)) = \eps(\psi_b(b)) \neq 0.$$ 
Moreover, $\eps(\psi_b(b))^{-1}b = \frac{1}{|A|} \sum_{g \in A}\chi_b(g^{-1})g$, with $\chi_b$ defined in \eqref{eq:def-chi_c}, is idempotent. Consequently, $\restr{\phi_{\mc{B}}}{B_1 \times B_1}$ is equivalent to the solution 
$$A^{\vee} \times A^{\vee} \rightarrow A^{\vee} \times A^{\vee}: (\chi_1,\chi_2) \mapsto (\chi_1\chi_2^{-1},\chi_2).$$
\end{corollary}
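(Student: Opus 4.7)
The plan is to combine the weight-space description of $B_1$ from \Cref{lem:gc-eigenvector} with the multiplication-via-support formula of \Cref{lem:right-monomial}(i), and to leverage both the orthogonality relations \eqref{eq:orth} and the bijection $\iota : B_1 \to A^\vee$ to identify $B_1$ with a scaled set of Fourier idempotents.

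First, for $b, c \in B_1$, since $1 \in \Supp(b)$, \Cref{lem:support-preserve}(iv) forces $c \circ b = c$ (as already used in the proof of \Cref{construction Gamma}), so \Cref{lem:right-monomial}(i) collapses the product to $bc = \eps(\psi_c(b))\, c$. By \Cref{lem:gc-eigenvector}, each $b \in B_1$ lies in the one-dimensional weight space spanned by $\sum_g \chi_b(g^{-1}) g = |A|\, e_{\chi_b}$, so there is $\mu_b \in k^*$ with $b = \mu_b |A|\, e_{\chi_b}$. Using $gc = \chi_c(g)\, c$ and character orthogonality, a short computation gives $bc = \mu_b |A|\, \delta_{\chi_b,\chi_c}\, c$; combined with the injectivity of $\iota$, this yields $bc = 0 = cb$ for $b \neq c$, and $b^2 = \mu_b |A|\, b$. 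Comparing with $b^2 = \eps(\psi_b(b))\, b$ identifies $\eps(\psi_b(b)) = \mu_b |A|$, which is nonzero because $\mu_b \neq 0$ and $\Char(k) \nmid |A|$ by the standing convention.

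The key remaining step is to show $\mu_b$ does not depend on $b$. Computing $\psi_c(b)$ directly from $\Phi(b \otimes c) = b_{(1)} \otimes b_{(2)} c$ and the group-like coproduct on $k[G]$ gives $\psi_c(b) = \sum_g b_g \chi_c(g)\, g = \mu_b |A|\, e_{\chi_b \chi_c^{-1}}$ (after a routine manipulation of the characters). Now \Cref{construction Gamma} guarantees $\psi_c(b) \in B_1$, and bijectivity of $\iota$ (\Cref{lem:A-abelian}) produces a unique $b' \in B_1$ with $\chi_{b'} = \chi_b \chi_c^{-1}$ and $b' = \mu_{b'} |A|\, e_{\chi_{b'}}$. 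The equality $\psi_c(b) = (\mu_b/\mu_{b'})\, b'$, together with the fact that two distinct elements of $\mc{B}$ cannot be proportional, forces $\mu_b = \mu_{b'}$. Letting $c$ vary over $B_1$ makes $b'$ range over all of $B_1$, so $\mu_{b'}$ is constant, which yields the equality $\eps(\psi_b(b)) = \eps(\psi_c(c))$.

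The remaining clauses then follow formally: $\eps(\psi_b(b))^{-1} b = (\mu |A|)^{-1} \mu |A|\, e_{\chi_b} = e_{\chi_b}$ exhibits the rescaled basis as the standard Fourier idempotents, and the explicit formula for $\psi_c(b)$ together with $c \circ b = c$ transports $\restr{\phi_{\mc{B}}}{B_1 \times B_1}$ through $\iota$ into precisely $(\chi_1, \chi_2) \mapsto (\chi_1 \chi_2^{-1}, \chi_2)$. The only genuinely delicate point is the uniformity of $\mu_b$: the weight-space description alone pins down each basis element only up to its own scalar, so closing the loop really requires both that every $\psi_c$ stabilises $B_1$ and that distinct basis elements of $\mc{B}$ cannot be proportional.
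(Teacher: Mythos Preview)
Your proof is correct and follows essentially the same approach as the paper: both arguments identify each $b\in B_1$ with a scalar multiple of the Fourier idempotent $e_{\chi_b}$ via \Cref{lem:gc-eigenvector}, use $c\circ b=c$ together with \Cref{lem:right-monomial}(i), and then pin down the common scalar by tracking where $\Phi_{k[G]}$ sends $b\otimes c$ in the idempotent basis. The only cosmetic differences are that the paper obtains $bc=0$ from commutativity of $k[A]$ (writing $\eps(\psi_c(b))\,c=bc=cb=\eps(\psi_b(c))\,b$ and invoking linear independence) rather than from your direct orthogonality computation, and that the paper cites \Cref{prop:s2Theta} for the formula $\Phi_{k[G]}(e_{\chi_1}\otimes e_{\chi_2})=e_{\chi_1\chi_2^{-1}}\otimes e_{\chi_2}$ whereas you rederive it by expanding $\psi_c(b)=\sum_g b_g\chi_c(g)\,g$.
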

Note that the above corollary says that the elements $b\in B_1$ are of the form $\lambda \, e_{\chi}$ with $\chi \in A^{\vee}$, $e_{\chi} := \frac{1}{|A|} \sum_{g \in A}\chi(g^{-1})g $ and $\lambda$ a non-zero scalar independent of $b$. 
\begin{proof}[Proof of \Cref{B_1 and Fourier}]
 By \Cref{lem:right-monomial} we know that $bc = \eps(\psi_{c}(b)) \, c \circ b.$ On the other hand, as pointed out already in several proofs, $c \circ b = c$ by \Cref{lem:support-preserve}.\eqref{gsuppb}. Thus if $b=c$, we get the claim about $b^2$. When $b \neq c$ we use that $k[A]$ is abelian by \Cref{lem:A-abelian}, and hence also $K$. Namely we get altogether
$$\eps(\psi_{c}(b)) c = bc = cb = \eps(\psi_{b}(c)) b.$$
As $b$ and $c$ are different basis elements this implies that $\eps(\psi_{c}(b)) = \eps(\psi_{b}(c)) = 0.$ Hence $bc = 0 = cb$, as required.

Now  \cref{lem:gc-eigenvector} also yields that $b \in V_{\chi_b}:=\{x\in k[A]\mid gx=\chi_b(g)x\ \text{ for all } g\in A\}.$ Furthermore, that common $A$-eigenspace is $1$-dimenisonal with basis the Fourier idempotent $e_{\chi_b}:= \frac{1}{|A|} \sum_{g \in A}\chi_b(g^{-1})g.$ Therefore $b = \lambda \, e_{\chi_b}$ for some $\lambda \in k^*$. However, we obtained earlier that $b^2 = \eps(\psi_b(b)) b$ which entails $\lambda^2 e_{\chi_b} = \eps(\psi_b(b)) \lambda\, e_{\chi_b}.$ Thus, $\lambda = \eps(\psi_b(b))$.

Now we prove that the scalars $\eps(\psi_b(b))$ are equal for all $b \in B_1.$ By \Cref{lem:A-abelian} the group algebra $k[A]$ has $B_1$ as a $k$-basis. Moreover by \Cref{construction Gamma} the canonical solution $\Phi_{k[G]}( g\ot h) = g \ot gh$ of $k[G]$ restricts to the group solution on $k[A]^{\ot 2} \rightarrow k[A]^{\ot 2}.$ It follows from \Cref{prop:s2Theta}, that on the basis $\{ e_{\chi} :=  \frac{1}{|A|} \sum_{g \in A}\chi(g^{-1})g \mid \chi \in A^{\vee}\}$ one has that $\Phi_{k[G]}(e_{\chi_1} \ot e_{\chi_2})= e_{\chi_1 \, \chi^{—1}_2} \ot e_{\chi_2}$. Therefore, using the expression for $b \in B_1$ obtained earlier, we have 
$$\Phi_{k[G]}(b \ot c) =\Phi_{k[G]} \big( \eps(\psi_b(b)) e_{\chi_b} \ot  \eps(\psi_c(c)) e_{\chi_c}\big) =  \eps(\psi_b(b)) e_{\chi_b\chi_c^{-1}} \ot c.$$
Since $B_1$ is preserved, the right hand side is a pure tensor in $B_1 \ot B_1$. Thus $ \eps(\psi_b(b)) e_{\chi_b\chi_c^{-1}} \in B_1$. Now note that the scalar appearing is independent of $c$, thus the latter is only possible if $\eps(\psi_b(b)) = \eps(\psi_c(c))$ for all $c.$ We denot this constant value by $\kappa$. In other words, up to a common multiple $\kappa$, $B_1$ coincides with the basis $\{ e_{\chi} \mid  \chi \in A^{\vee} \}.$ The bijection $B_1 \rightarrow\{ e_{\chi} \mid  \chi \in A^{\vee} \}: b \mapsto \kappa^{-1} b$ now yields the desired equivalence between the set theoretic solution $\restr{\phi_{\mc{B}}}{B_1 \times B_1}$ from \Cref{construction Gamma} and the solution written in the statement.
 \end{proof}

\subsection{Description of arbitrary basis elements}\label{sectie group N}

At this stage we have obtained the desired finite abelian group $A$. The group $N$ will be a complement of $A$ in $G$. To start consider a set $\mc{T}_A^G\subseteq G$ of right\footnote{The choice for right is simply to match at the end the solution from \Cref{thm:dual}. However since $A$ is normal, they also form left coset representatives.} coset representatives of $A$ in $G$. In order to show that one can choose such representative to form a subgroup, we need to first describe explicitly all basis elements $b \in \mc{B}$ in terms of $G$. The splitting will be achieved in next section.

For $u\in \mc{T}_A^G$ define
$$
B_u:=\{b\in \mc{B} \mid \Supp(b)\subseteq uA\}.
$$
Following lemma implies that $\mc{B}=\bigsqcup_{u\in \mc{T}_A^G} B_u$.
\begin{lemma}\label{lem:single-coset}
For every $b\in \mc{B}$ there exists a unique $g\in \mc{T}^G_A$ such that $\Supp(b)\subseteq gA$. Moreover, $\Span(B_{g})=g\,k[A]:= \{g x \mid x \in k[A] \}.$
\end{lemma}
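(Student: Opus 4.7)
The plan for the first statement is to fix $b\in \mc{B}$, pick some $g_0\in\Supp(b)$, and show that for every other $h\in\Supp(b)$ one has $hg_0^{-1}\in A$. The main trick is to probe $b$ against an element of $B_1$ using the solution $\Phi$. Concretely, take any $c\in B_1$; by \Cref{A is subgroup} we have $\Supp(c)=A$. By \Cref{lem:support-preserve}(ii), for every $g\in\Supp(b)$ one gets
\[
\delta_g(b)\,gc \;=\; \delta_g(\psi_c(b))\,(c\circ b),
\]
and by \Cref{lem:support-preserve}(iii) the coefficient $\delta_g(\psi_c(b))$ is nonzero exactly when $g\in\Supp(b)$. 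So for every $g\in\Supp(b)$, $gc$ is a nonzero scalar multiple of the single basis element $c\circ b$. Taking supports yields
\[
gA \;=\; g\,\Supp(c) \;=\; \Supp(gc) \;=\; \Supp(c\circ b),
\]
and the right-hand side does not depend on $g\in\Supp(b)$. Hence $gA=g_0A$ for every $g\in\Supp(b)$, i.e.\ $\Supp(b)\subseteq g_0 A$. Uniqueness of a representative $g\in\mc{T}_A^G$ with $\Supp(b)\subseteq gA$ is then automatic from the fact that the cosets $\{uA\}_{u\in\mc{T}_A^G}$ partition $G$.

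For the second statement, the plan is to argue by a projection onto each coset. For $g\in\mc{T}_A^G$, let $\pi_g:k[G]\to g\,k[A]$ be the $k$-linear projection sending $h\mapsto h$ if $h\in gA$ and $h\mapsto 0$ otherwise; these projections are orthogonal and sum to the identity. By the first part, each $b\in\mc{B}$ is supported in a unique coset $uA$, so $\pi_g(b)=b$ if $b\in B_g$ and $\pi_g(b)=0$ otherwise. The inclusion $\Span(B_g)\subseteq g\,k[A]$ is immediate from the definition of $B_g$. For the reverse inclusion, given $x\in g\,k[A]$ write $x=\sum_i\lambda_i b_i$ with $b_i\in\mc{B}$; then applying $\pi_g$ gives
\[
x \;=\; \pi_g(x) \;=\; \sum_{b_i\in B_g}\lambda_i\,b_i \;\in\;\Span(B_g),
\]
which finishes the proof. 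No step of this plan is delicate: the only real content is the identity $\Supp(gc)=\Supp(c\circ b)$ obtained from \Cref{lem:support-preserve}(ii), which has already been established.
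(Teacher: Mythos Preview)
Your proof is correct and follows essentially the same approach as the paper: both pick $c\in B_1$, use \Cref{lem:support-preserve}\eqref{gsuppb} to deduce $g\,\Supp(c)=\Supp(c\circ b)$ for every $g\in\Supp(b)$, and conclude that $\Supp(b)$ lies in a single $A$-coset; the span equality is likewise obtained by expanding an element of $g\,k[A]$ in the basis $\mc{B}$ and observing only $B_g$-terms can contribute. Your projection phrasing for the second part is just a repackaging of the paper's argument.
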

\begin{proof}
In other words, we need to prove that $b\in k[A]u$ for some $u$. Take $c \in B_1$. By definition of $A$, we have that $\Supp(c) \subseteq A$. Now note that, for $b\in B$, we have by \Cref{lem:support-preserve} that both 
$$\Supp(b) \subseteq \Supp(c \circ b) \text{ and } g\Supp(c) = \Supp(gc) = \Supp(c\circ b).$$
So altogether $\Supp(b) \subseteq g \Supp(c) \subseteq gA$. Clearly we can take $g$ to be in $\mc{T}^G_A$ and the unicity then follows from the fact that the $Ag$ form a partition. 

Next we prove that $\Span(B_{g})=g\,k[A]$. The inclusion $\Span(B_{g})\subseteq g\,k[A]$ is by definition. Conversely, take $ga \in g\, k[A]$ and expand $ga= \sum_{b \in \mc{B}}\lambda_b b$ in the basis $\mc{B}$. By the first part, each basis element has support in a single left $A$--coset. Since the sum is supported in $gA$, only basis elements from $B_{g}$ can occur. Hence $ga\in \Span(B_{g})$ for all $a$, yielding the remaining inclusion.

\end{proof}

Now we want to upgrade \Cref{lem:single-coset} in the spirit of \Cref{B_1 and Fourier}, i.e. show that every $b \in \mc{B}$ is of the form $\lambda e_{\chi} u$ with $\chi \in A^{\vee}, u \in \mc{T}_A^G$ and a scalar $\lambda$ independent of the chosen $b$.

\begin{proposition}\label{lem:purity-single-line}
Let $b \in \mc{B}$ and consider the unique $g \in A$ such that $\Supp(b) \subseteq Ag$ given by \Cref{lem:single-coset}. Then there exists a unique $\chi\in A^\vee$ such that $b\in \Span_k\{ e_\chi g\}$.
\end{proposition}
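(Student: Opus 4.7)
The plan is to mimic the argument behind \Cref{lem:gc-eigenvector} and \Cref{B_1 and Fourier}, namely to show that $b$ is a simultaneous eigenvector for left multiplication by $A$ with character $\chi$, and then exploit $\Supp(b)\subseteq Ag$ to pin down its coordinates.

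Fix any $c\in B_1$; by \Cref{B_1 and Fourier} we have $\Supp(c)=A$, so $\delta_h(c)\neq 0$ for every $h\in A$. Apply $\Phi$ to $c\otimes b$: since $\mc{B}$ is $\Phi$-set theoretic we get $\Phi(c\otimes b)=\psi_b(c)\otimes(b\circ c)$ with both factors in $\mc{B}$. By \Cref{lem:support-preserve}\,\eqref{gsuppb} applied to the pair $(c,b)$, for every $h\in A=\Supp(c)$ we obtain
\[
\delta_h(c)\,h\,b \;=\;\delta_h(\psi_b(c))\,(b\circ c).
\]
Taking $h=1$ (using $1\in\Supp(c)$ and $\Supp(\psi_b(c))=\Supp(c)$ by \Cref{lem:support-preserve}\,\eqref{supp first coord}) yields that $b$ is a scalar multiple of $b\circ c$, hence $b=b\circ c$ as two basis elements of $\mc{B}$. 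Consequently, for every $h\in A$,
\[
h\,b\;=\;\chi(h)\,b \qquad\text{with}\qquad \chi(h):=\frac{\delta_h(\psi_b(c))}{\delta_h(c)}\in k^*.
\]
The identities $(h_1h_2)b=h_1(h_2b)$ and the fact that $A$ is abelian immediately force $\chi\in A^\vee$, exactly as in the proof of \Cref{lem:gc-eigenvector}.

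Next I translate this eigenvalue information into coordinates. Since $\Supp(b)\subseteq Ag$ by \Cref{lem:single-coset}, write $b=\sum_{a\in A}\lambda_a\,(ag)$. For any $h\in A$, reindexing $a'=ha$ gives $hb=\sum_{a'\in A}\lambda_{h^{-1}a'}\,a'g$, while $\chi(h)b=\sum_{a'\in A}\chi(h)\lambda_{a'}\,a'g$. Comparing coefficients yields $\lambda_{h^{-1}a'}=\chi(h)\lambda_{a'}$, and specializing $a'=1$ gives $\lambda_h=\chi(h^{-1})\lambda_1$ for all $h\in A$. Therefore
\[
b\;=\;\lambda_1\sum_{a\in A}\chi(a^{-1})\,ag\;=\;\lambda_1|A|\,e_\chi\,g,
\]
which proves the existence claim.

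Finally, uniqueness is immediate: if $b\in k\,e_\chi g\cap k\,e_{\chi'}g$ with $b\neq 0$, right-multiplying by $g^{-1}$ gives a nontrivial linear dependence between $e_\chi$ and $e_{\chi'}$ in $k[A]$. Since the Fourier idempotents $\{e_\psi\}_{\psi\in A^\vee}$ are linearly independent (being pairwise orthogonal nonzero idempotents, cf.\ \Cref{prop:ThetaInv}), this forces $\chi=\chi'$. The only possible subtlety in the whole argument is making sure that the denominator $\delta_h(c)$ never vanishes and that $b=b\circ c$ rather than merely a scalar multiple—both are guaranteed by $\Supp(c)=A$ and by $\mc{B}$ being a basis, so no basis element is a nontrivial scalar multiple of another.
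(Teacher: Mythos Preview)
Your proof is correct and takes a cleaner route than the paper's. The paper expands $b=\sum_\chi \alpha_\chi\,e_\chi g$ in the Fourier basis of $k[A]g$, then applies \Cref{lem:support-preserve}\,\eqref{gsuppb} to $\Phi(b\otimes c_\psi)$ for $c_\psi\in B_1$ together with the conjugation identity \Cref{lem:conj-idem} to rewrite each $e_\chi g$ as a scalar multiple of a basis element $c_{g^{-1}\cdot\chi}\circ b$; linear independence of $\mc{B}$ then kills all but one $\alpha_\chi$. You instead apply the same lemma in the reversed order, to $\Phi(c\otimes b)$, and read off directly that $hb=\chi(h)b$ for all $h\in A$, so $b$ is already a simultaneous $A$-eigenvector and the Fourier line is determined. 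Your argument is a bit more economical: it avoids invoking \Cref{lem:conj-idem} (and hence does not use normality of $A$ at this step), and it bypasses the injectivity check for $c\mapsto c\circ b$ on $B_1$. The paper's approach, on the other hand, makes the bijection $B_1\to B_g$, $c\mapsto c\circ b$, more visible as a by-product.
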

\begin{proof}
By \Cref{lem:A-abelian} and \Cref{lem:gc-eigenvector} the group algebra $k[A]$ decomposes as 
$$k[A] = \bigoplus_{\chi \in A^{\vee}} V_{\chi} \text{ and } V_{\chi} = \Span_k\{ e_{\chi}\}.$$
Therefore, $k[A]g = \bigoplus_{\chi\in A^\vee} \Span_k\{ e_\chi g\}$. Hence we can decompose $b$ as $ b=\sum_{\chi \in A^{\vee}}\alpha_\chi \,e_\chi g$ with $e_{\chi_1}g \neq e_{\chi_2}g$ for $\chi_1 \neq \chi_2.$
We need to prove that at most one coefficient $\alpha_\chi$ can be nonzero. 

By \Cref{B_1 and Fourier} and \Cref{lem:A-abelian}, for each $\chi \in A^{\vee}$ there is a $c \in B_1$ such that $c = \lambda e_{\chi}$ for some non-zero scalar that do not depend on $c$ or $\chi$. Denote this $c$ by $c_{\chi}$. \Cref{lem:right-monomial} and \Cref{lem:conj-idem} provide that
$$c_{\chi}g = g c_{g^{-1}.\chi} =\frac{\delta_g(\psi_{c_{g^{-1}.\chi}}(b))}{\delta_g(b)} c_{g.\chi}\circ b$$
Therefore 
$$b =\sum_{\chi \in A^{\vee}} \alpha_{\chi}\lambda^{-1} \frac{\delta_g(\psi_{c_{g^{-1}.\chi}}(b))}{\delta_g(b)} c_{g^{-1}.\chi}\circ b$$
with $\lambda^{-1} \delta_g(\psi_{c_{g^{-1}.\chi}}(b))\delta_g(b)^{-1} \neq 0$.
As all basis elements $c_{g^{-1}.\chi} \circ b$ are different, linear independence yields that only one $\alpha_{\chi}$ is non-zero, as needed. 

\end{proof}

Since $A$ is normal by \Cref{thm: A normal}, we have that $G$ acts on $A$ by conjugation. Furthermore as $A$ is abelian by \Cref{lem:A-abelian}, the action of $G$ factors through $G/A$ which identifies with $\mc{T}_A^G$. Write the induced action on characters $A^{\vee}$ by
$$
(u\cdot\chi)(a)\ :=\ \chi(u^{-1}au)
$$
for $u\in \mc{T}_A^G, \, a \in A$ and $\chi \in A^{\vee}$. On the idempotents $e_{\chi}$ the $G$-action on $A$ and $A^{\vee}$ are related as following. In particular, by \Cref{B_1 and Fourier}, this also describes the action of $G$ on $B_1$.

\begin{lemma}\label{lem:conj-idem}
For all $u \in G$ and $\chi \in A^{\vee}$ one has $u\,e_\chi\,u^{-1} \;=\; e_{u\cdot\chi}$.
\end{lemma}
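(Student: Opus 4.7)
The proof should be a short direct computation, which I will organize as follows. The key fact I will invoke is that $A$ is normal in $G$ (by \Cref{thm: A normal}), so that conjugation by $u \in G$ restricts to a bijection of $A$, which allows a clean reindexing of the sum defining $e_\chi$.

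Starting from the definition \eqref{eq:ThetaDef} and the $k$-linearity of left/right multiplication, I would write
\[
u\,e_\chi\,u^{-1} \;=\; \frac{1}{|A|}\sum_{a\in A}\chi(a^{-1})\,u a u^{-1}.
\]
Then I would perform the substitution $b := u a u^{-1}$. Because $A$ is normal, the map $a \mapsto u a u^{-1}$ is a bijection $A \to A$, so summing over $a \in A$ is the same as summing over $b \in A$, with $a = u^{-1} b u$ and hence $a^{-1} = u^{-1} b^{-1} u$. The sum becomes
\[
u\,e_\chi\,u^{-1} \;=\; \frac{1}{|A|}\sum_{b\in A}\chi(u^{-1} b^{-1} u)\, b.
\]

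Finally, by the very definition of the action of $G$ on characters, $\chi(u^{-1} b^{-1} u) = (u \cdot \chi)(b^{-1})$, so the right hand side equals $e_{u \cdot \chi}$ as required.

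There is no real obstacle here: the statement is essentially the transport of the Fourier idempotent under the conjugation action, and the only nontrivial input is the normality of $A$, which has already been established. The same change of variables was used informally in the discussion preceding \eqref{eq:conj} in the semidirect-product case; here the sole modification is that conjugation by $u$ need no longer fix elements of $N$, but this is irrelevant since we only conjugate elements of $A$.
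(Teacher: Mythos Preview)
Your proof is correct and follows exactly the same approach as the paper: invoke normality of $A$ from \Cref{thm: A normal}, reindex the sum via $b = u a u^{-1}$, and identify the resulting character as $u\cdot\chi$ using its definition.
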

\begin{proof}
Using that $A$ is normal in $G$ by \Cref{thm: A normal} and changing variables $a'=uau^{-1}\in A$ we compute
$$
u e_{\chi} u^{-1}
=\frac1{|A|}\sum_{a\in A}\chi(a^{-1})\, (uau^{-1})
=\frac1{|A|}\sum_{a'\in A}\chi\!\bigl((u^{-1}a'u)^{-1}\bigr)\, a'
=\frac1{|A|}\sum_{a'\in A}(u\cdot\chi)(a'^{-1})\,a'
$$
which indeed equals $e_{u\cdot\chi}.$
\end{proof}

\subsection{Semi-direct product and finishing the proof of \Cref{Classification theorem basis grp alg}}

Recall that $A$ is normal in $G$ and hence we can consider the quotient $Q = G/A.$ In terms of $Q$, the choice of a transversal $\mc{T}_A^G$ is the same as a choice of a section $\tau: Q \rightarrow G$. In other words, we consider $G$ as the extension
$1 \rightarrow A \rightarrow G \rightarrow Q \rightarrow 1$
Every element $g\in G$ is of the form $g= a \,\tau(q)$ with $q \in Q$. Multiplicatoin is given by 
$$(a_1 \tau(q_1)).(a_2\tau(q_2)) = a_1 a_2^{q_1} \sigma(q_1,q_2) \tau(q_1q_2) $$
for some map $\sigma : Q \times Q \rightarrow A$. One can assume that $\sigma$ is normalized, so that $\sigma(q,1) = 1 = \sigma(1,q)$ for all $q \in Q$. We now show that this extension is split which amounts to show that the elements $\sigma(q_1,q_2) =1$ for all $q_i \in Q.$ In that case $\tau(Q)$ is a subgroup of $G$ complementing $A$. To do so we compute the restriction of the solution $\Phi_{k[G]}$ on the $\Phi$-set theoretic basis $\mc{B}$ using the form obtained in \Cref{lem:purity-single-line}. As by-product we will obtain that $\phi_{\mc{B}}$ has the form given in \eqref{eq:setdual} finishing at the same time the proof of \Cref{Classification theorem basis grp alg}.

\begin{lemma}\label{lem:set-theoretic-kills-cocycles}
With notations as above, we have for all $\chi,\psi\in A^\vee$ and $p,q\in Q$ that
$$
\Phi_{k[G]}(e_\chi \tau(p)\otimes e_\psi \tau(q))
=(p \cdot \psi)(\sigma(p,q))\,
\Big(e_{\chi(p\cdot\psi)^{-1}} \tau(p)\otimes e_{p\cdot\psi}\tau(pq)\Big).
$$
\end{lemma}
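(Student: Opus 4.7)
The plan is a direct computation of $\Phi_{k[G]}$ on the given pure tensor, using that $\Delta$ is an algebra map and that $\tau(p)$ is group-like. First I will expand
$$
\Delta(e_\chi \tau(p)) \;=\; \Delta(e_\chi)\,\Delta(\tau(p)) \;=\; \frac{1}{|A|}\sum_{a\in A}\chi(a^{-1})\bigl(a\tau(p)\otimes a\tau(p)\bigr),
$$
so that by definition of $\Phi_H$,
$$
\Phi_{k[G]}\bigl(e_\chi\tau(p)\otimes e_\psi\tau(q)\bigr) \;=\; \frac{1}{|A|}\sum_{a\in A}\chi(a^{-1})\,\bigl(a\tau(p)\otimes a\tau(p)e_\psi\tau(q)\bigr).
$$

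The core of the argument is simplifying the right tensor factor. I will apply \Cref{lem:conj-idem} to get $\tau(p)e_\psi = e_{p\cdot\psi}\tau(p)$, and then insert the cocycle relation $\tau(p)\tau(q)=\sigma(p,q)\tau(pq)$ (following from the extension datum of $1\to A\to G\to Q\to 1$) to obtain
$$
a\tau(p)e_\psi\tau(q) \;=\; a\,e_{p\cdot\psi}\,\sigma(p,q)\,\tau(pq).
$$
Since $A$ is abelian by \Cref{lem:A-abelian} and $\sigma(p,q)\in A$, I can move $\sigma(p,q)$ past $e_{p\cdot\psi}$, and then use the eigenvalue identity $g\,e_\mu = \mu(g)\,e_\mu$ (valid for $g\in A$, cf.\ \eqref{eq:trans}) twice: for $g=\sigma(p,q)$ to extract the scalar $(p\cdot\psi)(\sigma(p,q))$, and for $g=a$ to extract $(p\cdot\psi)(a)$. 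The right tensor factor thus becomes $(p\cdot\psi)(\sigma(p,q))\,(p\cdot\psi)(a)\,e_{p\cdot\psi}\,\tau(pq)$.

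Substituting this back and factoring the $a$-independent constant outside the sum yields
$$
\Phi_{k[G]}\bigl(e_\chi\tau(p)\otimes e_\psi\tau(q)\bigr) \;=\; (p\cdot\psi)(\sigma(p,q))\,\Bigl(\tfrac{1}{|A|}\sum_{a\in A}\chi(a^{-1})(p\cdot\psi)(a)\,a\Bigr)\tau(p)\otimes e_{p\cdot\psi}\tau(pq).
$$
Setting $\mu := \chi(p\cdot\psi)^{-1}$, a one-line character identity gives $\chi(a^{-1})(p\cdot\psi)(a) = \mu(a^{-1})$, so by \eqref{eq:ThetaDef} the bracketed sum is exactly $e_\mu = e_{\chi(p\cdot\psi)^{-1}}$, delivering the claimed formula. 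No step is technically delicate; the only conceptually important point is to track how the $2$-cocycle $\sigma(p,q)\in A$ contributes the scalar $(p\cdot\psi)(\sigma(p,q))$ via the character $p\cdot\psi$. This is precisely the scalar that will have to equal $1$ when imposing that $\mc{B}$ is $\Phi$-set theoretic (since the right-hand side must be a pure tensor from $\mc{B}\otimes\mc{B}$, up to the universal scalar from \Cref{B_1 and Fourier}), thereby forcing the extension to split and completing the proof of \Cref{Classification theorem basis grp alg}.
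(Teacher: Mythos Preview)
Your proof is correct and follows essentially the same strategy as the paper: both are direct computations using \Cref{lem:conj-idem}, the cocycle relation $\tau(p)\tau(q)=\sigma(p,q)\tau(pq)$, and the eigenvalue identity $g\,e_\mu=\mu(g)e_\mu$ for $g\in A$.

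The only organizational difference is in how the coproduct of $e_\chi$ is handled. The paper first establishes the identity $\Delta(e_\chi)=\sum_{\alpha\beta=\chi}e_\alpha\otimes e_\beta$ (via Fourier inversion and character orthogonality), then collapses the resulting sum over $A^\vee$ using idempotent orthogonality $e_\beta e_{p\cdot\psi}=\delta_{\beta,p\cdot\psi}e_{p\cdot\psi}$. You instead keep $\Delta(e_\chi)$ as the raw sum $\frac{1}{|A|}\sum_a\chi(a^{-1})\,a\otimes a$ over group elements, extract scalars from the right factor, and only at the very end recognize the remaining $A$-sum as $e_{\chi(p\cdot\psi)^{-1}}$. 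Your route is marginally more economical since it avoids the separate derivation of the coproduct formula for idempotents; the paper's route keeps everything phrased in the idempotent basis throughout, which matches the target formula more transparently. Either way, the substance is identical.
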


\begin{remark*}
    In \Cref{lem:set-theoretic-kills-cocycles} we have on purpose not yet used that $\mc{B}$ is a $\Phi$-set theoretic basis. This to indiciate that non-split extensions should be related to ``almost'' set theoretic solution, i.e. the pure tensors $\mc{B} \ot \mc{B}$ are send on scalar multiplies of $\mc{B} \ot \mc{B}$. 
\end{remark*}

\begin{proof}[Proof of \Cref{lem:set-theoretic-kills-cocycles}]
First we note that
\begin{equation}\label{coprod of fourier idempotents}
\D (e_\chi)=\sum_{\alpha\beta=\chi} e_\alpha\otimes e_\beta.
\end{equation}
This was implicit in \Cref{sol from dual grp alg}, but we explain it now explicitly. Firstly, by definition $ \Delta(e_\chi)=\frac{1}{|A|}\sum_{a\in A}\chi(a^{-1})(a\otimes a).$ Next, by the Fourier inversion identity $a=\sum_{\alpha\in A^\vee}\alpha(a)e_\alpha$ we get
\begin{align*}
a\otimes a
&=\sum_{\alpha,\beta}\alpha(a)\beta(a)\, e_\alpha\otimes e_\beta
=\sum_{\alpha,\beta}(\alpha\beta)(a)\, e_\alpha\otimes e_\beta.
\end{align*}
Hence
\begin{align*}
\Delta(e_\chi)
&=\sum_{\alpha,\beta}\left(\frac{1}{|A|}\sum_{a\in A}\chi(a^{-1})(\alpha\beta)(a)\right)e_\alpha\otimes e_\beta.
\end{align*}
The inner sum is $1$ if $\alpha\beta=\chi$ and $0$ otherwise by orthogonality of characters. This gives expression \eqref{coprod of fourier idempotents}. \medskip

Now note that \eqref{coprod of fourier idempotents} implies that $\D(e_{\chi}\tau(p)) = \D(e_{\chi})\D(\tau(p)) = \D(e_{\chi})(\tau(p) \ot \tau(p))$. Hence 
\begin{equation}\label{sol on almost basis}
   \Phi_{k[G]}(e_\chi \tau(p)\otimes e_\psi \tau(q) )
     = \sum_{\alpha\beta=\chi} e_\alpha\tau(p)\otimes (e_\beta\tau(p))(e_{\psi}\tau(q)
\end{equation}
By \Cref{lem:conj-idem}, $u e_\beta u^{-1}=e_{u\cdot\beta}$ for any $u \in G$. Furthermore, as the action of $G$ on $A^{\vee}$ factorizes through $G/A$, we have that $\tau(p) \cdot \beta = p \cdot \beta$. Hence 
$$(e_\beta\tau(p))(e_{\psi}\tau(q)) = e_{\beta} e_{p\cdot\psi} \tau(p)\tau(q) =  e_{\beta} e_{p\cdot \psi}\sigma(p,q) \tau(pq).$$
Since $e_{\beta} e_{p.\psi} = \delta_{\beta,p\cdot\psi} e_{p\cdot\psi}$ and $a e_{\chi'} = \chi'(a) e_{\chi'}$ for any $a \in A$ and $\chi' \in A^{\vee}$, the sum in \eqref{sol on almost basis} collapses to
\begin{align*}
\Phi_{k[G]}(e_\chi \tau(p)\otimes e_\psi \tau(q) )
     & =  e_{(p\cdot \psi)^{-1}\chi} \tau(p) \ot \sigma(p,q) e_{p\cdot\psi} \tau(pq) \\
     & = (p\cdot \psi) (\sigma(p,q)) \,  e_{(p\cdot \psi)^{-1}\chi} \tau(p) \ot  e_{p\cdot\psi} \tau(pq)
\end{align*}
which is exactly the desired form.
\end{proof}

We now use $\Phi$-set theoretic to deduce from \Cref{lem:set-theoretic-kills-cocycles} the final required statements.

\begin{corollary}
The following holds:
\begin{itemize}
    \item $\mc{B}$ equals the set $\{ \lambda e_{\chi} \tau(p) \mid \chi \in A^{\vee}, p \in Q \}$,
    \item $\sigma\equiv 1$,
    \item $G$ is a semidirect product of $A$ by $\tau(Q)$;
    \item $\phi_{\mc{B}}$ is equivalent to the solution
    $$\phi_{A^\vee\bowtie N}\bigl((\alpha,u),(\beta,v)\bigr)
=
\bigl((\alpha(u\cdot\beta)^{-1},u),\ (u\cdot\beta,uv)\bigr).$$
\end{itemize}
\end{corollary}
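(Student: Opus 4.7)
The plan is to combine \Cref{lem:purity-single-line} (which pins down each basis element on its coset) with \Cref{lem:set-theoretic-kills-cocycles} (which gives the explicit form of $\Phi_{k[G]}$ on elements of the shape $e_\chi\tau(p)$) so as to force $\sigma\equiv 1$. By \Cref{lem:purity-single-line}, for each $(\chi,p) \in A^\vee \times Q$ there is a unique $b_{\chi,p} \in \mc{B}$ of the form $b_{\chi,p} = \lambda_{\chi,p}\,e_\chi\tau(p)$ with $\lambda_{\chi,p} \in k^\ast$. The heart of the proof will be to show that, after replacing $\tau$ by a suitably rescaled transversal, all these scalars become equal to the universal constant $\kappa$ from \Cref{B_1 and Fourier}.

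\textbf{Functional equation.} The $\Phi$-set-theoretic hypothesis forces $\Phi_{k[G]}(b_{\chi,p}\otimes b_{\psi,q})$ to be a pure tensor in $\mc{B}\otimes \mc{B}$; by \Cref{lem:purity-single-line} the only admissible one is $b_{\chi(p\cdot\psi)^{-1},p}\otimes b_{p\cdot\psi,pq}$. Combining bilinearity with \Cref{lem:set-theoretic-kills-cocycles} then yields the scalar identity
\[
\lambda_{\chi,p}\,\lambda_{\psi,q}\,(p\cdot\psi)(\sigma(p,q))
\;=\;
\lambda_{\chi(p\cdot\psi)^{-1},p}\,\lambda_{p\cdot\psi,pq}. \qquad (\star)
\]
Specializing $q=1$ (so $\sigma(p,1)=1$) and invoking $\lambda_{\cdot,1}\equiv\kappa$ from \Cref{B_1 and Fourier} gives $\kappa\,\lambda_{\chi,p} = \lambda_{\chi\alpha^{-1},p}\,\lambda_{\alpha,p}$ for all $\chi,\alpha \in A^\vee$; equivalently, $f_p(\chi) := \lambda_{\chi,p}/\kappa$ is a group homomorphism $A^\vee \to k^\ast$. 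Pontryagin duality $A \cong (A^\vee)^\vee$, valid because $A$ is finite abelian and $k$ contains the relevant roots of unity, delivers a unique $a_p \in A$ with $f_p(\chi) = \chi(a_p)$, and $a_1 = 1$.

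\textbf{Rescaling and conclusion.} Next I would replace $\tau$ by the transversal $\tau'(p) := a_p\tau(p)$. Using the identity $e_\chi a = \chi(a) e_\chi$ for $a \in A$, one computes $b_{\chi,p} = \kappa\,e_\chi\tau'(p)$, so all rescaled coefficients equal $\kappa$. Applying \Cref{lem:set-theoretic-kills-cocycles} in the new transversal coordinates (with its own cocycle $\sigma'$) and plugging $\lambda\equiv\kappa$ into $(\star)$ collapses it to $(p\cdot\psi)(\sigma'(p,q)) = 1$ for all $p,q \in Q$ and $\psi \in A^\vee$. Since $\psi \mapsto p\cdot\psi$ is a bijection of $A^\vee$ and characters separate points of $A$, this forces $\sigma' \equiv 1$. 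Consequently $\tau'$ is a group homomorphism, $N := \tau'(Q)$ is a subgroup, and by normality of $A$ (\Cref{thm: A normal}) we obtain $G = A \rtimes N$. The identification $\mc{B} \leftrightarrow A^\vee \times N$ via $\kappa\, e_\chi \tau'(p) \mapsto (\chi,\tau'(p))$ combined with \Cref{lem:set-theoretic-kills-cocycles} at $\sigma'=1$ then yields the claimed formula for $\phi_{\mc{B}}$.

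\textbf{Main obstacle.} The principal technical step is the transversal rescaling: first extracting multiplicativity of $f_p$ from the $q=1$ specialization of $(\star)$, and then realizing $f_p$ as evaluation at some $a_p \in A$ via Pontryagin duality, so that the discrepancy can be absorbed into the choice of transversal. Once the uniform normalization $\lambda_{\chi,p} \equiv \kappa$ is achieved, both $\sigma \equiv 1$ and the explicit shape of $\phi_{\mc{B}}$ follow essentially for free from $(\star)$ and the preceding lemmas.
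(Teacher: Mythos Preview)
Your argument is correct and follows the same overall strategy as the paper: derive the scalar identity $(\star)$ from \Cref{lem:set-theoretic-kills-cocycles}, specialize to $q=1$ to constrain the coefficients $\lambda_{\chi,p}$, and then feed the normalized coefficients back into $(\star)$ to force the cocycle to vanish. Where you differ---and in fact improve on the paper---is at the normalization step. The paper, after the $q=1$ analysis, asserts directly that all $\alpha_b$ equal the common constant $\lambda$ from \Cref{B_1 and Fourier}; as written this only establishes that $\lambda_{\chi,p}$ is independent of $\chi$ once one varies $\psi$, not that it is independent of $p$. You handle this carefully: the $q=1$ specialization gives that $f_p(\chi)=\lambda_{\chi,p}/\kappa$ is a homomorphism $A^\vee\to k^\ast$, hence by double duality equals evaluation at some $a_p\in A$, and the substitution $\tau'(p)=a_p\tau(p)$ absorbs this residual $p$-dependence into the choice of transversal. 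Only then does $(\star)$ collapse to $(p\cdot\psi)(\sigma'(p,q))=1$, and separating points by characters kills $\sigma'$. Your route thus makes explicit why a specific transversal must be chosen, which the paper's formulation leaves implicit.
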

\begin{proof}
By \Cref{lem:purity-single-line} we know that every $b \in \mc{B}$ is of the form $\alpha_b e_{\chi}\tau(p)$ for some $\alpha \in k^*, \chi \in A^{\vee}$ and $p \in \mc{T}_{A}^G$. Let $c \in \mc{B}$ another basis element which we write as $\alpha_c e_{\psi} \tau(q).$ First consider $c \in B_1$, i.e. $\tau(q) = 1$ and $\alpha_c = \lambda$ the common scalar from \Cref{B_1 and Fourier}. For such $c$, as $\sigma$ is taken normalized, $\sigma(p,q) = \sigma(p,1)= 1$. Hence \Cref{lem:set-theoretic-kills-cocycles} tells that 
\begin{align*}
\Phi_{k[G]}(b \ot c) &  = \alpha_b\lambda \Phi_{k[G]}(e_\chi \tau(p)\otimes e_\psi ) \\
& = \alpha_b\lambda  \,
\Big(e_{\chi(p\cdot\psi)^{-1}} \tau(p)\otimes e_{p\cdot\psi}\Big) \\
& = \alpha_b  e_{\chi(p\cdot\psi)^{-1}} \tau(p)\otimes c
\end{align*}
Since $\mc{B}$ is $\Phi$-set theoretic this implies that $\alpha_b  e_{\chi(p\cdot\psi)^{-1}} \in \mc{B}$. However, by varying $c$ or in other words $\psi$  one can obtain all $e_{\chi'}$ via the elements $e_{\chi (p \cdot\psi)^{-1}}$ for fixed $\chi$ and $p$. This entail that all scalars $\alpha_b$ must be equal to $\lambda.$ Thus $\mc{B}$ is indeed a common multiple of the basis $\{ e_{\chi} \tau(p) \mid \chi \in A^{\vee}, p \in Q\}$.

Now take $c \in \mc{B}\setminus B_1$, and use that all scalars $\alpha_b = \lambda$ in combination with \Cref{lem:set-theoretic-kills-cocycles} to obtain 
\begin{align*}
\Phi_{k[G]}(b \ot c) &  = \lambda^2 \Phi_{k[G]}(e_\chi \tau(p)\otimes e_\psi \tau(q)) \\
& = \lambda^2  (p \cdot \psi)(\sigma(p,q))\,
\Big(e_{\chi(p\cdot\psi)^{-1}} \tau(p)\otimes e_{p\cdot\psi}\tau(pq)\Big). \\
& = (p \cdot \psi)(\sigma(p,q))\,\Big(\lambda e_{\chi(p\cdot\psi)^{-1}} \tau(p)\otimes \lambda e_{p\cdot\psi}\tau(pq)\Big)
\end{align*}
By the earlier obtained description of $\mc{B}$ we have that $\lambda e_{\chi(p\cdot\psi)^{-1}} \tau(p)$ and $ \lambda e_{p\cdot\psi}\tau(pq)$ are basis elements. Thus as $\mc{B}$ is $\Phi$-set theoretic, the scalars $(p \cdot \psi)(\sigma(p,q))$ must all be equal to $1$. However for any $p,q \in Q$, one can choose a $\psi$ such that $p \cdot \psi$ do not contain $\sigma(p,q)$ in its kernel. Therefore the elements $\sigma(p,q)$ are all the identity in $A$. In other words, the extension is split.

In conclusion, we showed that there exists a subgroup $N$ in $G$ such that $G = A \rtimes N$. For such choice of complement, the basis $\mc{B}$ has the form $\{\lambda e_{\chi} g \mid \chi \in A^{\vee}, g \in N \}.$ Now considering the bijection from $\mc{B}$ to $\{e_{\chi} g \mid \chi \in A^{\vee}, g \in N \}.$, we get that $\phi_{\mc{B}}$ is equivalent ot the solution constructed in \eqref{eq:Phi-dual}.
\end{proof}

\bibliographystyle{plain}
\bibliography{PEandHopf}

@misc{COvA,
    title={Bijective solutions to the Pentagon Equation},
    author={I. Colazzo and J. Okniński and A. Van Antwerpen},
    year={2024},
    eprint={2405.20406},
    archivePrefix={arXiv},
    primaryClass={math.GR}
}

@article {CJK,
    AUTHOR = {Colazzo, I. and Jespers, E. and Kubat, {\L}},
     TITLE = {Set-theoretic solutions of the pentagon equation},
   JOURNAL = {Comm. Math. Phys.},
  FJOURNAL = {Communications in Mathematical Physics},
    VOLUME = {380},
      YEAR = {2020},
    NUMBER = {2},
     PAGES = {1003--1024},
      ISSN = {0010-3616},
   MRCLASS = {20K10 (16T25)},
  MRNUMBER = {4170296},
MRREVIEWER = {Jo\~{a}o Matheus Jury Giraldi},
       DOI = {10.1007/s00220-020-03862-6},
       URL = {https://doi.org/10.1007/s00220-020-03862-6},
}

@preamble{"\def\cprime{$'$} "}

@article{KvDZ,
 author = {Kurose, Hideki and Van Daele, Alfons and Zhang, Yinhuo},
 title = {Corepresentation theory of multiplier {Hopf} algebras. {II}.},
 fjournal = {International Journal of Mathematics},
 journal = {Int. J. Math.},
 issn = {0129-167X},
 volume = {11},
 number = {2},
 pages = {233--278},
 year = {2000},
 language = {English},
 doi = {10.1142/S0129167X00000131},
 keywords = {16W30},
 zbMATH = {1629342},
 Zbl = {1108.16302}
}

@article{Takeuchi,
 author = {Takeuchi, M.},
 title = {Matched pairs of groups and bismash products of {Hopf} algebras},
 fjournal = {Communications in Algebra},
 journal = {Commun. Algebra},
 issn = {0092-7872},
 volume = {9},
 pages = {841--882},
 year = {1981},
 language = {English},
 doi = {10.1080/00927878108822621},
 keywords = {16W30,14L15},
 zbMATH = {3713893},
 Zbl = {0456.16011}
}

@article {Wo96,
    AUTHOR = {S. L. Woronowicz},
     TITLE = {From multiplicative unitaries to quantum groups},
   JOURNAL = {Internat. J. Math.},
  FJOURNAL = {International Journal of Mathematics},
    VOLUME = {7},
      YEAR = {1996},
    NUMBER = {1},
     PAGES = {127--149},
      ISSN = {0129-167X},
   MRCLASS = {46L89 (16W30 81R50)},
MRREVIEWER = {Aldo J. Lazar},
       DOI = {10.1142/S0129167X96000086},
       URL = {https://doi.org/10.1142/S0129167X96000086},
}

@article {Mi04,
    AUTHOR = {G. Militaru},
     TITLE = {Heisenberg double, pentagon equation, structure and
              classification of finite-dimensional {H}opf algebras},
   JOURNAL = {J. London Math. Soc. (2)},
  FJOURNAL = {Journal of the London Mathematical Society. Second Series},
    VOLUME = {69},
      YEAR = {2004},
    NUMBER = {1},
     PAGES = {44--64},
      ISSN = {0024-6107},
   MRCLASS = {16W30},
MRREVIEWER = {Mat\'{\i}as A. Gra\~{n}a},
       DOI = {10.1112/S0024610703004897},
       URL = {https://doi.org/10.1112/S0024610703004897},
}

@article {Dav,
    AUTHOR = {Davydov, A. A.},
     TITLE = {Pentagon equation and matrix bialgebras},
   JOURNAL = {Comm. Algebra},
  FJOURNAL = {Communications in Algebra},
    VOLUME = {29},
      YEAR = {2001},
    NUMBER = {6},
     PAGES = {2627--2650},
      ISSN = {0092-7872},
   MRCLASS = {16W30},
  MRNUMBER = {1845134},
MRREVIEWER = {George Szeto},
       DOI = {10.1081/AGB-100002412},
       URL = {https://doi.org/10.1081/AGB-100002412},
}

@article {St98,
    AUTHOR = {R. Street},
     TITLE = {Fusion operators and cocycloids in monoidal categories},
   JOURNAL = {Appl. Categ. Structures},
  FJOURNAL = {Applied Categorical Structures. A Journal Devoted to
              Applications of Categorical Methods in Algebra, Analysis,
              Order, Topology and Computer Science},
    VOLUME = {6},
      YEAR = {1998},
    NUMBER = {2},
     PAGES = {177--191},
      ISSN = {0927-2852},
   MRCLASS = {18D10 (16W30 17B37)},
MRREVIEWER = {Sorin D\u{a}sc\u{a}lescu},
       DOI = {10.1023/A:1008655911796},
       URL = {https://doi.org/10.1023/A:1008655911796},
}

@article {BaSk93,
    AUTHOR = {S. Baaj and G. Skandalis},
     TITLE = {Unitaires multiplicatifs et dualit\'{e} pour les produits crois\'{e}s
              de {$C^*$}-alg\`ebres},
   JOURNAL = {Ann. Sci. \'{E}cole Norm. Sup. (4)},
  FJOURNAL = {Annales Scientifiques de l'\'{E}cole Normale Sup\'{e}rieure. Quatri\`eme
              S\'{e}rie},
    VOLUME = {26},
      YEAR = {1993},
    NUMBER = {4},
     PAGES = {425--488},
      ISSN = {0012-9593},
   MRCLASS = {46L89 (22D25 46L05 81R50)},
MRREVIEWER = {Palle E. T. Jorgensen},
       URL = {http://www.numdam.org/item?id=ASENS_1993_4_26_4_425_0},
}

@article {BaSk03,
    AUTHOR = {S. Baaj and G. Skandalis},
     TITLE = {Unitaires multiplicatifs commutatifs},
   JOURNAL = {C. R. Math. Acad. Sci. Paris},
  FJOURNAL = {Comptes Rendus Math\'{e}matique. Acad\'{e}mie des Sciences. Paris},
    VOLUME = {336},
      YEAR = {2003},
    NUMBER = {4},
     PAGES = {299--304},
      ISSN = {1631-073X},
   MRCLASS = {46L89 (22D25 46L05 81R50)},
MRREVIEWER = {Erik B\'{e}dos},
       DOI = {10.1016/S1631-073X(03)00034-7},
       URL = {https://doi.org/10.1016/S1631-073X(03)00034-7},
}

@article{Ka96,
	Author = {R. M. Kashaev},
	Date-Modified = {2020-03-18 12:45:08 +0100},
	Fjournal = {Rossi\u{\i}skaya Akademiya Nauk. Algebra i Analiz},
	Issn = {0234-0852},
	Journal = {Algebra i Analiz},
	Mrclass = {16W30 (17B37 81R50)},
	Mrreviewer = {E. J. Taft},
	Number = {4},
	Pages = {63--74},
	Title = {The {H}eisenberg double and the pentagon relation},
	Volume = {8},
	Year = {1996}}

@article {CMM19,
    AUTHOR = {F. Catino and M. Mazzotta and M. M. Miccoli},
     TITLE = {Set-theoretical solutions of the pentagon equation on groups},
   JOURNAL = {Comm. Algebra},
  FJOURNAL = {Communications in Algebra},
    VOLUME = {48},
      YEAR = {2020},
    NUMBER = {1},
     PAGES = {83--92},
      ISSN = {0092-7872},
   MRCLASS = {16T20 (20G42 81R60)},
       DOI = {10.1080/00927872.2019.1632331},
       URL = {https://doi.org/10.1080/00927872.2019.1632331},
}

@article{Mi98,
	Author = {G. Militaru},
	Doi = {10.1080/00927879808826329},
	Fjournal = {Communications in Algebra},
	Issn = {0092-7872},
	Journal = {Comm. Algebra},
	Mrreviewer = {Stefaan Caenepeel},
	Number = {10},
	Pages = {3071--3097},
	Title = {The {H}opf modules category and the {H}opf equation},
	Url = {https://doi.org/10.1080/00927879808826329},
	Volume = {26},
	Year = {1998},
	Bdsk-Url-1 = {https://doi.org/10.1080/00927879808826329}}

@article{MR1637789,
	Author = {R. M. Kashaev and S. M. Sergeev},
	Doi = {10.1007/s002200050391},
	Fjournal = {Communications in Mathematical Physics},
	Issn = {0010-3616},
	Journal = {Comm. Math. Phys.},
	Mrclass = {81R50 (16W30 17B37 82B23)},
	Mrreviewer = {W\l adys\l aw Marcinek},
	Number = {2},
	Pages = {309--319},
	Title = {On pentagon, ten-term, and tetrahedron relations},
	Url = {https://doi.org/10.1007/s002200050391},
	Volume = {195},
	Year = {1998},
	Bdsk-Url-1 = {https://doi.org/10.1007/s002200050391}}

@book{KL,
	Author = {G. Krause and T. H. Lenagan},
	Edition = {Revised},
	Pages = {x+212},
	Publisher = {Amer. Math. Soc., Providence, RI},
	Series = {Grad. Stud. Math.},
	Title = {Growth of algebras and {G}elfand--{K}irillov dimension},
	Volume = {22},
	Year = {2000}}

@article {Lu2000,
    AUTHOR = {J.-H. Lu and M. Yan and Y.-C. Zhu},
     TITLE = {On the set-theoretical {Y}ang--{B}axter equation},
   JOURNAL = {Duke Math. J.},
  FJOURNAL = {Duke Mathematical Journal},
    VOLUME = {104},
      YEAR = {2000},
    NUMBER = {1},
     PAGES = {1--18},
      ISSN = {0012-7094},
   MRCLASS = {16W30 (57M25 81R50)},
MRREVIEWER = {Sorin D\u{a}sc\u{a}lescu},
       DOI = {10.1215/S0012-7094-00-10411-5},
       URL = {https://doi-org.myezproxy.vub.ac.be/10.1215/S0012-7094-00-10411-5}}

@article{zbMATH01616308,
 author = {Lu, Jiang-Hua and Yan, Min and Zhu, Yongchang},
 title = {On {Hopf} algebras with positive bases},
 fjournal = {Journal of Algebra},
 journal = {J. Algebra},
 issn = {0021-8693},
 volume = {237},
 number = {2},
 pages = {421--445},
 year = {2001},
 language = {English},
 doi = {10.1006/jabr.2000.8459},
 keywords = {16W30},
 zbMATH = {1616308},
 Zbl = {0991.16032}
}

@book{Tim,
 author = {Timmermann, T.},
 title = {An invitation to quantum groups and duality. {From} {Hopf} algebras to multiplicative unitaries and beyond},
 fseries = {EMS Textbooks in Mathematics},
 series = {EMS Textb. Math.},
 isbn = {978-3-03719-043-2},
 year = {2008},
 publisher = {Z{\"u}rich: European Mathematical Society},
 language = {English},
 doi = {10.4171/043},
 keywords = {46-02,46L65,16-02,58B32,16W30,17B37},
 zbMATH = {5246363},
 Zbl = {1162.46001}
}

@article{Bask,
 author = {Baaj, S. and Skandalis, G.},
 title = {Multiplicative unitaries and duality for crossed products of {{\(C^*\)}}-algebras},
 fjournal = {Annales Scientifiques de l'{\'E}cole Normale Sup{\'e}rieure. Quatri{\`e}me S{\'e}rie},
 journal = {Ann. Sci. {\'E}c. Norm. Sup{\'e}r. (4)},
 issn = {0012-9593},
 volume = {26},
 number = {4},
 pages = {452--488},
 year = {1993},
 language = {French},
 doi = {10.24033/asens.1677},
 keywords = {46L55,47L50,46L05},
 url = {https://eudml.org/doc/82346},
 zbMATH = {440385},
 Zbl = {0804.46078}
}

@article{castelli,
  author  = {Castelli, M.},
  title   = {On commutative set-theoretic solutions of the {P}entagon {E}quation},
  journal = {Semigroup Forum},
  year    = {2026},
  doi     = {10.1007/s00233-026-10609-7},
}

@book{maclane,
 author = {Mac Lane, S.},
 title = {Categories for the working mathematician.},
 edition = {2nd ed},
 fseries = {Graduate Texts in Mathematics},
 series = {Grad. Texts Math.},
 issn = {0072-5285},
 volume = {5},
 isbn = {0-387-98403-8},
 year = {1998},
 publisher = {New York, NY: Springer},
 language = {English},
 doi = {book/10.1007/978-1-4757-4721-8},
 keywords = {18-01},
 zbMATH = {1216133},
 Zbl = {0906.18001}
}

@incollection{zbMATH05238963,
 author = {Kashaev, R. M. and Reshetkhin, N.},
 title = {Symmetrically factorizable groups and set-theoretical solutions of the pentagon equation},
 booktitle = {Israel mathematical conference proceedings. Quantum groups. Proceedings of a conference in memory of Joseph Donin, Haifa, Israel, July 5--12, 2004},
 isbn = {978-0-8218-3713-9},
 pages = {267--279},
 year = {2007},
 publisher = {Providence, RI: American Mathematical Society (AMS)},
 language = {English},
 keywords = {17B37,70G65,81T99},
 zbMATH = {5238963},
 Zbl = {1177.17014}
}

@article{zbMATH05984366,
 author = {Kashaev, R.},
 title = {Fully noncommutative discrete {Liouville} equation},
 fjournal = {RIMS K{\^o}ky{\^u}roku Bessatsu},
 journal = {RIMS K{\^o}ky{\^u}roku Bessatsu},
 issn = {1881-6193},
 volume = {B28},
 pages = {89--98},
 year = {2011},
 language = {English},
 keywords = {81Q05,30F35,81P15,39A12,70H05},
 zbMATH = {5984366},
 Zbl = {1260.81075}
}

@article{zbMATH00721651,
 author = {Maillet, J. M.},
 title = {On pentagon and tetrahedron equations},
 fjournal = {St. Petersburg Mathematical Journal},
 journal = {St. Petersbg. Math. J.},
 issn = {1061-0022},
 volume = {6},
 number = {2},
 pages = {206--214},
 year = {1994},
 language = {English},
 keywords = {58D30,81U20,81R50},
 zbMATH = {721651},
 Zbl = {0824.58016}
}

@article{zbMATH01594092,
 author = {Kustermans, J. and Vaes, S.},
 title = {Locally compact quantum groups.},
 fjournal = {Annales Scientifiques de l'{\'E}cole Normale Sup{\'e}rieure. Quatri{\`e}me S{\'e}rie},
 journal = {Ann. Sci. {\'E}c. Norm. Sup{\'e}r. (4)},
 issn = {0012-9593},
 volume = {33},
 number = {6},
 pages = {837--934},
 year = {2000},
 language = {English},
 doi = {10.1016/S0012-9593(00)01055-7},
 keywords = {46L65,46L89,43A99,22D99},
 url = {https://eudml.org/doc/82536},
 zbMATH = {1594092},
 Zbl = {1034.46508}
}

\end{document}